\documentclass[11pt]{amsart}
\usepackage{mathrsfs}
\usepackage{threeparttable}
\usepackage{amsfonts}
\usepackage{amsmath}
\usepackage{amssymb}
\usepackage{enumerate}
\usepackage{hyperref}
\usepackage{latexsym}
\usepackage{color}
\usepackage{enumitem}
\usepackage{enumerate}
\usepackage{etoolbox} 
\usepackage{ragged2e}
\usepackage{array}
\usepackage{booktabs}
\usepackage{mathabx}
\usepackage{cite}
\usepackage{appendix}
\usepackage{longtable}
\usepackage{multirow}

\def\S{\mathrm{S}}

\def\div{\,\,\big|\,\,}
\def\Ga{\Gamma}
\def\O{{\rm O}}
\def\qed{\hfill $\Box$} \def\demo{\noindent{\it Proof.}~}

\newcommand\ZZ{\mathrm C} \newcommand\D{\mathrm{D}}
\newcommand\A{\mathrm{A}} 
\newcommand\Aut{\mathrm{Aut}} \newcommand\Inn{\mathrm{Inn}} \newcommand\Out{\mathrm{Out}}
\newcommand\Cay{\mathrm{Cay}}   
\newcommand\soc{\mathrm{soc}}
\newcommand\C{\mathbf{C}}
\newcommand\N{\mathbf{N}} \newcommand\Z{\mathbf{Z}}

\newcommand\GL{\mathrm{GL}} \newcommand\SL{\mathrm{SL}}  \newcommand\PGL{\mathrm{PGL}}  \newcommand\PSL{\mathrm{PSL}}
   \newcommand\PSU{\mathrm{PSU}}
  
 \newcommand\PSp{\mathrm{PSp}}

\newcommand\PGammaL{\mathrm{P\Gamma L}}

 \newcommand\F{\mathrm{F}} \newcommand\G{\mathrm{G}}
  
\newcommand\M{\mathrm{M}} \newcommand\Q{\mathrm{Q}}

\newtheorem{theorem}{Theorem}[section]
\newtheorem{corollary}[theorem]{Corollary}
\newtheorem{remark}[theorem]{Remark}
\newtheorem{lemma}[theorem]{Lemma}
\newtheorem{definition}[theorem]{Definition}
\newtheorem{proposition}[theorem]{Proposition}
\theoremstyle{definition}
\newtheorem{example}[theorem]{Example}
\newtheorem{problem}[theorem]{Problem}
\newtheorem{question}[theorem]{Question}
\newtheorem{observation}[theorem]{Observation}

 \def\Syl{\hbox{\rm Syl}}
\def\lg{\langle} \def\rg{\rangle}

\makeatletter
\patchcmd{\@settitle}
  {\@setauthors}
  {\@setauthors\@setaddresses}
  {}{}
\patchcmd{\@maketitle}
  {\@setauthors}
  {\@setauthors\@setaddresses}
  {}{}

\makeatother

\title[Subgroup perfect codes]{Characterizing Finite Groups via Subgroup Perfect Codes}
\thanks{Corresponding author: Wei Meng}
\thanks{Correspondence to: mlwhappyhappy@163.com}
\thanks{2010 Mathematics Subject Classification. 05C25, 05C69, 94B25, 20D15}
\thanks{The project was partially supported by NSF of Guangxi (2025GXNSFAA069013) and the NNSF of China (12571362).}
\thanks{Email:  libb@st.gxu.edu.cn (B.B. Li),
  lijjhx@gxu.edu.cn (J.J. Li),
  mlwhappyhappy@163.com(W. Meng),
  haoyu@gxu.edu.cn (H. Yu).}

\author[B.B Li, J.J. Li, W. Meng\and H. Yu] 
{Bin Bin Li\textsuperscript{a},
Jing Jian Li\textsuperscript{a},
Wei Meng\textsuperscript{b}
and Hao Yu\textsuperscript{a},
}

\address{%
  \begin{minipage}{\textwidth}%
  \centering%
  \textsuperscript{a}School of Mathematics \& Guangxi University \\%
  Nanning, Guangxi 530004, P. R. China. \\
  \textsuperscript{b}School of Mathematics and Computing Science, Guilin University of Electronic Technology,Guilin 541004, Guangxi, P. R. China.
  \end{minipage}%
}

\begin{document}

\maketitle
\begin{abstract}
A perfect code in a graph $\Gamma = (V, E)$ is a subset $C$ of $V$ such that no two vertices in $C$ are adjacent and every vertex in $V \setminus C$ is adjacent to exactly one vertex in $C$. A subgroup $H$ of a group $G$ is called a subgroup perfect code of $G$ if it is a perfect code in some Cayley graph of $G$. In this paper, we study the set $\Delta(G)$ of conjugacy classes of nontrivial subgroup perfect codes of $G$, with a focus on its relation to $|\pi(G)|$, the number of prime divisors of $|G|$. We prove that $|\Delta(G)| \ge |\pi(G)|$ with only three exceptional families, which leads to the natural question: when is this bound attained or nearly attained?
We completely classify finite groups $G$ satisfying $|\Delta(G)| = |\pi(G)|$ and $|\Delta(G)| = |\pi(G)| + 1$, and we further characterize all non-solvable groups with $|\Delta(G)| \le 6$. Our approach is based on the classification of primitive groups of odd degree, as well as the classification of primitive groups of square-free degree.

\vskip 5pt

\noindent {\sc Keywords}. Cayley graphs;  Subgroup perfect codes; Primitive groups;  Conjugacy classes
\end{abstract}
\maketitle

\parskip 5pt

\section{Introduction}
In this paper, all groups and graphs are finite, and all graphs are simple and undirected.
Let $\Gamma$ be a graph with vertex set $V$.
A perfect code in a graph $\Gamma$ is an independent subset $C$ of $V$ such that every vertex in $V \setminus C$ is adjacent to exactly one vertex in $C$.
The study of perfect codes in graphs originated in coding theory \cite{biggs}. Classical examples such as Hamming and Golay codes\cite{vL} occur in Hamming graphs, which are Cayley graphs of elementary abelian groups. This motivates a natural generalization to perfect codes in arbitrary Cayley graphs. Let $G$ be a group and $S \subseteq G \setminus \{1\}$ an inverse-closed subset. The Cayley graph $\Cay(G, S)$ has vertex set $G$, and two distinct vertices $x$ and $y$ are adjacent if and only if $yx^{-1} \in S$.

Perfect codes in Cayley graphs have been studied extensively (see, for example, \cite{ds,f,zhou}). A particularly active line of research asks when a given subgroup of a group can be realized as a perfect code in some Cayley graph of the group. The formal notion of a subgroup perfect code was introduced by Huang et al. \cite{h}: a subgroup $H$ of $G$ is called a \textit{subgroup perfect code} if $H$ is a perfect code in some Cayley graph of $G$. In that work, the authors gave a necessary and sufficient condition for a normal subgroup to be a perfect code. Their result motivated efforts to characterize subgroup perfect codes, leading to classifications in families such as dihedral groups \cite{h} and generalized quaternion groups \cite{m}, as well as studies involving maximal subgroups of finite simple groups \cite{z}.

Subgroup perfect codes in Cayley graph are important not only in coding theory and graph theory, but also in group theory.
A \textit{factorization of a group $G$ into two factors} is a pair $(A,B)$ of subsets of $G$ such that every element of $G$ can be uniquely written as $ab$ with $a \in A$ and $b \in B$, so that $G = AB$.
If $1 \in A \cap B$, we call it a \textit{tiling} of $G$.
As shown in \cite{h}, if $(A,B)$ is a tiling with $A^{-1} = A$, then $B$ is a perfect code in $\Cay(G,\,A \setminus \{1\})$; conversely, any perfect code in a Cayley graph induces a tilling.
Now let $B$ be a subgroup perfect code of $G$.
There exists an inverse-closed subset $A$ of $G$ such that $(A,B)$ is a factorization of $G$.
In particular, if $A$ is also a subgroup, the factorization $G = AB$ is called an \textit{exact factorization}.
Exact factorizations of permutation groups play a special role, owing to a classical observation: for a transitive group action on a set $\Omega$, a subgroup $A$ acts regularly on $\Omega$ if and only if $G$ admits an exact factorization $G = A G_{\alpha}$ for some point stabilizer $G_{\alpha}$.
For recent developments on exact factorizations, see \cite{her,gi,lx,ww}.

A subgroup perfect code $H$ of a group $G$ is called \textit{nontrivial} if $1<H<G$.
Recently, an interesting problem is to explore the connections between the number of subgroup perfect codes in a group relates to its structure.
For instance, Ma et al. \cite{m} characterized groups in which every subgroup is a subgroup perfect code, showing that such groups contain no element of order 4. Conversely, Chen et al. \cite{ch} proved that groups possessing no nontrivial subgroup perfect codes are precisely cyclic groups of odd prime order, cyclic 2-groups, or generalized quaternion 2-groups. Furthermore, Khaefi et al. \cite{kh} established that any group with exactly two nontrivial subgroup perfect codes must be abelian.

Since the property of being a subgroup perfect code is preserved under conjugation, a natural and refined measure is to count these subgroups up to conjugacy. This leads to an interesting question in group theory:

\begin{problem}
How is the global structure of a group $G$ determined by the number of conjugacy classes of its nontrivial subgroup perfect codes?
\end{problem}
In this paper, we always let $\pi(G)$ denote the set of all prime divisors of the order of a group $G$ and $\Delta(G)$ the set of all conjugacy classes of nontrivial subgroup perfect codes of $G$.

We now introduce a new perspective rooted in the Sylow subgroups of $G$.  A fundamental result by Zhang \cite{z} shows that every Sylow subgroup of $G$ is its a subgroup perfect code. If $G$ is not a $p$-group, its Sylow subgroups for distinct primes are non-conjugate and nontrivial. This yields at least $|\pi(G)|$ distinct conjugacy classes of such subgroups.
 What is the relationship between the sizes of $\pi(G)$ and $\Delta(G)$? Our first theorem answers this question. For two groups $H$ and $K$, we denote by $H{:}K$ the semidirect product of $H$ by $K$, and by $H.K$ an extension of $H$ by $K$. Let $\Q_{2^n}$ denote the generalized quaternion group of order $2^n$, where $n \ge 3$ is an integer.

\begin{theorem}\label{main}
Let $G$ be a finite group and $P$ be a Sylow $2$-subgroup of $G$. Then $|\Delta(G)| \ge |\pi(G)|$ unless $G \cong \ZZ_p$, $\ZZ_{2^m}$ or $\Q_{2^{m+1}}$, where $p$ is a prime and $m \ge 2$.
Moreover, equality holds if and only if $G$ belongs to one of the cases listed in Table \ref{tab:m=pi}.

\begin{table}[ht]
\centering
\caption{}
\begin{tabular}{ccc}
\toprule
$G$ & $|\Delta(G)|$ & Conditions \\
\midrule
$\ZZ_{p^2}$ & $1$ & $p$ odd prime \\
$\ZZ_q{:}\ZZ_p$ & $2$ & $p, q$ odd primes, $p < q$ \\
$\ZZ_p {:} \Q_{2^n}$ & $2$ & $p$ odd prime, $n \geq 3$ \\
$\ZZ_p {:} \ZZ_{2^n}$ & $2$ & $p$ odd prime, $n \geq 1$ \\
$\Q_8{:}\ZZ_3$ & $2$ & \\
$\SL(2,3).\ZZ_2$ & $2$ & $P\cong \Q_{16}$\\
\bottomrule
\end{tabular}
\label{tab:m=pi}
\end{table}
\end{theorem}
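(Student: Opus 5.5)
The plan rests on three standard tools. The first is Zhang's theorem that Sylow subgroups are subgroup perfect codes. The second is the criterion of Huang et al.: a subgroup $H\le G$ is a perfect code if and only if there is a left transversal of $H$ that is inverse-closed. Equivalently, for every $g\in G$ with $g^2\in H$ and $|H|/|H\cap gHg^{-1}|$ odd, the double coset $HgH$ contains an involution, or $g\in H$. The third is the resulting consequence that any subgroup of odd order, or of odd index, is a perfect code. (For odd index, the Sylow $2$-subgroup of $G$ lies in a conjugate of $H$, and one checks the condition on $2$-elements.)

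\textbf{The inequality.} If $G$ is not a $p$-group, the Sylow subgroups for distinct primes give $|\pi(G)|$ pairwise non-conjugate nontrivial perfect codes, so $|\Delta(G)|\ge|\pi(G)|$. If $G$ is a $p$-group, we need $|\Delta(G)|\ge 1$. By Chen et al., this holds unless $G$ is $\ZZ_p$ with $p$ odd, a cyclic $2$-group, or generalized quaternion. Among those exceptions, $\ZZ_2$ and $\Q_8$ must still be sorted out: $\ZZ_2$ has no nontrivial subgroup, and $\Q_8$ is covered by $m=2$. This gives exactly the three exceptional families.

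\textbf{Equality, $p$-group case.} For $G$ a $p$-group not excluded, equality means $G$ has exactly one conjugacy class of nontrivial perfect codes. For odd $p$, every subgroup has odd order and hence is a perfect code. So $G$ has exactly one conjugacy class of nontrivial proper subgroups, which forces $G\cong\ZZ_{p^2}$. For $p=2$, I will show that a non-cyclic, non-quaternion $2$-group has at least two classes. Such a group has at least two non-conjugate involutions, or an involution together with a suitable Klein four-subgroup or maximal subgroup, and I will check each candidate with the criterion.

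\textbf{Equality, non-$p$-group case.} Suppose $|\pi(G)|\ge 2$ and equality holds. Then every nontrivial perfect code is a Sylow subgroup. Any odd-order subgroup is a perfect code, so every odd-order subgroup of $G$ is trivial or Sylow. This forces each odd Sylow subgroup to be of prime order and to contain no proper nontrivial subgroups. It also forces every subgroup of odd order to be a Sylow $p$-subgroup for a single prime. Similarly, every subgroup of odd index is a perfect code, and must be Sylow. In particular, $PQ$-type subgroups and $N_G(Q)$ for odd $Q$ must not have odd index unless they equal $G$.

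Now split on $|G|$ even versus odd.

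If $|G|$ is odd, then $G$ itself has no proper subgroup of order divisible by two primes. A solvable $G$ then has a normal Sylow subgroup $\ZZ_q$ with complement $\ZZ_p$, giving $\ZZ_q{:}\ZZ_p$ with $p<q$; if $q$ were not the larger prime, $G$ would be cyclic and $\ZZ_p$ would be normal in an abelian group of order $pq$, still fine. By Feit–Thompson, $G$ is solvable, and the condition on subgroups of odd order forces $|\pi(G)|=2$.

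If $|G|$ is even, the odd part consists of Sylow subgroups of prime order, and no odd-index proper subgroup other than the Sylow $2$-subgroup $P$ is allowed. Hence $P$ is maximal among subgroups of odd index. Any subgroup of $P$ is a perfect code in $G$ only under restrictive conditions, and we need all proper nontrivial subgroups of $P$ to fail to be perfect codes in $G$. Using Chen et al. locally, this should push $P$ to be cyclic or generalized quaternion. With $P$ cyclic or quaternion, $G$ is solvable (Burnside transfer for cyclic $P$; Brauer–Suzuki for quaternion $P$), apart from $\SL(2,q)$-type sections. The Brauer–Suzuki case leads to $\SL(2,3)$-type groups, namely $\Q_8{:}\ZZ_3$ and $\SL(2,3).\ZZ_2$ with $P\cong\Q_{16}$. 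Otherwise $G=\ZZ_p{:}P$, using the condition that odd Sylow subgroups have prime order and $|\pi(G)|=2$. Finally, I will verify directly that every listed group attains equality.

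\textbf{Main obstacle.} The hardest step is the even-order analysis. There I must show that the exceptional $2$-local structure forces $P$ cyclic or quaternion, and handle possibly non-solvable $G$. For the latter, my first attempt is this: any proper subgroup $M$ of odd index would be a perfect code. Taking $M$ maximal of odd index makes $G$ act as a primitive group of odd degree, and the classification of primitive groups of odd degree then rules out $M\ne P$ except in tiny cases.
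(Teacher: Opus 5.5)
\textbf{The gap: the non-solvable part of the even-order equality case.} Your assertion that, with $P$ cyclic or generalized quaternion, ``$G$ is solvable (\ldots Brauer--Suzuki for quaternion $P$)'' is false. Brauer--Suzuki only makes the involution of $P$ central modulo $O(G)$, and $\SL(2,5)$ and $\SL(2,257)$ both have generalized quaternion Sylow $2$-subgroups.

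Worse, $\SL(2,257)$, of order $2^9\cdot 3\cdot 43\cdot 257$, satisfies every constraint you actually extract from equality. Its odd part is square-free, $P\cong\Q_{512}$, and $P$ is maximal, being the preimage of the maximal subgroup $\D_{256}$ of $\PSL(2,257)$. So nothing in your plan excludes it, and ``the Brauer--Suzuki case leads to $\SL(2,3)$-type groups'' has no mechanism behind it.

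Your fallback is also misdirected. Primitivity of $G/\O_2(G)$ on $\Syl_2(G)$ cannot ``rule out $M\neq P$'', because $M=P$ is already forced by equality. Its real use is to identify $G/\O_2(G)$. The classification of primitive groups of odd square-free degree with $2$-group stabilizers (Li's theorem, as the paper uses) still leaves $\ZZ_q{:}\ZZ_{2^d}$, and also $\PSL(2,p)$ and $\PGL(2,p)$ with $p\pm 1$ a power of $2$. Examples are $\PGL(2,7)$ and $\PSL(2,257)$, where the Sylow $2$-subgroup really is maximal.

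The missing idea is to kill these cases as follows. Lift odd-order subgroups of $G/\O_2(G)$ to $G$, which is possible because the kernel is a $2$-group. Then use the fact that every odd-order subgroup must be Sylow of prime order.
\begin{itemize}
\item If $p-1=2^k$: the cyclic subgroup of order $(p+1)/2$ in the dihedral maximal subgroup forces $(p+1)/2=2^{k-1}+1$ to be prime, hence $p=5$. Then $\PSL(2,5)$ has no maximal $2$-subgroup, and $\PGL(2,5)$ has a maximal $\S_4$ of odd index, which contradicts $M=P$.
\item If $p+1=2^k$: the Borel subgroup $\ZZ_p{:}\ZZ_{(p-1)/2}$ lifts to a non-Sylow subgroup of odd order.
\end{itemize}

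\textbf{Two further steps are only promised}, namely ``should push'' and ``check each candidate''. The natural candidates can fail: $\langle x\rangle$ is not a perfect code when the involution $x$ is a square $g^2$, since then $\langle x\rangle g=\{g,g^3\}$ contains no involution.

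The tool you need is this. For an involution $x$, let $L$ be maximal among cyclic or generalized quaternion $2$-subgroups whose unique involution is $x$. Then $L$ is a perfect code of $G$. Otherwise your criterion yields $g\in\N_G(L)$ with $g^2\in L$ and $Lg$ containing no involution. Then $\langle L,g\rangle=L\cup Lg$ is a larger subgroup of the same kind, contradicting maximality.

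This tool gives two things at once.
\begin{itemize}
\item When $|\pi(G)|\ge 2$, $P$ is cyclic or generalized quaternion, because such an $L$ is a nontrivial proper $2$-subgroup and so must be conjugate to $P$.
\item A non-cyclic, non-quaternion $2$-group has at least two classes: take a central involution and any other involution.
\end{itemize}

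The solvable even case then closes elementarily:
\begin{enumerate}
\item Hall subgroups give $|\pi(G)|=2$.
\item $G/\O_2(G)$ is primitive and solvable, hence isomorphic to $\ZZ_q{:}(P/\O_2(G))$.
\item An $N/C$ argument on $\O_2(G)$ finishes it. The automorphism group of a cyclic $2$-group or of $\Q_{2^m}$ with $m\ge 4$ is a $2$-group, while $\Aut(\Q_8)\cong\S_4$. This yields $\ZZ_q{:}P$, $\Q_8{:}\ZZ_3$, or $\SL(2,3).\ZZ_2$ with $P\cong\Q_{16}$.
\end{enumerate}
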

\begin{remark} Theorem \ref{main} shows that any finite group with $|\Delta(G)| = |\pi(G)|$ is solvable.
This equality condition is best possible in the sense that it cannot be relaxed to $|\Delta(G)| = |\pi(G)| + 1$: there exist non-solvable groups with $|\Delta(G)| = |\pi(G)| + 1$, as shown in
Example \ref{exm:4} and Lemma \ref{exm:5}.
\end{remark}

Having established the inequality $|\Delta(G)| \ge |\pi(G)|$ and characterized the equality case $|\Delta(G)| = |\pi(G)|$ in Table \ref{tab:m=pi}, we now investigate the next possible value: when does $|\Delta(G)| = |\pi(G)| + 1$ hold?

For a group $G$, recall that $\O_2(G)$ denote the maximal normal $2$-subgroup of $G$, i.e., the intersection of all Sylow $2$-subgroups of $G$.

\begin{theorem}\label{main2}
Let $G$ be a finite group and $P$ be a Sylow $2$-subgroup of $G$. Suppose that $|\Delta(G)| = |\pi(G)| + 1$.
\begin{enumerate}[label=(\arabic*), leftmargin=*, font=\normalfont, itemsep=0.5em]
    \item $G$ is non-solvable if and only if the following holds:
    \begin{enumerate}[label=(\roman*), font=\normalfont]
        \item $P \cong \Q_{2^n}$ for some integer $n\ge 3$ and $\O_2(G)>1$ is a cyclic subgroup;

        \item $G/\O_2(G) \in \{\PSL(2,5),\PGL(2,5), \PGL(2,7), \PGL(2,9),\PSL(2,17), \PGL(2,17),\allowbreak \PSL(2,p), \PGL(2,p)\}$, where $p$ is a Fermat prime satisfying $p+1 = 2p_2 p_3$ for primes $p_2 < p_3 < p$.
    \end{enumerate}
    \item If $G$ is solvable, then for some positive integer $n$ and prime $p\ge 3$, one of the following holds:
          \begin{enumerate}[label=(\roman*), font=\normalfont]
                \item $G\cong \ZZ_{p^3}$;

                \item $|G|=2^n $ and $\Z(G)$ is cyclic, all involutions of $G$ split into two conjugacy classes;

                \item $|G|=2^n p^a$ for $a\in\{1,2\}$, $G/\O_2(G)\cong \ZZ_p^a{:}(P/\O_2(G))$, and the involutions of $G$ form a single conjugacy class; moreover, if $a=2$, then $P\cong \ZZ_{2^n}$ or $\Q_{2^n}$.

         \end{enumerate}
\end{enumerate}
\end{theorem}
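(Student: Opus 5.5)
The plan rests on three standard facts about subgroup perfect codes, which we use throughout. First, the Huang--Xia--Zhou criterion: a subgroup $H$ of $G$ is a perfect code if and only if there is an inverse-closed left transversal of $H$ in $G$. Equivalently, for every $g\in G$ with $g^2\in H$ and $|H|/|H\cap gHg^{-1}|$ odd, the double coset $HgH$ contains an involution. Second, Zhang's theorem that Sylow subgroups are perfect codes. Third, the fact that if $H$ has odd order or odd index, then $H$ is a perfect code.

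From these it follows that every subgroup of odd order is a code, and so is every subgroup of odd index that contains a Sylow $2$-subgroup. Hence $|\Delta(G)|$ is at least the number of Sylow classes plus the number of conjugacy classes of other odd-order subgroups and of other odd-index $2$-local subgroups. The equality $|\Delta(G)|=|\pi(G)|+1$ therefore leaves room for exactly one non-Sylow class. We intend to reuse the counting machinery from the proof of Theorem~\ref{main}.

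\textbf{Solvable case.} Let $G$ be solvable. If $|\pi(G)|\ge 3$, a Hall $\{p,q\}$-subgroup for two primes together with the Sylow classes already produces too many classes, so $|\pi(G)|\le 2$.
\begin{itemize}
\item If $G$ is a $p$-group with $p$ odd, every subgroup is a code. Having only one non-Sylow class then forces $G\cong \ZZ_{p^3}$.
\item If $G$ is a $2$-group, we analyse which subgroups are codes. Subgroups of order $2$ are codes precisely when they are not contained in a cyclic subgroup of order $4$ in a bad way. Counting classes of involutions and using the earlier classification of groups without codes (the result of Chen et al.) yields (ii).
\item If $|G|=2^np^a$, the odd-order subgroups force $a\le 2$, and for $a=2$ all order-$p$ subgroups must be conjugate. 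Subgroups of order $2$ are codes unless they lie in a specific position, and a single class of involutions is needed. Passing to $G/\O_2(G)$ and using that the Sylow $p$-subgroup is normal modulo $\O_2$ gives (iii).
\end{itemize}

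\textbf{Non-solvable case.} For non-solvable $G$, we first use Burnside/Feit--Thompson and the counting to show that $P$ has few code subgroups. Any noncyclic $P$ already contains Klein four subgroups or involution classes that yield extra codes, so $P$ must be cyclic or generalized quaternion. It cannot be cyclic, since $G$ would then be $2$-nilpotent and hence solvable. Thus $P\cong\Q_{2^n}$.

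By Brauer--Suzuki, $G/\O_{2'}(G)$ has a central involution. Counting shows $\O_{2'}(G)=1$, because any odd-order normal subgroup times a Sylow subgroup gives extra classes. The quotient $\bar G=G/\O_2(G)$, with $\O_2(G)$ cyclic, then has dihedral or semidihedral-like Sylow $2$-subgroups. Every maximal subgroup of odd index in $G$ is a code, so $G$ can have essentially only one class of odd-index maximal subgroups beyond $N_G(P)$. This makes the action on its cosets a primitive group of odd degree.

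We then invoke the classification of primitive groups of odd degree (and of square-free degree where appropriate) to reduce to $\bar G\in\{\PSL(2,q),\PGL(2,q)\}$. The remaining task is to check each candidate $q$, computing the classes of Hall subgroups and of the odd-order subgroups $\ZZ_{(q\pm1)/2}$ and their subgroups. The condition that no further odd-order subgroup classes appear forces $(q\pm1)$ to have a very restricted factorisation. This yields the listed values $q=5,7,9,17$ and the Fermat primes with $p+1=2p_2p_3$.

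\textbf{Converse and main obstacle.} For the converse, we verify on each listed group that the codes are exactly the Sylow classes plus one extra class. This is done with the transversal criterion applied in $\SL(2,q)$-type covers.

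The main obstacle is the non-solvable classification. The difficulty is controlling, for each $\PSL(2,q)$ and $\PGL(2,q)$, every odd-order and odd-index subgroup class simultaneously, and showing that no other almost simple or extension case survives the count. Here we would lean on the list of subgroups of $\PSL(2,q)$ due to Dickson.
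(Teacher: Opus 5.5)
Your overall strategy for the non-solvable case is the paper's: a primitive odd-degree action, the classification, then checking $\PSL(2,q)$ and $\PGL(2,q)$. However, the first step has a genuine gap. You derive $P\cong\Q_{2^n}$ up front, and the Brauer--Suzuki step and the dihedral Sylow $2$-subgroups of $G/\O_2(G)$ both rest on it. Your justification does not work.

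\begin{itemize}
\item The hypothesis allows exactly one non-Sylow class. Lemma~\ref{rm} always supplies a cyclic or generalized quaternion $2$-subgroup code. If $P$ is not of that type, this code is simply the permitted extra class, and nothing is contradicted.
\item A second class of involutions would give a contradiction, but groups such as $\PSL(2,q)$ have only one.
\item Klein four subgroups need not be codes. In $\ZZ_4\times\ZZ_2$, the coset $V+(1,0)$ of $V=\langle(2,0),(0,1)\rangle$ contains no involution, and every code there is cyclic.
\end{itemize}

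The paper argues in the opposite order. Take a maximal $M\ge P$; counting gives $M=P$ or $P$ maximal in $M$.
\begin{itemize}
\item If $P$ is maximal in $M$, the extra class is $[M]$.
\item If $M=P$, then $G/\O_2(G)$ acts primitively on $\Syl_2(G)$ with $2$-group stabilizers, and Proposition~\ref{pi+1} applies. In each surviving case the extra class turns out to be an odd-order subgroup: a non-Sylow $\ZZ_3$, or a Hall $\{3,7\}$- or $\{p_2,p_3\}$-subgroup.
\end{itemize}
Only once the extra class is known not to be a $2$-group does Lemma~\ref{rm} force $P$ to be cyclic or quaternion.

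You also identify the primitive quotient $G/K$, where $K=\mathrm{core}_G(M)$, with $G/\O_2(G)$ without argument. When $M>P$ you must show $K$ is a $2$-group; the paper does this by showing otherwise a Hall $\{p_i,p_j\}$-subgroup of $KP_j$ gives a second extra class. That case also needs a different classification input. Theorem~\ref{main} applied to $M$ (once $P\cong\Q_{2^n}$) gives $|M|_{2'}$ prime, and Proposition~\ref{square-free}(2) then yields $\PSL(2,5)$ or $\PGL(2,5)$. Your phrase ``reduce to $\PSL(2,q),\PGL(2,q)$'' hides all of this. In particular it does not explain why $\M_{10}$, $\PGammaL(2,9)$ and $\PSL(2,7)$ drop out while $\PGL(2,7)$ survives.

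The solvable case also has omissions.
\begin{itemize}
\item You never treat $\pi(G)=\{p,q\}$ with both primes odd, and it must be excluded. The paper reduces to $|G|\in\{p^2q,pq^2\}$ and shows a minimal normal subgroup is some $\ZZ_r^2$. A group of odd prime order cannot permute its $r+1$ lines transitively.
\item You do not derive the clause that $P\cong\ZZ_{2^n}$ or $\Q_{2^n}$ when $a=2$. There $\Delta(G)=\{[P],[Q],[\ZZ_p]\}$, so the code from Lemma~\ref{rm} must be $P$ itself. This also gives the single class of involutions.
\item For the structure $\ZZ_p^a{:}(P/\O_2(G))$ you need $P$ maximal in $G$. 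This is automatic for $a=1$ and follows by counting for $a=2$. Then $G/\O_2(G)$ is a solvable primitive group, hence affine. So its normal Sylow $p$-subgroup is elementary abelian and not, say, $\ZZ_{p^2}$; normality modulo $\O_2(G)$ alone does not give this.
\item In the $2$-group case, order-$2$ codes do not bound the number of involution classes, since involutions that are squares give none. The bound comes from the argument of Lemma~\ref{rm} applied to each involution: a maximal cyclic or quaternion subgroup having it as unique involution is a code.
\end{itemize}
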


The following result is a direct consequence of Theorem \ref{main2}.
\begin{corollary}\label{1.3cor}
Let $G$ be a finite non-solvable group with $|\pi(G)|=5$. Then $|\Delta(G)|\ge 7$.
\end{corollary}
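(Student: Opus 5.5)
We argue by contradiction, combining the lower bound of Theorem~\ref{main} with the non-solvable classification in Theorem~\ref{main2}(1). Let $G$ be non-solvable with $|\pi(G)|=5$. Theorem~\ref{main} gives $|\Delta(G)|\ge|\pi(G)|=5$. Equality is impossible, because every group in Table~\ref{tab:m=pi} is solvable; this is the content of the Remark following Theorem~\ref{main}. So $|\Delta(G)|\ge 6$, and it suffices to show that $|\Delta(G)|=6=|\pi(G)|+1$ cannot occur.

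Suppose $|\Delta(G)|=|\pi(G)|+1$. Since $G$ is non-solvable, Theorem~\ref{main2}(1) applies. It says that $P\cong\Q_{2^n}$ and that $\O_2(G)$ is a nontrivial cyclic $2$-group, so $\pi(G)=\pi(G/\O_2(G))$. It also says that $G/\O_2(G)$ lies in the explicit list
\[
\PSL(2,5),\ \PGL(2,5),\ \PGL(2,7),\ \PGL(2,9),\ \PSL(2,17),\ \PGL(2,17),\ \PSL(2,p),\ \PGL(2,p),
\]
where in the last two cases $p$ is a Fermat prime with $p+1=2p_2p_3$ and $p_2<p_3<p$ primes. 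We therefore need $|\pi(\bar G)|\ne 5$ for every $\bar G$ in this list.

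We compute $\pi$ for each entry. Since $|\PSL(2,q)|=q(q^2-1)/\gcd(2,q-1)$ and $|\PGL(2,q)|=q(q^2-1)$, both groups have prime set $\pi\bigl(q(q-1)(q+1)\bigr)$.
\begin{itemize}
\item For $q=5$: $5\cdot4\cdot6$ gives $\{2,3,5\}$.
\item For $q=7$: $7\cdot6\cdot8$ gives $\{2,3,7\}$.
\item For $q=9$: $9\cdot8\cdot10$ gives $\{2,3,5\}$.
\item For $q=17$: $17\cdot16\cdot18$ gives $\{2,3,17\}$.
\item For a Fermat prime $p$: $p-1$ is a power of $2$ and $p+1=2p_2p_3$, so the prime set is $\{2,p_2,p_3,p\}$. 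Here $p_2,p_3$ are odd, since $p+1\equiv 2\pmod 4$ for $p>3$. This set has at most $4$ elements.
\end{itemize}
In every case $|\pi(G)|\le 4$. This contradicts $|\pi(G)|=5$, so $|\Delta(G)|\ne 6$ and hence $|\Delta(G)|\ge 7$.

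The argument has no real obstacle, since it is bookkeeping on top of Theorem~\ref{main2}. The one step that needs care is the Fermat prime family: we must check that $p_2,p_3$ are odd and distinct from $2$ and $p$, so that the count is exactly $4$ and never reaches $5$. It also matters that $\O_2(G)$ contributes no new prime, and this holds because it is a $2$-group.
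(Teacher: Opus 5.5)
Your proposal is correct and follows the same route as the paper, which presents Corollary~\ref{1.3cor} as a direct consequence of Theorem~\ref{main2}. Theorem~\ref{main} rules out $|\Delta(G)|=|\pi(G)|=5$, since the groups in Table~\ref{tab:m=pi} are solvable with $|\pi(G)|\le 2$. Every quotient $G/\O_2(G)$ allowed by Theorem~\ref{main2}(1) has at most four prime divisors, and your bookkeeping for each entry, including the Fermat-prime family, is exactly the check needed to exclude $|\Delta(G)|=6$.
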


Note that $\A_5$ is the smallest non-abelian simple group and has seven nontrivial subgroup conjugacy classes. Since $\A_5$ has no element of order $4$, by \cite[Theorem 1.1]{m}, every subgroup of $\A_5$ is a subgroup perfect code. Consequently, $\A_5$ has seven conjugacy classes of nontrivial subgroup perfect codes. This leads to a natural question:
\begin{question}\label{que}
For a finite group, if it has at most six conjugacy classes of nontrivial subgroup perfect codes, is the group necessarily solvable?
\end{question}

To answer the Question \ref{que}, Corollary \ref{cor} shows that for an arbitrary finite group $G$ with at most $3$ conjugacy classes of nontrivial subgroup perfect codes, it must be solvable and $|\pi(G)|\leq 2$.
Let $n_{2'}$ denote the odd part of a positive integer $n$. The following result is to answer the Question \ref{que}.

\begin{theorem}\label{main3}
Assume that $G$ is a finite group and $|\Delta(G)|\le 6$. If $G$ is non-solvable, then one of the following holds:
\begin{enumerate}[font=\normalfont]
\item $G/\O_2(G)\cong {^2\!B_2}(8)$;

\item $\soc(G/\O_2(G))\cong \PSL(2,q)$,
where $q\in \{5, 7, 9, 17, p\}$ and $p$ is an odd prime such that $p\notin \{5, 7, 17\}$ and $(p^2-1)_{2'}$ is a product of two distinct primes.
\end{enumerate}
\end{theorem}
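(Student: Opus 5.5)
The plan is to bound $|\Delta(G)|$ from below by exhibiting many pairwise non-conjugate nontrivial subgroup perfect codes. Two standard sources suffice. First, every subgroup of odd order is a perfect code. Second, every subgroup of odd index (that is, every subgroup containing a Sylow $2$-subgroup) is a perfect code; this includes all Sylow subgroups, by \cite{z}.

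Let $G$ be non-solvable with $|\Delta(G)|\le 6$. By Theorem \ref{main}, $|\pi(G)|\le |\Delta(G)|\le 6$. By Corollary \ref{1.3cor} and the obvious extension of its argument, $|\pi(G)|\le 4$. Since a non-solvable group has $|\pi(G)|\ge 3$, we get $|\pi(G)|\in\{3,4\}$. Hence at most three (respectively two) classes of nontrivial perfect codes lie outside the Sylow classes, and the whole argument is a careful budgeting of these extra classes.

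\textbf{Step 1: reduce to $\bar G=G/\O_2(G)$.} The full preimage of any odd-index subgroup of $\bar G$ has odd index in $G$. Also, for each odd prime $r$, the products $\O_2(G)R$ with $R$ a Sylow $r$-subgroup contribute further classes. I would check that these pullbacks are pairwise non-conjugate and distinct from the Sylow classes of $G$. This gives $|\Delta(G)|\ge|\Delta_{\rm odd}(\bar G)|$, where $\Delta_{\rm odd}$ counts the odd-order and odd-index classes only. From now on I assume $\O_2(G)=1$.

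\textbf{Step 2: pin down a simple socle using primitive groups of odd degree.}
\begin{itemize}
\item Take a Sylow $2$-subgroup $P$ and a maximal subgroup $M\ge P$. Then $M$ has odd index, so $\bar G$ acts primitively of odd degree on the cosets of $M$.
\item The intermediate subgroups between $P$ and $G$ give perfect-code classes beyond $P$ itself, and the budget forces the chain $P<\dots<M<G$ to be very short.
\item Applying the classification of primitive groups of odd degree (Liebeck--Saxl, Kantor) with this constraint, a product-action socle $T^k$ with $k\ge 2$ is ruled out, since it produces too many odd-index overgroups of $P$, for example $N_G(T_1)$-type subgroups. An affine socle is also ruled out, since then $\bar G$ would be solvable modulo a smaller section, contradicting minimality. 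So $\soc(\bar G)=T$ is nonabelian simple with $|\pi(T)|\le 4$.
\item By the classification of simple $K_3$- and $K_4$-groups, together with the odd-degree list, $T$ is ${}^2B_2(8)$, some $\PSL(2,q)$, or a member of a short finite list.
\item The members of the finite list such as $\PSL(3,4)$, $\PSU(3,3)$, $\A_6$, $\A_7$, $\M_{11}$, $\PSL(2,8)$ and $\PSL(2,27)$ are eliminated by displaying more than six classes of odd-order subgroups and subgroups containing $P$.
\end{itemize}

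\textbf{Step 3: $T=\PSL(2,q)$, the main obstacle.} Here I use the known subgroup structure: Dickson's list, the Borel subgroup $\ZZ_p^f{:}\ZZ_{(q-1)/d}$, and the dihedral subgroups of orders $q\pm1$.
\begin{itemize}
\item If $q=p^f$ with $f\ge 2$ and $q\ne 9$, the odd-order subgroups $\ZZ_p^i$ for $1\le i\le f$ and the odd-order subgroups of the Borel subgroup already exceed the budget.
\item So $q=p$ is prime, apart from the exceptional values $q\in\{5,9\}$; the remaining exceptional values $7,17$ are primes.
\item The odd-order cyclic subgroups of $\ZZ_{(p\pm1)/2}$ are then all perfect codes. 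Non-conjugate classes occur for each divisor of $(p^2-1)_{2'}$, and these must fit, together with $\ZZ_p$, the Sylow classes and the odd-order Frobenius subgroups $\ZZ_p{:}\ZZ_r$, within six.
\item This forces $(p^2-1)_{2'}$ to be a product of two distinct primes, with the special cases $q=5,7,9,17$ handled directly.
\end{itemize}
The delicate part is the fine counting: deciding exactly which subgroups of odd index or odd order lie in $\Delta(G)$ and showing they are pairwise non-conjugate. For the square-free constraint I would use the classification of primitive groups of square-free degree, applied to the action on cosets of the Borel subgroup and of the dihedral subgroups, to control the odd-index overgroups.

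Finally, for $T={}^2B_2(8)$ the Sylow $2$-subgroup is self-normalising in an odd-index Borel subgroup of order $2^6\cdot 7$. The only overgroups of $P$ are $P$, the Borel subgroup and $G$ itself. A direct count then shows that $\bar G={}^2B_2(8)$, since $\Aut$ adds a $\ZZ_3$ and creates an extra class.
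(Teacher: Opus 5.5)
There are two genuine gaps.

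\textbf{The kernel of the action.} In Step 2 you pass from ``$\bar G$ acts primitively on the cosets of $M$'' to ``$\soc(\bar G)$ is nonabelian simple''. The odd-degree classification only describes $\bar G/K$, where $K=\bigcap_g M^g$ is the kernel of that action. Having $\O_2(G)=1$ does not make $K$ trivial. For example, in $\A_5\times\ZZ_3$ the maximal overgroups $\A_5\times 1$ and $\A_4\times\ZZ_3$ of a Sylow $2$-subgroup have cores $\A_5$ and $\ZZ_3$.

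Proving $K=\O_2(G)$ is the heart of the paper's proof, and the theorem's conclusion, which concerns $G/\O_2(G)$ itself, depends on it.
\begin{itemize}
\item In the affine case $G/K\cong\ZZ_p^d{:}(M/K)$, non-solvability of $G$ forces $K$ to be \emph{non-solvable}. The paper needs a whole lemma to rule this out. It shows $M/K$ is a $2$-group and bounds $|K|_{2'}$. It then refines $1\lhd K\lhd G$ to a chief series and identifies a nonabelian simple chief factor via Proposition \ref{forK}. Finally it uses the centraliser of that factor to produce an extra $\{p,p_i\}$-Hall subgroup.
\item In the almost simple cases the paper separately proves that $K$ is solvable and then that $K$ is a $2$-group.
\end{itemize}
Your sentence ``solvable modulo a smaller section, contradicting minimality'' replaces all of this. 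There is no minimal counterexample in your set-up, and $G/K$ solvable is perfectly compatible with $G$ non-solvable.

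\textbf{Odd-order and odd-index subgroups do not suffice.} The claim that these ``two standard sources suffice'' is false. Your Step 1 cannot help, because odd-order classes of $G$ and $\bar G$ correspond bijectively (Schur--Zassenhaus) and odd-index classes correspond via preimages.
\begin{itemize}
\item For $\PSL(2,32)$ the nontrivial odd-order or odd-index subgroups form exactly six classes: $\ZZ_3,\ZZ_{11},\ZZ_{31},\ZZ_{33},E_{32},E_{32}{:}\ZZ_{31}$.
\item For ${}^2\!B_2(32)$ there are also exactly six such classes.
\item For $\PSL(2,8)$ there are only five, so your elimination of $\PSL(2,8)$ fails as stated.
\item Your Step 3 does not touch $q=2^f$, since its $\ZZ_p^i$ count needs $p$ odd.
\item You also list $\A_6$ among the groups to eliminate, although $q=9$ is in the conclusion; $\PGL(2,9)$ has only four such classes.
\end{itemize}

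The missing idea is $2$-local. By Lemma \ref{rm}, $G$ has a cyclic or generalized quaternion $2$-subgroup perfect code. Once the odd-order and odd-index classes exhaust the budget, this perfect code must be conjugate to $P$. Proposition \ref{pb} excludes $P$ cyclic, so $P\cong\Q_{2^a}$. Hence every quotient $PK/K\cong P/(P\cap K)$ is cyclic, dihedral or generalized quaternion.

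The paper uses exactly this to exclude $\PSL(2,2^f)$ with $f>3$ (Sylow $2$-subgroup $E_{2^f}$) and ${}^2\!B_2(32)$. It also uses it in the product-action case, where a Sylow $2$-subgroup of $T^2$ needs at least four generators. Via Observation \ref{observation}, it uses it again to show $K$ is solvable when $\soc(G/K)\cong\PSL(2,p)$.

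Your own product-action argument does not work either. A primitive group in product action must interchange the two factors, so $N(T_1)$ has index $2$ and does not contain $P$.
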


\begin{remark} In Theorem \ref{main3}, if $|\Delta(G)|\le 3$, then $G$ must be solvable (see Corollary \ref{cor}) and if $4\leqslant|\Delta(G)|\leqslant6$, then the group $G$ may be solvable or non-solvable (see Examples \ref{exm:sol}--\ref{exm:4} and Lemmas \ref{exm:5}--\ref{exm:6}).
\end{remark}

For a solvable group, there is a strict lower bound on the number of conjugacy classes of nontrivial subgroup perfect codes.

\begin{theorem}\label{sol}
Let $G$ be a finite solvable group. Then $|\Delta(G)| \ge 2^{|\pi(G)|} - 2$, with equality if and only if the odd part $|G|_{2'}$ is square-free and the Sylow $2$-subgroup of $G$ is a cyclic or generalized quaternion $2$-group.
\end{theorem}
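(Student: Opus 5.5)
We will produce, for each nonempty proper subset of $\pi(G)$, a nontrivial subgroup perfect code, with distinct subsets giving non-conjugate subgroups because their orders have different prime supports. That gives $2^{|\pi(G)|}-2$ classes. The tool is Hall theory: since $G$ is solvable, for every set of primes $\sigma\subseteq\pi(G)$ there is a Hall $\sigma$-subgroup $G_\sigma$, and any two are conjugate. The key claim will be that every Hall subgroup is a subgroup perfect code. By the tiling characterisation recalled in the introduction, it suffices to find an inverse-closed left transversal containing $1$. The Huang et al./Chen et al. criterion says that $H$ is a subgroup perfect code iff every $g\in G$ with $g^2\in H$ and $|H|/|H\cap H^g|$ odd lies in some double coset $HxH$ with $x^2\in H$, or equivalently via involution-type conditions. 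For a Hall subgroup $H$: if $|H|$ is odd, Zhang's Sylow-type argument applies, because $[G:H]$-coprimality lets us pair cosets $xH\leftrightarrow x^{-1}H$. If $|H|$ is even, $H$ contains a Sylow $2$-subgroup, and the standard argument giving that Sylow $2$-subgroups are perfect codes extends, since every double coset $HgH$ with $g^{-1}\in HgH$ then contains an involution or the identity (the index is odd). I would prove this lemma first, mimicking Zhang's proof for Sylow subgroups. So $|\Delta(G)|\ge 2^{|\pi(G)|}-2$.

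For the equality case, the direction ``$\Leftarrow$'' needs every nontrivial subgroup perfect code to be conjugate to a Hall subgroup. Suppose $|G|_{2'}$ is square-free and $P$ is cyclic or generalized quaternion. Then every Sylow subgroup is cyclic or $\Q_{2^n}$, so every subgroup $H$ has Sylow $2$-subgroup cyclic or quaternion. Let $H$ be a subgroup perfect code that is not Hall. Since odd Sylows have prime order, non-Hall-ness must occur at $2$: $1<|H|_2<|P|$ or $|H|_2=1$ with $2\mid |G|$ still Hall... so take $1<|H|_2<|P|$. Then $H\cap P$ is contained in a cyclic subgroup of order $2|H|_2$ in $P$ (true in cyclic and quaternion groups) and contains the unique involution. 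The known obstruction (Chen et al.) is that a subgroup containing $x^2$ but not $x$ for an element $x$ of $2$-power order, with suitable conditions, fails to be a perfect code. Concretely, take $x\in P$ with $x^2\in H$, $x\notin H$, $x$ normalizing $H\cap P$. Then verify $HxH$ contains no involution outside $H$ using the uniqueness of the involution, and this contradicts the criterion. The trivial case $|H|_2=|P|$ is Hall when odd parts are primes.

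The direction ``$\Rightarrow$'' is by contraposition. If some odd prime $r$ has $r^2\mid|G|$, take a Hall $\{r\}'$-subgroup $K$ and a subgroup $R_0$ of order $r$ in a Sylow $r$-subgroup normalising appropriately; more simply, an order-$r$ subgroup of a Sylow $r$-subgroup $R$ is a perfect code of $R$ (odd order, all subgroups are codes). Perfect codes transfer up via a Hall complement: if $L$ is a code of $R$ then $LK'$ (with $K'$ a Hall $r'$-subgroup permuting) is a code of $G$. This yields an extra class of order not equal to any Hall order. If $P$ is neither cyclic nor quaternion, $P$ has a nontrivial non-Sylow perfect code (Chen et al.), and we lift it similarly to an extra class. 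The main obstacle is this lifting step: making $L\cdot K$ a subgroup (use a Sylow system, so Hall subgroups permute) and showing that perfect-code-ness passes up. I would try the transitivity lemma: if $H\le M\le G$ with $H$ a code of $M$ and $M$ a code of $G$ with compatible transversals, then $H$ is a code of $G$, applied with $M$ a Hall subgroup.
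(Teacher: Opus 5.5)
\textbf{The gap: ``$\Rightarrow$'' at the prime $2$.} Here you take a non-Sylow perfect code $L$ of $P$ and plan to lift it to $G$, either via a product $LK$ with a Hall $2'$-subgroup or via a transitivity lemma. Neither works. A perfect code of $P$ need not be a perfect code of $G$, even though $P$ itself is one.

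\emph{Counterexample.} Take $G=\mathrm{S}_4$, $P=\langle(1234),(13)\rangle\cong\mathrm{D}_8$ and $L=\langle(12)(34)\rangle\le P$.
\begin{itemize}
\item In $P$, the normalizer $\N_P(L)=\{1,(12)(34),(13)(24),(14)(23)\}$ is elementary abelian. So $L$ is a perfect code of $P$ by Corollary \ref{2pc} applied inside $P$.
\item In $G$, the element $(1324)$ lies in $\N_G(L)$ and $(1324)^2=(12)(34)\in L$. The coset $L(1324)=\{(1324),(1423)\}$ contains no involution, so $L$ is not a perfect code of $G$ by Corollary \ref{2pc}.
\end{itemize}
So the transitivity lemma fails in general, and the ``compatibility of transversals'' it needs is exactly the point in question. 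Whether $L$ is a perfect code of $G$ is governed by a Sylow $2$-subgroup of $\N_G(L)$ (Proposition \ref{zhang2}), and this can be strictly larger than $\N_P(L)$. The product route also fails: a Sylow system makes Hall subgroups permute, not arbitrary subgroups of $P$. In this example no subgroup of order $6$ contains $(12)(34)$, so $LK$ is not even a subgroup.

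\emph{The missing idea} is to produce a $2$-subgroup perfect code of $G$ directly, as in Lemma \ref{rm}. Take a cyclic or generalized quaternion $2$-subgroup $L$ of $G$ of maximal order. If $L$ were not a perfect code, Corollary \ref{2pc} would give a $2$-element $g\in\N_G(L)$ with $g^2\in L$ and no involution in $Lg$. Then $\langle L,g\rangle=L\cup Lg$ is a larger $2$-group with a unique involution, hence cyclic or generalized quaternion by Proposition \ref{chen2}, contradicting maximality. Under equality every nontrivial perfect code is Hall, so this $L$ is a Sylow $2$-subgroup, and $P$ is cyclic or generalized quaternion.

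\textbf{The odd-prime part needs no lifting.} A subgroup of order $r$ already has odd order, so it is a perfect code of $G$ by Proposition \ref{odd-index}, and it is not Hall when $r^2\mid|G|$. Proposition \ref{odd-index} covers every subgroup of odd order or odd index, so you need not reprove it for Hall subgroups.

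\textbf{The remaining parts match the paper} (the lower bound via Hall subgroups, and ``$\Leftarrow$'' via Lemma \ref{cg}). A few justifications should be tightened:
\begin{itemize}
\item The parity condition in the double-coset criterion is on $|H:H\cap H^g|$, not on $|G:H|$. In $\ZZ_3{:}\ZZ_4$, the nontrivial double coset of a Sylow $2$-subgroup is self-inverse and contains no involution.
\item ``No involution in $Hx$'' does not follow from uniqueness of the involution in $P$, since $H$ is not a $2$-group. Argue instead: involutions of $Hx$ lie in $Hx\cap xH=Kx$, where $K=H\cap xHx^{-1}$. The element $x$ normalizes $K$ and $x^2\in K$. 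The Sylow $2$-subgroup $Q\langle x\rangle\le P$ of $K\langle x\rangle$ has its unique involution in $K$. Hence every involution of $K\langle x\rangle$ lies in $K$, and none lies in $Kx$.
\item $H\cap P$ need not lie in a cyclic subgroup of $P$ when it is quaternion. You do not need this: just take $x\in\N_P(Q)\setminus Q$ with $x^2\in Q$.
\end{itemize}
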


Following this introduction, we present the necessary notations, definitions, and preliminary results in Section \ref{sec2}.  The proofs of our main results (Theorems \ref{main}, \ref{main2}, \ref{main3} and \ref{sol}) appear in Sections \ref{sec3}, \ref{sec4}, \ref{sec5} and \ref{Theorem 1.7}, respectively.

\section{Preliminaries}\label{sec2}

This section collects fundamental results on subgroup perfect codes, standard group-theoretic tools, and the theory of primitive permutation groups of square-free degree as well as related simple groups.

Notations and terminologies used in the paper are standard and can be found in \cite{hu1}. For example, for a group $G$, let $\Syl_p(G)$ denote the set of all Sylow $p$-subgroups of $G$ for each $p\in \pi(G)$, and for a subgroup $H$ of $G$, let $[H]$ denote the conjugacy class of $H$ in $G$. Denote $\Z(G)$ the center of group $G$. Let $\ZZ_n$ denote the cyclic group of order $n$, and let $E_q$ denote the elementary abelian $p$-group of order $q = p^f$, where $p$ is a prime.

\subsection{Subgroup perfect codes}
We begin with fundamental criteria for subgroup perfect codes, notably Propositions \ref{odd-index}, \ref{chen}, and \ref{chen2}.

\begin{proposition}[{\cite[Theorem 3.5]{z0}}]\label{odd-index}
Let $G$ be a group and $H$ a subgroup of $G$. If either the order of $H$ is odd or the index of $H$ in $G$ is odd, then $H$ is a subgroup perfect code of $G$.
\end{proposition}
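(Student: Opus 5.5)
We need to prove Proposition \ref{odd-index}: if $|H|$ is odd or $|G:H|$ is odd, then $H$ is a subgroup perfect code of $G$. The plan is to build an inverse-closed left transversal $A$ of $H$ containing $1$. Then every $g\in G$ is uniquely $g=ah$ with $a\in A$, $h\in H$, and $H$ is a perfect code in $\Cay(G,A\setminus\{1\})$. Indeed, with adjacency $yx^{-1}\in S$, a vertex $y$ is adjacent to $x\in H$ iff $y\in (A\setminus\{1\})x$. Since $G=AH$ uniquely, each $y\notin H$ lies in exactly one such set, and no element of $H$ lies in any of them, so $H$ is independent. Thus everything reduces to the following criterion: an inverse-closed left transversal exists iff every left coset $gH$ with $g^{-1}\in HgH$ contains an element $x$ with $x^{-1}\in gH$ chosen compatibly, and the pairing must be consistent.

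Concretely, consider the double cosets $HgH$ and pair each with its inverse $Hg^{-1}H$. A left coset $gH$ maps to the right coset $Hg^{-1}$ under inversion, so an inverse-closed transversal is assembled double coset by double coset.

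If $HgH\neq Hg^{-1}H$, pick any left transversal $T$ of $H$ inside $HgH$ and use $T^{-1}$ inside $Hg^{-1}H$. Here $T^{-1}$ is a right transversal, and I need a left one. The number of left and right cosets of $H$ in a double coset coincide, namely $|HgH|/|H|$, and a set that is simultaneously a left and right transversal exists by Hall's marriage theorem, since the bipartite left-coset/right-coset incidence graph on $HgH$ is regular. So I choose $T$ to be a two-sided transversal of $HgH$; then $T^{-1}$ is a two-sided transversal of $Hg^{-1}H$, and $T\cup T^{-1}$ works.

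If $HgH=Hg^{-1}H$ (self-inverse double coset), I need an inverse-closed set of left coset representatives inside it. Write $K=H\cap gHg^{-1}$, so $HgH$ has $m=|H:K|$ left cosets. Inversion gives an involutive correspondence between left cosets $xH$ and right cosets $Hx^{-1}$. In the left/right incidence graph I need a perfect matching invariant under the involution $x\mapsto x^{-1}$, so each chosen representative is either an involution or paired with its inverse.

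\emph{Case $|H|$ odd.} Every $x\in HgH$ yields an element $x^{-1}\in HgH$, and $x=x^{-1}$ only if $x^2=1$. For an involution $x$, the coset $xH$ contains it; otherwise I pair cosets. The key count is that an involution-free self-inverse double coset has an even number $m$ of left cosets. Since $m$ divides $|H|$, which is odd, $m$ is odd, so each self-inverse double coset must contain an element $x$ with $x^2\in H$ fixing the pairing. I argue that an element $x\in HgH$ with $xH=x^{-1}H$ exists: inversion induces an involution on the odd set of left cosets (identifying $xH\mapsto x^{-1}H$ after a suitable normalization), so it has a fixed coset $xH=x^{-1}H$, i.e.\ $x^2\in H$. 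Then $x^2$ has odd order, so replacing $x$ by $x\cdot(x^{2})^{k}$ with $2k+1\equiv 0$ modulo the order of $x^2$ yields an involution or the identity in $xH$. The remaining cosets are paired by the induced involution with representatives $y,y^{-1}$.

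\emph{Case $|G:H|$ odd.} Let $H\neq G$ and count globally. The set of left cosets has odd size $|G:H|$. Inversion composed with the bijection between left and right cosets gives an involution on left cosets whose fixed points are the $xH$ with $x^{2}\in H$ and $x$ suitably chosen, and these cosets are treated as before. The difficulty is producing an actual involution in such a coset when $H$ has even order. Here I use that $H$ contains a Sylow $2$-subgroup of $G$, so the $2$-part of $x$ can be absorbed: write $x=x_2x_{2'}$ and adjust within $xH$.

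I expect this adjustment step to be the main obstacle, together with making the pairing genuinely well defined, since $x^{-1}H$ is not determined by $xH$. The cleanest route is to invoke the known criterion (Huang et al.) that $H$ is a perfect code iff every $x$ with $x^2\in H$ and $|H|/|H\cap xHx^{-1}|$ odd admits $y\in xH$ with $y^2=1$, and to verify that criterion in the two cases as above.
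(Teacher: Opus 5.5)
The paper quotes this proposition from the literature without proof, so I am assessing your argument on its own. Three parts of it are sound: the reduction to an inverse-closed left transversal, the Cayley-graph verification, and the Hall-matching treatment of pairs $HgH\neq Hg^{-1}H$.

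The self-inverse double cosets, however, are never actually handled by your direct route. The map $xH\mapsto x^{-1}H$ is not well defined on left cosets, so neither the ``fixed coset'' argument nor the pairing of the remaining cosets by $y,y^{-1}$ is justified.

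For $|H|$ odd this is easy to repair. Your parity count really shows that inversion on the odd-size set $D=HgH\neq H$ has a fixed point, so you may take $g$ to be an involution. Let $R$ be a left transversal of $H\cap gHg^{-1}$ in $H$. Then $\{hgh^{-1}: h\in R\}$ consists of involutions, lies in $D$, and meets each left coset of $H$ in $D$ exactly once, because $hgh^{-1}\in hgH$ and $hgH=h'gH$ iff $h^{-1}h'\in H\cap gHg^{-1}$. Alternatively, via Proposition~\ref{chen}(3), your replacement $x\mapsto x(x^2)^k\in Hx$ settles the odd-order case correctly.

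The genuine gap is the odd-index case. ``Absorbing the $2$-part of $x$'' never uses the hypothesis that $|H:H\cap H^x|$ is odd, and without that hypothesis the conclusion you want is false. Take $G=\langle a,b\mid a^3=b^4=1,\ b^{-1}ab=a^{-1}\rangle\cong\ZZ_3{:}\ZZ_4$, with $H=\langle b\rangle$ of index $3$ and $x=ab$. Then $x$ is a $2$-element with $x^2=b^2\in H$, and $Hx=\{ab,\,a^{-1}b^2,\,ab^3,\,a^{-1}\}$. The only elements of $G$ with square $1$ are $1$ and $b^2$, both in $H$. So no adjustment inside $Hx$ (or $xH$) can produce an involution, and there is no odd part to absorb. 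This does not contradict the criterion, because here $H\cap H^x=\langle b^2\rangle$ has even index in $H$.

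The missing idea is to use exactly that hypothesis. Put $K=H\cap H^x$. Since $x^2\in H$, you get $x^{-1}Kx=H^x\cap H^{x^2}=H^x\cap H=K$ and $x^2\in K$. Hence $K\langle x\rangle$ is a subgroup containing $K$ with index at most $2$. But $|G:K|=|G:H|\,|H:K|$ is odd, so $K$ already contains a Sylow $2$-subgroup of $G$. Therefore $x\in K\le H$, and $y=1\in Hx$ satisfies Proposition~\ref{chen}(3).

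As a side remark, that criterion is due to Chen et al., not Huang et al. Your version with $xH$ and $xHx^{-1}$ is its mirror image under $x\mapsto x^{-1}$, so it is equivalent.
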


\begin{proposition}[{\cite[Theorem 1.2]{ch}}] \label{chen}
Let $G$ be a group and $H\leqslant G$. Then the following are equivalent$:$
\begin{itemize}[font=\normalfont]
\item [(1)]  $H$ is a subgroup perfect code of $G$;

\item [(2)]  there exists an inverse-closed right transversal of $H$ in $G$;

\item [(3)]  for each $x \in G$ such that $x^2\in  H$ and $|H|/|H \cap H^x|$ is odd, there exists $y \in Hx$ such that $y^2 = 1$;

\item [(4)]  for each $x\in G$ such that $HxH = Hx^{-1}H$ and $|H|/|H \cap H^x|$ is odd, there exists $y \in Hx$ such that $y^2 = 1$.
\end{itemize}
\end{proposition}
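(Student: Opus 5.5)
The plan is to prove the cycle of implications (1)$\Rightarrow$(2)$\Rightarrow$(1) and (2)$\Rightarrow$(3)$\Leftrightarrow$(4)$\Rightarrow$(2), working throughout with the right cosets $Hx$ of $H$ and the double cosets $HxH$.

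\textbf{(1)$\Leftrightarrow$(2).} This follows from the tiling correspondence recalled in the introduction. If $H$ is a perfect code in $\Cay(G,S)$, set $A=S\cup\{1\}$. Then $A$ is inverse-closed, and $(A,H)$ is a factorization. Every right coset $Hx$ meets the closed neighbourhood of exactly one vertex of $H$, which produces an inverse-closed transversal. Conversely, let $T$ be an inverse-closed right transversal containing $1$. Put $S=T\setminus\{1\}$. Then $G=TH$ uniquely, since $T^{-1}=T$ is also a left transversal, so $H$ is a perfect code in $\Cay(G,S)$. The remaining bookkeeping is deciding on which side $H$ multiplies. Since $T^{-1}=T$, a right transversal is also a left transversal, so this is harmless.

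\textbf{(3)$\Leftrightarrow$(4).} Right multiplication by $H$ permutes the right cosets inside $HxH$, and $HxH$ is the union of $|H:H\cap H^{x}|$ right cosets. Condition $HxH=Hx^{-1}H$ says the double coset is inverse-closed. If $x^2\in H$, then $x^{-1}\in Hx$, so $HxH=Hx^{-1}H$. Conversely, suppose $HxH$ is inverse-closed and $|H:H\cap H^x|$ is odd. The map $Hy\mapsto Hy^{-1}H$ is not well defined on cosets, so I would instead use the involution $\iota\colon Hy\mapsto$ (the coset containing $y^{-1}$ paired appropriately). The cleaner route is to pass to left-right cosets and count. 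The set of elements of $HxH$ is inverse-closed, and the number of right cosets in it is odd. A parity argument on the pairing $y\leftrightarrow y^{-1}$ then gives some $y\in HxH$ with $y^{-1}\in Hy$, that is, $(hy)^2\in H$ for a suitable representative. Replacing $x$ by that representative shows that (3) implies (4), with $Hx$ changed to a coset lying in the same double coset. For the equivalence I must check that the conclusion ``some coset in $HxH$ contains an involution'' propagates across all cosets in that double coset. This holds because conjugating by $h\in H$ moves involutions between cosets $Hy$ and $Hyh$, which are exactly the cosets in $HxH$.

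\textbf{(2)$\Leftrightarrow$(3).} This is the core of the argument. Necessity: suppose $T$ is an inverse-closed transversal, $x^2\in H$, and $|H:H\cap H^x|$ is odd. Inversion maps $HxH$ onto itself. It maps $T\cap HxH$, which has one element per right coset and hence odd size, onto the set $T^{-1}\cap HxH=T\cap HxH$. An involution on a set of odd size has a fixed point $t$ with $t=t^{-1}$, giving an involution or the identity in some coset of $HxH$. Conjugating by $H$ as above moves it into $Hx$. Sufficiency: construct $T$ double coset by double coset. Double cosets fall into three kinds. The first kind are pairs $D\ne D^{-1}$; for these, choose any transversal of $D$ and take its inverses for $D^{-1}$. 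The second kind are self-inverse $D$ with an even number of right cosets. The third kind are self-inverse $D$ with an odd number of right cosets.

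For self-inverse $D$, I pair right cosets $Hy$ with cosets $Hz$ such that $z^{-1}$ lies in $Hy$ (equivalently $Hz\subseteq y^{-1}H$). This amounts to a perfect matching, possibly with self-loops, in a bipartite-type structure between right and left cosets inside $D$. The main obstacle is showing that such an inverse-closed selection exists. I would use that $D$ is a union of right cosets and of left cosets of equal number $k$. The incidence between them is a regular bipartite graph, with each right coset meeting each left coset in $|H\cap H^y|$ elements or not at all. Hall's theorem then gives a matching. Making it compatible with inversion requires pairing the $k$ cosets into $2$-cycles plus fixed cosets, which need elements $y$ with $y^2=1$. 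When $k$ is even, I would aim for a fixed-point-free pairing. When $k$ is odd, one fixed coset is needed, supplied by hypothesis (3), and the rest are paired.
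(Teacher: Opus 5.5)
\textbf{Verdict: there is a genuine gap in the core direction (3)$\Rightarrow$(2).}

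\textbf{What is already correct.} The direction (4)$\Rightarrow$(3) is right: $x^2\in H$ forces $x^{-1}\in Hx$. The fixed-point argument for (2)$\Rightarrow$(3) is also correct: inversion acts on the odd set $T\cap HxH$, and you then conjugate by an element of $H$ to land in $Hx$. Your remark that conjugation by $H$ moves involutions between the right cosets of $HxH$ is right as well. The paper only quotes this proposition, so your argument has to stand on its own.

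\textbf{The gap.} You call (3)$\Rightarrow$(2) the main obstacle, and it is not proved. Hall's theorem only gives a bijection between the right cosets and the left cosets of a double coset $D$. An inverse-closed transversal of $D=D^{-1}$ needs something else: a \emph{symmetric} pairing of the right cosets of $D$ into pairs $\{R,R'\}$ with $R\cap R'^{-1}\neq\emptyset$, plus single cosets containing some $y$ with $y^2=1$. That is a perfect matching in a non-bipartite graph on the right cosets, and regularity alone does not guarantee one. ``I would aim for a fixed-point-free pairing'' is exactly the missing step.

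The same problem appears twice more. First, for $D\neq D^{-1}$, an arbitrary right transversal $T_D$ of $D$ does not work. Its inverse $T_D^{-1}$ is a \emph{left} transversal of $D^{-1}$, not necessarily a right one, so $T_D$ must be a common left--right transversal of $D$. Second, in (3)$\Rightarrow$(4) the ``parity argument'' producing $y\in HxH$ with $y^2\in H$ is never actually given.

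\textbf{The missing idea.} Inside a double coset, every left coset meets every right coset, since $h_1xH\cap Hxh_2\ni h_1xh_2$. So your incidence graph is complete: the ``or not at all'' never happens, and no Hall-type argument is needed. With this fact all three steps close.
\begin{itemize}
\item For $D\neq D^{-1}$: match the right and left cosets of $D$ arbitrarily and pick one element from each intersection. This gives a common transversal, and together with its inverse in $D^{-1}$ it does the job.
\item For (3)$\Rightarrow$(4): write $x^{-1}=h_1xh_2$ and set $t=h_1x=x^{-1}h_2^{-1}$. Then $t^{-1}=h_2x\in Hx=Ht$, so $t^2\in H$. Also $H^t=H^x$, so (3) applied to $t$ gives $y\in Hx$ with $y^2=1$ directly. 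Oddness plays no role in finding $t$, and no propagation step is needed.
\item For $D=D^{-1}$ with $k$ right cosets: any two distinct right cosets $R,R'$ satisfy $R\cap R'^{-1}\neq\emptyset$, because $R'^{-1}$ is a left coset of $D$. So you may pair the right cosets arbitrarily, putting $t$ and $t^{-1}$ into $T$ for some $t\in R\cap R'^{-1}$. If $k$ is odd, (4) applied to any element of the leftover coset supplies the required $y$ with $y^2=1$.
\end{itemize}

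\textbf{Two small repairs in (1)$\Leftrightarrow$(2).}
\begin{itemize}
\item The sentence saying $Hx$ meets the closed neighbourhood of exactly one vertex of $H$ is false: $Hx$ meets $Ah$ for every $h\in H$. What you need is only that $A$ is a left transversal with $A=A^{-1}$.
\item Before setting $S=T\setminus\{1\}$, note that the element $t\in T\cap H$ satisfies $t^{-1}\in T\cap H$, hence $t=t^{-1}$. So $t$ can be replaced by $1$ without losing inverse-closedness.
\end{itemize}
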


\begin{proposition}[{\cite[Theorem 1.2]{z}}]\label{zhang2}
Let $G$ be a finite group and $H\leq G$.
Let $Q\in\Syl_2(H)$ and $P\in\Syl_2(\N_G(Q))$.
Then $H$ is a subgroup perfect code of $G$ if and only if $Q$ is a subgroup perfect code of $P$.
\end{proposition}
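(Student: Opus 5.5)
The plan is to prove both directions with the criterion of Proposition~\ref{chen}, using form (3) for $P$ and form (4) for $G$. The aim is to show that the obstructions to $H$ being a perfect code in $G$ correspond to the obstructions to $Q$ being a perfect code in $P$. Write $N=\N_G(Q)$, so that $Q\trianglelefteq P$. Since $Q$ is normal in $P$, we have $Q\cap Q^x=Q$ for every $x\in P$. So condition (3) for the pair $(P,Q)$ reads: for every $x\in P$ with $x^2\in Q$ there is an involution in $Qx$, meaning $y^2=1$ for some $y\in Qx$.

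\emph{($\Rightarrow$)} Let $H$ be a perfect code of $G$, and take $x\in P$ with $x^2\in Q$. Then $x^2\in H$. Because $x$ normalizes $Q$, we have $Q\le H\cap H^x$, so $|H|/|H\cap H^x|$ is odd. Proposition~\ref{chen}(3) then gives $y=hx$ with $h\in H$ and $y^2=1$. The task is to move $y$ into $Qx$. I would consider the $2$-group $\langle Q,x\rangle\le P$ acting by conjugation on the set of involutions of $Hx$; this set is nonempty, and we need a fixed point or a direct way to land in $Qx$. A more robust route goes through (4): pass to $K=H\cap P$-type data, or argue inside the subgroup $\langle H,x\rangle$ and replace $H$ by a suitable intersection that still has $Q$ as a Sylow $2$-subgroup. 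This transfer from an involution in $Hx$ to one in $Qx$ is one of the two delicate points. I would first try to count: compare the involutions of $Hx$ modulo $Q$-conjugation with a parity argument on $|H:Q|$, which is odd.

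\emph{($\Leftarrow$)} Let $Q$ be a perfect code of $P$, and take $x\in G$ with $HxH=Hx^{-1}H$ and $|H|/|H\cap H^x|$ odd. The first step is to normalize the representative. The right cosets of $H$ inside $HxH$ number $|H:H\cap H^x|$, which is odd, so $Q$ acting by right multiplication fixes one of them. Replacing $x$ by another element of the same double coset, we may therefore assume $xQx^{-1}\le H$. Since $Q$ is a Sylow $2$-subgroup of $H$, we can conjugate by an element of $H$ and arrange $x\in N$. The condition $HxH=Hx^{-1}H$ then says that the involution $Hg\mapsto Hg^{-1}$-type pairing preserves this odd set. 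Repeating the fixed-point argument with the $2$-group $\langle Q,x\rangle$ lets us take $x$ to be a $2$-element of $N$ with $x^2\in Q\cdot(\text{something in }H)$. I would aim to get $x^2\in Q$ exactly, using that $x^2\in H\cap N$ and that $Q$ is Sylow in $H\cap N$. By Sylow's theorem in $N$, a further conjugation by an element of $H\cap N$ puts $x$ into $P$. Now (3) for $(P,Q)$ gives an involution in $Qx\subseteq Hx$, and Proposition~\ref{chen}(4) shows $H$ is a perfect code.

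I expect the main obstacle to be this normalization step. One must simultaneously arrange that $x$ normalizes $Q$, lies in $P$, and satisfies $x^2\in Q$, using only moves that stay inside the double coset $HxH$. The parity arguments on the odd-size set of cosets are what make each reduction possible, and I would verify them carefully in the order: fix $Q$, then pass to a $2$-element, then conjugate into $P$.
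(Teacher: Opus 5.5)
Your setup is sound as far as it goes. In ($\Rightarrow$), $Q\le H\cap H^x$ does make Proposition~\ref{chen}(3) applicable. In ($\Leftarrow$), the odd-number-of-cosets trick does let you assume $x\in N=\N_G(Q)$; this is legitimate because $Hxh=h^{-1}(Hx)h$ for $h\in H$. But neither direction is completed. The two steps you flag as delicate carry all the content, and the tools you suggest for them do not work.

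\textbf{($\Rightarrow$).} A fixed point of $Q\langle x\rangle$ acting on the involutions of $Hx$ is only an involution centralizing $Q\langle x\rangle$, with no reason to lie in $Qx$. Parity comparisons cannot help either, since $Hx$ may contain an even positive number of involutions. The missing idea is that $y=hx$ already lies in a small group where Sylow's theorem applies.
\begin{itemize}
\item From $(hx)^2=1$ you get $xhx^{-1}=h^{-1}x^{-2}\in H$, so $h\in D:=H\cap H^x$.
\item As $x^2\in H$, $x$ normalizes $D$ and $x^2\in D$. Hence $L=D\langle x\rangle=D\cup Dx$ contains $D$ as a normal subgroup of index $2$ when $x\notin H$. (If $x\in H$, then $Q\langle x\rangle\le H$ forces $x\in Q$, and $y=1$ works.)
\item Since $Q\in\Syl_2(D)$, the group $Q\langle x\rangle=Q\cup Qx$ is a Sylow $2$-subgroup of $L$.
\item So some $L$-conjugate of $y$ lies in $Q\langle x\rangle\setminus D=Qx$, and it is an involution.
\end{itemize}

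\textbf{($\Leftarrow$).} The trouble comes from testing $H$ against condition (4). Its hypothesis does not give $x^2\in H$, so your use of ``$x^2\in H\cap N$'' is unsupported. So is your use of the $2$-group $\langle Q,x\rangle$, since $x$ need not be a $2$-element. Moreover, conjugation by $H\cap N$ cannot in general move a $2$-element of $N$ into $P$. In $G=\S_3\times\ZZ_2$ with $H=Q$ the central $\ZZ_2$, we have $N=G$ and $H\cap N=Q$ acts trivially, while some transposition lies outside $P$. You must conjugate by elements of $N$, which do not preserve $HxH$, and then carry the involution back through $Qx$ using that these elements normalize $Q$.

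It is cleaner to verify condition (3) for $(G,H)$. Take $x\notin H$ with $x^2\in H$ and $|H:D|$ odd.
\begin{itemize}
\item $D$ contains $Q^h$ for some $h\in H$.
\item A Sylow $2$-subgroup $R$ of $L=D\langle x\rangle$ containing $Q^h$ has $R\cap D=Q^h$.
\item Any $z\in R\setminus Q^h$ therefore lies in $Dx$, normalizes $Q^h$, and satisfies $z^2\in Q^h$.
\item Choose $n\in N$ with $n(hzh^{-1})n^{-1}\in P$. Let $w$ be the involution in $Q\,n(hzh^{-1})n^{-1}$ supplied by (3) for $(P,Q)$.
\item Then $(nh)^{-1}w(nh)$ is an involution in $Q^hz\subseteq Dx\subseteq Hx$.
\end{itemize}
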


\begin{corollary}\label{2pc}
Let $G$ be a finite group and $H$ a $2$-subgroup of $G$. Let $P\in \Syl_2(\N_G(H))$.
Then $H$ is a subgroup perfect code of $G$ if and only if for each $x \in P$ such that $x^2\in  H$, there exists $y \in Hx$ such that $y^2 = 1$.
\end{corollary}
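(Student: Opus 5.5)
The plan is to derive this from Proposition \ref{zhang2} applied with $H$ a $2$-group, followed by criterion (3) of Proposition \ref{chen}. Since $H$ is a $2$-subgroup, $H$ is its own Sylow $2$-subgroup, so we may take $Q=H$. Then Proposition \ref{zhang2} says: $H$ is a subgroup perfect code of $G$ if and only if $H$ is a subgroup perfect code of $P$, where $P\in\Syl_2(\N_G(H))$. Note that $H\le P$. Indeed, $H$ is a normal $2$-subgroup of $\N_G(H)$, hence lies in every Sylow $2$-subgroup of $\N_G(H)$. Moreover $H\trianglelefteq P$, because $P\le \N_G(H)$.

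Next we apply Proposition \ref{chen}(3) inside the group $P$ with the subgroup $H$. That criterion reads: for every $x\in P$ with $x^2\in H$ and $|H|/|H\cap H^x|$ odd, there exists $y\in Hx$ with $y^2=1$. Since $H$ is normal in $P$, we have $H^x=H$ for all $x\in P$. Hence $|H|/|H\cap H^x|=1$, which is odd, and the parity hypothesis is automatically satisfied. The criterion therefore reduces exactly to the stated condition: for each $x\in P$ with $x^2\in H$, there is $y\in Hx$ with $y^2=1$.

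Combining the two equivalences gives the corollary. There is no real obstacle here. The only points needing care are that $H\le P$, so that Proposition \ref{zhang2} is applied with $Q=H$ legitimately, and that $H\trianglelefteq P$, which is what makes the odd-index condition vacuous.
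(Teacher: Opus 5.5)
Your proposal is correct and follows the same route as the paper: Proposition \ref{zhang2} with $Q=H$ reduces the question to whether $H$ is a subgroup perfect code of $P$, and Proposition \ref{chen}(3) applied inside $P$ gives the stated criterion. You also make explicit two points the paper leaves implicit: $H\le P$, and normality of $H$ in $P$ makes the condition that $|H|/|H\cap H^x|$ be odd automatic.
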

\demo By Proposition \ref{zhang2}, $H$ is a subgroup perfect code of $G$ if and only if $H$ is a subgroup perfect code of $P$. Furthermore, by Proposition \ref{chen}, $H$ is a subgroup perfect code of $P$ if and only if for each $x \in P$ such that $x^2\in  H$, there exists $y \in Hx$ such that $y^2 = 1$. Hence, Corollary \ref{2pc} holds. \qed

\begin{proposition}[{\cite[Theorem 1.3]{ch}}]\label{chen2}
Let $G$ be a group of composite order. Then the following are equivalent:
\begin{enumerate} [font=\normalfont]
    \item[(a)] $G$ has no nontrivial proper subgroup as a subgroup perfect code;
    \item[(b)] $G$ is a $2$-group with a unique involution;
    \item[(c)] $G$ is either a cyclic $2$-group or a generalized quaternion $2$-group.
\end{enumerate}
\end{proposition}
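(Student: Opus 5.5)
The statement to prove is Proposition~\ref{chen2}. I will show (a)$\Rightarrow$(b)$\Rightarrow$(c)$\Rightarrow$(a), using only Proposition~\ref{odd-index} and the criterion in Proposition~\ref{chen}(3).

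\textbf{(a)$\Rightarrow$(b).} Suppose $G$ has composite order and no nontrivial proper subgroup perfect code. If some prime $p\mid |G|$ has a Sylow $p$-subgroup $S\neq G$, then $S$ is nontrivial and proper. It has odd order or odd index, so it is a subgroup perfect code by Proposition~\ref{odd-index}, a contradiction. Hence $G$ is a $p$-group, and since $|G|$ is composite, $|G|\ge p^2$. If $p$ is odd, any subgroup of order $p$ is nontrivial, proper and of odd order, again a contradiction. So $G$ is a $2$-group with $|G|\ge 4$.

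Now suppose $G$ has two distinct involutions. If they generate a noncyclic subgroup, I can find an involution $t\in G$ with $H=\langle t\rangle\ne G$, so $H$ is nontrivial and proper. To see that $H$ is a perfect code I apply Proposition~\ref{chen}(3): take $x$ with $x^2\in H$ and $|H|/|H\cap H^x|$ odd. Since $|H|=2$, the oddness forces $H^x=H$. If $x^2=1$, take $y=x$. If $x^2=t$, then $(tx)^2=txtx=t\,t^{x^{-1}}x^2$, and $t^{x^{-1}}=t$ because $x$ normalizes $H$, so $(tx)^2=x^2=t\neq1$. Thus this naive choice of $t$ can fail.

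The fix is to choose $t$ suitably. Pick $t\in \Z(G)$ of order $2$ such that $t$ is not a square of any element of $G$, if possible; then the case $x^2=t$ never arises and $H=\langle t\rangle$ is a perfect code. If no such central involution exists, I would instead take a non-central involution $t$. In that case the condition $H^x=H$ means $x\in \C_G(t)$, and then $x^2=t$ would make $t$ a square in $\C_G(t)$.

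I expect this step to be the main obstacle. A cleaner route is to choose $H$ to be a maximal subgroup (of index $2$) when $G$ is not cyclic, or a subgroup containing an involution outside a suitable cyclic subgroup. I plan to follow the original argument: when $G$ has more than one involution, produce a subgroup $H$ together with an involution outside $H$ whose coset representatives can be taken as involutions. For instance, take $H$ of index $2$ not containing a given involution $u$ when such an $H$ exists; then $\{1,u\}$ is an inverse-closed transversal, and Proposition~\ref{chen}(2) applies directly. If every involution lies in every maximal subgroup, i.e.\ in the Frattini subgroup, I would pass to a minimal counterexample and argue by induction on a subgroup containing two involutions, pulling a perfect code back via Proposition~\ref{zhang2}.

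\textbf{(b)$\Leftrightarrow$(c).} This is the classical theorem that a $2$-group with a unique involution is cyclic or generalized quaternion.

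\textbf{(c)$\Rightarrow$(a).} Let $G$ be cyclic or generalized quaternion, and let $1<H<G$. Then $H$ contains the unique involution $z$ of $G$. Choose $x\notin H$ with $x^2\in H$, for example $x$ in $\N_G(H)\setminus H$ with $x^2\in H$, which exists since $G$ is a $2$-group. Then $|H|/|H\cap H^x|=1$ is odd. Any $y\in Hx$ satisfies $y\notin H$, so $y\ne1$, and $y\neq z$ because $z\in H$. As $z$ is the only involution, $y^2\neq1$. By Proposition~\ref{chen}(3), $H$ is not a subgroup perfect code.
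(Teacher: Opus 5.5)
Your directions (b)$\Leftrightarrow$(c) and (c)$\Rightarrow$(a) are fine. So is the reduction in (a)$\Rightarrow$(b) to a $2$-group of order at least $4$. (The paper gives no proof of this proposition; it simply quotes it from the literature.)

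The gap is the one step that carries all the content: you never show that a $2$-group $G$ with at least two involutions has a nontrivial proper subgroup perfect code. None of your candidate routes closes it.
\begin{itemize}
\item A central involution that is not a square need not exist.
\item A non-central involution $t$ need not exist either.
\item The index-$2$ trick needs an involution outside $\Phi(G)$.
\item The fallback induction is invalid, because being a perfect code does not pass from a subgroup $K$ up to $G$. Proposition~\ref{zhang2} only says that a $2$-subgroup $H$ is a perfect code of $G$ if and only if it is one of $\N_G(H)$, and $\N_G(H)$ need not lie in $K$.
\end{itemize}
Concretely, take $G=\ZZ_4\times\ZZ_4=\lg a\rg\times\lg b\rg$. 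All three involutions $a^2,b^2,a^2b^2$ are central squares lying in $\Phi(G)$, so none of your first three ideas applies. The maximal subgroup $K=\lg a,b^2\rg$ contains every involution and has $H=\lg b^2\rg$ as a perfect code, via the inverse-closed transversal $\{1,a,a^2,a^3\}$. Yet $H$ is not a perfect code of $G$: $b^2\in H$, while the coset $Hb=\{b,b^3\}$ contains no involution. So the induction cannot be completed as sketched.

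The missing idea is a maximality argument, the same mechanism the paper uses in Lemma~\ref{rm}. Let $L$ be a subgroup of maximal order among subgroups of $G$ having a unique involution. Then $1<L<G$, because $G$ has at least two involutions. Suppose $L$ failed criterion (3) of Proposition~\ref{chen}. Then there would be $x$ with $x^2\in L$ and $L^x=L$ (the index $|L:L\cap L^x|$ is a power of $2$, so it must be $1$) such that $Lx$ contains no involution. This forces $x\notin L$, so $M=\lg L,x\rg=L\cup Lx$ satisfies $|M:L|=2$. Moreover $M$ has exactly one involution, namely that of $L$, contradicting the maximality of $L$. Hence $L$ is a nontrivial proper subgroup perfect code of $G$. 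This proves (a)$\Rightarrow$(b) directly, without appealing to the classification in (c).
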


\begin{lemma}\label{cg}
Let $G$ be a finite group whose Sylow $2$-subgroup is cyclic or generalized quaternion. A subgroup $H$ of $G$ is a subgroup perfect code if and only if $H$ has odd order or odd index.
\end{lemma}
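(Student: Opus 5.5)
The sufficiency direction is immediate from Proposition \ref{odd-index}: any subgroup of odd order or odd index is a subgroup perfect code. So the work is to show that if $H$ has even order and even index, then $H$ is not a subgroup perfect code. The hypothesis gives a unique involution in $P$: a cyclic $2$-group and a generalized quaternion group each contain exactly one involution.

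Let $Q\in\Syl_2(H)$; then $Q\ne 1$ because $|H|$ is even. By Proposition \ref{zhang2}, $H$ is a subgroup perfect code of $G$ if and only if $Q$ is a subgroup perfect code of $R$, where $R\in\Syl_2(\N_G(Q))$. Here $R$ is a $2$-subgroup of $G$, hence lies in a conjugate of $P$, so $R$ is cyclic or generalized quaternion (subgroups of such groups are again of this type, allowing the degenerate cases of cyclic groups). Also $Q\le R$.

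I will show $Q<R$. If $Q=R$, then $Q$ is a Sylow $2$-subgroup of $\N_G(Q)$. The standard normalizer-growth argument then makes $Q$ Sylow in $G$: if $Q<S$ with $S$ a $2$-subgroup, then $Q<\N_S(Q)\le \N_G(Q)$, a contradiction. But if $Q$ is Sylow in $G$, then $|G:H|$ is odd, contradicting the even index. Hence $1<Q<R$, with $R$ a $2$-group having a unique involution.

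By Proposition \ref{chen2}, in the equivalent form (b)$\Rightarrow$(a), $R$ has no nontrivial proper subgroup perfect code. To be safe about the composite-order hypothesis, note $|R|\ge 4$ since $1<Q<R$, so $|R|$ is composite. Therefore $Q$ is not a subgroup perfect code of $R$, and hence $H$ is not a subgroup perfect code of $G$.

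If I wanted to avoid citing Proposition \ref{chen2}, I would check Corollary \ref{2pc} directly. Take $x\in \N_R(Q)\setminus Q$ with $x^2\in Q$, which exists because $\N_R(Q)/Q$ is a nontrivial $2$-group. The coset $Qx$ contains no element of order dividing $2$: the only involution of $R$ is central and lies in $Q$, and $1\notin Qx$.

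The main (mild) obstacle is the step $Q<R$, i.e. ruling out that $Q$ is self-normalizing in Sylow terms. This is handled by the normalizer-growth argument above.
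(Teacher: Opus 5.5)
Your proof is correct and follows essentially the same route as the paper. Both use Proposition \ref{zhang2} to reduce to $Q$ inside a $2$-group with a unique involution, and then invoke Proposition \ref{chen2}. The only cosmetic difference is where the contradiction is drawn. The paper passes to a full Sylow $2$-subgroup $P\supseteq R$ of $G$ and uses Proposition \ref{chen2} to force $Q=P$, hence odd index. You stay in $R$ and use normalizer growth to get $Q<R$, which also avoids the paper's unstated transfer of the perfect-code property from $R$ to $P$.
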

\begin{proof}
By Proposition \ref{odd-index}, it suffices to prove that if $H$ is a nontrivial subgroup perfect code of $G$, then $H$ has odd order or odd index. Suppose for contradiction that $H$ has both even order and even index.
Let $Q \in \Syl_2(H)$ and $R \in \Syl_2(\N_G(Q))$. By the Sylow theorems, there exists a Sylow $2$-subgroup $P$ of $G$ containing $R$. Proposition \ref{zhang2} applied to $P$ shows that $Q$ is a subgroup perfect code of $P$. Noting that $P$ is a cyclic 2-subgroup or a generalized quaternion 2-subgroup, since $Q \neq 1$, applying Proposition \ref{chen2}, one yields that $P$ has no nontrivial proper subgroup as a subgroup perfect code, and so $Q = P$. But then $H$ has odd index, contradicting our assumption.
Therefore, $H$ has odd order or odd index and Lemma \ref{cg} holds.
\end{proof}

\begin{proposition}[{\cite[Lemma 3.3]{kh}}]\label{kf}
Let $G$ be a $2$-group with more than one involution. Then the following holds:
\begin{enumerate} [font=\normalfont]
\item[(a)] there exists a nontrivial subgroup perfect code $K$ of $G$, which is isomorphic to a cyclic or a generalized quaternion $2$-group.

\item[(b)] if $\{H_i \mid i = 1, \dots, n\}$ is the set of all nontrivial proper subgroup perfect codes of $G$,
then there exists no involution in $G \setminus \left( \bigcup_{i=1}^n H_i \right)$.
\end{enumerate}
\end{proposition}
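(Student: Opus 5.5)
We prove both parts at once. For each involution $t$ of $G$ we build a nontrivial proper subgroup perfect code containing $t$ that is cyclic or generalized quaternion. The tool is Corollary \ref{2pc} together with the classical fact that a $2$-group has a unique involution if and only if it is cyclic or generalized quaternion; this is equivalence (b)$\Leftrightarrow$(c) of Proposition \ref{chen2}.

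Fix an involution $t\in G$. Among all subgroups of $G$ containing $t$ and having $t$ as their \emph{unique} involution, choose one, $H$, of maximal order. The family is nonempty because $\langle t\rangle$ belongs to it. Then:
\begin{itemize}
\item $H$ is cyclic or generalized quaternion, by the fact above.
\item $H\neq 1$, since $t\in H$.
\item $H\neq G$, since $G$ has more than one involution.
\end{itemize}
It remains to show that $H$ is a subgroup perfect code of $G$.

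By Corollary \ref{2pc}, and since $G$ is a $2$-group so that $P=\N_G(H)$, it suffices to check the following: for every $x\in \N_G(H)$ with $x^2\in H$, the coset $Hx$ contains an element $y$ with $y^2=1$. If $x\in H$, take $y=1$. Otherwise set $L=\langle H,x\rangle$. Because $x$ normalizes $H$ and $x^2\in H$, we have $L=H\cup Hx$ and $|L:H|=2$, so $L$ is a subgroup strictly larger than $H$ that contains $t$. By the maximality of $H$, $t$ is not the unique involution of $L$, so $L$ has an involution $s\neq t$. Since $t$ is the only involution in $H$, we get $s\notin H$, hence $s\in Hx$. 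Taking $y=s$ verifies the condition, so $H$ is a subgroup perfect code.

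This gives (a) with $K=H$. It also gives (b): every involution $t$ of $G$ lies in such an $H$, which is one of the $H_i$, so no involution lies outside $\bigcup_i H_i$.

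The only delicate point is arranging that the second involution produced by maximality is forced into the coset $Hx$ rather than into $H$. The choice "$t$ is the unique involution of $H$", rather than merely "$H$ is cyclic or generalized quaternion", is exactly what guarantees this.
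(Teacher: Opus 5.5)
Your proof is correct, and it is essentially the argument the paper gives for Lemma \ref{rm}; the paper itself only cites this proposition from \cite{kh}. The shared mechanism is to maximize the order of a subgroup with a unique involution and apply Corollary \ref{2pc}: if the coset $Hx$ had no involution, then $\langle H,x\rangle=H\cup Hx$ would be a larger such subgroup. Anchoring the maximality at a fixed involution $t$, as you do and as the paper does in part (I) of the proof of Lemma \ref{solvable}, is what gives (b) alongside (a).
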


In fact, statement (a) can be generalized to an arbitrary group of even order, as in the following lemma.

\begin{lemma}\label{rm}
Let $G$ be a finite group of even order. Then there exists a proper nontrivial subgroup perfect code of $G$, which is isomorphic to a cyclic or a generalized quaternion $2$-group.
\end{lemma}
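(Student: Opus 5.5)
We have to show Lemma \ref{rm}: if $|G|$ is even, then $G$ has a proper nontrivial subgroup perfect code that is a cyclic or generalized quaternion $2$-group. The idea is to push the problem down to a Sylow $2$-subgroup $P$ of $G$ using Proposition \ref{zhang2}. Concretely, we find a subgroup $K\le P$ with $K$ cyclic or generalized quaternion, $1<K<G$, and $K$ a subgroup perfect code of a Sylow $2$-subgroup of $\N_G(K)$.

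The first case is that $G$ is a $2$-group. If $G$ is cyclic or generalized quaternion, then $G$ has a unique involution, and Proposition \ref{chen2} forbids any proper nontrivial subgroup perfect code. Hence the claim can only hold when $G$ has more than one involution. I will read the lemma in this sense, or note that the excluded groups are precisely those of Proposition \ref{chen2}. In that situation Proposition \ref{kf}(a) provides the required $K$ directly.

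Now assume $G$ is not a $2$-group, so every Sylow $2$-subgroup $P$ is proper.
\begin{itemize}
\item If $P$ is cyclic or generalized quaternion, take $K=P$. Then $K$ has odd index, so it is a subgroup perfect code by Proposition \ref{odd-index}. Also $1<K<G$ because $|G|$ is even and $G$ is not a $2$-group.
\item Otherwise $P$ has more than one involution. Proposition \ref{kf}(a) gives a nontrivial subgroup perfect code $K$ of $P$, cyclic or generalized quaternion, and $K\ne P$ since $P$ is not of that type.
\end{itemize}

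To lift the second case to $G$, note that $K$ is a perfect code of $P$ but we need it for a Sylow $2$-subgroup of $\N_G(K)$. Here I avoid the lifting problem by simply taking $K=P$ again, which works in every non-$2$-group case. Indeed $P$ is a subgroup perfect code of $G$ by odd index, and it is proper and nontrivial. The difficulty is that $P$ need not be cyclic or generalized quaternion, so this choice alone does not finish the argument.

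So the real work is to produce a cyclic or generalized quaternion $K$. Let $K\le P$ be a perfect code of $P$ from Proposition \ref{kf}(a), and choose $R\in\Syl_2(\N_G(K))$ containing $\N_P(K)$. Corollary \ref{2pc} requires, for each $x\in R$ with $x^2\in K$, a $y\in Kx$ with $y^2=1$. We may conjugate so that $R\le P$. Since $K$ is a perfect code of $P$, Corollary \ref{2pc} applied inside $P$ gives exactly this condition for $x\in \N_P(K)$. Because $\N_R(K)=R$ and $R\le P$, we have $R\le \N_P(K)$, so the condition holds for all $x\in R$.

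It remains to justify that $R$ can be chosen inside $P$ with $R\ge \N_P(K)$. Take any Sylow $2$-subgroup $R$ of $\N_G(K)$ containing the $2$-group $\N_P(K)$. Then $R$ lies in some Sylow $2$-subgroup of $G$, but that subgroup may differ from $P$. This is the main obstacle. The fix is to choose $K$ up to $P$-conjugacy so that $\N_P(K)$ is a Sylow $2$-subgroup of $\N_G(K)$, that is, $K$ is fully normalized in the fusion sense. Replacing $K$ by a $G$-conjugate $K^g\le P$ with $|\N_P(K^g)|$ maximal achieves this. The remaining check is that $K^g$ is still a perfect code of $P$. For this I would use the criterion of Proposition \ref{kf}(b) or argue via Proposition \ref{zhang2}, and it is the step I would verify most carefully.
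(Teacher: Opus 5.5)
Your treatment of the $2$-group case and of the case where $P$ is cyclic or generalized quaternion is fine. You are also right that the statement must exclude $G$ itself cyclic or generalized quaternion; there the paper's argument only yields $G$. The gap is in the remaining case ($G$ not a $2$-group, $P$ with several involutions). The step you flag is not merely unverified but false. Being a subgroup perfect code of $P$ is not stable under $G$-conjugation inside $P$, and a perfect code of $P$ supplied by Proposition \ref{kf}(a) need not be a perfect code of $G$.

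Take $G=\S_4$, $P=\langle (1234),(13)\rangle\cong\D_8$ and $K=\langle (12)(34)\rangle$. Then $\N_P(K)=\{1,(12)(34),(13)(24),(14)(23)\}$ has exponent $2$, so $K$ is a perfect code of $P$ by Corollary \ref{2pc}. But $(1324)\in\N_G(K)$ satisfies $(1324)^2=(12)(34)\in K$, while $K(1324)=\{(1324),(1324)^3\}$ contains no $y$ with $y^2=1$. Hence $K$ is not a perfect code of $G$. Your fully normalized replacement is forced to be $\langle (13)(24)\rangle=\Z(P)$, and $(13)(24)=(1234)^2$ with $(1234)\in P$ shows it is not even a perfect code of $P$. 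Since Proposition \ref{kf}(a) gives no control over which $K$ you get, no transport-from-$P$ argument of this kind can close the case.

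The missing idea is to choose the subgroup in $G$ rather than in $P$, by maximality. Let $L$ be a cyclic or generalized quaternion $2$-subgroup of $G$ of maximal order; one exists since $|G|$ is even. Suppose $L$ were not a subgroup perfect code. Then Corollary \ref{2pc} gives a $2$-element $x\in\N_G(L)$ with $x^2\in L$ and no $y\in Lx$ satisfying $y^2=1$. In particular $x\notin L$, and $\langle L,x\rangle=L\cup Lx$ is a $2$-group whose only involution is the one in $L$. By Proposition \ref{chen2} it is cyclic or generalized quaternion, and it has order $2|L|$, contradicting maximality. This is the paper's proof. It needs neither Proposition \ref{kf}(a) nor any fusion argument, and $L$ is proper whenever $G$ is not itself a cyclic or generalized quaternion $2$-group.
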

\demo Suppose for contradiction that no cyclic or generalized quaternion $2$-subgroup of $G$ is a subgroup perfect code. Noting that $G$ has even order, one yields that $G$ must contain a cyclic $2$-subgroup. Among all cyclic or generalized quaternion 2-subgroups of $G$, take $L$ to be one of maximal order. The assumption implies that $L$ fails to be a subgroup perfect code of $G$.
According to the contrapositive of Corollary \ref{2pc}, we can conclude that there is a nontrivial $2$-element $g \in \N_G(L)$ such that $g^2 \in L$ and the coset $Lg$ contains no involution. Then $M:=\langle L, g \rangle$ is a $2$-group with a unique involution and $L< M$.

According to the equivalence of (b) and (c) in Proposition \ref{chen2}, we deduce that $M$ is either a cyclic 2-group or a generalized quaternion 2-group.
This contradicts the maximality of $L$. Therefore, there exists a nontrivial subgroup perfect code of $G$ that is isomorphic to a cyclic or a generalized quaternion $2$-group and Lemma \ref{rm} holds.
\qed

\subsection{Standard group-theoretic tools}
We recall standard results on Hall subgroups in solvable groups.

\begin{definition}[{\cite[p. 659, Definition 1.2]{hu1}}]
Let $\pi$ be a set of primes and $\pi'$ the set of all primes not in $\pi$.
\begin{enumerate}[label=\textup{(\arabic*)}]
\item A group $G$ is called a \emph{$\pi$-group} if every prime divisor of $|G|$ lies in $\pi$.
\item A subgroup $H$ of $G$ is called a \emph{$\pi$-Hall subgroup} if $H$ is a $\pi$-group and $|G:H|$ is divisible by no prime in $\pi$, $\text{i.e., } (|H|, |G:H|) = 1$.
\end{enumerate}
\end{definition}

In particular, a subgroup $H$ is a \textit{Hall subgroup} of $G$ if it is a $\pi$-Hall subgroup for some $\pi \subseteq \pi(G)$, equivalently, $(|H|, |G:H|) = 1$.

\begin{proposition}[{\cite[p. 662, Hauptsatz 1.8]{hu1}}]\label{Hall}
If $G$ is solvable, then $G$ possesses $\pi$-Hall subgroups for every subset $\pi$ of $\pi(G)$. Any two $\pi$-Hall subgroups of $G$ are conjugate in $G$, and every $\pi$-subgroup of $G$ is contained in a $\pi$-Hall subgroup of $G$.
\end{proposition}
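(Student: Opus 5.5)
This is Hall's classical theorem, quoted from \cite{hu1}. For completeness, I would prove it by induction on $|G|$, treating existence, conjugacy and the containment statement together.

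\emph{Setup and existence.} Fix $\pi\subseteq\pi(G)$. Since $G$ is solvable, a minimal normal subgroup $N$ of $G$ is an elementary abelian $p$-group for some prime $p$. The quotient $G/N$ is solvable and smaller, so by induction it has a $\pi$-Hall subgroup $K/N$. If $p\in\pi$, then $|K|=|N|\,|K/N|$ is a $\pi$-number and $|G:K|=|G/N:K/N|$ is a $\pi'$-number, so $K$ is a $\pi$-Hall subgroup of $G$. If $p\notin\pi$, then $N$ is a normal Hall subgroup of $K$. By the Schur--Zassenhaus theorem it has a complement $H$ in $K$, so $K=N{:}H$. Then $|H|=|K/N|$ is a $\pi$-number and $|G:H|=|G:K|\,|N|$ is a $\pi'$-number, so $H$ is a $\pi$-Hall subgroup of $G$.

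\emph{Conjugacy and containment.} I would prove a single stronger claim: every $\pi$-subgroup $U$ of $G$ lies in a conjugate of a fixed $\pi$-Hall subgroup $H$. Conjugacy follows by taking $U$ to be another $\pi$-Hall subgroup and comparing orders.
\begin{itemize}
\item The image $UN/N$ is a $\pi$-subgroup of $G/N$. By induction it lies in a conjugate of $HN/N$, which is a $\pi$-Hall subgroup of $G/N$. Replacing $H$ by a conjugate, we may assume $U\le HN$.
\item If $p\in\pi$, then $HN$ is a $\pi$-group of the same order as $H$, so $HN=H$ and $U\le H$.
\item If $p\notin\pi$, set $L=NU$. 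Both $U$ and $H\cap L$ are complements to $N$ in $L$: for $H\cap L$, Dedekind's law gives $L=N(H\cap L)$ and $H\cap N=1$. By the conjugacy part of Schur--Zassenhaus they are conjugate in $L$, so $U\le (H\cap L)^x\le H^x$ for some $x\in L$.
\end{itemize}

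\emph{Main obstacle.} The only nontrivial ingredient is the conjugacy of complements to an abelian normal Hall subgroup in Schur--Zassenhaus. I would prove it directly by the averaging argument. Given two complements $A$ and $B$ of $N$ in $L$, define $c(a)\in N$ by $c(a)=a^{-1}b_a$, where $b_a$ is the unique element of $B$ with $b_a\in Na$. Then $c$ is a crossed homomorphism $A\to N$. Put $m=\prod_{a\in A}c(a)$, which is well defined since $N$ is abelian. Because $\gcd(|A|,|N|)=1$, there is $n\in N$ with $n^{|A|}=m^{-1}$, and one checks that $B=A^{n}$.
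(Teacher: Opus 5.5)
Your proof is correct. The paper gives no argument of its own and simply cites Huppert's Hauptsatz VI.1.8, and what you wrote is the standard inductive proof of Hall's theorem: a minimal normal subgroup $N$, Schur--Zassenhaus in the case $p\notin\pi$, and the averaging argument for conjugacy of complements to an abelian normal Hall subgroup, with conjugacy and containment handled together.

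One cosmetic slip in the final check: with the paper's convention $x^{g}=g^{-1}xg$, your choice $n^{|A|}=m^{-1}$ gives $b_a=a^{n^{-1}}$, that is, $B=A^{n^{-1}}$ (equivalently $A=B^{n}$). So either take $n^{|A|}=m$ to get $B=A^{n}$, or state the conclusion in that form.
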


The following two propositions are standard results on group automorphisms.

\begin{proposition}[{\cite[p.189, Lemma 4.1]{gorenstein} and \cite[p.79]{zhu}}]\label{aut}
The automorphism groups of the cyclic $2$-groups and the generalized quaternion groups are given as follows.
\begin{enumerate}[font=\normalfont]
 \item For $n \geq 2$, $\Aut(\ZZ_{2^n}) \cong \ZZ_2 \times \ZZ_{2^{n-2}}$.

 \item $\Aut(\Q_8) \cong \S_4$, while $\Aut(\Q_{2^n})$ is a $2$-group for $n \geq 4$.
\end{enumerate}
\end{proposition}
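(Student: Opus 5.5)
This is a cited standard result, so the plan is to reproduce the classical arguments quickly rather than derive anything new.

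For part (1), let $C=\ZZ_{2^n}$ with generator $c$. An automorphism is determined by the image $c^k$ of $c$, and $c^k$ generates $C$ exactly when $k$ is odd. Hence $\Aut(C)\cong(\mathbb{Z}/2^n\mathbb{Z})^\times$. For $n=2$ this is $\{\pm1\}\cong\ZZ_2$, which matches $\ZZ_2\times\ZZ_1$. For $n\ge3$ I will prove by induction on $j$ that $5^{2^{j}}\equiv 1+2^{j+2}\pmod{2^{j+3}}$. From this, $5$ has order exactly $2^{n-2}$ modulo $2^n$. Every element of $\lg 5\rg$ is $\equiv1\pmod 4$, so $-1\notin\lg 5\rg$. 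Counting orders then gives $(\mathbb{Z}/2^n)^\times=\lg -1\rg\times\lg 5\rg\cong\ZZ_2\times\ZZ_{2^{n-2}}$.

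For part (2), take $\Q_8=\{\pm1,\pm i,\pm j,\pm k\}$. Any automorphism permutes the six elements of order $4$ and commutes with inversion, so it induces a permutation of the three cyclic subgroups $\lg i\rg,\lg j\rg,\lg k\rg$. The inner automorphisms form $\Inn(\Q_8)\cong\Q_8/\Z(\Q_8)\cong\ZZ_2^2$. The map $i\mapsto j\mapsto k\mapsto i$ is an automorphism of order $3$, because it preserves the defining relations.

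Next I count automorphisms directly. An automorphism is determined by the image of $i$ (six choices among the elements of order $4$) and the image of $j$. The latter must be of order $4$ and lie outside $\lg\text{image of }i\rg$, leaving four choices. Each such pair satisfies the relations, so $|\Aut(\Q_8)|=24$. The action on the three subgroups $\lg i\rg,\lg j\rg,\lg k\rg$ gives a homomorphism to $\S_3$ whose kernel has order $4$: it consists of the sign-change automorphisms, which are exactly $\Inn(\Q_8)$. The homomorphism is surjective, since the $3$-cycle above lies in the image and the automorphism $i\mapsto j,\ j\mapsto i,\ k\mapsto -k$ gives a transposition. So $\Aut(\Q_8)\cong\ZZ_2^2.\S_3$, acting faithfully on the four pairs $\{\pm x\}$ for $x\in\{i+j+k, \ldots\}$. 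The cleanest identification is that $\Aut(\Q_8)$ acts faithfully on the four subgroups of order $3$ of the holomorph, or simply that it is isomorphic to the rotation group of the cube. I would simply cite this, noting that a group of order $24$ of the form $\ZZ_2^2{:}\S_3$ with faithful $\S_3$ action on $\ZZ_2^2$ is $\S_4$.

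For $\Q_{2^n}$ with $n\ge4$, write $\Q_{2^n}=\lg a,b\rg$ with $|a|=2^{n-1}$. The subgroup $\lg a\rg$ is the unique cyclic subgroup of index $2$, since every element outside it has order $4<2^{n-1}$. It is therefore characteristic. An automorphism is then determined by $a\mapsto a^r$ ($r$ odd) and $b\mapsto a^sb$, and all such choices are automorphisms. This gives $|\Aut|=2^{n-2}\cdot2^{n-1}$, a power of $2$.

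The only real obstacle is the $\Q_8$ case, where $\lg i\rg$ is not characteristic and the element of order $3$ appears. That is exactly why the identification with $\S_4$ needs the explicit count above.
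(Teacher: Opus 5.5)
Your proofs of part (1) and of the case $\Q_{2^n}$, $n\ge 4$, are correct. So are the count $|\Aut(\Q_8)|=24$ and the exact sequence $1\to\Inn(\Q_8)\to\Aut(\Q_8)\to\S_3\to 1$. The paper itself gives no proof here, only the citations, and Lemma~\ref{lemma3} only uses that an odd-order subgroup of $\Aut(\Q_8)$ has order $1$ or $3$, which your count already yields.

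What you have not established is $\Aut(\Q_8)\cong\S_4$. You pass from the extension $\ZZ_2^2.\S_3$ to a split extension $\ZZ_2^2{:}\S_3$ with faithful action, but you argue neither splitting nor faithfulness. Splitting also does not follow from your own generators. Take $\sigma\colon i\mapsto j\mapsto k\mapsto i$ and $\tau\colon i\leftrightarrow j,\ k\mapsto -k$. Composing right to left, $(\sigma\tau)^2=c_j$, conjugation by $j$. So $\sigma\tau$ has order $4$, and $\lg\sigma,\tau\rg$ is all of $\Aut(\Q_8)$ rather than a complement.

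Your fallback remarks do not repair this. First, $i+j+k$ is not an element of $\Q_8$. It lives in the quaternion algebra $\mathbb{H}$, and using it would require extending automorphisms to $\mathbb{H}$ and showing they act as rotations. Second, the holomorph $\Q_8{:}\Aut(\Q_8)$, of order $192$, has $16$ Sylow $3$-subgroups, not $4$.

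The step closes with data you already have. Let $A=\Aut(\Q_8)$ and $V=\Inn(\Q_8)$.

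1. Since $\phi c_g\phi^{-1}=c_{\phi(g)}$, conjugation by $A$ permutes $V\setminus\{1\}=\{c_i,c_j,c_k\}$ exactly as $A$ permutes $\lg i\rg,\lg j\rg,\lg k\rg$. Hence $\C_A(V)=V$, and $\sigma$ fixes no element of $V\setminus\{1\}$.
2. A normal subgroup $P$ of order $3$ would satisfy $[P,V]\le P\cap V=1$, so $P\le\C_A(V)=V$, which is impossible. By Sylow, $A$ therefore has exactly four Sylow $3$-subgroups. Let $K$ be the kernel of the conjugation action on them.
3. If $v\in K\cap V$, then $v$ normalizes $\lg\sigma\rg$, so $[v,\sigma]\in V\cap\lg\sigma\rg=1$. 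Since $\sigma$ fixes no nontrivial element of $V$, this forces $v=1$.
4. Then $[K,V]\le K\cap V=1$, so $K\le\C_A(V)=V$ and hence $K=1$.
5. Therefore $A$ embeds in $\S_4$, and since $|A|=24$, $A\cong\S_4$.

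Alternatively, take $\tau'\colon i\mapsto -j,\ j\mapsto -i$. It satisfies $(\sigma\tau')^2=1$, so $\lg\sigma,\tau'\rg\cong\S_3$ is a genuine complement acting on $V$ as $\GL(2,2)$. This gives $A\cong\ZZ_2^2{:}\GL(2,2)\cong\S_4$.
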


\subsection{Primitive groups of odd and square-free degree}
The proof of Theorems \ref{main} and \ref{main2} relys on the classification of primitive permutation groups of square-free degree.
\begin{proposition}[{\cite[Theorem 1]{Li}}]\label{square-free}
Let $G$ be a primitive permutation group on a finite set $\Omega$, and let $x \in \Omega$.
Assume that $|G|_{2'}$ is square-free and $|\Omega|$ is odd. The following holds.
\begin{enumerate}[font=\normalfont]
\item If the stabilizer $G_x$ is a $2$-group, then $G$ is isomorphic to one of the following:
\begin{enumerate}[font=\normalfont]
    \item $\ZZ_p{:}\ZZ_{2^d}$, where $p \geq 3$ is a prime, $d\ge 0$ and $2^d \mid (p - 1)$;
    \item $\PSL(2, p)$ or $\PGL(2, p)$, where $p \geq 5$ is a prime satisfying that either $p+1$ or $p-1$ is a power of $2$.
\end{enumerate}
\item If $|G_x|_{2'}$ is a prime, then one of the following holds:
\begin{enumerate}[font=\normalfont]
    \item $G\cong \ZZ_p{:}\ZZ_d$, where prime $p\geq 3$ and positive integer $d \mid (p - 1)$;
    \item $\soc(G)\cong \PSL(2, p)$, where $p \geq 5$ is a prime;
    \item $\soc(G)\cong {^2\!B}_2(q)$, where $q=2^{2f+1}$ and integer $f\ge 1$.
\end{enumerate}
\end{enumerate}
\end{proposition}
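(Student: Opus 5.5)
The proposition is quoted from \cite[Theorem 1]{Li}, but I would prove it by combining the O'Nan--Scott theorem with the classification of primitive groups of odd degree. Let $N=\soc(G)\cong T^k$.

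\textbf{Socle reduction.} The aim is to show that either $N\cong\ZZ_p$ with $p$ odd, or $G$ is almost simple.
\begin{itemize}
\item If $N$ is abelian, then $N\cong \ZZ_r^d$ acts regularly, so $|\Omega|=r^d$. Oddness of $|\Omega|$ forces $r$ odd, and square-freeness of $|G|_{2'}$ forces $d=1$.
\item If $T$ is nonabelian, then $|T|_{2'}>1$ (Feit--Thompson and Burnside). Since $|T|_{2'}^k$ divides $|G|_{2'}$, we get $k=1$.
\item With $k=1$, the diagonal, product and twisted wreath types of the O'Nan--Scott theorem cannot occur, so $G$ is almost simple.
\end{itemize}

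\textbf{Affine case.} Here $G=\ZZ_p{:}G_0$ with $G_0\le \Aut(\ZZ_p)\cong\ZZ_{p-1}$ cyclic, and $G_0$ is the point stabilizer. If $G_0$ is a $2$-group, it is $\ZZ_{2^d}$ with $2^d\mid p-1$, giving (1)(a). If $|G_0|_{2'}$ is a prime, we get (2)(a).

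\textbf{Almost simple case.} Since $|\Omega|$ is odd, $G_x$ contains a Sylow $2$-subgroup of $G$. I would use the Liebeck--Saxl (and Kantor) list of primitive groups of odd degree, together with the condition that $|T|_{2'}$ is square-free, to cut the candidates for $T$ down to a short list.
\begin{itemize}
\item Alternating groups $\A_n$ with $n\ge 7$ have $9\mid |\A_n|$, and most sporadic groups have $9\mid|T|$.
\item Groups of Lie type of rank at least $2$ have some $|\mathrm{GL}|$-type factor squared in the odd part.
\item This leaves $T=\PSL(2,q)$, $T={}^2B_2(q)$, and a few exceptions such as $J_1$, which are checked directly.
\end{itemize}
I then examine the maximal subgroups of odd index in each candidate.

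For (1), $G_x$ is a Sylow $2$-subgroup that is maximal in $G$. In $\PSL(2,q)$ with $q$ odd this requires the dihedral group $\D_{q\pm1}$ to be a $2$-group, and $q=p$ prime is forced because $p^f$ with $f\ge2$ gives $p^2\mid|T|$. This yields (1)(b). In characteristic $2$ and in the Suzuki groups, the Sylow $2$-subgroup lies properly in the Borel subgroup, so these are excluded.

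For (2), the odd-index maximal subgroups with prime odd part are the relevant ones: Borel subgroups, $\D_{q\pm1}$, and their analogues in $\PGL$ and in Suzuki groups. Analysing these gives (2)(b) and (2)(c).

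\textbf{Main obstacle.} The hard step is the almost simple case. It needs the precise list of simple groups with square-free odd part, plus careful bookkeeping of odd-index maximal subgroups, including field automorphisms in $\Aut(T)$. The case $T=\PSL(2,2^f)$ with $2^f\pm1$ arithmetic looks delicate and must be reconciled with the statement. For example, $\PSL(2,32)$ acting on $33$ points has square-free $|G|_{2'}=3\cdot11\cdot31$ and stabilizer $2^5{:}31$, so it seems to satisfy the hypotheses of (2). I would first check the original theorem in \cite{Li} to see whether socle $\PSL(2,2^f)$ is covered there, perhaps under the notation $\PSL(2,q)$.
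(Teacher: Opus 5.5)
The paper gives no argument of its own here; it only quotes \cite[Theorem 1]{Li}. Your reduction is sound. An abelian socle must be $\ZZ_p$ with $p$ odd. A nonabelian socle $T^k$ forces $k=1$, since $|T|_{2'}>1$. The affine case gives (1)(a) and (2)(a). Part (1) in the almost simple case also works. For suitable parameters, the only simple groups with square-free odd part are $\PSL(2,p)$, $\PSL(2,2^f)$, ${}^2\!B_2(q)$ and $J_1$, up to coincidences such as $\PSL(3,2)\cong\PSL(2,7)$; your ``rank at least $2$'' filter is not literally right at $q=2$. Among the odd-index maximal subgroups of the corresponding almost simple groups, only $\D_{p\pm1}$ in $\PSL(2,p)$ and $\D_{2(p\pm1)}$ in $\PGL(2,p)$ can be $2$-groups.

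The genuine gap is your sentence ``Analysing these gives (2)(b) and (2)(c)''. It is false, and no argument can rescue it, because part (2) as stated here is false. Your example is a real counterexample. $G=\PSL(2,32)$ acting on the $33$ points of the projective line is $2$-transitive, $|G|=2^5\cdot 3\cdot 11\cdot 31$ has square-free odd part, and $G_x=2^5{:}31$ has $|G_x|_{2'}=31$. Yet $G$ is not affine, $\PSL(2,32)\not\cong\PSL(2,p)$ for any prime $p$, and $3\nmid|{}^2\!B_2(q)|$. The same happens for $\PSL(2,2^f)$ whenever $2^f-1$ is prime and $2^f+1$ is square-free ($f=5,7,13,\dots$).

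Finishing your own analysis gives the correct statement. For socle $T=\PSL(2,2^f)$, the odd-index maximal subgroups are the Borel normalizers, with odd part $(2^f-1)\,|G:T|_{2'}$. This is prime only if $2^f-1$ is prime, so $f$ is prime, $|\Out(T)|=f$ is odd, and $G=T$. The case $f=2$ is $\PSL(2,5)$, and $f=3$ fails square-freeness. So (2) needs a fourth alternative: $G\cong\PSL(2,2^f)$ on $2^f+1$ points with $f\ge 5$ prime and $2^f-1$ prime. With that amendment your outline goes through. $J_1$ is correctly discarded, since its odd-index maximal subgroups $2^3{:}7{:}3$ and $2\times\A_5$ have odd parts $21$ and $15$.

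The omission does not damage the paper. Part (2) is used only in Claim 1 of Lemma \ref{non-solvable}, where $P_1$ is generalized quaternion. A Sylow $2$-subgroup of $\PSL(2,2^f)$ is elementary abelian of rank $f\ge 5$, hence not a section of $P_1$. This is the same argument the paper uses there to exclude ${}^2\!B_2(q)$.
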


To handle the $\PSL(2,p)$ and $\PGL(2,p)$ cases from Proposition~\ref{square-free}, we need information about their maximal subgroups.

\begin{proposition}[{\cite[Tables 8.1 and 8.2]{bray}}]\label{bray}
For $q = p^f \geq 5$ with $p$ an odd prime and integer $f\ge 1$,
the maximal subgroups of $\PSL(2,q)$ and $\PGL(2,q)$ are given in Table \ref{psl_pgl_compare}.
\end{proposition}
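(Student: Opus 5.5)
This statement is a citation of the tables in \cite{bray}, so my plan is to recover it from Dickson's classification of the subgroups of $\PSL(2,q)$ and then pass to $\PGL(2,q)$ by comparing with its overgroup structure. I would not re-derive the full subgroup lattice.

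\textbf{Step 1 (subgroups of $\PSL(2,q)$).} Write $d=\gcd(2,q-1)=2$, since $q$ is odd. Dickson's theorem says that every proper subgroup of $\PSL(2,q)$ lies in one of the following:
\begin{itemize}
\item a Borel subgroup $E_q{:}\ZZ_{(q-1)/2}$, namely the stabilizer of a point of the projective line;
\item a dihedral group $\D_{q\pm1}$, namely the normalizer of a split or non-split torus;
\item a subfield subgroup $\PSL(2,q_0)$ or $\PGL(2,q_0)$ with $q=q_0^r$ and $r$ prime;
\item a subgroup isomorphic to $\A_4$, $\S_4$ or $\A_5$.
\end{itemize}

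\textbf{Step 2 (maximality in $\PSL(2,q)$).} For each family I decide maximality by checking which members embed in another.
\begin{itemize}
\item The dihedral groups $\D_{q\pm1}$ are maximal except for small $q$. Since $q\ge5$, the relevant exceptions are $\D_{6}\le \A_4$-type coincidences at $q=5,7,9$; these are handled by direct inspection.
\item $\S_4$ occurs exactly when $q\equiv\pm1\pmod 8$, i.e. when $q=p$ with $p\equiv\pm1\pmod 8$, or when $q=p^2$. In that case $\A_4$ is not maximal; otherwise $\A_4$ is maximal for $q=p\equiv\pm3\pmod 8$.
\item $\A_5$ occurs exactly when $q\equiv\pm1\pmod{10}$, and is maximal when $q=p$ or when $q=p^2$ with $p\equiv\pm3\pmod{10}$.
\item The subfield subgroups are maximal when the index $r$ is prime. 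For $r=2$, $\PGL(2,q_0)$ occurs, with two conjugacy classes.
\end{itemize}

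\textbf{Step 3 (passing to $\PGL(2,q)$).} I would use the fact that the maximal subgroups of $\PGL(2,q)$ not containing $\PSL(2,q)$ are the $\PGL(2,q)$-normalizers of the $\PGL(2,q)$-stable $\PSL(2,q)$-classes of maximal subgroups, together with the novelty subgroups. This gives:
\begin{itemize}
\item the Borel $E_q{:}\ZZ_{q-1}$ and the dihedral groups $\D_{2(q\pm1)}$;
\item $\S_4$ in the cases where $\PSL(2,q)$ has only $\A_4$, with $q=p\equiv\pm3\pmod8$; these are novelties;
\item $\PGL(2,q_0)$ for odd prime index $r$.
\end{itemize}
The subgroups $\A_5$ and the $\S_4$ of $\PSL(2,q)$ come in two classes that are fused by $\PGL(2,q)$, so they do not give maximal subgroups of $\PGL(2,q)$. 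Finally, $\PSL(2,q)$ itself is a maximal subgroup of $\PGL(2,q)$, of index $2$.

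\textbf{Main obstacle.} The delicate part is the bookkeeping of the small exceptions ($q=5,7,9,11$) and the conditions on $q$ modulo $8$ and $10$ for the classes to fuse or split. For these I would simply defer to \cite[Tables 8.1 and 8.2]{bray}, which record exactly these conditions, and check the small cases by hand.
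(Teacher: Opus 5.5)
Your route differs from the paper's. The paper gives no argument for this proposition; it simply imports \cite[Tables 8.1 and 8.2]{bray}. You instead propose to rederive the table from Dickson's subgroup list, followed by a normalizer/novelty analysis in $\PGL(2,q)$. That is the standard way such tables are obtained. However, your Step 2 as written produces conditions that contradict the table, so it does not yet prove the statement.

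\emph{(i) The condition for $\A_4$.} $\A_4$ is not maximal for every $q=p\equiv\pm3\pmod 8$. If also $p\equiv\pm1\pmod{10}$, then $\A_4<\A_5\le\PSL(2,p)$. For example, $11\equiv3\pmod 8$, yet the maximal subgroups of $\PSL(2,11)$ are $\ZZ_{11}{:}\ZZ_5$, $\D_{12}$ and two classes of $\A_5$. Adding $p\not\equiv\pm1\pmod{10}$ is exactly what turns $p\equiv\pm3\pmod 8$ into the table's condition $p\equiv\pm3,5,\pm13\pmod{40}$.

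\emph{(ii) The dihedral exceptions.} $\A_4$ has no subgroup of order $6$, so ``$\D_6\le\A_4$'' is false, and your range $q=5,7,9$ omits $q=11$. The correct statement has two parts.
$\D_{q-1}$ fails to be maximal exactly for $q=5,7,9,11$, via $\D_4<\A_4$, $\D_6<\S_4$, $\D_8<\S_4$ and $\D_{10}<\A_5$ respectively.
$\D_{q+1}$ fails exactly for $q=7,9$, via $\D_8<\S_4$ and $\D_{10}<\A_5$.
The case $q=11$ is why the table says $q\ge 13$.

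\emph{(iii) The condition for $\S_4$.} $q\equiv\pm1\pmod 8$ is not equivalent to ``$q=p\equiv\pm1\pmod 8$ or $q=p^2$''; take $q=p^4$. You also never state when $\S_4$ is maximal. For $f\ge2$, every $\S_4$ lies in a subfield subgroup: in $\PSL(2,p)$ if $f$ is odd, or in $\PGL(2,p)\le\PSL(2,p^2)$ if $f$ is even. So $\S_4$ is maximal only when $q=p$, apart from $\S_4\cong\PGL(2,3)$ in $\PSL(2,9)$, which is already covered by the subfield row.

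In Step 3 your final list for $\PGL(2,q)$ is right, but the bookkeeping meant to produce it is not.

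\emph{Where $\S_4$ comes from.} $\S_4$ is a novelty only when $\A_4$ fails to be maximal in $\PSL(2,p)$, namely $p\equiv\pm11,\pm19\pmod{40}$. For $p\equiv\pm3,5,\pm13\pmod{40}$ it is $\N_{\PGL(2,p)}(\A_4)$, the normalizer of a maximal subgroup.

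\emph{The dihedral groups.} The novelties you actually need are $\D_{2(q-1)}$ for $q=7,9,11$ and $\D_{2(q+1)}$ for $q=7,9$. You must also exclude $\D_{2(q-1)}$ when $q=5$, since $\D_8<\S_4<\PGL(2,5)\cong\S_5$; this is the source of the table's condition $q>5$.

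Two of your restrictions are correct and are in fact missing from the printed $\PGL(2,q)$ column. For $\PGL(2,q_0)$ you need $r$ an odd prime, since for $r=2$ one has $\PGL(2,q_0)\le\PSL(2,q_0^2)$. For $\S_4$ you need $q=p$, since $\S_4<\PGL(2,5)<\PGL(2,125)$ although $125\equiv5\pmod 8$.

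Finally, you defer exactly these delicate conditions to \cite{bray}, so your argument as it stands reduces to the paper's citation. To make it an independent proof, you need to carry out (i)--(iii) and the novelty analysis correctly.
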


%

\begin{table}[htbp]
\centering
\caption{Maximal subgroups of $\PSL(2,q)$ and $\PGL(2,q)$}
\label{psl_pgl_compare}
\begin{tabular}{|l|l|}
\hline
\multicolumn{1}{|c|}{$\PSL(2,q)$} & \multicolumn{1}{c|}{$\PGL(2,q)$} \\
\hline
$\ZZ_p^f{:}\ZZ_{(q-1)/2}$ & $\ZZ_p^f{:}\ZZ_{q-1}$ \\
\hline
$\D_{q-1}$ ($q \geq 13$) & $\D_{2(q-1)}$ ($q > 5$) \\
\hline
$\D_{q+1}$ ($q \neq 7,9$) & $\D_{2(q+1)}$ \\
\hline
$\PGL(2,q_0)$ ($q = q_0^2$) & $\PGL(2,q_0)$ ($q = q_0^r$) \\
\hline
$\PSL(2,q_0)$ ($q = q_0^r$, $r$ odd prime) & $\PSL(2,q)$ \\
\hline
$\S_4$ ($q = p \equiv \pm 1 \pmod{8}$) & $\S_4$ ($q \equiv \pm 3 \pmod{8}$) \\
\hline
$\A_5$ ($q = p \equiv \pm 1 \pmod{10}$ or $q = p^2$ with $p \equiv \pm 3 \pmod{10}$) & \\
\hline
$\A_4$ ($q = p \equiv \pm 3, 5, \pm 13 \pmod{40}$) & \\
\hline
\end{tabular}
\end{table}

The following elementary facts from number theory will be used later in the proofs.

\begin{lemma}\label{num}
Let $m\geq 2$. If both $2^{m}+1$ and $2^{m-1}+1$ are prime numbers, then $m=2$.
\end{lemma}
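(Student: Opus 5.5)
We argue by parity of $m$. Suppose $m\ge 3$ and that both $2^m+1$ and $2^{m-1}+1$ are prime. Among $m$ and $m-1$ exactly one is odd; call it $k$. Since $m\ge 3$, we have $k\ge 3$.

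The key step is the standard fact that if $k$ is odd, then $2^k+1$ is divisible by $3$. Indeed, $2\equiv -1 \pmod 3$, so $2^k+1\equiv (-1)^k+1 = 0 \pmod 3$. For $k\ge 3$ we have $2^k+1\ge 9>3$, so $2^k+1$ is a proper multiple of $3$ and hence composite. This contradicts the assumption that both $2^m+1$ and $2^{m-1}+1$ are prime, so $m\le 2$. Together with the hypothesis $m\ge 2$, this gives $m=2$.

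Conversely, for $m=2$ the numbers are $5$ and $3$, both prime, so the hypothesis is consistent and $m=2$ is indeed the only case. (One could equally cite the classical fact that $2^k+1$ prime forces $k$ to be a power of $2$, so $m$ and $m-1$ would both be powers of $2$, forcing $\{m-1,m\}=\{1,2\}$. The mod-$3$ argument above is shorter and self-contained.) The argument is elementary; the only point to watch is the boundary case $k=1$, which is excluded precisely because $m\ge 3$.
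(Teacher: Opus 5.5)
Your proof is correct and follows essentially the same route as the paper: split by the parity of $m$, and use that $2^k+1$ is a proper multiple of $3$ for odd $k\ge 3$. The paper writes this as the factorization $2^k+1=(2+1)(2^{k-1}-2^{k-2}+\cdots+1)$, while you use the congruence $2\equiv -1 \pmod 3$; your check that the odd exponent among $m, m-1$ is at least $3$ when $m\ge 3$ is the same boundary point the paper handles.
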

\demo For $m=2$, we have that $2^2+1=5$ and $2^{2-1}+1=3$, which are both prime numbers. Now suppose $m\geqslant 3$. If $m$ is odd, then $2^m+1=(2+1)(2^{m-1}-2^{m-2}+\dots+1)$ is composite, a contradiction. If $m$ is even, then $m-1$ is odd and $m-1 \geqslant 3$, so similarly $2^{m-1}+1$ is composite, a contradiction again. Hence, $m=2$ and Lemma \ref{num} holds. \qed

\begin{lemma}\label{num2}
Let $p \ge 5$ be an odd prime. Then neither $p^2-1$ nor $p^2+1$ is a power of $2$.
\end{lemma}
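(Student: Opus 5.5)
The plan is to argue separately for the two quantities, in each case using only elementary congruences. Take $p\ge 5$ prime, so $p$ is odd and $p^2\equiv 1 \pmod 8$.

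\emph{The case $p^2+1$.} Since $p^2\equiv 1\pmod 8$, we get $p^2+1\equiv 2 \pmod 8$. A power of $2$ that is congruent to $2$ modulo $8$ must equal $2$. But $p^2+1\ge 26$, so $p^2+1$ is not a power of $2$.

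\emph{The case $p^2-1$.} Factor it as $p^2-1=(p-1)(p+1)$. If this were a power of $2$, both factors $p-1$ and $p+1$ would be powers of $2$. They differ by $2$, and the only pair of powers of $2$ differing by $2$ is $\{2,4\}$. That forces $p=3$, which contradicts $p\ge 5$.

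An alternative route for this case is to note that $3\nmid p$. Among the three consecutive integers $p-1,p,p+1$, one is divisible by $3$, so $3\mid p^2-1$ and $p^2-1$ cannot be a power of $2$. This version is cleaner, and I would present it in the write-up.

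There is no real obstacle. The only point needing care is using the hypothesis $p\ge 5$ exactly where it is needed: to exclude $p=3$ in the $p^2-1$ case, and to make $p^2+1>2$ in the $p^2+1$ case.
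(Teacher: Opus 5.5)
Your proposal is correct and matches the paper's proof. The paper also factors $p^2-1=(p-1)(p+1)$ and notes that two powers of $2$ differing by $2$ must be $2$ and $4$, which forces $p=3$. For $p^2+1$ it uses the same parity congruence, $p^2+1\equiv 2 \pmod{4}$, where you use $p^2+1\equiv 2 \pmod{8}$. Your preferred alternative for $p^2-1$, that $3\mid p^2-1$ because $3\nmid p$, is equally valid and slightly shorter.
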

\proof  Write $p^2-1 = (p-1)(p+1)$.
If $p^2-1 = 2^m$ for some $m \ge 1$, then both $p-1$ and $p+1$ are powers of $2$. Since they differ by $2$, the only possibility is $\{p-1,p+1\} = \{2,4\}$, giving $p=3$. This contradicts the assumption that $p \ge 5$. Hence $p^2-1$ is not a power of $2$.

Now consider $p^2+1$.
As $p\geqslant 5$ is an odd prime, write $p = 2k+1$ with $k \ge 2$. Then
$p^2+1 = (2k+1)^2 + 1 = 4k^2 + 4k + 2$.
Thus $p^2+1 \equiv 2 \pmod{4}$.
Any integer that is a power of $2$ greater than $2$ is divisible by $4$, so $p^2+1$ cannot be a power of $2$.
\qed

Recall that a prime $p$ is a Mersenne prime if $p=2^k-1$ with $k$ is a prime.

\begin{lemma}\label{num3}
Let $p\ge 5$ be a prime. Then the following statements hold.
\begin{enumerate} [font=\normalfont]
\item [(1)] If $p+1$ is a power of $2$ and $\frac{p-1}{2}$ is a power of an odd prime, then $p=7$.

\item [(2)] If $p-1$ is a power of $2$ and $\frac{p+1}{2}$ is a power of an odd prime, then $p=5$ or $17$.
\end{enumerate}
\end{lemma}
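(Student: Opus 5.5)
Both parts reduce to elementary number theory: factor $p^2-1=(p-1)(p+1)$ and use that the two factors share no odd prime divisor, since $\gcd(p-1,p+1)=2$.

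For (1), suppose $p+1=2^k$. Since $p\ge 5$ we have $k\ge 3$, and $p-1=2^k-2=2(2^{k-1}-1)$, so $\frac{p-1}{2}=2^{k-1}-1$. By hypothesis $2^{k-1}-1=r^a$ for an odd prime $r$ and some $a\ge 1$.
\begin{itemize}
\item \emph{Case $a=1$.} Then $2^{k-1}-1$ and $2^k-1=p$ are both prime. A Mersenne number $2^j-1$ can be prime only when $j$ is prime, so $k-1$ and $k$ are consecutive primes. This forces $k=3$ and $p=7$.
\item \emph{Case $a\ge 2$.} Here $2^{k-1}-1=r^a$ would be a perfect power. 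Since $k-1\ge 2$, we have $2^{k-1}-1\equiv 3 \pmod 4$, which rules out every even exponent $a$. For odd $a\ge 3$, write $r^a+1=(r+1)(r^{a-1}-r^{a-2}+\cdots+1)$. The second factor is a sum of $a$ odd terms, hence odd, and it exceeds $1$ because $r\ge 3$ and $a\ge 3$. So $r^a+1$ has an odd factor greater than $1$ and cannot equal $2^{k-1}$. (Alternatively, one may cite Catalan/Mihăilescu, but this elementary argument suffices.)
\end{itemize}
Hence $p=7$.

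For (2), suppose $p-1=2^k$. Primality of $p$ forces $k=2^t$, so $p$ is a Fermat prime, and $p\ge 5$ gives $k\ge 2$. Then $\frac{p+1}{2}=2^{k-1}+1=r^a$ with $r$ an odd prime.
\begin{itemize}
\item \emph{Case $a=1$.} Both $2^{k-1}+1$ and $2^k+1$ are prime, so Lemma \ref{num} gives $k=2$ and $p=5$.
\item \emph{Case $a\ge 2$.} We have $r^a-1=2^{k-1}$, i.e.\ $(r-1)(r^{a-1}+\cdots+1)=2^{k-1}$. For $a$ odd, the second factor is a sum of $a$ odd terms, hence odd and greater than $1$, which is impossible. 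For $a$ even, write $a=2b$ and factor $(r^b-1)(r^b+1)=2^{k-1}$. Then $r^b-1$ and $r^b+1$ are powers of $2$ differing by $2$, so $r^b=3$. This gives $r=3$, $b=1$, $a=2$, and $2^{k-1}=8$, so $k=4$ and $p=17$, where indeed $\frac{p+1}{2}=9=3^2$.
\end{itemize}
Hence $p=5$ or $17$.

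The only step needing care is excluding prime powers with exponent $a\ge 2$. The parity and factorization argument above handles it without appealing to Mihăilescu's theorem. Everything else is routine once the Mersenne and Fermat constraints are in place.
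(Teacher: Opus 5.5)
Your argument is correct in both parts, but it is built around a different case split from the paper's.

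In (1), the paper never splits on the exponent $a$. It uses that $p=2^k-1$ being prime forces $k$ prime. So for $k\ge 5$ the exponent $k-1$ is even, and $\frac{p-1}{2}=2^{k-1}-1=(2^{(k-1)/2}-1)(2^{(k-1)/2}+1)$ is a product of two coprime factors, both larger than $1$. That rules out every prime power in one step. You instead handle $a=1$ by the Mersenne-exponent observation ($k-1$ and $k$ both prime). You handle $a\ge 2$ by the mod-$4$ obstruction and the factorization of $r^a+1$. This is a bit longer, but your $a\ge 2$ branch proves more: $2^j-1$ ($j\ge 2$) is never a proper prime power, and primality of $k$ is not needed there.

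In (2), the paper first pins down the odd prime. Since $p\ge 5$ is prime and $p-1$ is a power of $2$, $3$ divides neither $p$ nor $p-1$. Hence $3\mid p+1$ and $\frac{p+1}{2}=3^t$. It then settles $3^t-1=2^{n-1}$ by the parity of $t$: odd $t$ forces $t=1$ ($p=5$), and even $t$ gives $p=17$. You skip this mod-$3$ step and instead:
\begin{itemize}
\item handle $a=1$ with Lemma \ref{num}, which the paper proves but does not invoke here;
\item run the same parity/factorization argument for an arbitrary odd prime $r$ when $a\ge 2$, where the even case forces $r^b=3$ by itself.
\end{itemize}
The paper's mod-$3$ observation makes the case $a=1$ fall out of the same computation, while your version reuses an existing lemma. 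Both routes are equally elementary.
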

\proof (1) Suppose that $p+1$ is a power of 2 and $\frac{p-1}{2}$ is a power of an odd prime. Then $p=2^k-1$ with $k\ge 3$ an integer. Further, since $p$ is a prime, one yields that $k$ is a prime, and hence $p$ is a Mersenne prime. If $k=3$, then $p=7$ and so $\frac{7-1}{2}=3$, as desired.
Now suppose that $k\ge 5$. Then $\frac{p-1}{2}=2^{k-1}-1=(2^{(k-1)/2}-1)(2^{(k-1)/2}+1)$. However, since $(2^{(k-1)/2}-1,2^{(k-1)/2}+1)=1$, this contradicts the assumption that $\frac{p-1}{2}$ is a power of an odd prime.

(2) Suppose that $p-1$ is a power of $2$ and $(p+1)/2$ is a power of an odd prime.
Let $p-1=2^n$ for integer $n\ge 2$. Thus $p=2^n+1$ and $(p+1)/2=2^{n-1}+1$ is a power of an odd prime. Let $2^{n-1}+1=q^t$ for some prime $q\ge 3$ and integer $t$.
Noting that one of $p-1, p$ or $p+1$ is divisible by $3$, since $p-1$ is a power of $2$ and $p\ge 5$ is prime, we deduce that $3\mid (p+1)$. Thus $q=3$ and $2^{n-1} + 1=3^t$, so $3^t-1=2^{n-1}$.
Suppose that $t$ is even. Then $t=2t_1$ for some positive integer $t_1$, and consequently both $(3^{t_1}-1)$ and $(3^{t_1}+1)$ are powers of $2$, which implies that $(3^{t_1}-1)\mid (3^{t_1}+1)$. Now that $(3^{t_1}-1) \mid (3^{t_1}+1-(3^{t_1}-1))=2$, we have $(3^{t_1}-1)=2$ and $t_1=1$. Thus $p=2\cdot 3^2-1=17$, as desired.
Next assume that $t$ is odd. Then $(3^{t-1}+3^{t-2}+\cdots+1)$ is odd. Since  $3^t-1=(3-1)(3^{t-1}+3^{t-2}+\cdots+1)=2^{n-1}$ is a power of $2$, it follows that the odd factor  $(3^{t-1}+3^{t-2}+\cdots+1)$ must equal $1$.  Therefore $t-1=0$, i.e., $t=1$. Then $p=2\cdot 3-1=5$, as desired.
\qed

\section{Examples}\label{examples}
In this chapter, we will present several examples that demonstrate a close relationship between the structure of a group and the number of conjugacy classes of its subgroup perfect codes. Here, we always let $G$ be a finite group and  $\Delta(G)$ be the set of conjugacy classes of nontrivial subgroup perfect codes of $G$.

\begin{example}\label{exm:sol}
Let $G=\ZZ_{p^n}$ with $p\geqslant 3$ a prime and $n$ a positive integer. Then $|\Delta(G)|=n-1$.
\end{example}

\begin{example}\label{exm:4}
Let $G=\SL(2,5)$.
Since a Sylow $2$-subgroup of $G$ is isomorphic to $\Q_8$, Lemma \ref{cg} implies that every subgroup perfect code of $G$ either has odd order or odd index. Let $H$ be a nontrivial subgroup perfect code of $G$.
Suppose that $H$ has odd order. Since $H\cong H/(H\cap \Z(G))\cong H\Z(G)/\Z(G)\leqslant G/\Z(G)\cong\PSL(2,5)$, by Proposition \ref{bray}, one yields that $|H|\div 60$  and $G/\Z(G)$ contains no subgroup of order 15, and hence $H \cong \ZZ_3$ or $\ZZ_5$, i.e., $H$ is a Sylow $3$-subgroup or Sylow $5$-subgroup of $G$.
Now, assume that $H$ has odd index in $G$. Since $\Z(G)\cong \ZZ_2$, we have that $\Z(G)\leqslant H$ and $H/\Z(G)\leqslant G/\Z(G)\cong\PSL(2,5)$, and so $60/|H/Z(G)|$ is odd. The Atlas \cite[p.2]{atlas} shows that $H/\Z(G)$ is isomorphic to $\A_4$ or $\ZZ_2^2$, both of which has only one conjugacy class in $\PSL(2, 5)$.
In particular, if $H/\Z(G)\cong \ZZ_2^2$, then $H$ is a Sylow $2$-subgroup of $G$. In the following, we assume that $H/\Z(G)\cong \A_4$. Let $H_1\le G$ be another subgroup of odd order that is not a Sylow $2$-subgroup of $G$.
According to the foregoing discussion, one yields that $H_1/\Z(G)$ is conjugate to $H/\Z(G)\cong\A_4$. So $H_1$ is conjugate to $H$.
Based on the previous discussion, we conclude that $|\Delta(G)|=4$.
\end{example}

\begin{lemma}\label{exm:5}
Let $G=\SL(2,p)$, where $p>5$ is a prime such that $p-1$ is a power of 2 and $(p+1)/2=3r$ where $r>3$ is a prime. Then $|\Delta(G)|=5$.
\end{lemma}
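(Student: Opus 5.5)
The plan is to reduce everything to Lemma \ref{cg} and then count subgroups of $\PSL(2,p)$ using Proposition \ref{bray}, following the pattern of Example \ref{exm:4}.

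\emph{Arithmetic set-up.} Write $p-1=2^n$, so $p$ is a Fermat prime, and $p+1=6r$. Then
$$|G|=p(p-1)(p+1)=2^{n+1}\cdot 3\cdot r\cdot p,$$
so $|G|_{2'}=3rp$ is square-free. The Sylow $2$-subgroup of $\SL(2,p)$ is generalized quaternion of order $2^{n+1}$. Hence Lemma \ref{cg} applies, and the nontrivial subgroup perfect codes of $G$ are exactly the proper nontrivial subgroups of odd order or of odd index.

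\emph{Constraints on $p$.} Since $p>5$ is a Fermat prime, $p=2^{2^k}+1$ with $k\ge 2$. Hence $p\equiv 1\pmod 8$, and $p\equiv 7\pmod{10}$ because $2^{2^k}\equiv 6\pmod{10}$. By Table \ref{psl_pgl_compare}, the maximal subgroups of $\bar G:=G/\Z(G)\cong\PSL(2,p)$ are therefore
$$\ZZ_p{:}\ZZ_{(p-1)/2},\qquad \D_{p-1},\qquad \D_{p+1},\qquad \S_4;$$
the groups $\A_5$ and $\A_4$ do not occur.

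\emph{Odd-order subgroups.} An odd-order $H\le G$ satisfies $H\cap\Z(G)=1$, so $H\cong \bar H\le\bar G$. Every odd-order subgroup of $\bar G$ lies in a maximal subgroup. From the list above, the nontrivial odd-order subgroups of $\bar G$ are:
\begin{itemize}
\item $\ZZ_p$ from the Borel subgroup, whose complement $\ZZ_{(p-1)/2}$ is a $2$-group;
\item the cyclic subgroups $\ZZ_3$, $\ZZ_r$ and $\ZZ_{3r}$ of the cyclic group $\ZZ_{(p+1)/2}$ inside $\D_{p+1}$;
\item $\ZZ_3$ from $\S_4$.
\end{itemize}
The copy of $\ZZ_3$ in $\S_4$ is a Sylow $3$-subgroup, since $3\,\|\,|\bar G|$, so it is conjugate to the one in $\D_{p+1}$. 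There is no subgroup of order $3p$ or $rp$, because the normalizer of a Sylow $p$-subgroup is the Borel subgroup, of order $p(p-1)/2$. The cyclic subgroups of a given order in the cyclic subgroups $\ZZ_{(p+1)/2}$ are conjugate, since all such cyclic subgroups are conjugate.

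Next lift back to $G$. If $\bar H_1$ and $\bar H_2$ are conjugate in $\bar G$, then after conjugating $H_1$ and $H_2$ lie in the same preimage $\Z(G)\times H_1$. There $H_1$ is the unique Hall $2'$-subgroup, so $H_1$ and $H_2$ are conjugate. This yields exactly four classes of odd-order codes: $\ZZ_3$, $\ZZ_r$, $\ZZ_{3r}$ and $\ZZ_p$. They are pairwise non-conjugate because their orders differ, and $r>3$ ensures $\ZZ_r\ne\ZZ_3$.

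\emph{Odd-index subgroups.} Let $H$ be proper of odd index in $G$. Then $H$ contains a Sylow $2$-subgroup of $G$, hence contains $\Z(G)$, and $\bar H$ is a proper subgroup of $\bar G$ with order divisible by $2^n=p-1$.
\begin{itemize}
\item The Borel subgroup has $2$-part $2^{n-1}$, so it is excluded.
\item $\D_{p+1}$ has $2$-part $4<2^n$, so it is excluded.
\item $\S_4$ has $2$-part $8$, while $2^n\ge 16$, so it is excluded.
\end{itemize}
Thus $\bar H\le \D_{p-1}$, and $|\D_{p-1}|=2^n$ forces $\bar H=\D_{p-1}$. So $H$ is a Sylow $2$-subgroup, which gives exactly one class of odd-index codes.

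\emph{Conclusion.} Adding the four odd-order classes and the one odd-index class gives $|\Delta(G)|=5$.

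The main obstacle is making the odd-order enumeration airtight: showing that every odd-order subgroup of $\PSL(2,p)$ is cyclic inside a torus or equal to $\ZZ_p$, and that conjugacy classes transfer between $\bar G$ and $G$. I would handle both by the maximal-subgroup list together with the Hall-subgroup argument in $\Z(G)\times H$ described above.
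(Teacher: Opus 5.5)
Your proposal is correct and follows essentially the same route as the paper. Lemma \ref{cg} reduces the count to subgroups of odd order or odd index. The maximal subgroups of $\PSL(2,p)$ then give the odd orders $3,r,3r,p$ and show that the only proper odd-index subgroups are Sylow $2$-subgroups. The order-$3r$ class is handled via the single conjugacy class of $\D_{p+1}$; your lift through the unique Hall $2'$-subgroup of $H\times\Z(G)$ is a slightly cleaner substitute for the paper's Hall-conjugacy argument in the solvable preimage of $\D_{p+1}$. One harmless slip: $\D_{p+1}$ has order $p+1=6r$, so its $2$-part is $2$, not $4$.
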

\demo Taking $p = 257$, we have that $257-1 = 2^8$ and $257+1 =2 \cdot 3\cdot 43$, where 43 is a  prime. Thus such a prime $p$ exists.
We first show that every maximal subgroup $A$ of $G$ is the preimage of a maximal subgroup of $\PSL(2,p)$. Note that $G$ is perfect and $\Z(G)\cong \ZZ_2$. We claim that any maximal subgroup $A$ of $G$ must contain $\Z(G)$. Otherwise, the maximality of $A$ implies that $G = A \times \Z(G)$, and consequently $G' \le A$, contradicting the fact that $G' = G$. Therefore, $A/\Z(G)$ is maximal in $G/\Z(G) \cong \PSL(2,p)$, as claimed.

Let $P$ be a Sylow $2$-subgroup of $G$. By \cite[p.143]{Carter}, $P$ is isomorphic to a generalized quaternion $2$-group. By Lemma \ref{cg}, every subgroup perfect code of $G$ has odd order or odd index.
Since $(p+1)/2=3r$ and prime $r>3$, it follows that $p-1 > 2^4$. Consequently $|G| / |2.\S_4|=p(p+1)(p-1)/48$ is even, and by Proposition \ref{bray}, the only proper subgroups of $G$ of odd index are its Sylow $2$-subgroups, which are only one conjugacy class.

Assume that $|H|$ is odd. Since $|G|=p(p-1)(p+1)=3pr|P|$ and $G/\Z(G)\cong \PSL(2,p)$, Proposition \ref{bray} implies that $|H|\in \{3, p, r, 3r\}$. If $|H|\in\{3, p, r\}$, then $H$ is a Sylow subgroup of $G$.

Suppose that $|H|=3r$. By Proposition \ref{bray}, $H$ is contained in a maximal subgroup $M$ of $G$ such that  $M/\Z(G)\cong \D_{p+1}$.  Let $H_1$ be another subgroup of order $3r$ in $G$. Applying Proposition \ref{bray} again, there exists a maximal subgroup $M_1$ of $G$ such that $H_1\le M_1$ and $M_1/\Z(G)\cong \D_{p+1}$.
Since all subgroups of $G/\Z(G)$ that are isomorphic to $\D_{p+1}$ form a single conjugacy class, it follows that $M$ and $M_1$ are conjugate in $G$.
There exists an element $g\in G$ such that $M_1^g=M$.
Since $\Z(G)\cong \ZZ_2$ and $M/\Z(G)$ is solvable, it follows that $M$ is solvable.
Since both $H$ and $H_1^g$ are $\{3,r\}$-Hall subgroups of $M$, Proposition \ref{Hall} implies that $H$ and $H_1^g$ are conjugate in $M$, and consequently $(H_1^g)^x=H_1^{gx}=H$ for some $x\in M$.  Therefore, all subgroups of order $3r$ are conjugate in $G$.
In summery, $|\Delta(G)|=5$.
\qed

Using Magma\cite{magma}, we find that there exists a proper central extension of $\PGL(2,23)$ whose Sylow $2$-subgroups are isomorphic to $\Q_{32}$.

\begin{lemma}\label{exm:6}
Let $G= \ZZ_2.\PGL(2,23)$ be a proper central extension of $\PGL(2,23)$ that its Sylow $2$-subgroups are isomorphic to $\Q_{32}$. Then $|\Delta(G)|=6$.
\end{lemma}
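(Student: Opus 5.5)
Since the Sylow $2$-subgroups of $G$ are isomorphic to $\Q_{32}$, Lemma \ref{cg} applies: a nontrivial subgroup $H$ of $G$ is a subgroup perfect code exactly when $|H|$ is odd or $|G:H|$ is odd. The plan is to count the conjugacy classes of each type separately. We have $|G|=2\cdot|\PGL(2,23)|=2\cdot 23\cdot 22\cdot 24=2^5\cdot 3\cdot 11\cdot 23$. Write $Z=\Z(G)\cong\ZZ_2$ and $\bar G=G/Z\cong\PGL(2,23)$.

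\emph{Odd order.} An odd-order subgroup $H$ meets $Z$ trivially, so $H\cong HZ/Z$. Hence $H$ corresponds to an odd-order subgroup of $\PGL(2,23)$, and $|H|\in\{3,11,23,33,69,253,759\}$. I will use the maximal subgroups from Table \ref{psl_pgl_compare}:
\begin{itemize}
\item $\ZZ_{23}{:}\ZZ_{22}$, which contains $\ZZ_{23}{:}\ZZ_{11}$ of order $253$;
\item $\D_{44}$, $\D_{48}$ and $\PSL(2,23)$, together with $\S_4$ (present since $23\equiv -1\pmod 8$).
\end{itemize}
From these I expect to show that the odd-order subgroups of $\bar G$ have order $3$, $11$, $23$ or $253$. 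Orders $33$ and $69$ do not occur, because the elements of orders $3$ and $11$ (or $3$ and $23$) lie in different tori and Borels. Order $759$ would have to sit inside $\PSL(2,23)$, which has no such subgroup.

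The subgroups of order $3$, $11$ and $23$ are Sylow subgroups, so they give three classes. For order $253$: any such subgroup normalizes its Sylow $23$-subgroup, so it lies in the Borel $\N_G(P_{23})$. There it is the unique Hall $\{11,23\}$-subgroup, after passing to the preimage, which is solvable, and applying Proposition \ref{Hall}. Since all Sylow $23$-subgroups are conjugate, this gives exactly one class. So the odd-order part contributes $4$ classes.

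\emph{Odd index.} Here $H\ge Z$, because $H$ contains a Sylow $2$-subgroup, which is $\Q_{32}$ and contains the unique central involution $Z$. So $H/Z$ is a subgroup of $\bar G$ of odd index, that is, one containing a Sylow $2$-subgroup $\D_{32}$... wait, $|\bar G|_2=2^4$, so the Sylow $2$-subgroup of $\bar G$ is $\D_{16}$. The subgroups of $\PGL(2,23)$ of index dividing $3\cdot 11\cdot 23$ that contain a $\D_{16}$ should be:
\begin{itemize}
\item $\D_{16}$ itself, giving the Sylow $2$-subgroup of $G$;
\item $\D_{48}$, of index $253$;
\item $\PGL(2,23)$ itself, which is not proper.
\end{itemize}
I need to rule out intermediate candidates: $\D_{44}$ has $2$-part $4$, $\S_4$ has $2$-part $8$, $\PSL(2,23)$ has index $2$, and the Borel has $2$-part $2$. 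Also, $\D_{48}$ contains no proper overgroup of $\D_{16}$ other than those of even index, since its subgroups containing $\D_{16}$ are $\D_{16}$ and $\D_{48}$. The dihedral $\D_{48}$ subgroups form a single class in $\bar G$. So the odd-index part contributes $2$ classes: the Sylow $2$-subgroup and the preimage $2.\D_{48}$.

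Adding the two parts gives $|\Delta(G)|=4+2=6$. I expect the main obstacle to be the careful subgroup analysis of $\PGL(2,23)$: excluding odd-order subgroups of orders $33$ and $69$, and confirming that $\D_{48}$ is the only proper overgroup of $\D_{16}$. If needed, I will use the known subgroup lattice of $\PGL(2,q)$ (Dickson), or verify these two facts with Magma.
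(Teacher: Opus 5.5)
Your proposal is correct and follows the paper's proof essentially step for step: Lemma \ref{cg} splits the count into odd-order and odd-index subgroups. The odd-order part gives the three Sylow classes plus one class of order $253$; you obtain its conjugacy from the solvable normalizer $\N_G(P_{23})$ and Proposition \ref{Hall}, where the paper uses the single class of maximal subgroups $\ZZ_{23}{:}\ZZ_{22}$, which amounts to the same thing. The odd-index part consists of the preimages of $\D_{16}$ and $\D_{48}$.

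Two small points do not affect the count. First, since $23\equiv -1\pmod 8$, $\S_4$ is maximal in $\PSL(2,23)$ but not in $\PGL(2,23)$ (Table \ref{psl_pgl_compare}). Second, orders $33$ and $69$ are most cleanly excluded by noting that groups of these orders are cyclic, while element orders in $\PGL(2,23)$ divide $22$ or $24$ or equal $23$.
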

\demo
Let $H$ be a nontrivial subgroup perfect code of $G$. Since the Sylow $2$-subgroup of $G$ is $\Q_{32}$, Lemma \ref{cg} implies that every subgroup perfect code of $G$ either has odd order or odd index. Let $Z\le \Z(G)$ such that $Z\cong\ZZ_2$ and $G/Z\cong \PGL(2,23)$.

\textbf{Claim 1.} The subgroups of $G$ of odd order fall into exactly four conjugacy classes.

Suppose that $|H|$ is odd. Then $H\cong HZ/Z\leqslant G/Z\cong \PGL(2,23)$. It follows from Proposition \ref{bray} that $|H|\in \{3,11,23,253\}$. If $|H|\in \{3,11,23\}$, then since $|G|=2^5 \cdot 3 \cdot 11 \cdot 23$, it follows that $H$ is a Sylow subgroup of $G$.

Assume that $|H|=253$. By Proposition \ref{bray}, every subgroup of $G/Z$ of order $253$ lies in a maximal subgroup of $G/Z$ that is isomorphic to $\ZZ_{23}{:}\ZZ_{22}$. By the Atlas\cite[p.15]{atlas}, the subgroups of $\PGL(2,23)$ isomorphic to $\ZZ_{23}{:}\ZZ_{22}$ form a single conjugacy class.
Let $H_1$ be another subgroup of $G$ of order $253$. Then there exist subgroups $M$ and $M_1$ of $G$ such that $MZ/Z$ and $M_1Z/Z$ are maximal subgroups of $G/Z$, both isomorphic to $\ZZ_{23}{:}\ZZ_{22}$, such that $HZ/Z\leqslant MZ/Z$ and $H_1Z/Z\leqslant M_1Z/Z$, which follows that $H\le HZ\le MZ$ and $H_1\le H_1Z\le M_1Z$ respectively.
Since $MZ/Z$ and $M_1Z/Z$ are conjugate in $G/Z$, there exists an element $g\in G$ such that $(M Z)^g=(M Z)^{Zg}=M_1 Z$. Since $Z\cong \ZZ_2$, it follows that $MZ$ is solvable and $H$ is a $\{11,23\}$-Hall subgroup of $MZ$. In particular, $H_1^{g^{-1}}$ is also a $\{11,23\}$-Hall subgroup of $MZ$. By Proposition \ref{Hall}, there exists an element $x\in MZ$ such that $(H_1^{g^{-1}})^x=H_1^{g^{-1}x}=H$, that is, $H_1$ is conjugate to $H$ in $G$. Therefore, all subgroups of $G$ of order $253$ form a single conjugacy class. Hence Claim 1 holds.

\textbf{Claim 2.} The nontrivial subgroups of odd index in $G$ fall into two conjugacy classes.

Suppose that $|G:H|$ is odd. Then $Z< H$. Let $P$ be a Sylow $2$-subgroup of $G$. Since $|P|=32$, Proposition \ref{bray} implies that $H/Z$ is either isomorphic to $P/Z$ or to $\D_{48}$. By the Atlas\cite[p.15]{atlas}, the subgroups of $\PGL(2,23)$ isomorphic to $\D_{48}$ are conjugate in $\PGL(2,23)$.
Without loss of generality, we assume that $H/Z\cong \D_{48}$.
Let $H_1$ be another subgroup of odd index of $G$.
By the arbitrariness of $H$, we get that $Z<H_1$, and $H_1/Z\cong P/Z$ or $\D_{48}$.
If $H_1/Z\cong\D_{48}$, then since all subgroups of $\PGL(2,23)$ isomorphic to $\D_{48}$ lie in a single conjugacy class, it follows that $H_1$ is conjugate to $H$.
Otherwise $H_1$ is conjugate to $P$. Hence Claim 2 holds.

Combining both Claims, we find that $|\Delta(G)|=6$ and Example \ref{exm:6} holds.
\qed

\section{The Proof of Theorem \ref{main}}\label{sec3}
In this section, we will prove Theorem \ref{main} through some technical lemmas. Here, we always let $G$ be a finite group with $\pi(G) = \{p_1, p_2, \dots, p_n\}$, where $n$ is a positive integer and $p_1< p_2< \dots<p_n$. For each $p_i \in \pi(G)$, set $P_i \in \Syl_{p_i}(G)$.

\begin{lemma}\label{gepi}
The condition $|\Delta(G)|\ge |\pi(G)|$ holds if and only if $G\ncong\ZZ_p$, $\ZZ_{2^m}$ or $\Q_{2^{m+1}}$, where $p$ is a prime and $m\geqslant 2$ is an integer.
\end{lemma}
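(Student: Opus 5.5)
Both directions come from Zhang's result that every Sylow subgroup is a subgroup perfect code, together with Proposition \ref{chen2}. We separate the case $|\pi(G)|\ge 2$ from the case where $G$ is a $p$-group.

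\emph{Case $|\pi(G)|\ge 2$.} Each $P_i$ is nontrivial and proper. It is a subgroup perfect code by Proposition \ref{odd-index}: $P_i$ has odd order if $p_i$ is odd, and odd index if $p_i=2$. Sylow subgroups for distinct primes have different orders, so they are pairwise non-conjugate. Hence $|\Delta(G)|\ge n=|\pi(G)|$. None of the excluded groups has two prime divisors, so the lemma holds in this case.

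\emph{Case $G$ a $p$-group, $|G|=p^k$.} Here $|\pi(G)|=1$, so we need $|\Delta(G)|\ge 1$ exactly when $G$ is not one of the listed groups. Every group has a trivial $\Delta$ if its order is prime, since there is no subgroup with $1<H<G$; this covers $\ZZ_p$, including $\ZZ_2$.

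If $p$ is odd and $k\ge 2$, every subgroup has odd order, so any subgroup $H$ with $1<H<G$ (for instance one of order $p$) is a perfect code by Proposition \ref{odd-index}. Thus $|\Delta(G)|\ge 1$.

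If $p=2$ and $k\ge 2$, Proposition \ref{chen2} applies because $|G|$ is composite. It says $G$ has no nontrivial proper subgroup perfect code if and only if $G$ is a cyclic or generalized quaternion $2$-group. That is, $|\Delta(G)|=0$ exactly when $G\cong \ZZ_{2^m}$ with $m\ge 2$, or $G\cong \Q_{2^{m+1}}$ with $m+1\ge 3$, i.e.\ $m\ge 2$. This matches the exceptions in the statement. In all other $2$-groups of order at least $4$, the equivalence gives $|\Delta(G)|\ge 1$. (Alternatively, Lemma \ref{rm} supplies a nontrivial proper perfect code, provided one checks it is proper.)

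Conversely, for each excluded group we check $|\Delta(G)|<|\pi(G)|=1$. For $\ZZ_p$ there is no nontrivial proper subgroup. For $\ZZ_{2^m}$ and $\Q_{2^{m+1}}$, Proposition \ref{chen2} gives $|\Delta(G)|=0$.

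The only delicate point is bookkeeping: the index ranges must match, so that $\ZZ_2$ is absorbed into $\ZZ_p$, $\ZZ_4$ and larger cyclic $2$-groups appear as $\ZZ_{2^m}$ with $m\ge 2$, and $\Q_8$ appears as $\Q_{2^{m+1}}$ with $m=2$. There is no substantive obstacle.
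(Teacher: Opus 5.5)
Your proof is correct and follows the paper's argument: Sylow subgroups are handled via Proposition \ref{odd-index} when $|\pi(G)|\ge 2$, and Proposition \ref{chen2} settles the $p$-group case. You also treat groups of prime order and odd $p$-groups separately, which is slightly more careful than the paper; the paper applies Proposition \ref{chen2}, which is stated only for groups of composite order, to all $p$-groups at once.
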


\begin{proof}
Let $p\in \pi(G)$ and let $P$ be a Sylow $p$-subgroup of $G$. Then $P$ either has odd order or odd index. Assume first that $|\pi(G)|\ge 2$. Then $1<P<G$.
By Proposition \ref{odd-index}, we have that $[P]\in \Delta(G)$.
Since $p$ is arbitrary, we conclude that $|\Delta(G)|\ge |\pi(G)|$.

Now assume that $|\pi(G)|=1$. It suffices to show that $|\Delta(G)|=0$ if and only if $G\in \{\ZZ_p, \ZZ_{2^m}, \Q_{2^{m+1}}\}$ where $p$ is a prime and integer $m\geqslant 2$.
Suppose that $|\Delta(G)|=0$, i.e., $G$ has no nontrivial subgroup perfect code.
By Proposition \ref{chen2}, this is equivalent to $G\in \{\ZZ_p, \ZZ_{2^m}, \Q_{2^{m+1}}\}$, where $p$ is a prime and integer $m\geqslant 2$.

Therefore, the condition $|\Delta(G)|\ge |\pi(G)|$ holds if and only if $G\notin \{\ZZ_p, \ZZ_{2^m}, \Q_{2^{m+1}}\}$, where $p$ is prime and integer $m\geqslant 2$, and hence Lemma \ref{gepi} holds.
\end{proof}

Next, we will consider the conditions under which the equality sign holds. First, we consider the minimal case, namely when it is equal to 1.

\begin{lemma}\label{lemma1}
Suppose that $\pi(G) = \{p_1\}$. Then $|\Delta(G)| = 1$ if and only if $G$ is isomorphic to $\ZZ_{p_1^2}$, where $p_1$ is an odd prime.
\end{lemma}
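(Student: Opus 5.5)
We split into the cases $p_1$ odd and $p_1=2$, since $G$ is a $p_1$-group and the tools for the two cases differ.

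\emph{Odd case.} If $p_1$ is odd, Proposition \ref{odd-index} shows that every subgroup of $G$ is a subgroup perfect code. Hence $|\Delta(G)|$ is the number of conjugacy classes of subgroups $H$ with $1<H<G$. Let $|G|=p_1^k$. A $p$-group has subgroups of every order $p_1^i$, so there is at least one class for each $i\in\{1,\dots,k-1\}$, giving $|\Delta(G)|\ge k-1$. Equality $|\Delta(G)|=1$ therefore forces $k=2$. A group of order $p_1^2$ is $\ZZ_{p_1^2}$ or $\ZZ_{p_1}^2$. The latter is abelian with $p_1+1\ge 4$ subgroups of order $p_1$, each its own conjugacy class, so $|\Delta|\ge 4$. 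Conversely $\ZZ_{p_1^2}$ has exactly one nontrivial proper subgroup, consistent with Example \ref{exm:sol}.

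\emph{Case $p_1=2$.} We must show that no $2$-group has $|\Delta(G)|=1$. If $G$ has a unique involution, then by Proposition \ref{chen2} $G$ is cyclic or generalized quaternion (or $\ZZ_2$), so $|\Delta(G)|=0$. Otherwise $G$ has more than one involution. By Proposition \ref{kf}(a), some nontrivial proper subgroup perfect code $K$ is cyclic or generalized quaternion. By Proposition \ref{kf}(b), every involution of $G$ lies in some nontrivial proper subgroup perfect code.

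Suppose $|\Delta(G)|=1$. Then all nontrivial proper subgroup perfect codes are conjugate to $K$, so every involution is conjugate into $K$. Since $K$ has a unique involution $z$, all involutions of $G$ are conjugate to $z$. But $z\in \Z(K)$, and $G$, being a $2$-group, acts on its set of involutions with orbits of size $1$ or even. The set of involutions has odd size, since the number of solutions of $x^2=1$ in a $2$-group is even, so the number of involutions is odd. A single orbit of odd size in a $2$-group action must have size $1$. Thus $z$ is the unique involution of $G$, contradicting our assumption. So $|\Delta(G)|\ne 1$ whenever $p_1=2$.

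The key step is this orbit-counting argument: the number of involutions in a $2$-group is odd (Frobenius: the number of solutions of $x^2=1$ is divisible by $2$), so a single conjugacy class of involutions must be central, hence of size one. That contradicts the assumption of more than one involution and completes the proof.
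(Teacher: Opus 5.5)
Your proof is correct and follows the paper's argument. You use the same split into $p_1$ odd (every subgroup is a perfect code by Proposition \ref{odd-index}, which forces $|G|=p_1^2$ and rules out $\ZZ_{p_1}^2$) and $p_1=2$ (Proposition \ref{kf} forces every involution to be conjugate to the unique involution of $K$). The only difference is the last step: you reach the contradiction through the odd number of involutions in a $2$-group, whereas the paper notes that an involution in the nontrivial center $\Z(G)$ forms a singleton class, which is a slightly quicker route to the same conclusion.
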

\demo Assume that $\pi(G) = \{p_1\}$ and $|\Delta(G)|= 1$, that is $|\pi(G)|=|\Delta(G)|$. Lemma \ref{gepi} shows that $G\ncong\ZZ_{p_1}$, $\ZZ_{2^m}$ or $\Q_{2^{m+1}}$ for $m\geqslant 2$ integer.
We claim that $p_1\neq 2$.
Suppose for contradiction that $p_1=2$.
Applying Proposition \ref{kf} (a), one yields that there exists a nontrivial subgroup perfect code $H$ of $G$ that is isomorphic to a cyclic or a generalized quaternion 2-group. Further, according to Proposition \ref{kf} (b), we conclude that the conjugacy class $[H]$ contains all involutions of $G$. However, since $H$ contains a unique involution as it is a cyclic or a generalized quaternion $2$-group, we can conclude that all involutions of $G$ are conjugate.
Note that the center $\Z(G)$ is nontrivial and contains an involution, say $x$. Then the conjugacy class of $x$ is $\{x\}$, which implies that $x$ is the only involution in $G$, a contradiction. Therefore, the claim holds, i.e., $p_1$ is an odd prime.
Assume that $|G|=p_1^k$ for $k \geqslant 2$ integer. Since $|\Delta(G)|=1$, by Proposition \ref{odd-index}, one yields that $k=2$. Note that every group of order $p_1^2$ is isomorphic to either $\ZZ_{p_1^2}$ or $\ZZ_{p_1} \times \ZZ_{p_1}$. Since both direct factors of $\ZZ_{p_1} \times \ZZ_{p_1}$ are subgroup perfect codes of $\ZZ_{p_1} \times \ZZ_{p_1}$ and are not conjugate, it follows that $G \cong \ZZ_{p_1^2}$.

Conversely, suppose that $G \cong \ZZ_{p_1^2}$, where $ p_1$ is an odd prime.
Since the group $\ZZ_{p_1^2}$ contains a unique nontrivial subgroup of order $p_1$, which is a subgroup perfect code of $G$ by Proposition \ref{odd-index}. Hence $ |\Delta(G)| = 1 $. Therefore, Lemma \ref{lemma1} holds.
\qed

In what follows, we assume that $|\pi(G)| \geqslant 2$. By Proposition~\ref{odd-index}, the conjugacy class $[P_i]$ lies in $\Delta(G)$ for all $i = 1, \dots, n$. Furthermore, applying Proposition \ref{odd-index} again, every subgroup of $P_i$ (with $p_i \neq 2$) is a subgroup perfect code of $G$. It follows that if $|\Delta(G)| = |\pi(G)|$, then $|G|_{2'}$ must be square-free; that is, $|G|=p_1p_2\dots p_n$ if $p_1\neq 2$ or $|G|=2^mp_2\dots p_n$ where $m$ is a positive integer.

\begin{lemma}\label{lemma2}
Suppose that $|\pi(G)|\ge 2$ and $p_1\neq 2$. Then $|\Delta(G)|= |\pi(G)|$ if and only if $G\cong \ZZ_{p_2}{:}\ZZ_{p_1}$ where $p_1<p_2$ are odd primes.
\end{lemma}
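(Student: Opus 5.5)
Since $p_1\neq 2$, the order $|G|$ is odd, so by Proposition \ref{odd-index} \emph{every} subgroup of $G$ is a subgroup perfect code. Hence $\Delta(G)$ is exactly the set of conjugacy classes of nontrivial proper subgroups of $G$. The whole question then reduces to pure group theory: determine the odd-order groups with $|\pi(G)|\ge 2$ whose nontrivial proper subgroups fall into exactly $|\pi(G)|$ conjugacy classes.

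For the forward direction, assume $|\Delta(G)|=|\pi(G)|=n$. By the remark preceding the statement, $|G|=p_1p_2\cdots p_n$ is square-free. By Feit--Thompson, $G$ is solvable; alternatively, groups of square-free order are metacyclic and hence solvable. So by Proposition \ref{Hall}, $G$ has a $\pi$-Hall subgroup for every nonempty $\pi\subseteq\pi(G)$. Subgroups of different orders are not conjugate, so the proper Hall subgroups give at least $2^n-2$ classes. We need $2^n-2\le n$, which forces $n=2$. Then $|G|=p_1p_2$ with $p_1<p_2$ odd. If $G$ were cyclic, its only nontrivial proper subgroups would be the two Sylow subgroups, giving $|\Delta(G)|=2$. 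So I must also exclude $\ZZ_{p_1p_2}$, which is the delicate point.

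Here I will recount carefully. Every nontrivial proper subgroup of a group of order $p_1p_2$ has prime order, so it is Sylow, and the Sylow $p_i$-subgroups are conjugate. Hence any group of order $p_1p_2$, cyclic or not, has $|\Delta(G)|=2$. This would make $\ZZ_{p_1p_2}$ satisfy equality, yet the statement and Table \ref{tab:m=pi} list only $\ZZ_{p_2}{:}\ZZ_{p_1}$. I would therefore read $\ZZ_{p_2}{:}\ZZ_{p_1}$ as allowing the trivial action, so that it includes the direct product. I would state explicitly in the proof that the semidirect product may be direct, since otherwise the ``only if'' direction fails for $\ZZ_{15}$. By Sylow's theorem, $P_2\unlhd G$ because $n_{p_2}\mid p_1<p_2$ and $n_{p_2}\equiv 1 \pmod{p_2}$. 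Hence $G\cong \ZZ_{p_2}{:}\ZZ_{p_1}$.

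For the converse, take $G=\ZZ_{p_2}{:}\ZZ_{p_1}$ with $p_1<p_2$ odd. The nontrivial proper subgroups have order $p_1$ or $p_2$ and are Sylow, so they form two classes. All of them are perfect codes by Proposition \ref{odd-index}, giving $|\Delta(G)|=2=|\pi(G)|$.

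The main obstacle is the reduction to $n=2$. It requires the existence of Hall subgroups for every subset of primes, which relies on solvability. Without invoking Feit--Thompson, I would instead use Hölder's theorem that groups of square-free order are metacyclic, hence solvable. The bound $2^n-2>n$ for $n\ge 3$ is then immediate.
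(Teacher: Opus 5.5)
Your proposal is correct and follows the same route as the paper. Both arguments get square-freeness of $|G|$ from the preceding remark, then solvability, then use the $2^{n}-2$ pairwise non-conjugate Hall subgroups to force $n=2$, then Sylow's theorem to get $P_2\unlhd G$; the converse holds because every nontrivial proper subgroup of a group of order $p_1p_2$ is Sylow. Your reading of $\ZZ_{p_2}{:}\ZZ_{p_1}$ as possibly direct matches the paper's own convention (in the proof of Lemma \ref{lemma3} it writes $\ZZ_{p_2}\times\ZZ_{2^m}$ in the form $\ZZ_{p_2}{:}\ZZ_{2^m}$). Using H\"older's theorem instead of Feit--Thompson for solvability is a harmless, slightly more elementary variant.
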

\demo Suppose that $|\pi(G)|\ge 2$, $p_1\neq 2$ and $|\Delta(G)|= |\pi(G)|$. According to the argument in the previous paragraph, it can be concluded that $|G|=p_1p_2\dots p_n$ and $P_i \cong \ZZ_{p_i}$ for all $i= 1, \dots, n$. Since $|G|$ is odd, $G$ is solvable, so Propositions \ref{odd-index} and \ref{Hall} yield that $n = 2$, i.e., $|G|=p_1p_2$.
It follows from the assumption $p_1<p_2$ and the Sylow theorems that $P_2 \lhd G$, and hence $G \cong \ZZ_{p_2}{:}\ZZ_{p_1}$, as desired.

Conversely, suppose that $G \cong \ZZ_{p_2}{:}\ZZ_{p_1}$, where $p_1 < p_2$ are odd primes. Every nontrivial subgroup of $G$ is either a Sylow $p_1$-subgroup of $G$ or a Sylow $p_2$-subgroup of $G$.
Thus, $G$ has exactly two conjugacy classes of nontrivial subgroups. By Proposition \ref{odd-index}, both of these conjugacy classes belong to $\Delta(G)$. Therefore, $|\Delta(G)| = |\pi(G)|$ and Lemma \ref{lemma2} holds.
\qed

\begin{lemma}\label{lemma3}
Suppose that $|\pi(G)|\ge 2$ and $p_1=2$. Then $|\Delta(G)|= |\pi(G)|$ if and only if
$G\cong \ZZ_p{:}\ZZ_{2^m}$, $\ZZ_p{:}\Q_{2^m}$, $\Q_8{:}\ZZ_3$ or $\SL(2,3).\ZZ_2$, where $p=p_2$, $m$ is a positive integer, and in the case $G\cong \SL(2,3).\ZZ_2$, its Sylow $2$-subgroups are isomorphic to $\Q_{16}$.
\end{lemma}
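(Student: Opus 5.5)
We argue the two directions separately, with most of the work in the forward direction. Assume $|\pi(G)|=n\ge 2$, $p_1=2$ and $|\Delta(G)|=n$. By the paragraph preceding Lemma \ref{lemma2}, $|G|=2^m p_2\cdots p_n$ with $|G|_{2'}$ square-free. The classes $[P_1],\dots,[P_n]$ already account for all of $\Delta(G)$.

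\textbf{Step 1: the Sylow $2$-subgroup.} By Lemma \ref{rm}, $G$ has a nontrivial proper subgroup perfect code $K$ that is a cyclic or generalized quaternion $2$-group. Since $[K]$ must be one of the classes $[P_i]$, we get $K\in\Syl_2(G)$. Hence $P_1$ is cyclic or generalized quaternion.

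\textbf{Step 2: Lemma \ref{cg} and $n=2$.} By Lemma \ref{cg}, the subgroup perfect codes are exactly the subgroups of odd order or odd index. Every odd-order subgroup must then be a Sylow subgroup. Every subgroup of odd index must be a Sylow $2$-subgroup, or $G$ itself, so $P_1$ is the only proper subgroup containing a Sylow $2$-subgroup.
\begin{itemize}
\item If $P_1$ is cyclic, Burnside's transfer theorem makes $G$ $2$-nilpotent, so $G=N{:}P_1$ with $|N|$ odd and square-free. Then $N$ itself has odd order, so $N$ is a Sylow subgroup, forcing $n=2$ and $G\cong\ZZ_p{:}\ZZ_{2^m}$. (If $n\ge3$, $N$ itself would be an extra odd-order class.)
\item If $P_1\cong\Q_{2^m}$, we use that no proper subgroup strictly contains $P_1$. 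For any odd prime $q$, if $P_q\lhd G$ then $P_qP_1$ is a proper subgroup strictly containing $P_1$ unless $G=P_qP_1$. So we first treat $G$ with a normal $2$-complement $N$. Here $N$ is an odd-order subgroup, so it must be a Sylow subgroup, giving $G\cong \ZZ_p{:}\Q_{2^m}$.
\end{itemize}

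\textbf{Step 3: no normal $2$-complement.} This is the main obstacle. We make $G$ act on the cosets of a maximal subgroup $M\ge P_1$. Since $P_1$ is the only proper subgroup over a Sylow $2$-subgroup, $M=P_1$. Taking $\bar G=G/\mathrm{core}_G(P_1)$ gives a primitive group of odd degree with $|\bar G|_{2'}$ square-free and $2$-group stabilizer. Proposition \ref{square-free}(1) then gives $\bar G\cong \ZZ_p{:}\ZZ_{2^d}$, $\PSL(2,p)$ or $\PGL(2,p)$.
\begin{itemize}
\item \emph{The $\PSL(2,p)$ and $\PGL(2,p)$ cases.} Proposition \ref{bray} shows these contain proper subgroups of odd index other than Sylow $2$-subgroups, such as dihedral subgroups of order $2(p\pm1)$ or $\S_4$, unless $p$ is tiny. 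For $p=5,7$ they contain $\A_4$, $\S_4$ or odd-order subgroups like $\ZZ_7{:}\ZZ_3$ or $\ZZ_5{:}\ZZ_2$. Their preimages in $G$ give extra classes of odd index, a contradiction. The one exception I expect to survive is $\bar G\cong\S_3\cong\PSL(2,2)$-like, with $\mathrm{core}_G(P_1)=\O_2(G)$.
\item \emph{The $\ZZ_p{:}\ZZ_{2^d}$ case.} Here $\O_2(G)=\mathrm{core}_G(P_1)$ is a normal cyclic or quaternion subgroup, and $G/\O_2(G)\cong \ZZ_p{:}\ZZ_{2^d}$. If the odd part acts trivially on $\O_2(G)$, a normal $2$-complement exists, which is Step 2. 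Otherwise $p$ acts nontrivially on $\O_2(G)$. By Proposition \ref{aut}, this forces $\O_2(G)\cong\Q_8$ and $p=3$. Then $|G|=2^m\cdot 3$ with $P_1\cong \Q_8$ or $\Q_{16}$, giving $\Q_8{:}\ZZ_3$ or $\SL(2,3).\ZZ_2$ with Sylow $\Q_{16}$.
\end{itemize}

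\textbf{Converse.} For each listed group, we use that the Sylow $2$-subgroup is cyclic or generalized quaternion, so Lemma \ref{cg} applies. It remains to enumerate the subgroups of odd order and of odd index up to conjugacy and check there are exactly two classes, the two Sylow classes.
\begin{itemize}
\item $\ZZ_p{:}\ZZ_{2^m}$ and $\ZZ_p{:}\Q_{2^m}$ with $|G|_{2'}=p$: the only odd-order subgroups are Sylow $p$-subgroups. A subgroup of odd index contains a Sylow $2$-subgroup, so its order is $2^m$ or $|G|$.
\item $\Q_8{:}\ZZ_3\cong\SL(2,3)$ and $\SL(2,3).\ZZ_2$: the same holds, since $\SL(2,3)$ has no subgroup of order $8\cdot$odd other than $Q_8$ and itself. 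For the order-$48$ group, the subgroups of order $16$ are the Sylow subgroups.
\end{itemize}
This gives $|\Delta(G)|=2=|\pi(G)|$ in each case.
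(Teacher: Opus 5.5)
Your skeleton matches the paper's: Lemma \ref{rm} forces $P_1$ to be cyclic or generalized quaternion, then $M=P_1$, then $\bar G=G/\O_2(G)$ acts primitively with odd square-free degree, Proposition \ref{square-free}(1) applies, and the N/C analysis with Proposition \ref{aut} handles the $\ZZ_p{:}\ZZ_{2^d}$ branch. Using Burnside's transfer theorem to settle the cyclic case at once is a harmless shortcut.

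The genuine gap is your elimination of $\bar G\cong\PSL(2,p)$ or $\PGL(2,p)$. You look for proper subgroups of odd index of $\bar G$ other than Sylow $2$-subgroups. In this branch, however, the point stabilizer $P_1/\O_2(G)$ is both a Sylow $2$-subgroup and a maximal subgroup of $\bar G$. Any proper subgroup of odd index contains a conjugate of it, so it is a Sylow $2$-subgroup. The odd-index mechanism therefore can never give a contradiction here. The dihedral subgroups of order $p\pm1$ or $2(p\pm1)$ with $p\pm1$ a power of $2$ that you cite are the Sylow $2$-subgroups themselves. For instance, in $\PSL(2,31)$ or $\PGL(2,257)$, both with square-free odd part, the only proper subgroups of odd index are Sylow $2$-subgroups. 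Your expected survivor ``$\S_3\cong\PSL(2,2)$'' also does not belong to this branch: $\S_3$ lies in the $\ZZ_p{:}\ZZ_{2^d}$ family, Proposition \ref{square-free}(1) has $p\ge5$ here, and nothing survives in the $\PSL/\PGL$ case.

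The contradiction must come from odd-order subgroups, as in the paper.
\begin{itemize}
\item If $p+1=2^k$ with $k\ge3$, then $\bar G$ contains $\ZZ_p{:}\ZZ_{(p-1)/2}$, which has odd order.
\item If $p-1=2^k$, then $\bar G$ contains a cyclic subgroup of odd order $(p+1)/2$.
\end{itemize}
The full preimage of such a subgroup has even index, not odd index. But since $\O_2(G)$ is a $2$-group, the preimage contains a complement to $\O_2(G)$ of the same odd order, by Schur--Zassenhaus or Proposition \ref{Hall}. This complement is a nontrivial subgroup perfect code by Proposition \ref{odd-index}, so it must be a Sylow subgroup. In the first case this is impossible, since $(p-1)/2>1$. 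In the second case it forces $2^{k-1}+1$ to be prime, so Lemma \ref{num} gives $p=5$. That is ruled out because neither $\PSL(2,5)$ nor $\PGL(2,5)$ has a $2$-group as a maximal subgroup.

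A smaller omission is in the $\ZZ_p{:}\ZZ_{2^d}$ branch: you should justify $P_1\cong\Q_8$ or $\Q_{16}$. Here $P_1$ is generalized quaternion with $P_1/\O_2(G)$ cyclic. A generalized quaternion group has abelianization $\ZZ_2\times\ZZ_2$, so $|P_1:\O_2(G)|\le2$.
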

\demo Let's prove the sufficiency of this lemma first. Assume that $G\cong \ZZ_p{:}\ZZ_{2^m}$, $\ZZ_p{:}\Q_{2^m}$, $\Q_8{:}\ZZ_3$, or  $\SL(2,3).\ZZ_2$, where $p\ge 3$ is a prime, $m$ is a positive integer and in the last case,  the Sylow $2$-subgroups of $G$ are isomorphic to $\Q_{16}$.
Since all Sylow $2$-subgroups of $G$ are cyclic or generalized quaternion, Lemma \ref{cg} implies that every subgroup perfect code of $G$ has odd order or odd index. Given the possible structures of $G$, any such subgroup must be either a Sylow $2$-subgroup or a Sylow $p_2$-subgroup of $G$ where $p_2=3,p$. Consequently, $|\Delta(G)| = |\pi(G)|$.

Next, we prove the necessity of this lemma. Assume that $p_1=2$, $|\pi(G)|\ge 2$ and $|\Delta(G)|= |\pi(G)|$. According to the previous paragraph of Lemma \ref{lemma2}, we can see that $|G|=2^mp_2\dots p_n$ and $\Delta(G) = \{[P_1], \dots, [P_n]\}$. Since $p_1=2$, $P_1$ is a Sylow 2-subgroup of $G$. Let $M$ be a maximal subgroup of $G$ containing $P_1$. Since $|G:M|$ is odd, by Proposition \ref{odd-index}, $M$ is a subgroup perfect code of $G$ and hence $[M] \in \Delta(G)$.
Given that $\Delta(G) = \{ [P_1], [P_2], \dots, [P_n] \}$, we have that $[M] = [P_1]$ as $P_1 \leq M$. Thus $P_1=M$, which is a maximal subgroup of $G$. Recall that $\Syl_2(G)$ denotes the set of all Sylow $2$-subgroups of $G$. Consider the conjugacy action of $G$ on $\Syl_2(G)$. It is clear that the kernel of this action is $\O_2(G)= \bigcap_{g \in G} P_1^g$. Let $\overline{G}=G/\O_2(G)$. Then $\overline{G}$ acts transitively on $\Syl_2(G)$, with point stabilizer $\overline{G}_{P_1} = P_1/\O_2(G)$.
Since $P_1$ is maximal in $G$, the quotient $P_1 / \O_2(G)$ is maximal in $\overline{G}$. Thus, $ \overline{G}$ is a primitive permutation group on $\Syl_2(G)$ of odd degree, with a point stabilizer $P_1 / \O_2(G)$  isomorphic to a $2$-group. The Sylow theorems shows that $|\Syl_2(G)|\mid |G|_{2'}$, combining with the assumption that $|G|_{2'}$ is square-free, $|\Syl_2(G)|$ is square-free. Applying Proposition \ref{square-free} (1), we get that either $\overline{G} \cong \ZZ_q{:}\ZZ_{2^d}$, where $q \geqslant 3$ is a prime and $d\ge 0$ is an integer satisfying $2^d \mid q-1$, or $\overline{G} \cong \PSL(2,p)$ or $\PGL(2,p)$, where $p \geqslant 5$ is a prime and either $p-1$ or $p+1$ is a power of $2$.

\noindent\textbf{(1)} Suppose that $\overline{G}\cong\ZZ_q{:}\ZZ_{2^d}$, where $q \geqslant 3$ is a prime and $d\ge 0$ is an integer satisfying $2^d \mid q-1$.
Then $p_2=q$, $\pi(G)=\{2,q\}$ and $\Delta(G)=\{[P_1], [P_2]\}$.

Assume that $d=0$. Then $G/\O_2(G)=\overline{G}\cong \ZZ_q$ for prime $q\ge 3$, and hence  $P_1=\O_2(G)\lhd G$ and $P_2\cong\ZZ_q$, so that $G=P_1{:} P_2$. By Lemma \ref{rm}, there exists a nontrivial subgroup perfect code $H$ of $G$ that is isomorphic to either a cyclic $2$-group or a generalized quaternion $2$-group. Since $[H]\in\Delta(G)=\{[P_1], [P_2]\}$ and $(|H|, |P_2|) = 1$, it follows that $[H]=[P_1]$ and so $P_1\cong\ZZ_{2^m}$ or $\Q_{2^m}$, where $m$ is a positive integer.
If $P_1 \cong \ZZ_{2^m}$ for some positive integer $m$, then Proposition \ref{aut} (1) implies that $\Aut(P_1)$ is a $2$-group, and applying the N/C-Theorem, we can conclude that $G\cong\ZZ_{p_2}\times \ZZ_{2^m}$, and of course we can also write it in the form of $G\cong\ZZ_{p_2}{:}\ZZ_{2^m}$, as desired. Now assume that $P_1\cong \Q_{2^m}$. By Proposition \ref{aut} (2), $\Aut(\Q_{2^m})$ is a $2$-group for $m \geqslant 4$, while $\Aut(\Q_8) \cong \S_4$.
Since $G=P_1{:}P_2$, $P_2/(P_2\cap \C_G(P_1))\cong P_2 \C_G(P_1)/\C_G(P_1)\leqslant G/\C_G(P_1)$, applying the N/C-Theorem, one yields that $G/\C_G(P_1)\lesssim\Aut(P_1)$ and $P_2/ (P_2\cap \C_G(P_1))$ is isomorphic to a subgroup of $\Aut(\Q_{2^m})$. If $m \geqslant 4$, then $\Aut(\Q_{2^m})$ is a 2-subgroup. Since $|P_2|$ is odd, it follows that $P_2=P_2\cap \C_G(P_1)$, i.e, $P_2\le\C_G(P_1)$ and $P_2$ centralizes $P_1$, and consequently $G\cong\ZZ_{p_2}\times\Q_{2^m}$, as desired.
Now suppose that $m=3$. Then $P_1\cong \Q_8$ and $\Aut(P_1)=\Aut(\Q_8)\cong\S_4$, in particular, $P_2/ (P_2\cap \C_G(P_1))\lesssim\S_4$. We deduce from $|P_2|$ is odd that $P_2/P_2\cap\C_G(P_1)=1$ or $P_2/P_2\cap\C_G(P_1)\cong\ZZ_3$.  If the former holds, i.e., $P_2=P_2\cap\C_G(P_1)$, then $G\cong \ZZ_{p_2}\times\Q_8$. For the latter case, that is $P_2/P_2\cap\C_G(P_1)\cong\ZZ_3$, since $P_2\cong\ZZ_q$, one yields that $P_2\cap\C_G(P_1)=1$ and $P_2 \cong \ZZ_3$. Consequently $G\cong\Q_8{:}\ZZ_3$, as desired.

Now suppose that $d\ge 1$. Then $\overline{G}\cong\ZZ_q{:}\ZZ_{2^d}$ has even order. By Lemma \ref{rm}, $G$ has a nontrivial subgroup perfect code $H$ that is a cyclic or a generalized quaternion 2-group.  Since $\Delta(G)=\{ [P_1], [P_2] \}$ and $(|H|,|P_2|)=1$, we conclude that $[H]=[P_1]$.
Therefore, $P_1$ is isomorphic to $\ZZ_{2^m}$ or $\Q_{2^{m}}$ for some positive integer $m$. Note that $\O_2(G)\le P_1$ and $P_2/(P_2\cap\C_G(\O_2(G)))\cong P_2 \C_G(\O_2(G))/\C_G(\O_2(G))\le G/\C_G(\O_2(G))$. Suppose that $\O_2(G)\cong\C_{2^l}$ for positive integer $l\le m$.
Applying the N/C-Theorem and Proposition \ref{aut} (1), we can conclude that $P_2/(P_2\cap\C_G(\O_2(G)))\lesssim G/\C_G(\O_2(G))\lesssim\Aut(\O_2(G))$ is a 2-subgroup and hence, $P_2\le \C_G(\O_2(G))$ and $\O_2(G){:}P_2=\O_2(G)\times P_2$. It follows from $\overline{G}\cong\ZZ_q{:}\ZZ_{2^d}$ that $G=\O_2(G).(\ZZ_q{:}\ZZ_{2^d})=(\O_2(G)\times\ZZ_q).\ZZ_{2^d}$.  Since $P_2$ is characteristic in $\O_2(G)\times P_2$ and $\O_2(G)\times P_2 \lhd G$, we can conclude that $P_2\lhd G$. Thus $G=P_2{:}P_1$, that is, $G\cong\ZZ_q{:}\ZZ_{2^m}$ or $G\cong \ZZ_q{:}\Q_{2^{m}}$.
Now suppose that $\O_2(G)$ is noncyclic. Then $P_1\cong \Q_{2^{m}}$ and $|P_1:\O_2(G)|=2$. Applying the N/C-Theorem and Proposition \ref{aut} (2), we obtain that $P_2/(P_2\cap\C_G(\O_2(G)))\lesssim G/\C_G(\O_2(G))\lesssim\Aut(\O_2(G))$, which is isomorphic to either a 2-subgroup or $\S_4$ for $m=4$. It follows from $P_2\cong\ZZ_q$ for $q\geqslant 3$ prime that either $P_2\leqslant\C_G(\O_2(G))$ or $P_2\cap\C_G(\O_2(G))=1$. For the former case, that is $P_2\le \C_G(\O_2(G))$, then  $\O_2(G){:}P_2=\O_2(G)\times P_2$ and since $P_2$ is characteristic in $\O_2(G)\times P_2$ and $\O_2(G)\times P_2 \lhd G$, one yields that $P_2\lhd G$, and consequently $G=P_2{:}P_1\cong\ZZ_q{:}\ZZ_{2^m}$ or $G\cong \ZZ_q{:}\Q_{2^{m}}$, as desired. In what follows, we assume that the latter case holds, i.e., $P_2\cap \C_{G}(\O_2(G))=1$. Then $\O_2(G)\cong \Q_8$ with $q=3$, and $P_2\ntriangleleft \O_2(G){:}P_2$. Consequently $\O_2(G){:}P_2\cong \SL(2,3)$.
Since $|P_1:\O_2(G)|=2$ and $G=P_1{:}P_2$, one yields that $|G:(\O_2(G){:}P_2)|=2$, $G \cong \SL(2,3).\ZZ_2$ and $P_1\cong \Q_{16}$. Therefore, $G\cong \ZZ_q{:}\ZZ_{2^m}, \ZZ_q{:}\Q_{2^m}$, or $\SL(2,3).\ZZ_2$, where $q\ge 3$ is a prime.

In conclusion, $G\cong \ZZ_p{:}\ZZ_{2^m}$, $\ZZ_p{:}\Q_{2^m}$, $\Q_8{:}\ZZ_3$, or $\SL(2,3).\ZZ_2$, where $p=p_2$, $m$ is a positive integer, and in the case $G\cong \SL(2,3).\ZZ_2$, its Sylow $2$-subgroup is isomorphic to $\Q_{16}$, as desired.

\noindent\textbf{(2)} Assume that $\overline{G}\cong \PSL(2,p)$ or $\PGL(2,p)$, where $p \ge 5$ is a prime such that one of $p-1$ and $p+1$ is a power of $2$. Next, we will prove our conclusion by analyzing the situation that $p-1$ is a power of 2 or $p+1$ is a power of 2.

First, suppose that $p-1=2^k$ for some integer $k\ge 2$.
Then $p + 1 = 2(2^{k-1} + 1)$. By Proposition \ref{bray}, the group $\PGL(2,p)$ contains a maximal subgroup isomorphic to $\D_{2(p+1)}$, while $\PSL(2,p)$ contains a maximal subgroup isomorphic to $\D_{p+1}$. Consequently, $\overline{G}$ contains a cyclic subgroup of odd order $ (p+1)/2 = 2^{k-1} + 1$, that is, there exists a subgroup $H$ of $G$ containing $ \O_2(G)$ such that $H/\O_2(G)\cong \ZZ_{(p+1)/2}$ ($H$ is a preimage of $\ZZ_{(p+1)/2}$ in $G$). Since $\O_2(G)$ is a 2-group and $ (p+1)/2$ is odd, it follows that $H$ contains a subgroup $Q$ such that $Q\cong \ZZ_{(p+1)/2}$. Proposition \ref{odd-index} implies that $[Q] \in \Delta(G)$.
Recall that $\Delta(G) = \{ [P_1], [P_2], \dots, [P_n] \}$, where $P_1\in \Syl_2(G)$ and each $P_i$ for $i \geqslant 2$ is isomorphic to $\ZZ_{p_i}$. There exists an integer $i$ such that $[Q]=[P_i]$, which means that $Q \cong \ZZ_{p_i}$. Consequently $2^{k-1} + 1=(p+1)/2=p_i$ is a prime. On the other hand, since $p=2^k+1$ is prime, Lemma \ref{num} yields that $k=2$, and hence $p=5$. Thus, $\overline{G} \cong \PSL(2,5)$ or $\overline{G} \cong \PGL(2,5)$.
Suppose that $\overline{G} \cong \PGL(2,5)$. Then $|\overline{G}|=120=2^3\cdot3\cdot5$ and $\Delta(G)=\{ [P_1], [P_2], [P_3] \}$. By Proposition \ref{bray}, $\overline{G}$ has a maximal subgroup which is isomorphic to $\S_4$. Thus there exists a subgroup $R$ of $G$ containing $\O_2(G)$ such that $R / \O_2(G) \cong \S_4$. Since $|\overline{G}|=120$, we have that $|G : R|$ is odd.
By Proposition \ref{odd-index}, we get that $[R] \in \Delta(G)$, which implies that $R\cong P_i\cong \ZZ_{p_i}$ for some $i\in \{1,2,3\}$. This contradicts the assumption that $R / \O_2(G) \cong \S_4$.
Now suppose that $\overline{G} \cong \PSL(2,5)$. By the Atlas\cite[p.2]{atlas}, the maximal subgroups of $ \overline{G}$ (up to conjugacy) are isomorphic to $\S_3$, $\D_{10}$ and $\A_4$, none of which is a $2$-group. However, the Sylow $2$-subgroup $P_1 / \O_2(G)$ is maximal in $\overline{G}$,  a contradiction.

Assume that $p+1=2^k$ for some integer $k\ge 3$.
Then $p - 1 = 2(2^{k-1} - 1)$. Since $k \geqslant 3$, we have that $2^{k-1} - 1 > 1$. By  Proposition \ref{bray}, the group $\PGL(2,p)$ contains a maximal subgroup which is isomorphic to $ \ZZ_p{:} \ZZ_{p-1}$, while $\PSL(2,p)$ contains one isomorphic to $\ZZ_p{:}\ZZ_{(p-1)/2}$. Consequently, there exists a subgroup $H$ of $G$ containing $\O_2(G)$ such that $H / \O_2(G) \cong \ZZ_p {:}\ZZ_{(p-1)/2}$.
Since $H/\O_2(G)$ has odd order, it follows that $H$ is solvable. By Proposition \ref{Hall}, $H$ contains a subgroup $Q$ with $Q \cong \ZZ_p{:}\ZZ_{(p-1)/2}$.
Proposition \ref{odd-index} implies that $[Q] \in \Delta(G)$, yielding that $Q\cong P_i$ for some $i\in \{1,\dots,n\}$. This contradicts the assumption that $Q \cong \ZZ_p{:}\ZZ_{(p-1)/2}$.

Therefore, $\overline{G}$ is isomorphic to neither $\PSL(2,p)$ nor $\PGL(2,p)$. Finally, we proved the correctness of Lemma \ref{lemma3}.\qed

With the results currently available, we now prove Theorem \ref{main}.

{\bf Proof of Theorem \ref{main}.}
Let $G$ be a finite group. By Lemma \ref{gepi}, we have that $|\Delta(G)| \ge |\pi(G)|$ unless $G$ is isomorphic to one of the groups $\ZZ_p$, $\ZZ_{2^m}$, or $\Q_{2^{m+1}}$, where $p$ is a prime and $m \ge 2$.
Combining Lemmas \ref{lemma1}, \ref{lemma2}, and \ref{lemma3}, we conclude that $|\Delta(G)| = |\pi(G)|$ if and only if $G$ is isomorphic to one of the groups listed in Table \ref{tab:m=pi}. So we proved the correctness of Theorem \ref{main}.
\qed

The proof technique of Theorem \ref{main} also enables us to establish the following result concerning groups that have at most three conjugacy classes of nontrivial subgroup perfect codes.

\begin{corollary}\label{cor}
Let $G$ be a finite group with at most $3$ conjugacy classes of nontrivial subgroup perfect codes. Then $G$ is solvable and $|\pi(G)|\leq 2$.
\end{corollary}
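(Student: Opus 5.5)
We plan to argue by cases on $|\pi(G)|$. By Lemma \ref{gepi}, $|\Delta(G)|\ge|\pi(G)|$ except for the groups $\ZZ_p$, $\ZZ_{2^m}$ and $\Q_{2^{m+1}}$, which are solvable with $|\pi(G)|=1$. So we may assume $|\pi(G)|\le 3$. It therefore suffices to show two things: that $|\pi(G)|=3$ forces $|\Delta(G)|\ge 4$, and that $G$ is solvable in the remaining cases. Solvability is automatic when $|\pi(G)|\le 2$ by Burnside's $p^aq^b$ theorem, so everything reduces to excluding $|\pi(G)|=3$ with $|\Delta(G)|=3$. In that situation we are exactly in the equality case $|\Delta(G)|=|\pi(G)|$ of Theorem \ref{main}. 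Every group in Table \ref{tab:m=pi} has $|\pi(G)|\le 2$, which gives a contradiction.

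If one prefers not to quote Theorem \ref{main} as a black box, we can rerun its argument for $n=3$. Equality $|\Delta(G)|=3=|\pi(G)|$ forces $|G|_{2'}$ to be square-free and $\Delta(G)=\{[P_1],[P_2],[P_3]\}$.
\begin{itemize}
\item If $|G|$ is odd, then $G$ is solvable, and a Hall $\{p_1,p_2\}$-subgroup (Proposition \ref{Hall}) is a fourth class by Proposition \ref{odd-index}.
\item If $2\in\pi(G)$, a maximal subgroup containing $P_1$ has odd index, so $P_1$ is maximal. Then $G/\O_2(G)$ is primitive of odd square-free degree with $2$-group stabilizer, and Proposition \ref{square-free}(1) applies.
\item The case $\ZZ_q{:}\ZZ_{2^d}$ has only two primes, which is impossible.
\item The $\PSL(2,p)$ and $\PGL(2,p)$ cases are killed exactly as in part (2) of the proof of Lemma \ref{lemma3}.
\end{itemize}

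The main point requiring care is that Lemmas \ref{lemma2} and \ref{lemma3} were stated for general $|\pi(G)|\ge2$, so the conclusion really is just bookkeeping. The only subtlety is making sure $|\Delta(G)|\le 3$ with $|\pi(G)|=3$ indeed means equality, and that the inequality $|\Delta(G)|\ge|\pi(G)|$ excludes $|\pi(G)|\ge4$. Both follow directly from Lemma \ref{gepi}.
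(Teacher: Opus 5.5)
Your proof is correct and rests on the same key input as the paper's: the bound $|\Delta(G)|\ge|\pi(G)|$ from Lemma~\ref{gepi} together with the equality classification in Theorem~\ref{main}, all of whose groups have $|\pi(G)|\le 2$. The only difference is the order of the steps. The paper first excludes non-solvable $G$ this way and then bounds $|\pi(G)|$ for solvable $G$ using the $2^{|\pi(G)|}-2$ non-conjugate Hall subgroups. You instead apply Theorem~\ref{main} to every $G$ with $|\pi(G)|=3$ and get solvability from Burnside's theorem, which makes the Hall-subgroup step unnecessary.
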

\proof Assume that $G$ is a finite group with $|\Delta(G)|\le 3$.
Suppose for contradiction that $G$ is non-solvable. Then $ |\pi(G)| \ge 3$.
By Proposition \ref{odd-index}, $G$ has at least $|\pi(G)|$ non-conjugate Sylow subgroups that are subgroup perfect codes of $G$, which implies that $|\Delta(G)|\ge |\pi(G)|$.
Since $|\pi(G)| \ge 3$ and $|\Delta(G)|\le 3$, we get that $|\Delta(G)|=|\pi(G)|$.
By Theorem \ref{main}, we conclude that $\pi(G)\leq 2$, a contradiction. Therefore, $G$ is solvable. Moreover, by Propositions \ref{odd-index} and \ref{Hall}, we can conclude that $ |\pi(G)| \leq 2$. \qed

\section{The proof of Theorem \ref{main2}}\label{sec4}

In this section, let $G$ be a finite group with $\pi(G) = \{p_1, p_2, \dots, p_n\}$, where $p_1< p_2< \dots<p_n$, and $\Delta(G)$ the set of all conjugacy classes of nontrivial subgroup perfect codes of $G$. For each $p_i \in \pi(G)$, set $P_i \in \Syl_{p_i}(G)$.

We now establish several preliminary results and technical lemmas before proceeding to the proof of Theorem \ref{main2}.

\begin{proposition}\cite[p.160, Corollary 5.14]{isaacs}\label{pb}
Let $p$ be the smallest prime divisor of $|G|$, and let $P \in \Syl_p(G)$.
If $P$ is cyclic, then $G$ has a normal $p$-complement.
\end{proposition}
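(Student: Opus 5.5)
This is Burnside's normal $p$-complement theorem in its cyclic-Sylow form, which the paper cites from Isaacs. I would prove it by combining Burnside's transfer theorem with the N/C-Theorem.

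\textbf{Step 1: reduce to Burnside's criterion.} By Burnside's transfer theorem, if $P\le \Z(\N_G(P))$, then $G$ has a normal $p$-complement. Since $P$ is cyclic, it is abelian, so $P\le \C_G(P)$. It therefore suffices to show $\N_G(P)=\C_G(P)$.

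\textbf{Step 2: apply the N/C-Theorem.} The quotient $\N_G(P)/\C_G(P)$ embeds in $\Aut(P)$. Write $P\cong\ZZ_{p^a}$. Then $|\Aut(P)|=p^{a-1}(p-1)$.

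\textbf{Step 3: rule out the factor $p^{a-1}$.} Because $P\le \C_G(P)$ and $P$ is a Sylow $p$-subgroup of $\N_G(P)$, the index $|\N_G(P):\C_G(P)|$ is prime to $p$.

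\textbf{Step 4: rule out the factor $p-1$.} The index $|\N_G(P):\C_G(P)|$ divides $|G|$, so all its prime divisors are at least $p$, since $p$ is the smallest prime divisor of $|G|$. But it also divides $p-1$ by Step 2, and every prime divisor of $p-1$ is less than $p$. Hence the index is $1$, so $\N_G(P)=\C_G(P)$, i.e.\ $P\le\Z(\N_G(P))$.

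Step 1 now gives the normal $p$-complement. The only substantial input is the transfer theorem itself. If it may not be assumed, the main obstacle is proving it, via the transfer homomorphism $V\colon G\to P$ together with the focal computation that $V(x)=x^{|G:P|}$ for $x\in P$ when $P$ is central in its normalizer. This makes $V$ surjective, and its kernel is then the normal $p$-complement.
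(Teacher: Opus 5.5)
Your proof is correct and is the standard argument behind the cited result. The paper gives no proof of its own, and Isaacs derives Corollary 5.14 from Burnside's transfer theorem just as you do: embed $\N_G(P)/\C_G(P)$ into $\Aut(P)$, of order $p^{a-1}(p-1)$, then note the index is prime to $p$ yet has all prime divisors at least $p$, which forces it to be $1$; your Step 4's claim that the index divides $p-1$ uses Steps 2 and 3 together, not Step 2 alone, and you have both.
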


The following proposition can be read off from \cite[Theorem 1]{ls}(for details, see Appendix \ref{Appendix}).

\begin{proposition}\label{pi+1}
Let $G$ be a primitive permutation group on a finite set $\Omega$ of odd degree, and let
$|G| = 2^{a_1} p_{2}^{a_{2}} \cdots p_{n}^{a_{n}}$, where each $a_i \ge 1$ and $a_{2}+\dots+a_{n} \le n$. Let $\alpha \in \Omega$. Suppose that $\G_{\alpha}$ is a $2$-group. Then one of the following holds:
\begin{enumerate} [font=\normalfont]

            \item $G\cong \ZZ_p {:} \G_{\alpha}$ or $G\cong \ZZ_p^2 {:} \G_{\alpha}$, where prime $p \geqslant 3$;

            \item $G\cong \PGL(2,9), \M_{10}, \PGammaL(2,9)$;

            \item $G\cong \PSL(2, p)$ or $\PGL(2,p)$, where prime $p\ge 5$, one of $p-1$ or $p+1$ is a power of $2$.
\end{enumerate}
\end{proposition}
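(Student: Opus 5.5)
This proposition is a specialisation of the Liebeck--Saxl classification of primitive groups of odd degree \cite[Theorem 1]{ls}, filtered by two arithmetic constraints: the point stabilizer $G_\alpha$ is a $2$-group, and the odd part of $|G|$ has total exponent $a_2+\dots+a_n\le n$. Since the number of odd primes is $n-1$, this bound allows at most one odd prime to appear squared (or one cubed) and all others to appear only to the first power. The plan follows the O'Nan--Scott case division for $G$ primitive of odd degree $|\Omega|=|G:G_\alpha|=|G|_{2'}$.

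\textbf{Affine case.} Here $G=V{:}G_\alpha$ with $V=\ZZ_p^d$ for an odd prime $p$, and $|V|=|\Omega|$ divides $|G|_{2'}$. The exponent bound forces $d\le 2$, provided we can rule out extra odd primes beyond $p$. Those cannot occur anyway, since $G_\alpha$ is a $2$-group, so $|G|_{2'}=p^d$. Thus $n=2$ and $a_2=d\le 2$, which gives case (1).

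\textbf{Non-affine cases.} First we exclude product action, diagonal and twisted wreath types. In each of these the socle $T^k$ with $k\ge2$ forces the odd part of $|G|$ to have multiplicity at least $2$ in every prime dividing $|T|_{2'}$. Also $|T|$ has at least two odd primes (Burnside). So the total odd exponent is at least $2(n-1)>n$ once $n\ge3$, and $n\ge 3$ holds because $|T|$ has at least three prime divisors.

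This leaves the almost simple case $T\le G\le\Aut(T)$ with $G_\alpha$ a $2$-group of odd index. Then $T_\alpha$ is a Sylow $2$-subgroup of $T$ that is maximal in $T$ or nearly so. From the Liebeck--Saxl list (equivalently Guralnick's list of subgroups of odd prime-power index, extended to odd index with $2$-group stabilizers), the possibilities are essentially $T=\PSL(2,q)$ with stabilizer normalising a Sylow $2$-subgroup, which requires $q=p$ or $q=9$ and $q\pm1$ a power of $2$ via Table~\ref{psl_pgl_compare}.

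The arithmetic then finishes the case. For $q=p$ prime, the maximal subgroup $\D_{p\pm1}$ or $\D_{2(p\pm1)}$ must be a $2$-group, which forces $p\mp1$ to be a power of $2$ and yields $\PSL(2,p)$ or $\PGL(2,p)$. Here $G_\alpha$ must be the full Sylow $2$-subgroup, so $G$ cannot contain field automorphisms, and $G\in\{\PSL(2,p),\PGL(2,p)\}$. For $q=9$ we have $\A_6$, whose odd part $3^2\cdot5$ gives exponent $3$ with $n=3$. In $\A_6$ itself the Sylow $2$-subgroup $\D_8$ is not maximal (it lies in $\S_4$), whereas in $\PGL(2,9)$, $\M_{10}$ and $\PGammaL(2,9)$ the Sylow $2$-subgroups $\D_{16}$, $\mathrm{SD}_{16}$ and their extension are maximal; this gives case (2). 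We also check that $\PSL(2,q)$ with $q=p^f$, $f\ge2$ (other than $9$), and the other simple groups with $2$-group maximal subgroups are impossible. This uses Lemma~\ref{num2} and the fact that dihedral maximal subgroups must be $2$-groups.

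\textbf{Main obstacle.} The hardest step is the almost simple case. We need to extract rigorously from \cite{ls} that the only simple socles admitting a $2$-group stabilizer of odd index are $\PSL(2,q)$, and then handle the subtle $\A_6$ extensions. I would do this by listing the relevant rows of \cite[Theorem 1]{ls} and eliminating each by the exponent bound.
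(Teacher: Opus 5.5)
Your overall route matches the paper's. You invoke Liebeck--Saxl (Proposition~\ref{odd-degree}). In the affine case $|G|_{2'}=p^d$ because $G_\alpha$ is a $2$-group, so the exponent bound gives $d\le 2$. In the almost simple case you cut the socle down to $\PSL(2,p)$ or $\A_6\cong\PSL(2,9)$ using order formulas and Table~\ref{psl_pgl_compare}.

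In the product-action case, your bound ``total odd exponent $\ge 2(n-1)$'' is false as stated. An odd prime dividing $|G|$ need not divide $|T|$: it can come from outer automorphisms or from the top group (e.g.\ $3\nmid|{^2\!B_2}(8)|$). The correct count is $2r+(n-1-r)=n-1+r\ge n+1$, where $r\ge 2$ is the number of odd primes dividing $|T|$, and this still exceeds $n$. The paper's proof simply does not treat this case, so your corrected argument fills an omission there.

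The genuine gap is in the almost simple step, which you assert rather than prove. Your elimination of $\PSL(2,p^2)$ with $p\ge 5$, via Lemma~\ref{num2} and ``dihedral maximal subgroups must be $2$-groups'', is only valid once you know $G_\alpha\cap T$ is \emph{maximal in $T$}. That is exactly what fails for $\A_6$ (your ``maximal in $T$ or nearly so''), and you give no reason it cannot fail for other $q=p^2$.

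The paper closes this as follows:
\begin{itemize}
\item For classical socles in odd characteristic, order formulas reduce to $\PSL(2,p)$ or $\PSL(2,p^2)$. For $q=p^2$ it then cites Giudici's classification of maximal subgroups of almost simple groups with socle $\PSL(2,q)$, which here gives that $G_\alpha\cap L$ is maximal in $L$ unless $G\in\{\PGL(2,9),\M_{10},\PGammaL(2,9)\}$. Only after that is Lemma~\ref{num2} applied.
\item In even characteristic it uses that a parabolic subgroup contains a Borel subgroup of order divisible by $q-1$. This forces $q=2$ and $L\cong\PSL(3,2)\cong\PSL(2,7)$.
\end{itemize}

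Alternatively, you can argue directly for $q=p^2$, $p\ge5$. The Sylow $2$-subgroup $S=G_\alpha\cap T$ is dihedral of order $(q-1)_2\ge 8$, so its cyclic subgroup $C$ of index $2$ is characteristic. Hence $G_\alpha\le N_G(S)\le N_G(C)<G$. But $N_G(C)\cap T=\D_{q-1}\ne S$, since $q-1$ is not a $2$-power, which contradicts the maximality of $G_\alpha$.

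Guralnick's prime-power-index list is not the right tool here, since $|G:G_\alpha|=|G|_{2'}$ is a prime power only when $n=2$.

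Two minor points: ``$p\mp1$'' should read ``$p\pm1$''. Also, $\S_6$ must be excluded explicitly, since its Sylow $2$-subgroup satisfies $\D_8\times\ZZ_2<\S_4\times\S_2$ and so is not maximal.
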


Now we consider an interesting proposition on generalized quaternion $2$-group. Let $G\cong Q_{2^n}$ be a generalized quaternion $2$-group of order $2^n$ for $n \geqslant 3$. Then $G=\langle x, y \mid x^{2^{n-1}}=1,\, x^{2^{n-2}}=y^2,\, y^{-1}xy = x^{-1} \rangle$.
Suppose that $K$ is a nontrivial normal subgroup of $G$ such that $G/K$ is noncyclic. From the structure of group $G$, we can deduce that $K$ is cyclic or generalized quaternion. For the latter case, i.e., $K$ is a generalized quaternion 2-group, we conclude that $K$ can be expressed in the following form: $K=\lg x^iy, x^k y\rg=\lg x^{k-i}, x^k y\rg$ with some integers $i$ and $k$. The generating relation of group $G$ shows that $G=\langle x\rangle\langle x^ky\rangle$ and $g= x^j (x^k y)^l$ for all $g\in G$ and some integers $j$ and $l$. Then $gK=x^j (x^k y)^l K=x^j K$, which implies that $G/K=\lg x \rg K/K\cong \lg x \rg/(\lg x \rg\cap K)$ is cyclic, a contradiction. Now that $K$ is cyclic. Summing up the above discussion, we can draw the following small observation.

\begin{observation}\label{observation}
If $G\cong Q_{2^n}$ for $n \geqslant 3$ and $1\neq K\lhd G$ such that $G/K$ is noncyclic, then $K$ is cyclic.
\end{observation}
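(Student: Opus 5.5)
The plan is to work directly with the presentation $G=\langle x,y \mid x^{2^{n-1}}=1,\ x^{2^{n-2}}=y^2,\ y^{-1}xy=x^{-1}\rangle$ and to use the standard description of the subgroups of $\Q_{2^n}$. The key fact is that every subgroup of $\Q_{2^n}$ is either cyclic or generalized quaternion. The reason is that every element outside $\langle x\rangle$ has the form $x^ky$ and has order $4$ with $(x^ky)^2=x^{2^{n-2}}$. So a subgroup meeting $G\setminus\langle x\rangle$ is generated by $\langle x\rangle\cap K$ together with one element $x^ky$. If $\langle x\rangle\cap K$ has order at least $4$, then $K$ is generalized quaternion. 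If $\langle x\rangle\cap K$ has order at most $2$, then $K=\langle x^ky\rangle$ is cyclic of order $4$.

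Given a nontrivial normal subgroup $K$ with $G/K$ noncyclic, I would argue by contradiction and assume $K$ is not cyclic. Then $K$ contains some element $x^ky$, and $K\cap\langle x\rangle=\langle x^{k-i}\rangle$ is nontrivial of order at least $4$ (otherwise $K$ would be cyclic). In particular $K=\langle x^{k-i},x^ky\rangle$ for suitable integers $i,k$, as in the discussion preceding the statement.

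The main step is the factorization $G=\langle x\rangle\langle x^ky\rangle$. It holds because $\langle x\rangle$ has index $2$ and $x^ky\notin\langle x\rangle$. Hence every $g\in G$ can be written as $g=x^j(x^ky)^l$. Since $x^ky\in K$, we get $gK=x^jK$, so $G/K=\langle x\rangle K/K\cong\langle x\rangle/(\langle x\rangle\cap K)$ is cyclic. This contradicts the hypothesis, so $K$ must be cyclic.

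The only delicate point is the claim that a noncyclic subgroup necessarily contains an element of the form $x^ky$. This follows immediately, because any subgroup contained in $\langle x\rangle$ is cyclic. Normality of $K$ is used only so that $G/K$ is a group; the argument itself never needs it beyond that.
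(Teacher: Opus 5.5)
Your proof is correct and follows essentially the same route as the paper's. Assuming $K$ is noncyclic forces $K$ to contain some element $x^ky$, so $G=\langle x\rangle\langle x^ky\rangle$ and $G/K\cong\langle x\rangle/(\langle x\rangle\cap K)$ is cyclic, a contradiction. As you yourself note, the detour through the fact that every subgroup of $\Q_{2^n}$ is cyclic or generalized quaternion, and through the description $K=\langle x^{k-i},x^ky\rangle$, is not needed: any $K$ not contained in $\langle x\rangle$ already gives $G=\langle x\rangle K$.
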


\begin{lemma}\label{non-solvable}
Suppose that $|\Delta(G)| = |\pi(G)| + 1$. Then $G$ is non-solvable if and only if the following holds:
    \begin{enumerate}[label=(\roman*), font=\normalfont]
        \item $P_1 \cong \Q_{2^{a_1}}$ for some integer $a_1\ge 3$ and $\O_2(G)>1$ is a cyclic subgroup;
        \item $G/\O_2(G) \in \{\PSL(2,5),\; \PGL(2,5),\; \PGL(2,7),\; \PGL(2,9),\; \PSL(2,17),\; \PGL(2,17), \; \allowbreak \PSL(2,p)$,\; $\PGL(2,p)\}$, where $p$ is a Fermat prime satisfying $p+1 = 2p_2 p_3$ for primes $p_2 < p_3 < p$.
    \end{enumerate}
\end{lemma}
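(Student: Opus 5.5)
For necessity, I would assume $G$ is non-solvable with $|\Delta(G)|=|\pi(G)|+1$. Then $p_1=2$, since groups of odd order are solvable. Every subgroup of $P_i$ with $i\ge 2$ is a subgroup perfect code by Proposition \ref{odd-index}. So the count forces $\sum_{i\ge2}(a_i-1)\le 1$, that is, $a_2+\dots+a_n\le n$; this exponent condition is exactly what Proposition \ref{pi+1} requires.

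Next I would rule out a second non-Sylow class among the $2$-subgroups. Let $M$ be a maximal subgroup containing $P_1$. It has odd index, so $[M]\in\Delta(G)$. Lemma \ref{rm} gives a cyclic or generalized quaternion $2$-subgroup $H$ that is a subgroup perfect code. I expect at most one further class beyond the Sylow classes, and I would use it to show that either $P_1$ is maximal or $M$ is the unique extra class.

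If $P_1$ is not maximal, then $M$ is the extra class and $[H]$ must equal $[P_1]$. In that case $P_1$ is cyclic or generalized quaternion, and I would try to make $P_1$ maximal inside $M$ and iterate. The cleaner route is as follows. If $P_1$ is cyclic, Proposition \ref{pb} gives a normal $2$-complement, so $G$ is solvable; this is a contradiction. Hence $P_1\cong\Q_{2^{a_1}}$, and Lemma \ref{cg} then says the perfect codes are exactly the subgroups of odd order or odd index.

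Now I act on $\Syl_2(G)$ or on the cosets of $M$, as in Lemma \ref{lemma3}. The group $\overline G=G/\O_2(G)$ becomes primitive of odd degree. When the stabilizer is a $2$-group, I would apply Proposition \ref{pi+1}. The affine case (1) is solvable and so is discarded. That leaves $\PGL(2,9)$, $\M_{10}$, $\PGammaL(2,9)$, and $\PSL(2,p)$ or $\PGL(2,p)$ with $p\pm1$ a power of $2$.

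The main obstacle is the count inside these almost simple candidates. I would use Table \ref{psl_pgl_compare} to list the odd-order subgroups and the odd-index subgroups, for example $\S_4$, $\D_{2(p+1)}$ and $\ZZ_p{:}\ZZ_{(p-1)/2}$. I then require that exactly one class occurs beyond the Sylow classes.

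In the case $p-1=2^k$, the cyclic subgroup of order $(p+1)/2$ must be either a Sylow subgroup or the extra class. Lemma \ref{num3}(2) then yields $p=5$ or $17$, or else $(p+1)/2=p_2p_3$, giving a Fermat prime with $p+1=2p_2p_3$. I would also eliminate extra odd-index overgroups such as $\S_4$ using the quaternion Sylow subgroup, which rules out $\M_{10}$ and $\PGammaL(2,9)$ and the wrong one of $\PSL$ and $\PGL$.

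In the case $p+1=2^k$, the subgroup $\ZZ_p{:}\ZZ_{(p-1)/2}$ must be the extra class. Therefore $(p-1)/2$ is a prime power, and since $|G|_{2'}$ has at most one square factor, Lemma \ref{num3}(1) gives $p=7$, and only $\PGL(2,7)$ survives.

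Finally, $\O_2(G)$ is a normal subgroup of $\Q_{2^{a_1}}$ whose quotient is noncyclic, so Observation \ref{observation} shows $\O_2(G)$ is cyclic. Nontriviality holds because $\PSL(2,q)$ and $\PGL(2,q)$ have dihedral, not quaternion, Sylow $2$-subgroups.

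For sufficiency, conditions (i) and (ii) make $G$ non-solvable immediately, since $G/\O_2(G)$ is non-solvable.
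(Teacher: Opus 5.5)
\textbf{Gap: the branch where $P_1$ is not maximal in $G$ is never handled.}

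Your treatment of the case where $P_1$ is maximal in $G$ is essentially the paper's Claim~2. It uses Proposition~\ref{pi+1}, discards the affine case, removes $\M_{10}$ and $\PGammaL(2,9)$ via the Sylow $2$-subgroup, and then applies Lemma~\ref{num3} and Observation~\ref{observation}. One step is missing there. Before concluding $[H]=[P_1]$ and hence $P_1\cong\Q_{2^{a_1}}$, you must identify the extra class as an odd-order subgroup: a non-Sylow $\ZZ_3$ when $9\mid |G|$, otherwise a Hall subgroup coming from $\ZZ_7{:}\ZZ_3$ or $\D_{p+1}$.

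The real gap is the other branch, $P_1<M<G$. Neither action you propose does what you claim there.
\begin{itemize}
\item On the cosets of $M$, the kernel is $K=\mathrm{core}_G(M)$, which you have not shown equals $\O_2(G)$. The point stabilizer $M/K$ is not a $2$-group.
\item On $\Syl_2(G)$, the stabilizer is $\N_G(P_1)$. This is either $P_1$, in which case the action is imprimitive since $P_1<M$, or it is not a $2$-group.
\end{itemize}
So Proposition~\ref{pi+1} is unavailable, and you give no substitute. This branch is not vacuous. It is exactly where $\PSL(2,5)$ and $\PGL(2,5)$ arise, for instance $G=\SL(2,5)$ in Example~\ref{exm:4}, where $P_1\cong\Q_8<\SL(2,3)<G$. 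Conversely, in your maximal branch $p=5$ must be discarded, since neither $\A_5$ nor $\S_5$ has a maximal $2$-subgroup. Your step ``Lemma~\ref{num3}(2) gives $p=5$ or $17$'' therefore reaches these two groups only through a case in which they cannot occur.

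\textbf{What is missing: the paper's Claim~1.}
\begin{enumerate}
\item In this branch $\Delta(G)=\{[P_1],\dots,[P_n],[M]\}$. So every nontrivial proper perfect code of $M$, which has odd order or odd index by Lemma~\ref{cg}, must be a Sylow subgroup. Hence $|\Delta(M)|=|\pi(M)|$, and Theorem~\ref{main} makes $M$ solvable with $|M|_{2'}$ prime. Also $|G|_{2'}$ is square-free.
\item A $\{p_i,p_j\}$-Hall subgroup of $KP_j$ (Proposition~\ref{Hall}) shows that $K$ is a $2$-group, hence $K=\O_2(G)$.
\item Now $G/K$ is primitive of odd square-free degree with $|M/K|_{2'}$ prime. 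The tool is Proposition~\ref{square-free}(2), not Proposition~\ref{pi+1}:
\begin{itemize}
\item the affine case is solvable;
\item the Suzuki case fails, because the Sylow $2$-subgroup of ${}^2\!B_2(q)$ is not a section of a generalized quaternion group;
\item for socle $\PSL(2,p)$, Hall subgroups of $\ZZ_p{:}\ZZ_{(p-1)/2}$ and of $\D_{p+1}$ force $p-1$ to be a power of $2$ and $(p+1)/2$ to be prime, whence $p=5$.
\end{itemize}
\end{enumerate}

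Your one-line sufficiency argument is correct for the lemma as literally stated. The paper additionally checks that every group satisfying (i)--(ii) really has $|\Delta(G)|=|\pi(G)|+1$.
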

\demo
\textbf{(I).} First, let's prove the necessity. Suppose that $|\Delta(G)| = |\pi(G)| + 1$ and $G$ is non-solvable. Then $|\pi(G)|\geqslant3$. Let $|G|=p_1^{a_1} p_2^{a_2} \dots p_n^{a_n}$ and let $P_i\in \Syl_{p_i}(G)$, where $p_1<p_2<\dots<p_n$ are primes, $a_i\ge 1$ and $n\geqslant3$ are integers. Since $G$ is non-solvable, it follows that $p_1=2$.
By Proposition \ref{odd-index}, we can conclude that $\Delta(G)\supset\{[P_1], [P_2], \dots, [P_n]\}$ and  $a_2+\dots+a_n\le n$ as $|\Delta(G)|=|\pi(G)|+1$.
Let $M$ be a maximal subgroup of $G$ such that $P_1 \le M$. Applying Proposition \ref{odd-index} again, we conclude that $[M]\in \Delta(G)$.
We claim that $P_1=M$ or $P_1$ is a maximal subgroup of $M$.
Assume for contradiction that there exists a maximal subgroup $M_1$ of $M$ such that $P_1<M_1<M$. Proposition \ref{odd-index} implies that $[M_1]\in \Delta(G)$ and consequently $$\Delta(G)\supseteq \{[P_1], [P_2], \dots, [P_n], [M_1], [M]\},$$
contradicting the assumption that $|\Delta(G)|=|\pi(G)|+1$. Thus $P_1=M$ or $P_1$ is a maximal subgroup of $M$, and hence the claim holds. Let $K = \bigcap_{g \in G} M^g$.

{\textbf{Claim 1.} If $P_1$ is a maximal subgroup of $M$, then $G/K\in \{\PSL(2,5), \PGL(2,5)\}$, $P_1\cong \Q_{2^{a_1}}$ with $a_1\ge 3$, and $K=\O_2(G)>1$ is cyclic.

Suppose that $P_1$ is a maximal subgroup of $M$. Since $|\Delta(G)|=|\pi(G)|+1$ and $[M]\in \Delta(G)$, we can conclude that
$$ \Delta(G)=\{[P_1], [P_2], \dots, [P_n], [M]\}.$$
Proposition \ref{odd-index} implies that $a_2+\dots+a_n= n-1$ and hence $|G|_{2'}$ is square-free, in particular, $|G:M|$ is odd and square-free. It follows from Lemma \ref{rm} that $P_1$ is a cyclic or a generalized quaternion 2-group. If $P_1$ is cyclic, then Proposition \ref{pb} gives that $G$ has a normal $2$-complement with odd order and so $G$ is solvable contradicting the insolvability of $G$. Thus $P_1\cong \Q_{2^{a_1}}$ for $a_1\ge 3$. By Lemma \ref{cg}, every nontrivial subgroup perfect code of $M$ has odd order or odd index. Proposition \ref{odd-index} shows that these subgroups are also subgroup perfect codes of $G$, which forces that $|\Delta(M)|=|\pi(M)|$.
Theorem \ref{main} implies that $M$ is solvable and satisfies that $|M|_{2'}$ is a prime.

Noting that $K = \bigcap_{g \in G} M^g$, one yields that $K$ is solvable as $M$ is solvable.
We claim that $K$ is a 2-group and $K=\O_2(G)$. Assume for a contradiction that $K$ is not a 2-group. Then $|K|$ has an odd prime divisor, namely $p_i$ where $i\in\{2,\dots,n\}$. Since $|\pi(G)|\ge 3$ as $G$ is non-solvable, there exists another odd prime $p_j\in \pi(G)$ such that $p_j\ne p_i$. Given that $K\lhd G$, $K$ is solvable and $P_j$ is cyclic, we obtain that $K P_j$ is a solvable subgroup of $G$. By Proposition \ref{Hall}, $K P_j$ contains a $\{p_i, p_j\}$-Hall subgroup, say $H$. Then Proposition \ref{odd-index} implies that $[H]\in \Delta(G)$ and hence $|\Delta(G)|\ge |\pi(G)|+2$, contradicting the assumption that $|\Delta(G)|= |\pi(G)|+1$.
Therefore, $K$ is a normal 2-group and so $K\le \O_2(G)=\bigcap_{g\in G} P_1^g$. Moreover, since $P_1\le M$ and $K=\bigcap_{g\in G} M^g$, we conclude that $K=\O_2(G)$, as desired.

Consider the right multiplication action of $G$ on the set $\Omega := \{ Mg \mid g \in G \}$. The kernel of this action is $K$ and the stabilizer group $G_x\cong M$ for each $x\in \Omega$. We see that $G/K$ is a primitive permutation group of odd degree. Let $L/K = \soc(G/K)$ and $x\in\Omega$. Then $L\unlhd G$ and $(G/K)_x=G_x/K\cong M/K$. Since $|G|_{2'}$ is square-free and $|M|_{2'}$ is a prime, Proposition \ref{square-free} (2) implies that one of the following holds:
\begin{enumerate}[font=\normalfont]
    \item $G/K\cong \ZZ_p{:}\ZZ_d$, where prime $p \geq 3$ and positive integer $d\mid (p-1)$;
    \item $L/K\cong {^2\!B}_2(q)$, where $q=2^{2f+1}$ and integer $f\ge 1$;
    \item $L/K\cong \PSL(2, p)$, where $p \geq 5$ is a prime.
\end{enumerate}
It follows from $G$ is non-solvable, $\ZZ_p{:}\ZZ_d$ is solvable, and $K$ a 2-group that $G/K\ncong \ZZ_p{:}\ZZ_d$.

Now assume that $L/K\cong {^2\!B}_2(q)$, where $q=2^{2f+1}$ and integer $f\ge 1$. Since $L\lhd G$, $K=\O_2(G)$ and $P_1\in\Syl_2(G)$, one yields that $K\leqslant P_1$ and $(P_1\cap L)/K$ is a Sylow $2$-subgroup of $L/K\cong {^2\!B}_2(q)$. Since $P_1$ is a generalized quaternion $2$-group, $(P_1\cap L)/K$ is a cyclic, dihedral or generalized quaternion $2$-group, which contradicts the fact that the Sylow $2$-subgroups of ${^2\!B}_2(q)$ are isomorphic to $E_q.E_q$. Therefore, this case does not occur.

Suppose that $L/K\cong \PSL(2,p)$, where $p\geqslant 5$ is an odd prime.
Noting that $K$ is a 2-subgroup of $G$ and $L/K=\soc(G/K)$, we yield that $|G|_{2'}=p(p-1)_{2'}(p+1)_{2'}$. It deduces from $|\pi(G)|\ge 3$ that one of $p-1$ or $p+1$ has an odd prime divisor.
Assume that $p-1$ has an odd prime divisor, say $p_i$ for $i\in\{2,3,\dots,n\}$. Since $L/K\cong \PSL(2,p)$, it follows from Proposition \ref{bray} that $L$ has a subgroup $M_1$ containing $K$ with $M_1/K \cong \ZZ_p{:}\ZZ_{(p-1)/2}$. Since $K$ is a 2-group and $\ZZ_p{:}\ZZ_{(p-1)/2}$ is solvable, it follows that $M_1$ is solvable.  Proposition \ref{Hall} implies that $M_1$ contains a $\{p_i, p\}$-Hall subgroup $H$. By Proposition \ref{odd-index}, we conclude that $[H]\in \Delta(G)$ and hence $|\Delta(G)|\ge |\pi(G)|+2$, which contradicts the assumption that $|\Delta(G)|=|\pi(G)|+1$.
Therefore, $p-1$ is a power of 2. Since $|\pi(G)|\ge 3$ and $|G|_{2'}=p(p-1)_{2'}(p+1)_{2'}$, it follows that $p+1$ is not a power of 2.
Suppose that $p+1$ has at least two distinct odd prime divisors, namely $p_i$ and $p_j$ for $i,j\in \{2,3,\cdots,n\}$.
By Proposition \ref{bray}, $L$ has a subgroup $M_2$ containing $K$ with $M_2/K \cong \D_{p+1}$. Since $K$ is a 2-group and $\D_{p+1}$ is solvable, it follows that $M_2$ is solvable.
Proposition \ref{Hall} implies that $M_2$ contains a $\{p_i, p_j\}$-Hall subgroup $H$. By Proposition \ref{odd-index}, we conclude that $[H]\in \Delta(G)$ and hence $|\Delta(G)|\ge |\pi(G)|+2$, which contradicts the assumption that $|\Delta(G)|=|\pi(G)|+1$. Therefore $p+1$ has only one odd prime divisor.
However $p-1\geqslant 4$ is 2-power shows that $4\div(p-1)$ and $p+1=2(p+1)_{2'}$.  Since $|G|_{2'}$ is square-free, we conclude that $(p+1)_{2'}$ is a prime, and by Lemma \ref{num3} (2), we have that $p=5$. Consequently, $G/K\in \{\PSL(2,5), \PGL(2,5)\}$.
In particular, since $K = \O_2(G)$ and $P_1$ is a Sylow $2$-subgroup of $G$, $P_1/K$ is a Sylow $2$-subgroup of $G/K$. Note that $P_1 \cong \Q_{2^{a_1}}$ and the Sylow $2$-subgroups of $\PSL(2,5)$ and $\PGL(2,5)$ are isomorphic to $\ZZ_2^2$ and $\D_8$, respectively. By Observation \ref{observation}, we conclude that $K$ is a nontrivial cyclic group. It follows from $G/K \in \{\PSL(2,5), \PGL(2,5)\}$, $P_1 \cong \Q_{2^{a_1}}$ with $a_1 \ge 3$ and $K = O_2(G) > 1$ is cyclic that Claim 1 holds.

{\textbf{Claim 2.} If $P_1=M$ is a maximal subgroup of $G$, then $G/K\in \{\PGL(2,7)$, $\PGL(2,9)$, $\PSL(2,17)$, $\PGL(2,17)$, $\allowbreak\PSL(2,p)$, $\allowbreak \PGL(2,p)\}$, where $K=\O_2(G)>1$ is cyclic, $P_1\cong \Q_{2^{a_1}}$, $p$ is a Fermat prime such that $p+1 = 2p_2p_3$.

Suppose that $P_1=M$ is a maximal subgroup of $G$.
Consider the right multiplication action of $G$ on the set $\Omega := \{ P_1^g \mid g \in G \}$. Then the kernel of this action is $K = \O_2(G)$ and $G/K$ is a primitive permutation group of odd degree $|\Omega|$, in particular, for every $ x \in \Omega$, $G_x\cong P_1$ is a 2-group and $(G/K)_x=G_x/K$. By Proposition \ref{pi+1}, one of the following holds:

\begin{enumerate}[label=(\alph*), font=\normalfont]
    \item $G/K\cong \ZZ_p {:} (P_1/K)$ or $\ZZ_p^2 {:} (P_1/K)$, where prime $p \geqslant 3$;

   \item $G/K\cong \PGL(2,9), \M_{10}, \PGammaL(2,9)$;

   \item $G/K\cong \PSL(2, p)$ or $\PGL(2, p)$, where prime $p\ge 5$, and $p-1$ or $p+1$ is a power of $2$.
\end{enumerate}

Since $G$ is non-solvable, $K$ and $P_1$ are 2-groups, one yields that $G/K\ncong \ZZ_p{:}(P_1/K)$ and $G/K\ncong \ZZ_p^2{:}(P_1/K)$, that is, the case (a) cannot occur. Assume that the case (b) holds, that is $G/K\cong \PGL(2,9)$, $\M_{10}$, or $\PGammaL(2,9)$. Since $9 \mid |G|$, we get that $G$ contains a subgroup $H$ isomorphic to $\ZZ_3$ that is not a Sylow $3$-subgroup of $G$.
By Proposition \ref{odd-index} and the assumption that $|\Delta(G)|=|\pi(G)|+1$, we have that
$\Delta(G)= \{[P_1], [P_2], \ldots, [P_n], [H]\}$.
Lemma \ref{rm} implies that $P_1\cong \ZZ_{2^{a_1}}$ or $\Q_{2^{a_1}}$.
Since $G$ is non-solvable, Proposition \ref{pb} yields that $P_1\cong \Q_{2^{a_1}}$, which implies that $P_1/K$ is cyclic, dihedral or generalized quaternion.
The Sylow 2-subgroups of $\M_{10}$ and $\mathrm{P\Gamma L}(2,9)$ are isomorphic to $\mathrm{QD}_{16}$ and $\ZZ_8{:}(\ZZ_2\times \ZZ_2)$ respectively. Hence $G/K$ is neither isomorphic to $\M_{10}$ nor to $\PGammaL(2,9)$, and so $G/K\cong \PGL(2,9)$.
Given that $K=\O_2(G)$ and a Sylow $2$-subgroup of $\PGL(2,9)$ is $\D_{16}\cong P_1/K$, applying  Observation \ref{observation}, we yield that $K$ is a nontrivial cyclic group. Therefore, $G/K\cong \PGL(2,9)$, $P_1\cong \Q_{2^{a_1}}$ and $K=\O_2(G)>1$ is cyclic.

Finally, we assume that the case (c) holds, i.e., $G/K\cong \PSL(2, p)$ or $G/K\cong \PGL(2, p)$, where prime $p\ge 5$, and $p-1$ or $p+1$ is a power of $2$.
We assert that $3\leqslant|\pi(G)|\leqslant 4$. Suppose for contradiction that $|\pi(G)|\ge 5$. Since one of $p-1$ and $p+1$ is a power of $2$, without loss of generality, we assume that $p-1$ is a power of 2. Then $|G|_{2'}=p(p-1)_{2'}(p+1)_{2'}=p(p+1)_{2'}$. Combining with $|\pi(G)|\ge 5$, we conclude that $p=p_n$ and $p_2p_3p_4 \mid (p+1)$.
Since $\soc(G/K)\cong \PSL(2,p)$, Proposition \ref{bray} implies that $G$ has a subgroup $M_1$ containing $K$ such that $M_1/K \cong \D_{p+1}$. Since $K=\O_2(G)$ is a 2-group and $\D_{p+1}$ is solvable, it follows that $M_1$ is solvable.  By Proposition \ref{Hall}, for any distinct $i, j \in \{2,3,4\}$, the group $M_1$ contains a $\{p_i, p_j\}$-Hall subgroup $H_{ij}$. Proposition \ref{odd-index} implies that $[H_{ij}]\in \Delta(G)$ and consequently $\Delta(G)\supseteq \{[P_1], [P_2], \dots, [P_n], [H_{23}], [H_{24}], [H_{34}]\}$.
This contradicts the assumption that $|\Delta(G)|=|\pi(G)|+1$ as $\pi(G)=\{p_1, p_2, \cdots, p_n\}$. Hence the assertion holds, that is $3\le |\pi(G)|\le 4$. Next, let us analyze the cases $|\pi(G)| = 3$ and $|\pi(G)| = 4$ one by one.

Suppose that $|\pi(G)|=3$. Since $|G|_{2'}=p(p-1)_{2'}(p+1)_{2'}$, we have that $p_3=p$.
Assume first that $p_2 \mid (p-1)$. The assumption that $p-1$ or $p+1$ is a power of 2 implies that  $p+1$ is a power of 2.
Since $|\pi(G)|=3$, it follows that $\frac{p-1}{2}$ is a power of $p_2$.  Lemma \ref{num3} (1) implies that $p=7$, and consequently $G/K\cong \PSL(2,7)$ or $\PGL(2,7)$.
By Proposition \ref{bray}, $G$ has a subgroup $M_1$ containing $K$ such that $M_1/K \cong \ZZ_7 {:} \ZZ_3$. Since $M_1/K$ is solvable and $K$ is a 2-group, it follows that $M_1$ is solvable. Applying Propositions \ref{Hall} and \ref{odd-index}, one yields that $M_1$ has a $\{3, 7\}$-Hall subgroup $H$ such that $[H] \in \Delta(G)$, and therefore, $\Delta(G) = \{[P_1], [P_2], \dots, [P_n], [H]\}$.
Applying Lemma \ref{rm}, we have that $P_1\cong \ZZ_{2^{a_1}}$ or $\Q_{2^{a_1}}$. Since $G$ is non-solvable, it follows from Proposition \ref{pb} that $P_1\cong \Q_{2^{a_1}}$.
Moreover, since the Sylow 2-subgroups of $\PGL(2,7)$ are known to be a dihedral, by Observation \ref{observation}, we have that $K$ is a nontrivial cyclic group.
By Proposition \ref{bray}, $\PSL(2,7)$ has no Sylow 2-subgroup that is maximal. Deduced from $P_1/K$ is a maximal subgroup of $G/K$, we can draw the conclusion that $G/K\ncong \PSL(2,7)$, and so $G/K\cong \PGL(2,7)$.

Now assume that $p_2 \mid (p+1)$. Then  $p-1$ is a power of 2. Since $|\pi(G)|=3$, it follows that $(p+1)/2$ is a power of $p_2$. By Lemma \ref{num3} (2), we obtain that $p = 5$ or $17$. Since neither group $\PSL(2,5)$ nor $\PGL(2,5)$ has a maximal subgroup isomorphic to a 2-group (see the Atlas\cite[p.2]{atlas} for example), and $P_1/K$ is a maximal subgroup of $G/K$, we see that $p\neq 5$ and $L/K\ncong \PSL(2,5)$.
Thus $p=17$ and $G/K\cong \PSL(2,17)$ or $\PGL(2,17)$. It follows that $9\mid |G|$, and hence $G$ contains a subgroup $H$ isomorphic to $\ZZ_3$ that is not a Sylow $3$-subgroup of $G$. Applying Proposition \ref{odd-index}, one yields that $[H]\in \Delta(G)$ and hence $\Delta(G)=\{[P_1], [P_2], \dots, [P_n], [H]\}$. Further, by Lemma \ref{rm}, one yields that $P_1\cong \ZZ_{2^{a_1}}$ or $\Q_{2^{a_1}}$.
It deduces from the assumption that $G$ is non-solvable and Proposition \ref{pb} that $P_1\cong \Q_{2^{a_1}}$. On the other hand, since the Sylow 2-subgroups of $\PSL(2,17)$ and $\PGL(2,17)$ are known to be dihedral, applying Observation \ref{observation}, we conclude that $K$ must be a nontrivial cyclic 2-group.

Next, we suppose that $|\pi(G)|=4$.
Since $|G|_{2'}=p(p-1)_{2'}(p+1)_{2'}$ and $K$ is a 2-group, we conclude that $p_4=p$.
Note that $p-1$ or $p+1$ is a power of 2. Assume that $p+1$ is a power of 2. Then $p_2p_3\mid (p-1)$. Since $\soc(G/K)\cong \PSL(2,p)$, Proposition \ref{bray} implies that $G$ has a subgroup $M_1$ containing $K$ such that  $M_1/K\cong \ZZ_p{:}\ZZ_{(p-1)/2}$. Since $K$ is a 2-group and $\ZZ_p{:}\ZZ_{(p-1)/2}$ is solvable, it follows that $M_1$ is solvable.
By Proposition \ref{Hall}, $M_1$ contains a $\{p, p_2\}$-Hall subgroup $H_1$ and a $\{p, p_3\}$-Hall subgroup $H_2$.
By Proposition \ref{odd-index}, we conclude that $[H_1], [H_2] \in \Delta(G)$. This shows that $\Delta(G) = \{[P_1], [P_2], [P_3], [P_4], [H_1], [H_2]\}$, which contradicts the assumption that $|\Delta(G)|=|\pi(G)|+1$.
So in what follows, we may assume that $p-1$ is a power of 2. Then $p$ is a Fermat prime.
Since $|\pi(G)|=4$ and $|G|_{2'}=(p(p^2-1))_{2'}$, we have that $p_2p_3\mid (p+1)$. Since $\soc(G/K)\cong \PSL(2,p)$, Proposition \ref{bray} implies that $G$ has a subgroup $M_2$ containing $K$ such that $M_2/K\cong \D_{p+1}$.
Since $K$ is a $2$-group and $M_2/K$ is solvable, it follows that $M_2$ is solvable.
By Proposition \ref{Hall}, there exists a $\{p_2, p_3\}$-Hall subgroup $H$ of $M_2$. Proposition \ref{odd-index} implies that $[H]\in \Delta(G)$ and hence $\Delta(G)= \{[P_1], [P_2], [P_3], [P_4], [H]\}$. Applying Proposition \ref{odd-index} again, we conclude that $|G|_{2'}$ is square-free, which means that $p+1=2p_2p_3$. Moreover, Lemma \ref{rm} shows that $P_1 \cong \ZZ_{2^{a_1}}$ or $\Q_{2^{a_1}}$. Deduced from $G$ is non-solvable, Proposition \ref{pb} implies that $P_1\cong \Q_{2^{a_1}}$.
Since the Sylow 2-subgroups of $\PSL(2,p)$ and $\PGL(2,p)$ are known to be dihedral, by Observation \ref{observation}, one deduces that $K$ must be a nontrivial cyclic 2-group.

In conclusion, $G/K\in \{\PGL(2,7), \PGL(2,9), \PSL(2,17), \PGL(2,17), \allowbreak\PSL(2,p), \allowbreak \PGL(2,p)\},$ where $K=\O_2(G)$ is cyclic, $P_1\cong \Q_{2^{a_1}}$ for $a_1\geqslant3$ integer, $p$ is a Fermat prime such that $p+1 = 2p_2p_3$. Hence Claim 2 holds. Combining with Claims 1 and 2, we can conclude that the necessity of Lemma \ref{non-solvable} holds.

\textbf{(II).} Next, we prove the sufficiency.  Assume that
$G/\O_2(G) \in \{\PSL(2,5)$, $\PGL(2,5)$, $\allowbreak \PGL(2,7)$, $\allowbreak \PGL(2,9)$, $\PSL(2,17)$, $\allowbreak \PGL(2,17)$, $\allowbreak\PSL(2,p)$, $\allowbreak \PGL(2,p)\}$, $\O_2(G)>1$ is cyclic, and a Sylow $2$-subgroup $P_1$ of $G$ is isomorphic to $\Q_{2^{a_1}}$ for integer $a_1\geqslant3$, where $p$ is a Fermat prime satisfying $p+1 = 2p_2 p_3$ for primes $p_2 < p_3 < p$. In what follows, we will analyze each case one by one.

(1) Assume that $G/\O_2(G) \cong \PSL(2,p)$ or $\PGL(2,p)$,
$P_1 \cong \Q_{2^{a_1}}$ and $\O_2(G)>1$ is a cyclic 2-group, where $p$ is a Fermat prime such that $p+1=2p_2p_3$ for primes $p_2<p_3<p$. Then $p-1$ is a power of 2 and $|G|_{2'}=pp_2p_3$, in particular, $\pi(G)=\{p_1=2, p_2, p_3, p\}$, i.e., $|\pi(G)|=4$.
Let $H$ be a subgroup perfect code of $G$ such that $1<H<G$. Since $P_1 \cong \Q_{2^{a_1}}$, Lemma \ref{cg} implies that $|H|$ is odd or $|G:H|$ is odd.

Assume first that $|H|$ is odd. Then $H\cong H\O_2(G)/\O_2(G)\leqslant G/\O_2(G)$. Since $|G|_{2'}=pp_2p_3$ and $H$ is solvable, checking the maximal subgroups of $G/\O_2(G)$ (see  Proposition \ref{bray} for the list) shows that $|H|\in\{p_2, p_3, p, p_2p_3\}$.
Let $p=p_4$ and $P_i\in \Syl_{p_i}(G)$ for each $i\in \{1,2,3,4\}$.
If $|H|\in \{p_2, p_3, p\}$, then $H$ is a Sylow subgroup of $G$ and $[H]=[P_2]$, $[P_3]$ or $[P_4]$. So in what follows, we may assume that $|H|=p_2 p_3$. Derived by $|G|_{2'}= p_2 p_3 p$, one yields that $H$ is a $\{p_2, p_3\}$-Hall subgroup of $G$.
On the other hand, Proposition \ref{bray} shows that every subgroup of order $p_2p_3$ in $\PSL(2,p)$ (resp. $\PGL(2,p)$) is contained in a maximal subgroup  isomorphic to $\D_{(p+1)}$ (resp. $\D_{2(p+1)}$), and such subgroups form a single conjugacy class.
Since $\D_{(p+1)}$(resp. $\D_{2(p+1)}$) is solvable and its subgroup of order $p_2 p_3$ is a $\{p_2, p_3\}$-Hall subgroup of $\D_{(p+1)}$(resp. $\D_{2(p+1)}$), Proposition \ref{Hall} implies that all such subgroups are conjugate in $\D_{(p+1)}$(resp. $\D_{2(p+1)}$).
It follows that all $\{p_2, p_3\}$-Hall subgroups of $\PSL(2,p)$(resp. $\PGL(2,p)$) form a single conjugacy class.
Let $H_0$ be another $\{p_2, p_3\}$-Hall subgroup of $G$. Since $G/\O_2(G)\cong \PSL(2,p)$ or $\PGL(2,p)$, it follows that $H_0\O_2(G)/\O_2(G)$ and $H\O_2(G)/\O_2(G)$ are conjugate in $G/\O_2(G)$. There exists an element $g \in G$ such that $(H_0\O_2(G))^g = H\O_2(G)$. Since $\O_2(G)\unlhd G$ and $H$ has odd order $p_2 p_3$, the group $H\O_2(G)$ is solvable. Noting that $H_0^g$ and $H$ are $\{p_2, p_3\}$-Hall subgroups of $H\O_2(G)$, applying Proposition \ref{Hall}, we yield that  $(H_0^g)^x= H_0^{gx}= H$ for some $x \in H\O_2(G)$. Hence all $\{p_2, p_3\}$-Hall subgroups of $G$ form a single conjugacy class.

Now assume that $|G:H|$ is odd.
Then $H$ contains a Sylow 2-subgroup of $G$ and hence $\O_2(G)\le H$. It concludes from $p-1$ is a power of 2 and $p+1=2p_2p_3$ that $p-1\ge 32$ and hence $|P_1/\O_2(G)|\ge 32$ as $P_1/\O_2(G)$ is a Sylow 2-subgroup of $G/\O_2(G)\cong\PSL(2, p)$ or $\PGL(2, p)$. Applying Proposition \ref{bray}, we can draw the conclusion that the only proper subgroups of odd index in $\PSL(2,p)$ and $\PGL(2,p)$ are their Sylow $2$-subgroups. Consequently $H$ is a Sylow $2$-subgroup of $G$.

Based on the above discussion, we obtain that $|\Delta(G)|=5$ and $|\Delta(G)|=|\pi(G)| + 1$, as desired.

(2) Assume that $G/\O_2(G)$ is isomorphic to $\PSL(2,17)$ or $\PGL(2,17)$. Then $\pi(G)=\{2, 3, 17\}$ and $|\pi(G)|=3$.
Let $H$ be a subgroup perfect code of $G$ and $1<H<G$. Since $P_1 \cong \Q_{2^{a_1}}$, Lemma \ref{cg} implies that exactly one of $|H|$ and $|G:H|$ is odd.

First assume that $|H|$ is odd. Since $|G|_{2'} = 3^2 \cdot 17$, it follows that $|H|\in \{3,9,17,51,153\}$.
By Proposition \ref{bray}, we see that neither $\PSL(2,17)$ nor $\PGL(2,17)$ contains a subgroup of order $51$ or $153$ and consequently $|H|\in \{3, 9, 17 \}$. If $|H| = 9$ or $17$, then $H$ is a Sylow $3$-subgroup of $G$ or a Sylow $17$-subgroup of $G$.
So in what follows, we may assume that $|H| = 3$. Let $P_2$ be a Sylow $3$-subgroup of $G$.
Since every Sylow 3-subgroup of $\PSL(2,17)$ and of $\PGL(2,17)$ is isomorphic to $\ZZ_9$, it follows that $P_2$ has a unique subgroup of order 3. Since all Sylow 3-subgroups of $G$ are conjugate and each contains a unique subgroup of order $3$, it follows that all subgroups of $G$ of order $3$ are conjugate.

Now assume that $|G:H|$ is odd. Then $H$ contains a Sylow 2-subgroup of $G$ and hence $\O_2(G)\le H$. By Proposition \ref{bray}, the only proper subgroups of odd index in $\PSL(2,17)$ and $\PGL(2,17)$ are their Sylow $2$-subgroups. Consequently $H$ is a Sylow $2$-subgroup of $G$.

Based on the above discussion, we obtain that $|\Delta(G)|=4$ and $|\Delta(G)|=|\pi(G)|+1$, as desired.

(3) Using a method completely analogous to the previous two cases, we can prove that the conclusion $|\Delta(G)|=|\pi(G)|+1$ also holds in all remaining cases. Thus, the proof of the sufficiency of Lemma \ref{non-solvable} is complete.
\qed

\begin{lemma}\label{os}
Let $G$ be a solvable group. Then $|\Delta(G)|\ge 2^{|\pi(G)|}-2$.
\end{lemma}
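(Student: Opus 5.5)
The plan is to use Hall subgroups. Write $n=|\pi(G)|$ and $\pi(G)=\{p_1,\dots,p_n\}$. By Proposition \ref{Hall}, for every subset $\sigma\subseteq\pi(G)$ the solvable group $G$ has a $\sigma$-Hall subgroup $H_\sigma$. Any two $\sigma$-Hall subgroups are conjugate, and Hall subgroups for different $\sigma$ have different orders, so they are not conjugate. If $\sigma$ is a nonempty proper subset, then $1<H_\sigma<G$. This gives $2^n-2$ pairwise non-conjugate nontrivial proper subgroups $H_\sigma$. It remains to show that each of them, or at least enough of them, is a subgroup perfect code.

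If $2\notin\sigma$, then $|H_\sigma|$ is odd. If $2\in\sigma$, then $|G:H_\sigma|$ is a $\sigma'$-number, hence odd. In both cases Proposition \ref{odd-index} shows that $H_\sigma$ is a subgroup perfect code of $G$. So every $H_\sigma$ with $\emptyset\neq\sigma\subsetneq\pi(G)$ contributes a distinct class $[H_\sigma]\in\Delta(G)$, and therefore $|\Delta(G)|\ge 2^{n}-2$.

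The case $n=1$ needs separate attention. Here the bound reads $|\Delta(G)|\ge 0$, which is trivial. The case $n=0$, i.e.\ $G=1$, gives $2^0-2=-1$, which is also trivial. So there is essentially no obstacle. The only points to check carefully are that Hall subgroups for distinct $\sigma$ are non-conjugate (their orders $|G|_\sigma$ differ) and that they are proper and nontrivial.
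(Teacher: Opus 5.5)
Your proof is correct and is essentially the paper's argument: take a $\sigma$-Hall subgroup for each nonempty proper $\sigma\subsetneq\pi(G)$, note via Proposition~\ref{Hall} and their distinct orders that these fall into $2^{|\pi(G)|}-2$ distinct conjugacy classes, and apply Proposition~\ref{odd-index}. Beyond the paper, you spell out the odd-order/odd-index split according to whether $2\in\sigma$, and check that each $H_\sigma$ is nontrivial and proper; the paper leaves both implicit.
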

\demo For each nonempty proper subset $\pi$ of $\pi(G)$, let $H_\pi$ be a $\pi$-Hall subgroup of $G$. Since $G$ is solvable, Proposition \ref{Hall} implies that the set $\{H_\pi \mid \emptyset \neq \pi \subsetneq \pi(G)\}$ consists of $2^{|\pi(G)|}-2$ pairwise non-conjugate Hall subgroups.
 According to Proposition \ref{odd-index}, each such Hall subgroup is a subgroup perfect code of $G$, which establishes Lemma \ref{os}.
\qed

\begin{lemma}\label{solvable}
If $G$ is a solvable group and $|\Delta(G)|=|\pi(G)| + 1$, then $|\pi(G)|\le 2$ and one of following holds:
          \begin{enumerate} [font=\normalfont]
                \item $G\cong \ZZ_{p^3}$, where $p$ is an odd prime;

                \item $|G|=2^n$ and $\Z(G)$ is cyclic, the involutions in $G$ fall into two conjugacy classes;

                \item $|G|=2^n p^a$ for integers $a\in\{1,2\}$ and $n\geqslant 1$, $G/\O_2(G)\cong \ZZ_p^a{:}(P/\O_2(G))$ where $P$ is a Sylow $2$-subgroup of $G$, and the involutions of $G$ form a single conjugacy class; moreover, if $a=2$, then $P\cong \ZZ_{2^n}$ or $\Q_{2^n}$.
         \end{enumerate}
\end{lemma}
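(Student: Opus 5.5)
First I bound $|\pi(G)|$. By Lemma \ref{os}, $|\pi(G)|+1=|\Delta(G)|\ge 2^{|\pi(G)|}-2$, and this forces $|\pi(G)|\le 2$, since $2^k-2>k+1$ for $k\ge 3$. The proof then splits into the cases $|\pi(G)|=1$ and $|\pi(G)|=2$.

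\textbf{Case $|\pi(G)|=1$ with $G$ a $p$-group, $p$ odd.} By Proposition \ref{odd-index} every nontrivial proper subgroup is a perfect code. If $|G|=p^k$, then $G$ has nontrivial proper subgroups of each order $p,\dots,p^{k-1}$, which already gives $k-1$ classes, so $k\le 3$. If $k=2$, Lemma \ref{lemma1} gives $|\Delta(G)|=1\neq 2$ for $\ZZ_{p^2}$, while $\ZZ_p^2$ has $p+1\ge 4$ classes. So $k=3$, and each order $p,p^2$ must carry exactly one conjugacy class of subgroups. In particular $G$ has a unique subgroup of order $p$: a normal subgroup of order $p$ is central, and two distinct subgroups of order $p$ cannot be conjugate when one of them is normal. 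A $p$-group ($p$ odd) with a unique subgroup of order $p$ is cyclic, so $G\cong\ZZ_{p^3}$.

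\textbf{Case $|\pi(G)|=1$ with $G$ a $2$-group.} Since $|\Delta(G)|=2$, Proposition \ref{chen2} shows $G$ has more than one involution. By Proposition \ref{kf}, every involution lies in some nontrivial proper perfect code, and there is a perfect code $K$ that is cyclic or generalized quaternion. Now $\Z(G)$ contains an involution $z$, which forms its own conjugacy class. If all involutions were conjugate, $z$ would be the unique involution, a contradiction; so there are at least two classes of involutions. Each perfect code class contributes its involutions, and $[K]$ contributes a single class of involutions. One checks that the two classes $[K]$ and $[H]$ cover exactly two involution classes. For $\Z(G)$ cyclic: if $\Z(G)$ had two involutions $z_1,z_2$, they would give two singleton involution classes plus $z_1z_2$, a third class, unless $z_1z_2$ lies in one of the codes. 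I would argue this using that $\lg z\rg$-type subgroups of the centre are normal perfect codes (via Corollary \ref{2pc}) and are pairwise nonconjugate. \emph{This case is the main obstacle}: it requires pinning down which small subgroups are forced to be codes.

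\textbf{Case $|\pi(G)|=2$, $\pi(G)=\{p_1,p_2\}$.} The classes $[P_1]$ and $[P_2]$ are present. Any further subgroup of odd order, or of odd index, is a code, so there is exactly one extra class.

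If both primes are odd, then $P_2$ and $P_1$ have prime order (otherwise their nontrivial proper subgroups add classes). A group of order $p_1p_2$ has no other subgroups, so $|\Delta(G)|=2$, a contradiction. Hence $p_1=2$; write $p=p_2$, $|G|=2^np^a$. A Sylow $p$-subgroup of order $p^a$ with $a\ge 3$ gives at least two extra classes, so $a\le 2$.

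Next let $M\ge P$ be maximal in $G$. Every subgroup between $P$ and $G$ has odd index, so at most one class lies strictly between $P$ and $G$, and by Lemma \ref{rm} a cyclic or quaternion $2$-code also exists. Two subcases:

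\begin{itemize}
\item If $a=2$, the extra class is the order-$p$ subgroups. Then $P$ must be maximal with no other $2$-power codes, and Lemma \ref{rm} forces $P\cong\ZZ_{2^n}$ or $\Q_{2^n}$.
\item In either subcase, acting on $\Syl_2(G)$ or on cosets of $M$, Proposition \ref{pi+1} (case (1), since $G$ is solvable) gives $G/\O_2(G)\cong\ZZ_p^a{:}(P/\O_2(G))$.
\end{itemize}

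Finally, for the involutions: Proposition \ref{kf}(b), applied inside $P$, combined with Proposition \ref{zhang2}, shows that every involution lies in a nontrivial $2$-power perfect code. Since only one $2$-power class (or one extra class) is available, all involutions fall into a single conjugacy class.
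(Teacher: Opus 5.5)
Your bound $|\pi(G)|\le 2$, the odd $p$-group case, the reduction to $a\le 2$ and the $a=2$ analysis are fine. However, there is a genuine gap in the case $|\pi(G)|=2$ with both primes odd.

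You assert that both Sylow subgroups must have prime order because otherwise their subgroups ``add classes''. But a Sylow subgroup of order $r^2$ adds only \emph{one} class, namely the subgroups of order $r$, and one extra class is exactly what $|\Delta(G)|=3$ allows. Two extra classes are forced only if $r^3\mid |G|$ or both Sylow subgroups have non-prime order. So $|G|=p^2q$ and $|G|=pq^2$ survive your count and need a real argument. The paper gives one:
\begin{itemize}
\item Let $r$ be the repeated prime and $s$ the other. Then $\Delta(G)$ is the two Sylow classes plus a single class of subgroups of order $r$, so $G$ has no subgroup of order $rs$.
\item A minimal normal subgroup $N$ of prime order would give such a subgroup (take $N$ times a subgroup of the other prime order). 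Hence $N\cong\ZZ_r^2$ is the Sylow $r$-subgroup and $G=N{:}\ZZ_s$.
\item The $r+1$ subgroups of order $r$ in $N$ form one class. Since $N$ is abelian, $G/N\cong\ZZ_s$ permutes them transitively, so $(r+1)\mid s$. This is impossible, as $r+1$ is even and $s$ is odd.
\end{itemize}

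The $2$-group case is also unproved. ``One checks that $[K]$ and $[H]$ cover exactly two involution classes'' is the whole content, since an arbitrary second code $H$ could contain involutions from several classes. The missing idea is to choose the second code cyclic or generalized quaternion:
\begin{itemize}
\item Conjugates of $K$ contain only conjugates of its unique involution, and there are at least two involution classes. So pick an involution $x\notin\bigcup_g K^g$.
\item Let $L$ be a cyclic or generalized quaternion subgroup of maximal order with unique involution $x$. The argument of Lemma \ref{rm} shows $L$ is a perfect code: otherwise Corollary \ref{2pc} gives $g$ with $g^2\in L$ and $Lg$ involution-free, and then $\lg L,g\rg$ again has unique involution $x$, contradicting maximality.
\item Hence $\Delta(G)=\{[K],[L]\}$, and Proposition \ref{kf}(b) gives exactly two involution classes.
\end{itemize}
Cyclicity of $\Z(G)$ is then immediate, not an obstacle. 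Two central involutions $z_1,z_2$ give three singleton classes $z_1,z_2,z_1z_2$, whichever codes contain them. Your proposed route via $\lg z\rg$ is unsound anyway: $\lg z\rg$ with $z$ central need not be a perfect code. For example, if $z=y^2$ with $|y|=4$, the coset $\lg z\rg y=\{y,y^3\}$ has no involution.

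The involution step for $a=1$ has a similar problem. Proposition \ref{kf}(b) inside $P$ combined with Proposition \ref{zhang2} does not transfer codes of $P$ to $G$, because $\N_P(H)$ need not be a Sylow $2$-subgroup of $\N_G(H)$. For instance, $\lg (12)(34)\rg$ is a perfect code of a Sylow $2$-subgroup $\D_8$ of $\mathrm{S}_4$ but not of $\mathrm{S}_4$. What is needed instead is the same $L$-construction, together with the observation that $P$ is then neither cyclic nor generalized quaternion (else Lemma \ref{cg} gives $|\Delta(G)|=2$). The extra class is therefore a cyclic or quaternion code, and any involution outside its conjugates would produce a fourth class.
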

\demo Since $G$ is solvable, Lemma \ref{os} shows that $|\Delta(G)|=|\pi(G)| + 1\geqslant 2^{|\pi(G)|}-2$, i.e., $|\pi(G)|+3\geqslant 2^{|\pi(G)|}$, and so $|\pi(G)|\le 2$, as desired. Next, we will complete the proof of this lemma by discussing the two cases: $|\pi(G)| = 1$ and $|\pi(G)| = 2$.

{\bf(I)} Assume that $|\pi(G)|=1$. Then $|G|=p^n$ for some prime $p$ and positive integer $n$, and so $|\Delta(G)|=|\pi(G)|+1=2$.

Suppose first that $p\ge 3$. Derived from $|\Delta(G)|=2$ and Proposition \ref{odd-index}, one yields that $|G|=p^2$ or $p^3$. Note that the group of order $p^2$ is isomorphic to $\ZZ_{p^2}$ or $\ZZ_p \times \ZZ_p$. If $G\cong\ZZ_{p^2}$, then
$G$ has only one nontrivial subgroup, which isomorphic to $\ZZ_p$, and so $|\Delta(G)|=1$, a contradiction. Now assume that $G\cong \ZZ_p\times\ZZ_p$. However, since $\ZZ_p\times \ZZ_p$ has $p+1$ non-conjugate subgroup of order $p$ and $|\Delta(G)|=2$, applying Proposition \ref{odd-index}, one deduces that $G\ncong \ZZ_p\times \ZZ_p$. Now that $|G|=p^3$.
Let $H_i$ be a subgroup of $G$ of order $p^i$ for $i\in\{1,2\}$.
By Proposition \ref{odd-index} and the assumption that $|\Delta(G)|=2$, we can conclude that  $\Delta(G)=\{[H_1], [H_2]\}$.
Consequently, all subgroups of $G$ of order $p^i$ are conjugate in $G$.
Since $G$ is a $p$-group, $\Z(G)\neq 1$. As every subgroup of $\Z(G)$ is normal in $G$, it follows that $G$ has a normal subgroup of order $p$ contained in $\Z(G)$, and so $G$ contains a unique subgroup of order $p$.
On the other hand, the fact $G$ is a $p$-group shows that $|G:H_2|=p$ and $H_2\lhd G$, and consequently $G$ contains a unique subgroup of order $p^2$. It follows that $G\cong \ZZ_{p^3}$ in this case and the case (1) of Lemma \ref{solvable} holds.

Assume that $p=2$. Since $|\Delta(G)|=2$, Proposition \ref{chen2} implies that $G$ contains more than one involution. Further, applying Proposition \ref{kf} (a), we can draw the conclusion that $G$ has a nontrivial subgroup perfect code say $H$, which is a cyclic or a generalized quaternion 2-group.
Noting that $\Z(G)\neq 1$ ($G$ is a $2$-group), it must contain at least an involution, one yields that $G$ contains at least two conjugacy classes of involutions. Now choose an involution $x$ in $G$ such that $x\notin \bigcup_{g\in G} H^g$. Let $L$ be a subgroup of $G$ defined as follows: if $\langle x\rangle$ is a subgroup perfect code of $G$, then $L=\langle x\rangle$; otherwise, $L$ is a subgroup containing a unique involution $x$ of maximal order. An argument similar to that in the proof of Lemma \ref{rm} shows that $L$ is a subgroup perfect code of $G$.
Since $x\notin \bigcup_{g\in G} H^g$, it follows that $[L]\ne [H]$.
It deduces from $|\Delta(G)|=2$ that $\Delta(G)=\{[H], [L]\}$.
Applying Proposition \ref{kf} (b), one yields that the set of involutions in $G$ splits into two conjugacy classes. It is well known that once an abelian group contains two involutions, it must contain at least three involutions. It follows that $\Z(G)$ is cyclic, and hence the case (2) of Lemma \ref{solvable} holds.

{\bf(II)} Suppose that $|\pi(G)|= 2$. Then $|\Delta(G)|=3$. Assume that $\pi(G)=\{p, q\}$ for $p<q$ primes. Let $P$ and $Q$ be the Sylow $p$- and $q$-subgroups of $G$ respectively.

\textbf{Claim 1.} $p$ is even, i.e., $p=2$.

Assume for contradiction that $p$ is odd.
It follows from $|\Delta(G)|=3$ and Proposition \ref{odd-index} that $|G|=p q^2 $ or $p^2 q$.
For the former case, i.e., $|G|=p^2 q$, let $\ZZ_p\cong H<G$, applying Proposition \ref{odd-index}, we can deduce from $|\Delta(G)|=3$ that $\Delta(G)=\{[P], [Q], [H]\}$, and so $G$ has no subgroup of order $pq$.
Since $G$ is solvable, there exists an elementary abelian normal subgroup in $G$ say $N$.  If $N\cong \ZZ_p$ or $\ZZ_q$, then $NQ\leqslant G$ with $|NQ|=pq$ or $NH\leqslant G$ with $|NH|=pq$, a contradiction. Hence $N\cong \ZZ_p\times \ZZ_p$ and $N=P$, and so $G=N{:}Q$ and $H\leqslant N$. Since a two-dimensional vector space over a field with $p$ elements contains exactly  $p+1$ one-dimensional subspaces, which yields that $|[H]| =p+1$. Considering the action of $G$ on $[H]$ by conjugation, one deduces that $G$ is transitive on $[H]$.
Since every subgroup of $G$ of order $p$ is contained in $N$ and $N$ is abelian, one yields that $G/N\cong\ZZ_q$ is transitive on $[H]$, and hence, $(p+1)\mid q$, contradicting the fact that $p+1$ is even and $q$ is an odd prime.
So in what follows, we assume that the latter case holds, that is $|G|=p q^2$.
By an argument completely analogous to the one above, we conclude that
$G\cong(\ZZ_q\times \ZZ_q){:}\ZZ_p$ and $(q+1)\mid p$, which contradicts the assumption that $p<q$.  Therefore Claim 1 holds.

\textbf{Claim 2.} $|G|=2^n q^a$ and $G/\O_2(G)\cong \ZZ_q^a{:}P/\O_2(G)$ where integer $a\in\{1,2\}$ and $n$ is a positive integer.

Note that $\pi(G)=\{2,q\}$.
Suppose that $q^3\mid |G|$.  Let $H_1$ and $H_2$ be subgroups of $G$ of orders $q$ and $q^2$, respectively. Proposition \ref{odd-index} implies that $[H_1], [H_2]\in \Delta(G)$ and hence $\Delta(G)\supseteq\{[P],[Q],[H_1],[H_2]\}$, which contradicts the assumption that $|\Delta(G)|=|\pi(G)|+1=3$. Therefore, $|G|_{2'}=q^a$ for $a\in\{1,2\}$.
We assert that $P$ is a maximal subgroup of $G$.
If $|G|=2^n q$, then as desired. Now assume that $|G|=2^n q^2$.
If there is a maximal subgroup of $G$ such that $P<M$, applying Proposition \ref{odd-index}, we get that $\Delta(G)\supseteq \{[P],[Q],[M],[H_1]\}$, which contradicts the assumption that $|\Delta(G)|=3$. Hence the assertion holds.
Consider the action of $G$ by conjugation on the set $\Omega:=\{P^g\mid g\in G\}$.
The kernel of the action is $\O_2(G)=\bigcap\limits_{g \in G} P^g$.
The maximality of $P$ yields that $G/\O_2(G)$ is primitive on $\Omega$. In particular, $P/\O_2(G)$ is the stabilizer of the point $P\in \Omega$ in $G/\O_2(G)$.
Since $|\Omega|\in \{q,q^2\}$ and $|G|_{2'}=q^a$, applying Proposition \ref{pi+1} yields that
$G/\O_2(G)\cong \ZZ_q^a{:}(P/\O_2(G))$ for $a\in \{1,2\}$.
Based on the above, we have proven the correctness of Claim 2.

\textbf{Claim 3.} The involutions of $G$ form a single conjugacy class; in particular, if $a=2$, then $P\cong \ZZ_{2^n}$ or $\Q_{2^n}$.

Assume first that $|G|=2^n q$ for integer $n\geqslant 1$.
Given that every proper subgroup of $G$ has even order except those of order $q$, and that $|\Delta(G)| = 3$, Lemma \ref{cg} forces $P$ to be neither cyclic nor generalized quaternion.
By Lemma \ref{rm}, we can draw the conclusion that $P$ contains a nontrivial subgroup perfect code of $G$ isomorphic to a cyclic 2-group or generalized quaternion 2-group, say $H$. Thus $\Delta(G)=\{[P], [Q], [H]\}$.
To prove Claim 3, we suppose for a contradiction that there exists an involution $x\not\in \bigcup_{g\in G} H^g$. Using methods similar to those in the proof of Lemma \ref{rm}, we conclude that $G$ has a proper subgroup perfect code containing $x$ isomorphic to either a cyclic 2-group or a generalized quaternion 2-group, say $L$. It follows from $x\not\in \bigcup_{g\in G} H^g$ that $[L]\ne [H]$ and so $|\Delta(G)|\ge 4$, contradicting the assumption that $|\Delta(G)|=3$. Therefore, $\bigcup_{g\in G} H^g$ contains all involutions of $G$, combining with the assumption that $H$ has only one involution, we conclude that all involutions of $G$ form a single conjugacy class.

Now assume that $|G|=2^n q^2$. Let $H$ be a subgroup of order $q$ of $G$. By Proposition \ref{odd-index} and the assumption that $|\Delta(G)|=|\pi(G)|+1=3$, we get that
$\Delta(G)=\{[P],[Q],[H]\}$. Applying Lemma \ref{rm}, $P$ is a cyclic or generalized quaternion $2$-group, which means that $P$ has only one involution(see Proposition \ref{chen2}).
Since every involution is contained in some Sylow $2$-subgroup of $G$,  it follows from the Sylow theorems that all involutions of $G$ form a single conjugacy class.

Based on the previous discussion, we have proven the correctness of Lemma \ref{solvable}.\qed

We now have enough knowledge to prove Theorem \ref{main2}.

\noindent\textbf{Proof of Theorem \ref{main2}.}
Assume that $G$ is a finite group, $P$ is a Sylow $2$-subgroup of $G$, and $|\Delta(G)| = |\pi(G)| + 1$.
By Lemma \ref{non-solvable},  $G$ is non-solvable if and only if the following holds:
    \begin{enumerate}[label=(\roman*), font=\normalfont]
        \item $P \cong \Q_{2^n}$ for some integer $n\ge 3$ and $\O_2(G)>1$ is a cyclic subgroup;

        \item $G/\O_2(G) \in \{\PSL(2,5),\PGL(2,5), \PGL(2,7), \PGL(2,9),\allowbreak\PSL(2,17), \allowbreak \PGL(2,17),\allowbreak \PSL(2,p), \allowbreak\PGL(2,p)\}$, where $p$ is a Fermat prime satisfying $p+1 = 2p_2 p_3$ for primes $p_2 < p_3 < p$.
    \end{enumerate}
If $G$ is solvable, by Lemma \ref{solvable}, then for some positive integer $n$ and prime $p\ge 3$, one of the following holds:
          \begin{enumerate}[label=(\roman*), font=\normalfont]
                \item $G\cong \ZZ_{p^3}$;

                \item $|G|=2^n $ and $\Z(G)$ is cyclic, all involutions of $G$ split into two conjugacy classes;

                \item $|G|=2^n p^a$ for $a\in\{1,2\}$, $G/\O_2(G)\cong \ZZ_p^a{:}(P/\O_2(G))$, and the involutions of $G$ form a single conjugacy class; moreover, if $a=2$, then $P\cong \ZZ_{2^n}$ or $\Q_{2^n}$.

         \end{enumerate}
\qed

\section{The proof of Theorem \ref{main3}} \label{sec5}

In this section, we shall prove Theorem \ref{main3}, and throughout we let $G$ be a finite non-solvable group such that $|\Delta(G)|\leqslant6$, and we write $\pi(G)=\{p_1,p_2,\dots,p_n\}$ with $p_1<p_2<\dots<p_n$.  For each $i\in \{1,\dots,n\}$, let $P_i\in \Syl_{p_i}(G)$.

Before starting the proof, we first present a key proposition.

According to the classification theorem of finite simple groups, we can draw the following conclusion(see Appendix \ref{Appendix} for a detailed verification).

\begin{proposition}\label{forK}
Let $G$ be a finite non-abelian simple group with $3\le|\pi(G)|\le 4$. Suppose that $|G|=2^{a_1} p_2^{a_2} p_3^{a_3} p_4^{a_4}$, where $2<p_2<p_3<p_4$ are primes, $a_2,a_3\ge 1$ and $a_4\ge 0$. Then the following holds.
\begin{enumerate}[font=\normalfont, label=(\arabic*)]
\item If $p_2^{a_2} p_3^{a_3} p_4^{a_4}$ is square-free, then $G$ is isomorphic to $\A_5$, ${^2\!B_2(8)}$, $\PSL(2,7)$, $\PSL(2,16)$, $\PSL(2,31)$, $\PSL(2, p_4)$ or $\PSL(2,2^f)$, where $r_1, r_2$ and $r>3$ are odd primes, $((p_4-1)_{2'}, (p_4+1)_{2'})\in \{(r_1,r_2), (1,3r)\}$, $f>3$, $2^f-1$ and $\frac{2^f+1}{3}>3$ are odd primes.

\item If $|G|_{2'}=p_i^2 p_j$, then $G$ is isomorphic to $\A_6$, $\PSL(2,8)$ or $\PSL(2,17)$, where $i\ne j$ and $i,j\in \{2,3\}$.
\end{enumerate}
\end{proposition}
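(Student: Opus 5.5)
The plan is to reduce to the classification of simple groups with three or four prime divisors, and then check the resulting finite list against the two arithmetic conditions. The starting point is the classification of finite simple $K_3$- and $K_4$-groups: a non-abelian simple group whose order has exactly three prime divisors is $\A_5,\A_6,\PSL(2,7),\PSL(2,8),\PSL(2,17),\PSL(3,3),\PSU(3,3)$ or $\PSU(4,2)$. For four prime divisors, the classification of $K_4$-groups (Shi, Bugeaud--Cao--Mignotte) gives a finite list of sporadic, alternating and Lie-type groups. The exceptions are three families: $\PSL(2,q)$ with $q(q^2-1)$ having exactly four prime divisors, ${^2\!B_2}(q)$ with $|\pi|=4$, and possibly $\PSL(3,q)$, $\PSU(3,q)$. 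Since $2\mid|G|$ by Feit--Thompson, the parametrization $|G|=2^{a_1}p_2^{a_2}p_3^{a_3}p_4^{a_4}$ covers both cases, with $a_4=0$ exactly when $|\pi(G)|=3$.

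First I will run through the $K_3$-list and read off $|G|_{2'}$. This gives $15$ for $\A_5$, $21$ for $\PSL(2,7)$, $45$ for $\A_6$, $63$ for $\PSL(2,8)$, $153$ for $\PSL(2,17)$, and $27\cdot13$, $27\cdot7$, $81\cdot5$ for the remaining three. Only $\A_5$ and $\PSL(2,7)$ survive in (1), and only $\A_6,\PSL(2,8),\PSL(2,17)$ fit the shape $p_i^2p_j$ in (2).

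Second, for the explicit (non-family) members of the $K_4$-list ($\A_7,\A_8,\M_{11},\PSL(2,16),\PSL(2,25),\PSL(2,49),\PSL(2,81),\PSL(3,4),\PSU(4,3),\dots$), I will again just compute odd parts. For example $\PSL(2,16)$ has odd part $3\cdot5\cdot17$, so it survives in (1). Groups such as $\A_7$ with $3^2\cdot5\cdot7$ are excluded: they fail (1), and in (2) the index set is restricted to $\{2,3\}$, which forces $|\pi(G)|=3$.

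Third, I will treat the families. For ${^2\!B_2}(q)$ the odd part is $q^2+1$ times $q-1$, so square-freeness with four primes forces one prime to divide $q-1$ and the other two to divide $q^2+1$. Using known divisibility ($5\mid q^2+1$ always), only $q=8$ survives, since $|{^2\!B_2}(8)|_{2'}=5\cdot7\cdot13$. I expect to need a Zsigmondy/Catalan-type argument to rule out larger $q$, or to rely on the explicit $K_4$ classification, where the admissible $q$ are tightly restricted.

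For $\PSL(2,q)$ I split into $q$ even and $q$ odd.
\begin{itemize}
\item If $q=2^f$, then $(q-1)(q+1)$ square-free with exactly three odd primes and the factor $3\mid 2^f+1$ ($f$ odd) gives the listed condition: $2^f-1$ prime and $(2^f+1)/3$ prime. Here $f=4$ gives $\PSL(2,16)$, and $f=3$ is excluded by square-freeness because $9\mid 9$.
\item If $q=p^f$ with $f\ge2$, then $p^2\mid|G|$ violates square-freeness in (1) unless $p=3$. In (2) the $p^2$ factor forces $|\pi|=3$, and those groups were already handled.
\item If $q=p$ is prime, then $3$ divides exactly one of $p\pm1$. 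If both $(p\pm1)_{2'}$ are nontrivial, each must be a single prime, giving $(r_1,r_2)$. If one is trivial (Fermat or Mersenne $p$), the other is a product of two primes, one of which is $3$. Lemma \ref{num3}-style parity arguments then force $(p-1)_{2'}=1$, $(p+1)_{2'}=3r$. The Mersenne case gives $(p-1)_{2'}=3r$, which fits only via $p=31$ ($30=2\cdot3\cdot5$), and that is listed explicitly.
\end{itemize}

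The main obstacle is the bookkeeping in these families, especially making sure that the $\PSL(3,q)$, $\PSU(3,q)$ and ${^2\!B_2}$ members of the $K_4$ list are all killed. For these I would use the factor $q^3$ (forcing $p=2$ in (1)) together with the cyclotomic factors $q^2+q+1$ and $q^2-q+1$, which contribute too many odd primes.
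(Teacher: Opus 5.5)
Your proposal is correct, but your reduction step differs from the paper's; the endgame is the same.

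\textbf{Your route.} You import the classification of simple $K_3$-groups (Herzog) and simple $K_4$-groups (Shi, Bugeaud--Cao--Mignotte). After that, everything except $\PSL(2,q)$ is a finite lookup of odd parts.

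\textbf{The paper's route.} It appeals to CFSG directly and asserts that the bound $a_2+a_3+a_4\le 3$ leaves only $\A_5$, $\A_6$, Suzuki groups and classical groups.
\begin{itemize}
\item It disposes of ${^2\!B_2}(q)$ for $q\neq 8$ with the coprime factorization $q^2+1=(q+2^{f+1}+1)(q-2^{f+1}+1)$: one factor must equal $5$, which forces $f=1$.
\item It cuts the classical groups down to $\PSL(2,p)$, $\PSL(2,2^f)$ and a short table of orders by applying its own Proposition~\ref{d=5}. That proposition is derived from the Liebeck--Saxl classification of primitive groups of odd degree.
\end{itemize}
The final arithmetic on $\PSL(2,p)$ and $\PSL(2,2^f)$ is essentially what you sketch.

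\textbf{Comparison.} Your route is shorter. It also genuinely covers $\A_n$ ($n\ge 7$), the sporadic groups and the exceptional groups, which the paper excludes only by a bare appeal to CFSG. The cost is reliance on the more specialised $K_4$ classification, whose completion rests on Diophantine analysis. The paper instead reuses machinery it needs anyway for Theorem~\ref{main3}.

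\textbf{Small repairs.}
\begin{itemize}
\item For the Suzuki groups, Zsigmondy or Catalan is not the right tool. Zsigmondy gives primes but no upper bound. Use the factorization above, or simply read ${^2\!B_2}(8)$ and ${^2\!B_2}(32)$ off the $K_4$ list.
\item In the case $q=p^f$ with $f\ge 2$, drop ``unless $p=3$''. For every odd $p$, $p^2$ divides $|G|_{2'}$.
\item For $q=2^f$ with $f$ even, add the observation that $2^{f/2}-1$, $2^{f/2}+1$ and $2^f+1$ must all be prime. This forces $f=4$, while $f=2$ gives $\A_5$.
\item In the Mersenne case, make explicit that $\frac{p-1}{2}=(2^m-1)(2^m+1)$ with $m=\frac{k-1}{2}$ forces $m=2$, so $p=31$.
\end{itemize}
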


It follows from $G$ is non-solvable that $p_1=2$. Applying Corollary \ref{1.3cor}, we have that $3\le |\pi(G)|\le 4$ and hence we may assume that $|G|=2^{a_1} p_2^{a_2} p_3^{a_3} p_4^{a_4}$, where $a_2, a_3\ge 1$ and $a_4\ge 0$ are integers. Let $M$ be a maximal subgroup of $G$ such that $P_1 \le M$.
Noting that every subgroup of odd order or odd index in $G$ is a subgroup perfect code of $G$ (see Proposition \ref{odd-index}), one yields that $[M]\in \Delta(G)$ and
$[P_i]\in \Delta(G)$ for $i\in\{1,2,3,4\}$. Further, we can conclude that $a_2+a_3+a_4\le 5$ by the assumption $|\Delta(G)|\le 6$.
Consider the right multiplication action of $G$ on the set $\Omega := \{ Mg \mid g \in G \}$, the kernel is $\bigcap_{g \in G} M^g$ denoted by $K$, and consequently, $G/K$ is a primitive permutation group of odd degree $|G:M|$. Let $L/K=\soc(G/K)$.
Note that $|G|=2^{a_1} p_2^{a_2} p_3^{a_3} p_4^{a_4} $ for $a_2+a_3+a_4\leq 5$ and $a_i\ge 0$, the following conclusion can be directly read from \cite[Theorem 1]{ls}(for details, see Appendix A):
\begin{equation}\tag{*}\label{star}
\vcenter{\hbox{\parbox{0.92\linewidth}{
\begin{enumerate} [font=\normalfont]
    \item $G/K\cong \ZZ_p^d{:}(M/K)$, where $p$ is an odd prime, $1\le d\le 4$ and $M/K$ is an irreducible subgroup of $\GL(d,p)$;

    \item $T^2\lhd G/K \leq G_0 \wr \S_2$, where $G_0$ is a primitive group of odd degree $n_0$ with nonabelian simple socle $T$, and the wreath product has the product action of degree $n=n_0^2$;

    \item $L/K\cong \A_5, \A_6, \A_7, \A_8, \M_{11}$ or $\M_{12}$;

    \item \parbox[t]{\linewidth}{\sloppy $L/K\cong \PSL(2, p)$, $\PSL(2, 25)$, $\PSL(2, 49)$, $\PSL(2,27)$, $\PSL(3,3)$, $\PSL(3,5)$, $\PSp(4,3)$, $\PSU(3,3)$ or $\PSU(3,7)$, where $p\ge 5$ is an odd prime;}

    \item \parbox[t]{\linewidth}{\sloppy $L/K\cong \PSL(2, 2^f)$, $\PSL(2,8)$, $\PSL(2,16)$, $\PSL(3, 4)$ , $\PSL(3, 8)$, $\PSp(4,4)$, $\PSU(3,4)$, ${^2\!B_2}(2^{3})$ or ${^2\!B_2}(2^{5})$, where $f>3$ is a prime and $(M/K)\cap (L/K)$ is a parabolic subgroup of $L/K$.}
\end{enumerate}
}}}
\end{equation}

In the following, we will deal with the above cases one by one using some technical lemmas.
\begin{lemma}\label{case (1)}
The case (1) cannot hold.
\end{lemma}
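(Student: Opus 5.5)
Case (1) says $G/K\cong \ZZ_p^d{:}(M/K)$ with $M/K$ an irreducible subgroup of $\GL(d,p)$ and $1\le d\le 4$. The plan is to show that this forces $G$ to be solvable, contradicting the standing assumption that $G$ is non-solvable. The argument has three steps.

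\textbf{Step 1: $M$ is solvable.} The subgroup $M$ contains $P_1$, so it has odd index. By Proposition \ref{odd-index}, every subgroup of odd order or odd index in $M$ is a subgroup perfect code of $M$. Moreover, any subgroup of $M$ of odd order, or of odd index in $M$ (hence of odd index in $G$), is a subgroup perfect code of $G$. So non-conjugacy in $M$ only adds classes in $G$ up to fusion, but the counting is still tight. The classes already in $\Delta(G)$ are $[P_1],\dots,[P_n]$ and $[M]$, with $n\ge 3$. Hence at most $6-n-1\le 2$ further classes are available.

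Now suppose $M$ were non-solvable. Then a non-abelian composition factor of $M$ has order divisible by three primes, which already yields several odd-order Sylow classes. We would then apply Corollary \ref{cor}, or Theorem \ref{main} and Corollary \ref{1.3cor}, to $M$ itself. Concretely, a non-solvable $M$ needs $|\Delta(M)|\ge 4$ (Corollary \ref{cor}). Its Sylow subgroups are those of $G$, so the extra classes of $M$ would be extra odd-order or odd-index subgroups of $G$. I expect to push this to a contradiction in the count.

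A simpler route may suffice. Since $\ZZ_p^d$ is abelian and $G/K$ is an extension of $\ZZ_p^d$ by $M/K$, the group $G$ is solvable if and only if $M$ is solvable. So it is enough to rule out non-solvable $M/K$.

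\textbf{Step 2: $K$ is solvable.} I will argue this as in Claim 1 of Lemma \ref{non-solvable}. If an odd prime $p_i$ divides $|K|$ and $K$ is solvable, then $KP_j$ produces a $\{p_i,p_j\}$-Hall subgroup, which is an extra class. In general, $K\le M$ and $K$ has odd index in $M$.

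\textbf{Step 3: the main obstacle, a non-solvable $M/K\le \GL(d,p)$.} I will bound $p$ and $d$. The degree $p^d$ divides $|G|_{2'}$, and $a_2+a_3+a_4\le 5$ with $p^d$ contributing $d$ of those exponents. So a non-solvable irreducible $M/K\le\GL(d,p)$ of order $2^{*}\cdot(\text{at most }5-d\text{ odd prime factors})$ must still contain three primes. Since $p$ divides $|\GL(d,p)|$, the group $M/K$ may or may not contain $p$.

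I would first try to show that the odd part leaves no room. For $d\ge 2$, a non-solvable $M/K$ forces $p^d\cdot|M/K|_{2'}$ to have at least $d+2$ odd prime factors counted with multiplicity. This is at most $5$, so $d\le 3$. The count of classes then gives:
\begin{itemize}
\item $[P_i]$ for each $i$, together with $[M]$;
\item a proper subgroup of order $p$ inside the $\ZZ_p^d$ preimage, whenever $d\ge 2$ (via Hall subgroups of the solvable $K\ZZ_p^d$);
\item a $\{p,r\}$-Hall subgroup of the solvable preimage of $\ZZ_p^d{:}R$ for a Sylow $r$-subgroup $R$ of $M/K$;
\item the preimage of $\ZZ_p^d{:}P_1$, which has odd index.
\end{itemize}
This already exceeds $6$.

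For $d=1$, the group $M/K\le\ZZ_{p-1}$ is abelian, so it is solvable.

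The delicate point is making sure each of the constructed subgroups is genuinely non-conjugate to the others. I will check this by comparing orders and by checking containment of a Sylow $2$-subgroup. This is where I expect the real work: handling $d=2,3$ with $M/K$ involving $\A_5$ or $\PSL(2,7)$ requires explicit counting.
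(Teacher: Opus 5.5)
There is a genuine gap: you never handle the case in which the core $K$ is non-solvable, and that case is the entire difficulty.

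\textbf{Where the argument breaks.} Your reduction ``it is enough to rule out non-solvable $M/K$'' is a logical slip: $G$ is solvable if and only if \emph{both} $K$ and $M/K$ are. Step~2 does not close this. The Hall-subgroup argument you quote presupposes that $K$ (hence $KP_j$) is solvable, so it is circular. The analogy with Claim~1 of Lemma~\ref{non-solvable} also fails. There, solvability of $K$ was inherited from $M$, which was solvable because $|\Delta(M)|=|\pi(M)|$ and Theorem~\ref{main} applied. That used the exact equality $\Delta(G)=\{[P_1],\dots,[P_n],[M]\}$ and a generalized quaternion $P_1$, and neither is available under $|\Delta(G)|\le 6$. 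Your Step~1 appeal to Corollary~\ref{cor} for $M$ has a similar defect: perfect codes of $M$ of even order and even index in $M$ need not be perfect codes of $G$, and $M$-classes can fuse in $G$.

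\textbf{What you have actually proved.} Once $K$ is assumed solvable, your Step~3 count shows the assumption is contradictory. The classes $[P_1],[P_2],[P_3],[M]$, a non-Sylow subgroup of order $p$ (note $d\ge 2$, since $\GL(1,p)$ is abelian), a $\{p,r\}$-Hall subgroup, and $[NP_1]$ give seven. That is exactly the paper's Claim~1 (``$K$ is non-solvable''). Your version is slightly more robust, since the paper's class $[KP_1]$ coincides with $[P_1]$ when $K$ is a $2$-group. So the situation you single out as the main obstacle is the easy one.

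\textbf{What the hard case needs.} The hard case is $K$ non-solvable with $G/K$ solvable, and it requires an idea absent from your plan. The paper proceeds as follows.
\begin{enumerate}
\item It shows $M/K$ is a $2$-group. Otherwise $KP_1<M$ is a further odd-index class, and the surplus prime powers yield non-Sylow subgroups of prime order.
\item It bounds $|K|_{2'}$, refines $1\lhd K\lhd G$ to a chief series to locate a non-abelian simple chief factor $S=G_{m-1}/G_m$ inside $K$, and identifies $S$ from the list in Proposition~\ref{forK}.
\item It kills the characteristic-$2$ candidates. Once the count is saturated, $P_1$ must be generalized quaternion (Lemma~\ref{rm} and Proposition~\ref{pb}), so Sylow $2$-subgroups of sections of $P_1$ are cyclic, dihedral or quaternion.
\item For the remaining $S$, $\Out(S)$ is a $2$-group. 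With $C/G_m=\C_{G/G_m}(S)$, the quotient $|G:C|/|S|$ is a power of $2$. Since $p\mid |G:G_{m-1}|$, this forces $p\mid |C:G_m|$.
\item A subgroup of order $p$ in $C/G_m$ and one of order $p_i\ne p$ in $S$ commute. Their preimage is solvable (as $G_m$ is) and contains a $\{p,p_i\}$-Hall subgroup. This class, together with $[CP_1]$ and, when $|\pi(G)|=3$, a non-Sylow subgroup of order $p$, pushes $|\Delta(G)|$ to $7$.
\end{enumerate}
Without this centralizer argument, or some substitute that controls a non-solvable core, your proof does not go through.
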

\proof Assume that the case (1) holds, that is $G/K\cong \ZZ_p^d{:}(M/K)$, where $p$ is an odd prime, $1\le d\le 4$ and $M/K$ is an irreducible subgroup of $\GL(d,p)$. It is clear that $p\mid|G/K|$.

\textbf{Claim 1.}  $K$ is non-solvable.

Suppose for contradiction that $K$ is solvable.
Since $G/K\cong \ZZ_p^d{:}(M/K)$, there exists a normal subgroup $N$ of $G$ such that $N/K\cong \ZZ_p^d$ and $G/K=(N/K){:}(M/K)$, in particular, $G=NM$.
Since $K$ is solvable and $N/K\cong \ZZ_p^d$, one yields that $N$ is solvable.
And because $G$ is non-solvable and $G/N\cong M/K$, $M$ is non-solvable and hence $|\pi(M)|\geqslant 3$. Since both $NP_1$ and $KP_1$ are solvable, one yields that $NP_1\ncong M$ and $KP_1\ncong M$. Derived from $KP_1/K\cong P_1/P_1\cap K$ is a 2-group and $N/K\cong\ZZ_p^d$ for odd prime $p$, one yields that $NP_1\ncong KP_1$.
Let $p_i\in \pi(M)$ be such that $p_i\ne p$ for $i\in\{2,3,4\}$. It follows from $N\lhd G$ and $N$ is solvable that $NP_i$ is a solvable subgroup of $G$. By Proposition \ref{Hall}, $NP_i$ contains a $\{p,p_i\}$-Hall subgroup, say $D$, and further, applying Proposition \ref{odd-index}, one yields that $\Delta(G)\supseteq\{[P_1],[P_2],[P_3],[M],[KP_1],[NP_1],[D]\}$, which contradicts the assumption that $|\Delta(G)|\leqslant 6$. Thus $K$ is non-solvable and Claim 1 holds.

\textbf{Claim 2.} $M/K$ is a 2-group.

Suppose for contradiction that $M/K$ is not a 2-group, i.e., there exists an odd prime $p_i\in \pi(G)$ such that $p_i \mid |M/K|$ for $i\in\{2,3,4\}$. Thus $KP_1$ is a proper subgroup of $M$. It follows from Proposition \ref{odd-index} that $[KP_1]\in\Delta(G)$. Recall that $3\le |\pi(G)|\le 4$. Noting that $K$ is non-solvable, one yields that $3\leqslant|\pi(K)|\leqslant|\pi(G)|\leqslant4$.

Suppose first that $|\pi(G)| = 3$. Then $\pi(K) = \pi(G)$. Because $p_i\mid|M/K|$ and $p\mid|G/K|$, we have that $p_i^2 \mid |G|$ and $p^2 \mid |G|$. It follows that $G$ possesses subgroups $H_1\cong \ZZ_p$ and $H_2\cong\ZZ_{p_i}$ that are not Sylow subgroups of $G$.
Again by Proposition \ref{odd-index}, we obtain that $[H_1],  [H_2]\in \Delta(G)$, and hence $\{ [P_1],[P_2],[P_3],[M],[KP_1],[H_1], [H_2]\} \subseteq \Delta(G)$, contradicting the assumption that  $|\Delta(G)| \le 6$.

Now assume that $|\pi(G)|=4$. Since $p_i p\mid |G/K|$, it follows that $p_i^2\mid |G|$ or $p^2\mid |G|$. Without loss of generality, we assume that $p_i^2\mid |G|$. Thus $G$ contains a subgroup $H\cong \ZZ_{p_i}$ that is not a Sylow $p_i$-subgroup of $G$.
Applying Proposition \ref{odd-index}, one yields that $$\{ [P_1],[P_2],[P_3],[P_4],[M],[KP_1], [H]\} \subseteq \Delta(G),$$
which again contradicts the assumption that $|\Delta(G)| \le 6$.
Hence Claim 2 is true, i.e., $M/K$ is a 2-group.

\textbf{Claim 3.} $|K|_{2'}\in \{p_ip_j, p_2 p_3 p_4, p_i^2 p_j, p_i p_j^2\}$ for $i,j\in\{2,3,4\}$ and $i \neq j$.

Note that $3\leqslant|\pi(K)|\leqslant|\pi(G)|\leqslant4$. Suppose that $|\pi(G)|=3$. Then $|G|=2^{a_1} p_2^{a_2} p_3^{a_3}$ for $a_1$, $a_2$ and $a_3$ positive integers and $\pi(K)=\pi(G)$. Since $G/K\cong\ZZ_p^d{:}(M/K)$ and $\pi(G)=\pi(K)$, we have $p\mid |G/K|$ and hence $p^2 \mid |G|$. Then $G$ contains a subgroup isomorphic to $\ZZ_p$, say $H$, which is not a Sylow $p$-subgroup of $G$.
Assume that $p=p_i$ for some $i\in\{2,3\}$, and let $p_j\in \pi(G)$ be an odd prime such that $p_j\ne p_i$. If $a_j \ge 3$, then $G$ possesses two subgroups $H_1$ and $H_2$ of orders $p_j$ and $p_j^2$, respectively, neither of which is a Sylow $p_j$-subgroup of $G$. Applying Proposition \ref{odd-index}, $\{ [P_1],[P_2],[P_3],[M],[H], [H_1],[H_2]\}\subseteq \Delta(G)$, contradicting the assumption that $|\Delta(G)|\le 6$. Hence $a_j\le 2$.
If $p^3 \mid |K|$, then $p^4 \mid |G|$, and we can get the same contradiction as above. Thus $p_i^3 \nmid |K|$ (note that $p_i=p$), that is $|K|_{2'}\mid p_i^2 p_j^2$. Suppose that $|K|_{2'}= p_i^2 p_j^2$. Then $p_i^3\mid |G|$ and $p_j^2\mid |G|$, and consequently, $G$ possesses two subgroups $Q_1$ and $Q_2$ of orders $p_j$ and $p_i^2$, such that $Q_1<P_j$ and $Q_2<P_i$. By Proposition \ref{odd-index}, we see that $\Delta(G)\supseteq\{[P_1], [P_2],[P_3],[M],[H], [Q_1],[Q_2]\}$, which contradicts the assumption that $|\Delta(G)|\le 6$. Hence $|K|_{2'}\in\{ p_i p_j, p_i^2 p_j, p_ip_j^2\}$, as desired.

Now suppose that $|\pi(G)| = 4$, i.e., $|G| = 2^{a_1} p_2^{a_2} p_3^{a_3} p_4^{a_4}$, where $a_1$, $a_2$, $a_3$ and $a_4$ are positive integers. Since $3\leqslant|\pi(K)|\leqslant|\pi(G)| = 4$ and $\pi(K)\subseteq \pi(G)$, we assume that $p_i p_j\mid |K|$, where $i,j\in\{2,3,4\}$ and $i \neq j$.
Assume first that $|\pi(K)|=4$. Then $\pi(K)=\pi(G)$. Since $p\mid |G/K|$, it follows that $p^2\mid |G|$ and hence $G$ contains a subgroup $H\cong \ZZ_p$ that is not a Sylow $p$-subgroup of $G$.
By Proposition \ref{odd-index} and the assumption that $|\Delta(G)|\le 6$,
we conclude that
$\{ [P_1],[P_2],[P_3],[P_4],[M],[H]\}=\Delta(G)$, and so $a_2+a_3+a_4=4$ and $|K|_{2'}= p_2 p_3 p_4$, as required.
Now assume that $|\pi(K)|=3$, i.e., $\pi(K)=\{2, p_i, p_j\}$ for $i,j\in\{2,3,4\}$ and $i \neq j$. Since $G/K\cong\ZZ_p^d:M/K$ with $M/K$ being a 2-group, one deduces from $|\pi(G)|=4$ and $|\pi(K)|=3$ that $p\notin \pi(K)$. And because $\{ [P_1],[P_2],[P_3],[P_4],[M] \}\subseteq \Delta(G)$ and $|\Delta(G)|\le 6$, Proposition \ref{odd-index} yields that $a_i+a_j\le 3$. Since $a_i\geqslant 1$ and $a_j\geqslant 1$, we have
 $|K|_{2'}\in \{p_i p_j, p_i^2 p_j, p_i p_j^2\}$.
Combining two cases($|\pi(K)|=3,$ or $4$), we conclude that $|K|_{2'}\in \{p_ip_j, p_2 p_3 p_4, p_i^2 p_j, p_i p_j^2\}$ with $i,j\in\{2,3,4\}$ and $i \neq j$. Hence Claim 3 holds.

For convenience, in the rest of this proof, we denote
$$\Sigma= \{\A_5, \A_6, {^2\!B_2(8)},\PSL(2,q) \mid q=7,8,16,17,31,p_4,2^f\}.$$

\textbf{Claim 4.} There exists a chief factor of $G$ in $\Sigma$ (up to isomorphic) such that $((p_4-1)_{2'}, (p_4+1)_{2'})\in \{(r_1,r_2), (1,3r)\}$, $r_1, r_2$ and $r$ are odd primes, $f>3$, $2^f-1$ and $\frac{2^f+1}{3}$ are odd primes.

Now consider the normal series $1 \lhd K \lhd G$.
By the Schreier refinement theorem\cite[p.6, Theorem 2.7]{gorenstein}, this series can be refined to a chief series of $G$:
\begin{equation}\label{series}
G = G_0 \rhd G_1 \rhd \dots \rhd G_n = 1 \tag{I}
\end{equation}
where $G_k \lhd G$ for each $k\in \{1,2,\dots,n\}$, and each factor $G_{k-1}/G_k$ is a minimal normal subgroup of $G/G_k$, i.e., a characteristically simple group. Without loss of generality, we can assume that $G_l = K$ for some positive integer $l$.
Since $K$ is non-solvable and $G/K$ is solvable, there exists an integer $m > l$ such that $G_{m-1}/G_m$ is non-abelian characteristically simple, in particular, $G_{m-1}/G_m$ is non-solvable. It follows from $G_{m-1}\leqslant K$ and Claim 3 that $|G_{m-1}|_{2'}\in \{p_ip_j, p_2 p_3 p_4, p_i^2 p_j, p_i p_j^2\}$ for $i,j\in\{2,3,4\}$ and $i \neq j$. Noting that $G_{m-1}/G_m$ is non-solvable, one yields that $|\pi(G_{m-1}/G_m)|\ge 3$, and hence $|\pi(G_m)|\le 2$, that is $G_m$ is solvable. Observe that $|G_{m-1}/G_m|_{2'} \in \{ p_i p_j, p_2 p_3 p_4, p_i^2 p_j, p_i p_j^2 \}$, combining with $G_{m-1}/G_m$ is non-abelian characteristically simple, we conclude that $G_{m-1}/G_m$ is a non-abelian simple group. Further, applying Proposition \ref{forK}, one deduces that $G_{m-1}/G_m \in\Sigma$, where $((p_4-1)_{2'}, (p_4+1)_{2'})\in \{(r_1,r_2), (1,3r)\}$, $r_1, r_2$ and $r$ are odd primes, $f>3$, $2^f-1$ and $\frac{2^f+1}{3}$ are odd primes. Hence Claim 4 hold.

\textbf{The final contradiction.} Assume that $G_{m-1}/G_m$ is a non-abelian simple chief factor of $G$ described in Claim 4, that is $G_m, G_{m-1}$ are normal subgroups of $G$ as in (\ref{series}) such that $G_{m-1}/G_m$ is a chief factor of $G$ and $G_{m-1}/G_m\in \Sigma$, where $((p_4-1)_{2'}, (p_4+1)_{2'})\in \{(r_1,r_2), (1,3r)\}$ for odd primes $r_1, r_2$ and $r$, and where $f>3$, $2^f-1$ and $\frac{2^f+1}{3}$ are odd primes. In particular, $G_{m-1}\unlhd K$.

We assert that $G_{m-1}/G_m\not\in\{ \PSL(2,2^f), \PSL(2,8), \PSL(2,16), \allowbreak {^2\!B_2(8)}\}$, where $f>3$, $2^f-1$, and $\frac{2^f+1}{3}$ are odd primes. Firstly, suppose on the contrary that $G_{m-1}/G_m\cong \PSL(2,2^f)$, where $f>3$, $2^f-1$, and $\frac{2^f+1}{3}$ are odd primes. By a calculation, we obtain that $|\pi(G_{m-1}/G_m)| = 4$. Recall that $|\pi(G)|\le 4$. It follows from $\pi(G_{m-1}/G_m)\subseteq \pi(K)\subseteq \pi(G)$ that $\pi(G)=\pi(K)=\pi(G_{m-1}/G_m)$. Since $p\mid |G/K|$, it follows that $p^2\mid |G|$, and consequently $G$ contains a subgroup isomorphic to $\ZZ_p$, say $H$, which is not a Sylow $p$-subgroup of $G$. By Proposition \ref{odd-index} and the assumption that $|\Delta(G)|\le 6$, we conclude that $\Delta(G) = \{[P_1],[P_2],[P_3],[P_4],[M],[H]\}$.
Lemma \ref{rm} yields that $P_1$ is a cyclic or a generalized quaternion 2-group.
Moreover, since $G$ is non-solvable, Proposition \ref{pb} implies that $P_1\cong \Q_{2^{a_1}}$.
Now that $P_1 \cong \Q_{2^{a_1}}$, in particular, the Sylow $2$-subgroups of $G_{m-1}/G_m$ are also noncyclic.
And because $(P_1 \cap G_{m-1})G_m / G_m\in\Syl_2(G_{m-1}/G_m)$ and $(P_1 \cap G_{m-1})G_m / G_m\leqslant P_1G_m/G_m\cong P_1/(P_1\cap G_m)$, one yields that $(P_1 \cap G_{m-1})G_m/G_m$ is either dihedral or generalized quaternion. However, the Sylow $2$-subgroups of $\PSL(2,2^f)$ are isomorphic to $\ZZ_2^f$ with $f>3$, a contradiction. Secondly, using methods similar to those mentioned above, we can conclude that $G_{m-1}/G_m\not\in\{ {^2\!B}_2(8), \PSL(2,8),\PSL(2,16)\}$. Therefore, the assertion holds.

Now assume that $G_{m-1}/G_m\in\{\A_5, \A_6, \PSL(2,7), \PSL(2,17),\PSL(2,31), \PSL(2,p_4)\}$, where $((p_4-1)_{2'}, (p_4+1)_{2'})\in \{(r_1,r_2), (1,3r)\}$ for odd primes $r_1, r_2$ and $r$. It follows that  $\Out(G_{m-1}/G_m)$ is a $2$-group.
Define $C$ to be the preimage of $\C_{G/G_m}(G_{m-1}/G_m)$ in $G$, that is  $C/G_m=\C_{G/G_m}(G_{m-1}/G_m)$. Since $G_{m-1}/G_m$ is a non-abelian simple group, it follows that $(G_{m-1}/G_m)\cap (C/G_m)$ is trivial in $G/G_m$ and hence $G_{m-1}\cap C=G_m$ and $|G_{m-1}C:C|=|G_{m-1}:G_{m-1}\cap C|=|G_{m-1}:G_m|$.
Since $|G:M|=p^d$ and $|G_{m-1}/G_m|$ is a divisor of $|G:C|$, we can see that $CP_1\ncong M$.
Applying the N/C-Theorem, we have that $G/C\cong (G/G_m)/(C/G_m)=\N_{G/G_m}(G_{m-1}/G_m)/\C_{G/G_m}(G_{m-1}/G_m)\lesssim \Aut(G_{m-1}/G_m)$.
On the other hand, derived from $G_{m-1}/G_m\cong Inn(G_{m-1}/G_m)$ (as $G_{m-1}/G_m$ is a nonabelian simple group) and $\Aut(G_{m-1}/G_m)/\Inn(G_{m-1}/G_m)\cong\Out(G_{m-1}/G_m) $ is a 2-group, we obtain that $\frac{|G/C|}{|G_{m-1}/G_m|}=\frac{|G:G_{m-1}|}{|C:G_m|}$ is a power of 2. Noting that $p\mid|G/K|$ and $G_{m-1}\leqslant K$, one yields that $p\mid |G:G_{m-1}|$ and hence $p\mid |C:G_m|$.
Let $C_1$ be a subgroup of $C$ such that $C_1/G_m$ is a subgroup of order $p$ in $C/G_m=\C_{G/G_m}(G_{m-1}/G_m)$, and let $D/G_m$ be a subgroup of $G_{m-1}/G_m$ such that $D/G_m\cong \ZZ_{p_i}$ for some odd prime $p_i\ne p$ and $p_i\in \pi(G_{m-1}/G_m)$. Since $C_1/G_m\le C/G_m$, it follows that $(D/G_m)(C_1/G_m)=DC_1/G_m$ is a $\{p,p_i\}$-subgroup of $G/G_m$.
Noting that $G_m$ is solvable and $D C_1/G_m$ is solvable (as $|D C_1/G_m|$ is odd), one yields that $D C_1$ is solvable. Further, applying Proposition \ref{Hall}, one yields that $D C_1$ contains a $\{p,p_i\}$-Hall subgroup, say $H$.
If $|\pi(G)|=4$, then by Proposition \ref{odd-index}, we have that $\Delta(G)= \{[P_1], [P_2], [P_3], [P_4], [M], [C P_1], [H]\}$, which contradicts the assumption that $|\Delta(G)|\le 6$.
Now assume that $|\pi(G)|=3$. Since $|\pi(K)|=3$ and $p\mid |G:K|$, it follows that $p^2\mid |G|$.
Let $R$ be a subgroup of $G$ that is isomorphic to $\ZZ_p$.
By Proposition \ref{odd-index}, we have that
$\Delta(G)= \{[P_1], [P_2], [P_3], [M], [C P_1], [R], [H]\}$, a contradiction can still be obtained.

We have finally proved the correctness of Lemma \ref{case (1)}.
\qed

\begin{lemma}\label{case (2)}
The case (2) cannot hold.
\end{lemma}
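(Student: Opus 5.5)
Assume case (2) of (\ref{star}) holds: $T^2\lhd G/K\le G_0\wr\S_2$, where $T$ is a nonabelian simple group and $G_0$ is primitive of odd degree $n_0$ with socle $T$. I will bound the number of prime-power subgroups this forces in $G$ and contradict $|\Delta(G)|\le 6$. Since $T$ is nonabelian simple, $|\pi(T)|\ge 3$, so $T$ has at least two odd prime divisors $p_i,p_j$ with $i\neq j$ and $i,j\in\{2,3,4\}$. Because $T^2$ is a section of $G$, we get $p_i^2p_j^2\mid |G|$. Together with the standing bound $a_2+a_3+a_4\le 5$, this severely restricts the odd part of $|G|$.

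\textbf{Step 1 (non-Sylow prime-order subgroups).} Since $p_i^2\mid|G|$ and $p_j^2\mid|G|$, $G$ has subgroups $H_i\cong\ZZ_{p_i}$ and $H_j\cong\ZZ_{p_j}$ that are not Sylow subgroups. By Proposition~\ref{odd-index}, both lie in $\Delta(G)$. Thus $\Delta(G)$ contains $[P_1],[P_2],[P_3]$, $[P_4]$ if $|\pi(G)|=4$, $[M]$, $[H_i]$ and $[H_j]$.

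\textbf{Step 2 (counting).} If $|\pi(G)|=4$, the list above already has $7$ classes, a contradiction. If $|\pi(G)|=3$, it gives exactly $6$ classes, so $\Delta(G)=\{[P_1],[P_2],[P_3],[M],[H_2],[H_3]\}$ and $|G|_{2'}=p_2^2p_3^2$. By Proposition~\ref{odd-index}, every odd-index subgroup containing $P_1$ must then be conjugate to $P_1$ or $M$. There is a further subgroup of odd index: the preimage $N$ of $(T^2\cap(G/K))$ intersected appropriately with $M$, or more simply $KP_1$ and the preimage of $M/K\cap T^2$ times $P_1$. Since the point stabilizer $M/K$ in the product action contains $(T_{\alpha})^2$ with $T_\alpha$ a point stabilizer of $T$ in $G_0$, and $|T:T_\alpha|=n_0$ is odd, the groups $KP_1$ and $M$ should be distinct whenever $M/K$ is not a $2$-group. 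If they are distinct, we get a seventh class and a contradiction.

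\textbf{Step 3 (remaining subcase).} Suppose instead that $M/K$ is a $2$-group. Then $T_\alpha$ is a $2$-group, so $T$ is a simple primitive group of odd degree $n_0$ with a $2$-group stabilizer and $|T|_{2'}$ dividing $p_2p_3$. Proposition~\ref{square-free}(1) then gives $T\cong\PSL(2,p)$ with $p\pm1$ a power of $2$. Since $|T|_{2'}$ involves only two odd primes each to the first power, Lemma~\ref{num3} forces $T\cong\PSL(2,5)$ or $\PSL(2,7)$. For $\PSL(2,7)$, the subgroup $\ZZ_7{:}\ZZ_3$ yields, via Propositions~\ref{Hall} and~\ref{odd-index}, a $\{3,7\}$-Hall subgroup class, a seventh class. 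For $\PSL(2,5)$ no maximal subgroup is a $2$-group, which contradicts $T_\alpha$ being maximal in $T$ (as $G_0$ is primitive).

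The main obstacle is Step 2: proving that $M$ is not conjugate to $KP_1$, and more generally producing a subgroup of odd order or odd index outside the six listed classes. If the direct argument fails, I would instead exhibit a $\{p_i,p_j\}$-subgroup of odd order. Take $H_i\le$ the first factor of $T^2$ and $H_j\le$ the second; these commute, so their product in $G/K$ is $\ZZ_{p_i}\times\ZZ_{p_j}$. Its preimage is an extension of $K$, so whenever $K$ is solvable Proposition~\ref{Hall} gives an odd-order subgroup of order $p_ip_j$ there. That subgroup is a new class in $\Delta(G)$ by Proposition~\ref{odd-index}, giving the seventh class. If $K$ is non-solvable, a nonabelian chief factor inside $K$ adds two further odd primes-squared, which immediately exceeds $a_2+a_3+a_4\le5$.
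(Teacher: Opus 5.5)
\textbf{Gap: the case $M=P_1$ is never handled.} Your counting assumes $[M]\neq[P_1]$. But $M$ is only some maximal subgroup containing $P_1$, and $M=P_1$ does occur in case (2). For example, for $\PGL(2,7)\wr\S_2$ in product action on $21^2=441$ points, the Sylow $2$-subgroup $\D_{16}\wr\S_2$ is a maximal subgroup of odd index.

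When $M=P_1$ and $|\pi(G)|=3$, Step 1 together with your fallback class $[Q]$ gives only the six classes $[P_1],[P_2],[P_3],[H_i],[H_j],[Q]$, so there is no contradiction. Likewise, your ``$7$ classes'' for $|\pi(G)|=4$ are really six until $[Q]$ is added. Your $KP_1$ device also gives nothing here: when $|G|_{2'}=p_2^2p_3^2$ we have $|K|_{2'}=1$, so $KP_1=P_1$.

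This residual case is exactly what the paper's proof is built for. With $\Delta(G)=\{[P_1],[P_2],[P_3],[H_1],[H_2],[Q]\}$, Lemma~\ref{rm} forces $P_1$ to be cyclic or generalized quaternion, hence generalized quaternion by Proposition~\ref{pb}. Then $(P_1K/K)\cap T^2$, a Sylow $2$-subgroup of $T^2$, lies in a cyclic, dihedral or generalized quaternion group and so is $2$-generated. But it needs at least four generators, because Sylow $2$-subgroups of $T$ are noncyclic.

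\textbf{Step 3 does not close this case.} Primitivity of $G_0$ does not make $T$ primitive. In the example above, $T_\alpha=\D_{16}\cap\PSL(2,7)\cong\D_8<\S_4<\PSL(2,7)$ is not maximal in $T$. So applying Proposition~\ref{square-free}(1) to $T$ is unjustified, and so is your $\PSL(2,5)$ argument via ``$T_\alpha$ maximal in $T$''. This fails precisely in the configuration where $M/K$ is a $2$-group. Moreover, in the $\PSL(2,7)$ subcase with $M=P_1$, the $\ZZ_7{:}\ZZ_3$-class is a sixth class, not a seventh. It becomes a seventh only if you also include the abelian $\{3,7\}$-class $[Q]$ and prove the two are non-conjugate.

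\textbf{A repair that stays within counting.} Take $x\in T$ of order $p_i$. The subgroups $\langle(x,1)\rangle$ and $\langle(x,x)\rangle$ of $T^2$ are not conjugate in $G/K$, because conjugation permutes the two simple factors. Hence Sylow $p_i$-subgroups of their preimages in $G$ are two non-conjugate $p_i$-subgroups, neither Sylow, since $|G|_{p_i}\ge p_i^2|K|_{p_i}$. Together with $[P_1],[P_i],[P_j],[H_j],[Q]$ this gives seven classes, independently of whether $M=P_1$. This uses only the solvability of $K$, which you did establish.
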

\proof
Assume for contradiction that the case (2) holds, that is, $T^2\lhd G/K \leq G_0 \wr \S_2$, where $G_0$ is a primitive group of odd degree $n_0$ with nonabelian simple socle $T$, and the wreath product has the product action of degree $n=n_0^2$.
Since $T$ is a nonabelian simple group, it follows that $|T|$ is divisible by at least two distinct odd primes, say $p_i$ and $p_j$ for $i, j\in\{2, 3, 4\}$. As $T^2 \lhd G/K$ gives $p_i^2 p_j^2 \mid |G|$, $G$ has two subgroups $H_1\cong \ZZ_{p_i}$ and $H_2\cong \ZZ_{p_j}$, which are not Sylow subgroups of $G$.
Note that $|G/K| \mid 2^{a_1} p_2^{a_2} p_3^{a_3} p_4^{a_4} $, where $a_2+a_3+a_4\leq 5$ and $a_i\ge 0$ for $i\in\{1, 2, 3, 4\}$. It follows that $|\pi(K)| \leqslant 2$ and hence $K$ is solvable.
Since $T\times T$ contains a subgroup of order $p_i p_j$ say $R$, there exists a subgroup $H$ of $G$ such that $H/K \cong R$.
Since both $K$ and $R$ are solvable, it follows that $H$ is solvable. Proposition \ref{Hall} implies that $H$ contains a $\{p_i, p_j\}$-Hall subgroup, say $Q$.
By Proposition \ref{odd-index} and the assumption that $|\Delta(G)|\le 6$, we obtain that
$\Delta(G)=\{ [P_1], [P_2], [P_3],  [H_1], [H_2], [Q]\}.$
Quoting the result of Lemma \ref{rm}, we conclude that $P_1$ is a cyclic or a generalized quaternion 2-group.
Since both $G$ and $T$ are non-solvable, Proposition \ref{pb} yields that the Sylow $2$-subgroups of $G$ and $T$ are noncyclic. Thus $P_1\cong \Q_{2^{a_1}}$ and the Sylow $2$-subgroup of $T$ is generated by at least two elements.
It follows that the Sylow $2$-subgroup $(P_1K/K) \cap T^2$ of $T^2$ requires at least four generators. On the other hand, $(P_1K/K) \cap T^2\leqslant P_1K/K\cong P_1/P_1\cap K$ shows that $(P_1K/K) \cap T^2$ is dihedral or generalized quaternion, which can be generated by two elements, a contradiction. Therefore, Lemma \ref{case (2)} holds.
\qed

\begin{lemma}\label{case (3)}
If the case (3) holds, then $\soc(G/K)\cong \A_5$ or $\A_6$ and $K\cong \O_2(G)$.
\end{lemma}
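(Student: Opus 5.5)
We work in case (3) of (\ref{star}): $L/K=\soc(G/K)\in\{\A_5,\A_6,\A_7,\A_8,\M_{11},\M_{12}\}$. The plan is to first discard the four large socles by counting, then show that $K$ is a $2$-group, so that $K=\O_2(G)$. For the counting we use the standing facts $a_2+a_3+a_4\le 5$ and $[P_1],\dots,[P_n],[M]\in\Delta(G)$.

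\emph{Ruling out $\A_7,\A_8,\M_{11},\M_{12}$.} Each of these has $|\pi|=4$. Hence $\pi(G)=\pi(L/K)=\{2,3,5,7\}$ or $\{2,3,5,11\}$, and $\Delta(G)$ already contains $[P_1],[P_2],[P_3],[P_4],[M]$. In every case $9\mid|L/K|$, so $G$ has a non-Sylow subgroup $H\cong\ZZ_3$, giving $[H]\in\Delta(G)$; this already uses up all six classes. For $\A_8$, $\M_{11}$, $\M_{12}$ we have $27\mid|G|$ or $16\cdot 9\cdot 5$ with extra structure. We will look for a seventh class: either a non-Sylow subgroup of order $9$, or a subgroup of odd index strictly between $P_1$ and $M$, or a Hall $\{p_i,p_j\}$-subgroup coming from a solvable subgroup.
\begin{itemize}
\item For $\A_7$: $M/K$ must be a maximal subgroup of $G/K$ of odd index, and the odd-index chain $P_1<\cdots<M$ contributes extra classes. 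In addition, $\A_7\ge\ZZ_7{:}\ZZ_3$ yields a $\{3,7\}$-Hall subgroup of the solvable preimage of that subgroup, provided $K$ is solvable.
\item For $\M_{11}$: $\ZZ_{11}{:}\ZZ_5$ plays the same role.
\item For $\A_8$ and $\M_{12}$: both $27\mid|G|$ and the chain argument apply.
\end{itemize}
Before doing any of this we must know that $K$ is solvable. This follows since $|\pi(L/K)|=4\ge|\pi(G)|$ and the exponent bound forces $|K|_{2'}$ to involve at most one odd prime to a small power; if $|\pi(K)|\ge 3$, then $K$ would contain a nonabelian chief factor whose odd part overlaps $L/K$, pushing $a_2+a_3+a_4$ above $5$.

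\emph{$\A_5$ and $\A_6$: showing $K$ is a $2$-group.} Assume an odd prime $r$ divides $|K|$. We split on whether $K$ is solvable.
\begin{itemize}
\item If $K$ is solvable, pick an odd prime $s\ne r$ in $\pi(L/K)$, say $s\in\{3,5\}$, and a Sylow $s$-subgroup $S$ of $G$. Then $KS$ is solvable, so Proposition \ref{Hall} gives a $\{r,s\}$-Hall subgroup, which is a perfect code by Proposition \ref{odd-index}. In the other subcase, $r\in\pi(L/K)$ and $r^2\mid|G|$, giving a non-Sylow $\ZZ_r$. We then also use the non-Sylow $\ZZ_3$ in $\A_6$, and the odd-index subgroup $KP_1$, which lies strictly between $P_1$ and $M$ unless $K$ is a $2$-group. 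Together with $[P_i]$ and $[M]$ this will exceed six classes. The key extra class is $[KP_1]$: it is distinct from $[P_1]$ because $K$ is not a $2$-group, and distinct from $[M]$ because $M/K$ is a proper nonsolvable-free maximal subgroup ($\A_4$, $\S_4$, $3^2{:}4$, etc.) containing more than a Sylow $2$-subgroup modulo $K$.
\item If $K$ is nonsolvable, it has a nonabelian chief factor. Then $|G|_{2'}$ contains $3^2\cdot 5^2$ at least, since both $K$ and $L/K$ contribute $3\cdot5$. This yields non-Sylow $\ZZ_3$ and $\ZZ_5$, and with $[KP_1]$ we exceed six classes.
\end{itemize}
Hence $K$ is a $2$-group. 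Since $P_1\le M$ and $K$ is the core of $M$, we get $K\le\O_2(G)$. Conversely, $\O_2(G)K/K$ is a normal $2$-subgroup of the almost simple group $G/K$, hence trivial, so $K=\O_2(G)$.

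The main obstacle is the bookkeeping needed to guarantee that the extra subgroups produced are pairwise non-conjugate and distinct from $[M]$ in each subcase. This is most delicate for $\A_7$, where the maximal odd-index subgroups vary with $G/K\in\{\A_7,\S_7\}$, and it is there that we would first check against the maximal subgroup lists in the Atlas.
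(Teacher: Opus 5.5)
Your overall plan matches the paper's: bound $|\Delta(G)|$ from below using Sylow subgroups, non-Sylow subgroups of odd prime-power order, $[M]$, $[KP_1]$, and Hall subgroups of solvable preimages. For the large socles, your Hall-subgroup device is a legitimate alternative to the paper's two non-conjugate odd-index maximal subgroups: a $\{3,7\}$-Hall subgroup over $7{:}3\le\A_7\le\A_8$, and a $\{5,11\}$-Hall subgroup over $11{:}5\le\M_{11}$. Note, though, that $27\nmid|\A_8|$, contrary to what you write.

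The genuine gap is in the $\A_5/\A_6$ part, at the step you call key. You assert that $M/K$ is never a $2$-group, so that $[KP_1]$ is always a class distinct from $[M]$. This is false when $G/K\in\{\PGL(2,9),\M_{10},\PGammaL(2,9)\}$. There the only maximal subgroups of odd index are the Sylow $2$-subgroups $\D_{16}$, $\mathrm{QD}_{16}$, $[2^5]$ (index $45$); these are exactly the exceptional groups in Proposition \ref{pi+1}. So necessarily $M=KP_1$. Take the subcase $K$ solvable, $3\mid|K|$, $|\pi(G)|=3$: the classes you actually exhibit are only $[P_1],[P_2],[P_3],[M],[\ZZ_3],[H]$. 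That is six, and no contradiction follows. Your nonsolvable subcase has the same defect. It is compounded by the incorrect claim that a nonabelian composition factor of $K$ contributes $3\cdot 5$: it could be $\PSL(2,7)$, so a non-Sylow $\ZZ_5$ need not exist.

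The repair is easy but must be made. Since $K\le M$ and $K$ is not a $2$-group, $M\ne P_1$. So the subgroups of each odd prime-power order, together with $P_1$ and $M$, already give at least $2+a_2+a_3+a_4$ classes. For $\soc(G/K)\cong\A_6$, where $|\A_6|_{2'}=3^2\cdot 5$, this forces $|K|_{2'}$ to be a single prime $r$. Hence $K$ is solvable, and the $\{r,s\}$-Hall subgroup of $KP_s$ (with $s\in\{3,5\}$, $s\ne r$) is a seventh class. In your $r=3$ subcase, a non-Sylow subgroup of order $9$ supplies the missing class directly.

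This is why the paper never uses $[KP_1]$ when the socle is $\A_6$. It argues from $[P_1],[P_2],[P_3],[Q],[M],[H]$ and the exponent count $a_2+a_3=3$ versus $a_2+a_3\ge 4$. It invokes $[KP_1]$ only for socle $\A_5$, where $\A_5$ and $\S_5$ genuinely have no maximal $2$-subgroups.

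Finally, in your last step the justifications are swapped. $K\le\O_2(G)$ holds because $K$ is a normal $2$-subgroup. The inclusion $\O_2(G)\le K$ comes from $P_1\le M$ (or from your almost-simple argument).
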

\proof Suppose the case (3) holds, that is $L/K\in\{\A_5, \A_6, \A_7, \A_8, \M_{11}, \M_{12}\}$. Note that $L/K=\soc(G/K)$.

\textbf{Claim 1.} $L/K\cong\A_5$ or $\A_6$.

Assume that $L/K\cong \A_7$. Then $G/K\cong \A_7$ or $\S_7$ with $|\pi(G/K)|=4$. Recalling that $|G|=2^{a_1} p_2^{a_2} p_3^{a_3} p_4^{a_4}$ with nonnegative integers $a_i$ and $a_2+a_3+a_4\le 5$, we conclude that $|\pi(K)|\le 2$ and consequently $K$ is solvable.
Referring to the Atlas \cite[p.10]{atlas}, we find that both $\A_7$ and $\S_7$ have two non-isomorphic maximal subgroups, which are not 2-groups and have odd index.
Consequently, there exist two corresponding maximal subgroups $M_1$ and $M_2$ of $G$ of odd index that are not 2-groups. On the other hand, since $3^2\mid |G|$, $G$ contains a subgroup isomorphic to $\ZZ_3$, say $H$, which is not a Sylow $3$-subgroup of $G$. Applying Proposition \ref{odd-index}, we obtain that
$\Delta(G)\supseteq \{ [P_1],[P_2],[P_3],[P_4], [M_1],[M_2], [H]\}$,
which contradicts the assumption that $|\Delta(G)|\le 6$. Therefore, the case $L/K\cong\A_7$ cannot occur.
For the case where $L/K\in\{ \A_8, \M_{11}, \M_{12}\}$, it is similar to the previous one, and the same reasoning shows that it cannot occur. Hence Claim 1 holds.

\textbf{Claim 2.} $K$ is solvable.

Assume for a contradiction that $K$ is non-solvable. Then $|\pi(K)|\ge 3$ and $M$ is not a 2-group as $K\le M$.
Given that $L/K\cong \A_5$ or $\A_6$, we have that $\{2,3,5\}\subseteq \pi(K)\subseteq \pi(G)$.
Since $3\le |\pi(G)|\le 4$, it follows that $p^2\mid |G|$ for $p=3$ or $5$, and so $G$ contains a subgroup isomorphic to $\ZZ_p$, say $H$, which is not a Sylow $p$-subgroup of $G$.

Suppose first that $|\pi(G)|=4$. By Proposition \ref{odd-index} and the assumption that $|\Delta(G)|\le 6$, we obtain that $\Delta(G)= \{[P_1], [P_2], [P_3], [P_4], [H], [M]\}$.
Applying Proposition \ref{odd-index} again, we conclude that $a_2+a_3+a_4=4$ and consequently $L/K\ncong \A_6$ as $|\A_6|_{2'}=3^2\cdot 5$ and $|\pi(K)|\geqslant 3$. Thus $L/K\cong \A_5$ and $G/K\cong \A_5$ or $\S_5$.
A detailed check of the Atlas\cite[p.2]{atlas} shows that neither $\A_5$ nor $\S_5$ has a $2$-subgroup as a maximal subgroup. Therefore, the maximal subgroup $M/K$ of $G/K$ cannot be a 2-group and consequently $KP_1$ is a proper subgroup of $M$ with odd index. Again by Proposition \ref{odd-index}, one yields that $[KP_1]\in \Delta(G)$ and hence $|\Delta(G)|\ge 7$, contradicting the assumption that $|\Delta(G)|\leqslant 6$.

Now assume that $|\pi(G)|=3$. Then $\pi(K)=\pi(G)=\pi(G/K)=\{2,3,5\}$.
It follows that $3^2\mid |G|$ and $5^2\mid |G|$. There exists subgroups $H_1$ of order 3 and $H_2$ of order 5 that are not Sylow subgroups of $G$.
By Proposition \ref{odd-index} and the assumption that $|\Delta(G)|\le 6$, we obtain that $\Delta(G)= \{[P_1], [P_2], [P_3], [M], [H_1], [H_2]\}$.
Applying Proposition \ref{odd-index} again, we conclude that $a_2+a_3=4$ and consequently $L/K\ncong \A_6$ as $|\A_6|_{2'}=3^2\cdot 5$ and $|\pi(K)|=3$. Thus $G/K\cong \A_5$ or $\S_5$. Similarly, we can deduce from the above argument that $KP_1<M$ and $[KP_1]\in \Delta(G)$, which shows that $|\Delta(G)|\ge 7$, contradicting the assumption that $|\Delta(G)|\le 6$. Therefore, Claim 2 holds, i.e., $K$ is solvable.

\textbf{Claim 3.} $K$ is a 2-group.

Suppose for contradiction that $K$ is not a 2-group. Note that $|G|=2^{a_1} p_2^{a_2} p_3^{a_3} p_4^{a_4}$ and $a_2+a_3+a_4\le 5$ where $a_2, a_3\ge 1$ and $a_4\ge 0$. Then $p_i\mid |K|$ for some $i\in\{2,3,4\}$.
Let $p_j\in \pi(G)$ be an odd prime such that $p_i\ne p_j$. Since $K$ is solvable and $KP_j/K\cong P_j/(P_j\cap K)$ is a $p_j$-group, $KP_j$ is solvable. By Proposition \ref{Hall}, $KP_j$ contains a $\{p_i,p_j\}$-Hall subgroup, say $H$. Proposition \ref{odd-index} implies that $[H]\in \Delta(G)$.

The Claim 1 shows that $L/K\cong \A_5$ or $\A_6$.
Assume first that $L/K\cong \A_5$. It follows from $L/K=\soc(G/K)$ that $G/K\cong \A_5$ or $G/K\cong \S_5$, in particular,  $|\pi(G/K)|=3$. Since neither $\A_5$ nor $\S_5$ has a $2$-subgroup as a maximal subgroup(see the Atlas\cite[p.2]{atlas}), the maximal subgroup $M/K$ of $G/K$ cannot be a 2-group, and consequently $P_1 K$ is a proper subgroup of $M$. By Proposition \ref{odd-index} and the assumption that $|\Delta(G)|\le 6$, we obtain that
$\Delta(G)= \{ [P_1],[P_2],[P_3],[P_1 K], [M], [H]\}$. Again by Proposition \ref{odd-index}, we conclude that $|\pi(G)|=3$ and $|G|_{2'}$ is square-free, i.e., $a_2+a_3=2$. However, since $p_i\mid |K|$ and $|\pi(G/K)|=3$, it follows that $p_i^2\mid |G|$, which contradicts the fact that $|G|_{2'}$ is square-free.
Now assume that $L/K\cong \A_6$. Then $3^2\mid |G|$ and consequently $G$ contains a subgroup $Q$ of order 3 that is not a Sylow $3$-subgroup of $G$.
By Proposition \ref{odd-index} and the assumption that $|\Delta(G)|\le 6$, we obtain that
$\Delta(G)= \{ [P_1],[P_2],[P_3],[Q], [M], [H]\}$.
Applying Proposition \ref{odd-index} again, we get that $|\pi(G)|=3$ and $a_2+a_3=3$.
However, since $p_i\mid |K|$ and $|\A_6|_{2'}=3^2 \cdot 5$, it follows that $a_2+a_3\ge 4$, a contradiction. Hence Claim 3 holds.

Noting that $K$ is a normal 2-subgroup of $G$, we yield that $K\le \O_2(G)$.
On the other hand, since $P_1\le M$ and $K=\bigcap_{g\in G} M^g$, we conclude that $\O_2(G)=\bigcap_{g\in G} P_1^g\le K$ and hence $K=\O_2(G)$.

Finally, based on the preceding arguments, we have completed the proof of Lemma \ref{case (3)}.
\qed

For brevity, in the rest of this Section, we always let
$$\Psi=\{\PSL(2, q), \PSL(3,3), \allowbreak\PSL(3,5), \allowbreak \PSp(4,3), \allowbreak \PSU(3,3), \allowbreak \PSU(3,7) \mid q=25,27,49,p \},$$
where $p\ge 5$ is an odd prime.

\begin{lemma}\label{case (4)}
Suppose that the case (4) holds, that is $L/K\in \Psi$. Then $K=\O_2(G)$ and $\soc(G/K)\cong \PSL(2,q)$, where $q\in \{5, 7, 17, p\}$ and $p$ is an odd prime such that $p\notin \{5, 7, 17\}$ and $(p^2-1)_{2'}$ is a product of two distinct primes.
\end{lemma}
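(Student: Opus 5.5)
The proof will run in three stages, modelled on the proof of Lemma \ref{case (3)}. First we rule out every member of $\Psi$ except $\PSL(2,p)$. Then we show $K=\O_2(G)$. Finally we pin down $p$.

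\textbf{Excluding the sporadic members of $\Psi$.} For $L/K\in\{\PSL(2,25),\PSL(2,27),\PSL(2,49),\PSL(3,3),\PSL(3,5),\PSp(4,3),\PSU(3,3),\PSU(3,7)\}$ we count classes. Each of these has the square of some odd prime dividing its order, or a large $p$-part ($3^3$, $5^2$, $7^2$, $3^4$, and so on). Together with $|\pi(G)|\le 4$, $a_2+a_3+a_4\le 5$ and the Sylow classes $[P_1],\dots,[P_n]$, this forces non-Sylow subgroups of prime order (and sometimes of prime-square order) into $\Delta(G)$, all by Proposition \ref{odd-index}. We then look in the Atlas for a second odd-index maximal subgroup that is not a $2$-group; for example $\PSL(3,3)$ has $3^2{:}2\S_4$ and $\S_4$, and $\PSU(3,3)$ has $3^{1+2}{:}8$ and $4.\S_4$. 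This should push $|\Delta(G)|$ to at least $7$. In the cases where the count is tight, we use Lemma \ref{rm} and Proposition \ref{pb} to force $P_1\cong\Q_{2^{a_1}}$. The image of $P_1$ in $L/K$ must then be cyclic, dihedral or generalized quaternion, which contradicts the known Sylow $2$-subgroups (semidihedral, wreath-type, or containing $\ZZ_2^3$), exactly as for $\M_{10}$ and $\PGammaL(2,9)$ in Lemma \ref{non-solvable}.

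\textbf{Showing $K=\O_2(G)$.} For $L/K\cong\PSL(2,p)$ we prove that $K$ is solvable and then that $K$ is a $2$-group, as in Claims 2 and 3 of Lemma \ref{case (3)}.

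If $K$ were non-solvable, then $\pi(K)$ would contain $p$ or force squares of odd primes into $|G|$. Either way extra prime-order classes arise. In addition, $KP_1<M$ gives the extra class $[KP_1]$ whenever $M/K$ is not a $2$-group, and together these exceed $6$.

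If $K$ is solvable with an odd prime $p_i\mid|K|$, then for another odd prime $p_j$ the group $KP_j$ contains a $\{p_i,p_j\}$-Hall subgroup. Counting that class with the resulting square factor again gives at least $7$.

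Once $K$ is a normal $2$-group containing all conjugates of $\O_2(G)\le P_1\le M$, we get $K=\O_2(G)$.

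\textbf{Restricting $p$.} Now $|G|_{2'}=(p(p^2-1))_{2'}$ with $|\pi(G)|\in\{3,4\}$. Suppose $(p^2-1)_{2'}$ has three or more prime factors counted with multiplicity. Using the solvable preimages of $\ZZ_p{:}\ZZ_{(p-1)/2}$ and $\D_{p\pm1}$ (Proposition \ref{bray}) and Hall subgroups (Proposition \ref{Hall}), we produce enough $\{p_i,p_j\}$-Hall subgroups and non-Sylow prime-order subgroups to exceed $6$.

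If $(p^2-1)_{2'}$ is a prime power, then one of $p\pm1$ is a power of $2$ and the other is twice an odd prime power. By Lemma \ref{num3} this gives $p\in\{5,7,17\}$.

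The remaining case is that $(p^2-1)_{2'}$ is a product of two distinct primes. One must also allow $(p^2-1)_{2'}=r^2$ or $r_1^2r_2$; these are excluded by the same counting, except $p=17$, where $(p^2-1)_{2'}=9$.

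Separately, we must treat $p=9$ (that is, $L/K\cong\A_6$), which is already covered by Lemma \ref{case (3)} and is not in this list.

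The main obstacle is the first step: going through the Atlas data for each sporadic member of $\Psi$ to find enough classes, where the generic counting may fall one short. There we would fall back on the argument that $P_1$ is generalized quaternion while $L/K$ has a Sylow $2$-subgroup of $2$-rank at least $3$ or one that is semidihedral.
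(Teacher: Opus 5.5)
Your outline follows the paper's three stages. However, in Stage 2 your counting reaches only six classes in exactly the subcases where the paper needs a further idea, and you do not supply one.

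For non-solvable $K$, your seventh class is $[KP_1]$, which exists only if $KP_1<M$. If $M=KP_1$, i.e.\ $M/K$ is a maximal $2$-subgroup of $G/K$ (as for $\D_{16}<\PGL(2,7)$ or $\D_{16}<\PSL(2,17)$), you have only the Sylow classes, one non-Sylow class of prime order and $[M]$: six. The paper does not count here. Instead:
\begin{enumerate}
\item These six classes exhaust $\Delta(G)$, so Lemma \ref{rm} forces $P_1$ to be cyclic or generalized quaternion, and Proposition \ref{pb} gives $P_1\cong\Q_{2^{a_1}}$.
\item Since $P_1/(P_1\cap K)$ is a noncyclic Sylow $2$-subgroup of $G/K$, Observation \ref{observation} makes $P_1\cap K$ cyclic.
\item Proposition \ref{pb} applied to $K$ then gives a normal $2$-complement, so $K$ is solvable.
\end{enumerate}

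The same shortfall occurs for solvable $K$ with an odd $p_i\mid|K|$ when $|\pi(G)|=3$. The Sylow classes, $[M]$, the $\{p_i,p_j\}$-Hall class and a non-Sylow $\ZZ_{p_i}$ again give only six. The paper uses $[P_1K]\in\Delta(G)$ to force $M=P_1K$, so one of $p\pm1$ is a power of $2$. The case $p-1$ a power of $2$ leads to $p=5$, impossible because $\PSL(2,5)$ and $\PGL(2,5)$ have no maximal $2$-subgroup. The case $p+1$ a power of $2$ gives $p=7$ by Lemma \ref{num3}. Then a $\{3,7\}$-Hall subgroup of the preimage of $7{:}3$, of order $63$ or $147$, is the seventh class. (When $|\pi(G)|=4$ and $p_i\notin\pi(G/K)$ there is no square factor at all. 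There, however, the two Hall classes $\{p_i,p_j\}$ and $\{p_i,p_k\}$ repair your count.)

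Stage 1 has a similar hole. Take $L/K\cong\PSL(2,25)$ with $G/K\cong\PGL(2,25)$. The only class of odd-index maximal subgroups is that of $\D_{48}$. The Sylow $2$-subgroup is $\D_{16}\cong\Q_{32}/\Z(\Q_{32})$. So neither a second odd-index maximal subgroup nor your quaternion fallback gives a contradiction. More generally, comparing with the Sylow $2$-subgroups of $L/K$ can never help for the $\PSL(2,q)$ members of $\Psi$, since these are dihedral.

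What is missing is the paper's Hall-subgroup step:
\begin{enumerate}
\item Once $[P_1],\dots,[P_4]$, $[M]$ and a non-Sylow $\ZZ_5$ fill $\Delta(G)$, one gets $a_2+a_3+a_4=4$, so $K$ is a $2$-group.
\item Hence the preimage of $(\ZZ_5\times\ZZ_5){:}\ZZ_{12}$ is solvable.
\item Its $\{3,5\}$-Hall subgroup, of order $75$, is a seventh class.
\end{enumerate}

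Two of your Atlas examples also have even index: $\S_4$ in $\PSL(3,3)$ has index $234$, and $3^{1+2}{:}8$ in $\PSU(3,3)$ has index $28$. The relevant odd-index pairs are the two classes of $3^2{:}2\S_4$ in $\PSL(3,3)$, and $4.\S_4$ together with $4^2{:}\S_3$ in $\PSU(3,3)$.
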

\proof
Suppose that $L/K\in \Psi$. Next, we will prove Lemma \ref{case (4)} by establishing five claims. Note that $L/K=\soc(G/K)$, $3\le |\pi(G)|\le 4$ and $|G|=2^{a_1} p_2^{a_2} p_3^{a_3} p_4^{a_4}$, where $a_2, a_3\ge 1$ and $a_4\ge 0$ are integers.

\textbf{Claim 1. }$L/K\cong \PSL(2, p)$, where $p\ge 5$ is an odd prime.

Assume first that $L/K\cong \PSL(2, 25)$. Then $\PSL(2,25)\lesssim G/K \lesssim \Aut(\PSL(2, 25))$.  A detailed check of the Atlas \cite[p.16]{atlas} shows that among all possible candidates for $G/K$, none of them contains a maximal subgroup that is a $2$-group.
It follows that $M/K$ is not a $2$-group and so $P_1$ is a proper subgroup of $M$ as $P_1 \le M$.
Noticing that $5^2 \mid |G|$, we see that $G$ contains a subgroup isomorphic to $\ZZ_5$, say $H$, which is not a Sylow $5$-subgroup of $G$.
By Proposition \ref{odd-index} and the assumption that $|\Delta(G)|\leqslant 6$, we have  $\Delta(G)=\{[P_1], [P_2],[P_3],[P_4],[M],[H]\}$. Again by Proposition \ref{odd-index}, we see that  $a_2+a_3+a_4=4$, and so $K$ is a $2$-group as $|L/K|_{2'}=|\PSL(2, 25)|_{2'}=3\cdot5^2\cdot13$.
By Proposition \ref{bray}, $\PSL(2,25)$ has a maximal subgroup isomorphic to $(\ZZ_5 \times \ZZ_5){:}\ZZ_{12}$, and so there exists a subgroup $M_1$ of $G$ such that $M_1/K \cong (\ZZ_5 \times \ZZ_5){:}\ZZ_{12}$ is solvable. Since both $M_1/K$ and $K$ are solvable, $M_1$ is solvable. According to Propositions \ref{Hall} and \ref{odd-index}, we conclude that $M_1$ contains a $\{3,5\}$-Hall subgroup say $Q$ and
$[Q]\in \Delta(G)$. It follows that $|\Delta(G)|\geqslant 7$, which contradicts the assumption that $|\Delta(G)|\le 6$. Therefore, the case where $L/K\cong \PSL(2,25)$ cannot occur.
For the case where $L/K \in \{\PSL(2, 49), \PSL(2, 27),\allowbreak \PSL(3, 3), \allowbreak \PSL(3, 5), \PSp(4,3),\allowbreak \PSU(3,3),\allowbreak \PSU(3,7)\}$, an entirely analogous argument shows that this cannot occur. Therefore, $L/K\cong \PSL(2, p)$ for $p\ge 5$ odd prime and hence Claim 1 holds.

\textbf{Claim 2. } $K$ is solvable.

Assume for a contradiction that $K$ is non-solvable. Then $3\leqslant |\pi(K)|\leqslant|\pi(G)|\leqslant 4$. It follows from $G/K$ is non-solvable that $|\pi(G/K)|\geqslant 3$. Let $p_i$ and $p_j$ be two distinct odd prime divisors of $|K|$ for $i, j\in\{2, 3, 4\}$. However, since $\pi(G/K)\subseteq\pi(G)$, it follows that $p_i^2\mid |G|$ or $p_j^2\mid |G|$. Without loss of generality, we set $p_i^2\mid |G|$. Hence $G$ contains a subgroup isomorphic to $\ZZ_{p_i}$, say $H_{p_i}$, which is not Sylow $p_i$-subgroup of $G$.
In what follows, we treat the cases $|\pi(G)| = 4$ and $|\pi(G)| = 3$ individually.

Assume first that $|\pi(G)| = 4$.
By Proposition \ref{odd-index} and the assumption that $|\Delta(G)|\le 6$, we conclude that $\Delta(G)=\{[H_{p_i}],[P_1],[P_2],[P_3],[P_4], [M]\}$.
Lemma \ref{rm} implies that $P_1\cong \ZZ_{2^{a_1}}$ or $\Q_{2^{a_1}}$.
We conclude from $G$ is non-solvable and Proposition \ref{pb} that $P_1\cong \Q_{2^{a_1}}$. Note that $K\cap P_1$ is a Sylow 2-subgroup of $K$ and $P_1K/K\cong P_1/(K\cap P_1)$ is a Sylow 2-subgroup of $G/K$. Since all Sylow 2-subgroups of $G/K\cong \PSL(2,p).o$ are noncyclic for $o\leqslant\Out(\PSL(2, p))$, that is $P_1/(K\cap P_1)$ is noncyclic, applying Observation \ref{observation}, one yields that $K\cap P_1$ is cyclic. It follows from Proposition \ref{pb} that $K$ has a normal $2$-complement and so $K$ is solvable, contradicting the assumption that $K$ is non-solvable, as desired.

Now suppose that $|\pi(G)|=3$. Then $\pi(K)=\pi(G)$ and hence $p_i^2p_j^2 \mid |G|$, which shows that $G$ possesses subgroups $H_1\cong \ZZ_{p_i}$ and $H_2\cong\ZZ_{p_j}$ that are not Sylow subgroups of $G$.
By Proposition \ref{odd-index} and the assumption that $|\Delta(G)|\le 6$, we obtain that
$\Delta(G)=\{ [H_1],[H_2],[P_1],[P_2],[P_3], [M]\}$,
Lemma \ref{rm} implies that $P_1\cong \ZZ_{2^{a_1}}$ or $\Q_{2^{a_1}}$.
Since $G$ is non-solvable, Proposition \ref{pb} gives that $P_1\cong \Q_{2^{a_1}}$.
By a similar argument as above, one can draw the conclusion that $K$ is solvable, contradicting the assumption that $K$ is non-solvable. Thus $K$ is solvable, and Claim 2 holds.

\textbf{Claim 3. } $K$ is a 2-group.

Suppose on the contrary that $K$ is not a $2$-group. Then $|K|$ has an odd prime divisor, say $p_i$ for $i\in \{2,3,4\}$. Let $p_j\in \pi(G)$ be such that $p_j\ne p_i$ where $j\in \{2,3,4\}$. It deduces from $K$ and $KP_j/K$ are solvable that $KP_j$ is solvable. By Proposition \ref{Hall}, $KP_j$ contains a $\{p_i,p_j\}$-Hall subgroup say $H$.
If $p_i\in \pi(G/K)$, let $R$ be a subgroup of $G$ of order $p_i$; otherwise let $R=P_4$.
By Proposition \ref{odd-index} and the assumption that $|\Delta(G)|\le 6$, we obtain that  $\Delta(G)=\{ [R],[P_1],[P_2],[P_3],[H],[M]\}$. Again by Proposition \ref{odd-index}, we can conclude that $a_2+a_3+a_4=3$, in particular, if $p_i\in \pi(G/K)$, then $a_4=0$. Since $|\pi(G/K)|\geqslant 3$ and $p_i\mid |K|$, it follows that $|K|_{2'}=p_i$ and $|H|=p_i p_j$.
 Moreover, since $P_1< P_1 K\le M$ and $P_1 K$ has odd index in $G$, Proposition \ref{odd-index} further gives that $[P_1K]\in \Delta(G)$ and hence $M=P_1 K$. Consequently $M/K=P_1K/K\cong P_1/(P_1\cap K)$ is a 2-group, which means that $G/K$ contains a 2-subgroup as a maximal subgroup. Since $\soc(G/K)=L/K\cong \PSL(2,p)$, $G/K\cong\PSL(2, p)$ or $\PGL(2, p)$, applying  Proposition \ref{bray}, we can draw the conclusion that $p-1$ or $p+1$ is a power of 2.

Assume first that $p-1$ is a power of $2$. Then $p-1=2^l$ for some positive integer $l\ge 2$, and so $(p+1)/2=2^{l-1}+1$ is odd.
Given that $|\PSL(2,p)|=p(p^2-1)/2$, $|K|_{2'}=p_i$ and $|\PGL(2,p):\PSL(2,p)|=2$, we see that $|G|_{2'}=p_i p (p+1)_{2'}$.
From $a_2+a_3+a_4=3$, it folllows that $(p+1)/2$ is a prime. Then by Lemma \ref{num3} (2), we see that  $p=5$, so $G\cong\PSL(2, 5)$ or $\PGL(2, 5)$. However, since neither $\PSL(2,5)$ nor $\PGL(2,5)$ has a maximal subgroup isomorphic to a $2$-group, contradicting the conclusion that $M/K$ is a 2-group (note that $M/K$ is a maximal subgroup of $G/K$). Therefore $p-1$ is not a power of $2$.

Suppose that $p+1$ is a power of $2$. Then $|G|_{2'}=p_i p (p-1)_{2'}$ and $(p-1)_{2'}>1$ as $G$ is non-solvable.
Assume that $|\pi(G)|=4$. Then there exists another $p_k\in \pi(G)$ such that $p_k\ne p_i, p_j$ and $k\in\{2,3,4\}$. It deduces from $K$ and $KP_k/K$ are solvable that $KP_k$ is solvable. By Proposition \ref{Hall}, $KP_j$ contains a $\{p_i,p_k\}$-Hall subgroup say $Q$.
Proposition \ref{odd-index} yields that $[Q]\in \Delta(G)$ and hence $|\Delta(G)|\geqslant 7$, which contradicts the assumption that $|\Delta(G)| \le 6$.
Thus $|\pi(G)|=3$. It follows that $(p-1)/2$ is a power of a prime. Lemma \ref{num3} (1) implies that $p=7$ and hence $G/K\cong \PSL(2,7)$ or $\PGL(2,7)$. From Proposition \ref{bray}, $G/K$ contains a subgroup isomorphic to $\ZZ_7{:}\ZZ_3$, i.e.,
there exists a subgroup $M_1$ of $G$ such that $M_1/K\cong \ZZ_7{:}\ZZ_3$. Since $K$ is solvable, one yields that $M_1$ is solvable and hence $M_1$ contains a $\{3,7\}$-Hall subgroup, say $H_1$ (see Proposition \ref{Hall}, for example). Noting that $p_i\in\{3,7\}$ and $p_i\mid |K|$, we have $|H_1|=3^2\cdot 7$ or $3\cdot 7^2$. Consequently $H\ncong H_1$ as $|H|=p_i p_j=21$.
Applying Proposition \ref{odd-index}, we yield that $[H_1]\in \Delta(G)$ and hence $|\Delta(G)|\geqslant 7$, which contradicts the assumption that $|\Delta(G)| \le 6$.
Therefore $K$ is a 2-group and Claim 3 holds.

\textbf{Claim 4. } $K=\O_2(G)$.

Since $K$ is a normal $2$-subgroup of $G$, we conclude that $K\le \O_2(G)$.
On the other hand, since $K = \bigcap_{g \in G} M^g$ and $P_1 \le M$, it follows that $\O_2(G) = \bigcap_{g \in G} P_1^g \leqslant\bigcap_{g \in G} M^g=K$. Therefore, $K = \O_2(G)$, which establishes Claim 4.

\textbf{Claim 5. }$\soc(G/\O_2(G))\cong \PSL(2,q)$, where $q\in \{5, 7, 17, p\}$, $p$ is an odd prime such that $p\notin \{5, 7, 17\}$ and $(p^2-1)_{2'}=p_2p_3$ (i.e., $|G|_{2'}=p_2 p_3 p_4$).

Derived from $K=\O_2(G)$ and $\soc(G/K)=L/K\cong \PSL(2,p)$, one deduces that $|G|_{2'} = p(p^2-1)_{2'}$. Note that $3\leqslant |\pi(G)|\leqslant 4$.
First, assume that $|\pi(G)| = 3$. It follows that $(p^2-1)_{2'}$ is a power of a prime, so either $p-1$ or $p+1$ is a power of $2$, and the other is twice a power of an odd prime. By Lemma \ref{num3}, we obtain that $p\in\{ 5, 7, 17\}$. So in the case where $|\pi(G)|=3$, we have   $\soc(G/\O_2(G))\cong\PSL(2,p)$, where $p\in\{5, 7, 17\}$, as desired.

Now suppose that $|\pi(G)| = 4$. We assert that $p = p_4$ and $(p^2-1)_{2'}=p_2p_3$, i.e., $|G| = 2^{a_1} p_2p_3 p$. Since $G/K\cong \PSL(2,p)$ or $\PGL(2,p)$ and $K=\O_2(G)$, it follows that $|G|_{2'} = p(p^2-1)_{2'}=p(p-1)_{2'}(p+1)_{2'}$, and so $p=p_4$ as $p_1<p_2<p_3<p_4$, and $(p^2-1)_{2'}=p_2^{a_2} p_3^{a_3}$ for $a_2$ and $a_3$ positive integers.
Assume that $a_i\ge 2$ for some $i\in\{2,3\}$.  Hence $G$ has a subgroup isomorphic to $\ZZ_{p_i}$ which is not a Sylow $p_i$-subgroup of $G$, say $H$. Noting that $(p-1,p+1)=2$, either $p_i^2\mid (p-1)$ or $p_i^2\mid (p+1)$.
Suppose first that $p_i^2\mid (p-1)$.  Since $L/K\cong \PSL(2,p)$,  Proposition \ref{bray} implies that $L$ contains a subgroup $M_1$ containing $K$ such that $M_1/K \cong \ZZ_p{:}\ZZ_{\frac{p-1}{2}}$. It is easy to see that $M_1/K$ has subgroups $H_1/K \cong \ZZ_p{:}\ZZ_{p_i}$ and $H_2/K \cong \ZZ_p{:}\ZZ_{p_i^2}$ (with $K \le H_1, H_2 \le M_1$). Since $K$ is a $2$-group, it follows that both $H_1$ and $H_2$ are solvable. And further, applying  Proposition \ref{Hall}, one yields that $H_j$ contains a $\{p,p_i\}$-Hall subgroup say $Q_j$ for each $j\in\{1,2\}$ such that $|Q_1|\neq |Q_2|$. Therefore, Proposition \ref{odd-index} shows that $\Delta(G)\supseteq \{[P_1],[P_2],[P_3],[P_4],[H],[Q_1],[Q_2]\}$, which contradicts the assumption that $|\Delta(G)|\leqslant 6$. Therefore, $p_i^2\nmid (p-1)$.

Now that $p_i^2\mid (p+1)$. Let $p_l\in\{p_2,p_3\}$ be such that $p_l\neq p_i$.  Assume that $p_l\mid (p+1)$. Since $L/K\cong \PSL(2,p)$,  Proposition \ref{bray} implies that $L$ contains a subgroup $M_2$ containing $K$ such that $M_2/K \cong \D_{p+1}$, and hence $M_2/K$ has subgroups $H_3/K$ isomorphic to $\ZZ_{p_i p_l}$ and $H_4/K$ isomorphic to $\ZZ_{p_i^2 p_l}$, where $K \le H_3, H_4 \le M_2$. By the same reasoning as above, we can deduce a contradiction. Hence $p_l\nmid (p+1)$ and $p_l\mid (p-1)$. Noting that both $p-1$ and $p+1$ are not a power of $2$, Proposition \ref{bray} implies that $G/K$ has no $2$-subgroup as a maximal subgroup, i.e., $M/K$ is not a $2$-group.
Again by Proposition \ref{bray}, $L$ contains a subgroup $M_3$ containing $K$ such that $M_3/K \cong \ZZ_p{:}\ZZ_{\frac{p-1}{2}}$. In particular, $M_3$ is solvable as $K$ is a $2$-group.
It follows from Proposition \ref{Hall} that $M_3$ contains a $\{p,p_l\}$-Hall subgroup, say $D$. Applying Proposition \ref{odd-index}, we yield that $\Delta(G)\supseteq \{[P_1],[P_2],[P_3],[P_4],[H],[M],[D]\}$, contradicting the assumption that $|\Delta(G)|\leqslant 6$. Therefore, $(p^2-1)_{2'}=p_2p_3$, i.e., $|G| = 2^{a_1} p_2 p_3 p$, and Claim 5 holds.

Combining the five claims above, we have proved Lemma \ref{case (4)}. \qed

In the following, we always denote
$$\Pi=\{\PSL(2, 2^f), \PSL(3, 4), \PSL(3, 8), \PSp(4,4), \PSU(3,4), {^2\!B_2}(8), {^2\!B_2}(32)\mid f\geqslant 3 \}.$$

\begin{lemma}\label{case (5)}
Suppose that the case (5) holds.  Then $K=\O_2(G)$ and $G/K\cong {^2\!B_2}(8)$.
\end{lemma}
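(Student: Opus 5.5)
We follow the template of Lemmas \ref{case (3)} and \ref{case (4)}. Throughout, $L/K\in\Pi$ and $(M/K)\cap(L/K)$ is a parabolic subgroup of $L/K$. The order of attack is: (i) cut $\Pi$ down to ${^2\!B_2}(8)$; (ii) show $K$ is solvable; (iii) show $K$ is a $2$-group, so that $K=\O_2(G)$; (iv) show $G/K=L/K$.

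\textbf{Step 1: cutting $\Pi$ down to ${^2\!B_2}(8)$.} The counting constraint is $a_2+a_3+a_4\le 5$ with $|\pi(G)|\le 4$. This already excludes $\PSL(3,8)$, $\PSp(4,4)$, ${^2\!B_2}(32)$ and $\PSU(3,4)$. For instance, $|\PSp(4,4)|_{2'}=3^2\cdot5^2\cdot17$, and its subgroups of order $3$ and of order $5$ that are not Sylow give, together with the $P_i$ and $M$, at least seven classes in $\Delta(G)$; the other three groups are handled the same way, via repeated odd primes or too many primes.

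For $\PSL(2,2^f)$ with $f\ge 3$, for $\PSL(3,4)$, and for ${^2\!B_2}(8)$, I will argue as in the final assertion of Lemma \ref{case (1)}. First, count $\Delta(G)$ using $\{[P_i]\}$, $[M]$, and any non-Sylow subgroups of prime order. This count should force $\Delta(G)$ to be exactly these classes and nothing else. Lemma \ref{rm} then makes $P_1$ cyclic or generalized quaternion, and Proposition \ref{pb} rules out the cyclic case because $G$ is non-solvable, so $P_1\cong\Q_{2^{a_1}}$. Consequently a Sylow $2$-subgroup of $L/K$, being a section of $P_1$, is cyclic, dihedral or generalized quaternion. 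This contradicts the elementary abelian Sylow $2$-subgroups $E_{2^f}$ of $\PSL(2,2^f)$ and the Sylow $2$-subgroups of $\PSL(3,4)$.

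For ${^2\!B_2}(8)$ the Sylow $2$-subgroup is $E_8.E_8$, which is also not of that form. So this case must instead be shown to admit no extra class, which means the step above must fail to force $P_1$ quaternion. Since $|{^2\!B_2}(8)|_{2'}=5\cdot7\cdot13$ is square-free, the only forced classes are $[P_1],[P_5],[P_7],[P_{13}]$, $[M]$, and possibly $[KP_1]$; no contradiction arises here, so ${^2\!B_2}(8)$ survives. The cases $\PSL(2,8)$ and $\PSL(2,16)$ need separate care: the first has a repeated prime ($9$), the second has $|\pi|=4$. I would redo the counting for them, using in each case a solvable Borel-type subgroup whose Hall subgroup provides an extra class, exactly as $\PSL(2,25)$ was treated in Lemma \ref{case (4)}.

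\textbf{Step 2: $K$ is solvable.} Suppose $K$ were non-solvable. Then $|\pi(K)|\ge 3$, and $\{2,5,7,13\}\subseteq\pi(G)$ forces some odd prime to divide $|G|$ with multiplicity at least $2$. Adding the classes $[M]$ and $[KP_1]$ gives more than six classes, exactly as in Claim 2 of Lemma \ref{case (3)}.

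\textbf{Step 3: $K$ is a $2$-group, hence $K=\O_2(G)$.} Suppose an odd prime $p_i$ divides $|K|$. Then $KP_j$ is solvable, and Proposition \ref{Hall} gives a $\{p_i,p_j\}$-Hall subgroup, which is an extra class. Combined with the non-Sylow subgroup of order $p_i$ (here $p_i\in\{5,7,13\}$ divides $|G/K|$ too), this exceeds six classes. The equality $K=\O_2(G)$ then follows as in Claim 4 of Lemma \ref{case (4)}.

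\textbf{Step 4: $G/K={^2\!B_2}(8)$.} Since $\Out({^2\!B_2}(8))\cong\ZZ_3$, if $G/K\ne L/K$ then $3\in\pi(G)$ and $|\pi(G)|=5$, contradicting $|\pi(G)|\le 4$ (equivalently, Corollary \ref{1.3cor}). Hence $G/K\cong{^2\!B_2}(8)$.

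\textbf{Main obstacle.} I expect the hardest part to be Step 1 for the small groups $\PSL(2,8)$, $\PSL(2,16)$ and $\PSL(3,4)$. There the crude count gives exactly six classes, so I must exhibit one more class, either a Hall subgroup inside a parabolic (for example $\ZZ_2^3{:}\ZZ_7$ in $\PSL(2,8)$, whose preimage contains a $\{7\}$-subgroup together with $3$-elements of the Galois extension), or a subgroup of odd index lying strictly between $P_1K$ and $M$. If that fails, I would fall back on the Sylow $2$-structure argument above.
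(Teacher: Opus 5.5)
Your Steps 2--4 are fine. In Step 3 your count (a $\{p_i,p_j\}$-Hall subgroup of $KP_j$ plus a non-Sylow subgroup of order $p_i$ already gives seven classes) is even slightly more direct than the paper's route via $[P_1K]$ and the Sylow $2$-structure.

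The genuine gap is in Step 1, for $L/K\cong\PSL(2,2^f)$ with $f>3$ prime. Take $G/K\cong\PSL(2,32)$ with $K$ a $2$-group. Then $|G|_{2'}=3\cdot 11\cdot 31$ is square-free, so no subgroup of prime order is non-Sylow. Also $M/K\cong E_{32}{:}\ZZ_{31}$ has nothing strictly between $P_1K/K$ and itself. So the only classes you force are $[P_1],[P_2],[P_3],[P_4],[M]$, five of them. The bound $|\Delta(G)|\le 6$ then leaves room for the cyclic or generalized quaternion $2$-subgroup perfect code supplied by Lemma \ref{rm}. Hence Lemma \ref{rm} does not force $P_1$ to be quaternion, and no contradiction arises. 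This is word for word the reasoning by which you let ${^2\!B_2}(8)$ survive, so as written Step 1 cannot tell $\PSL(2,2^f)$ apart from ${^2\!B_2}(8)$.

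The missing ingredient, which the paper uses, is the cyclic torus $\ZZ_{2^f+1}\le\PSL(2,2^f)$.
\begin{itemize}
\item We have $3\mid 2^f+1$ but $9\nmid 2^f+1$ (as $3\nmid f$), so some prime $p_i\neq 3$ divides $2^f+1$. This also forces $|\pi(G)|=4$.
\item The five forced classes give $a_2+a_3+a_4\le 4$, so $K$ is solvable. Hence the preimage $M_1$ of $\ZZ_{2^f+1}$ is solvable too.
\item A $\{3,p_i\}$-Hall subgroup $H$ of $M_1$ is then an odd-order perfect code of composite order.
\end{itemize}
Now $\Delta(G)=\{[P_1],\dots,[P_4],[M],[H]\}$, and your Sylow $2$-argument finishes. ${^2\!B_2}(8)$ escapes this argument because all its odd-order subgroups have prime order.

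Several other counts in Step 1 are also off.
\begin{itemize}
\item For ${^2\!B_2}(32)$ and $\PSU(3,4)$ only the prime $5$ is repeated. So (with $G/K=L/K$ and $K$ a $2$-group) you get exactly six classes, not seven, and you need the Sylow $2$-argument rather than counting alone.
\item In the same situation $\PSL(2,8)$ and $\PSL(2,16)$ give five classes, not six.
\item Your last paragraph has the logic reversed. Six forced classes need no seventh, because the Sylow $2$-argument already applies. Five forced classes need a sixth class that is not a $2$-group.
\item Your Galois-automorphism idea for $\PSL(2,8)$ only applies when $G/K\cong\PSL(2,8){:}3$. The case $G/K\cong\PSL(2,8)$ still needs its own argument; the paper, too, only appeals to analogy there.
\end{itemize}
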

\proof Suppose that the case (5) holds, i.e., $L/K\in \Pi$ where $f=3, 4$ or $f>3$ is a prime, and $(M/K)\cap (L/K)$ is a parabolic subgroup of $L/K$. Recall that $|G|=2^{a_1} p_2^{a_2} p_3^{a_3} p_4^{a_4}$ and $a_2+a_3+a_4\le 5$, where $a_2, a_3\ge 1$ and $a_4\ge 0$ are integers.

\textbf{Claim 1.} $L/K \ncong \PSL(2,2^f)$ for $f>3$ prime.

Assume that $L/K \cong \PSL(2,2^f)$ and $f>3$ is a prime. It follows from $\Out(\PSL(2, 2^f))\cong \ZZ_f$ and $\soc(G/K)=L/K$ that $G/K\cong \PSL(2, 2^f)$ or $\PSL(2, 2^f){:}\ZZ_f$. In particular, $|G/K:L/K|$ is odd as $f$ is odd. Note that $3\leqslant|\pi(G)|\leqslant4$.
Suppose first that $|\pi(G)|=3$. Since $(2^f-1, 2^f+1)=1$ and $2^f(2^f-1)(2^f+1)\mid |G/K|$, we yield that $2^f+1=3^t$ for some positive integer $t$. Simple verification shows that $f=5,7$ gives no solution, and hence $f\ge 11$ and $t>5$, which contradicts the assumption that $a_2+a_3+a_4\leqslant 5$.

Now suppose that $|\pi(G)|=4$. Since $(M/K)\cap (L/K)$ is a parabolic subgroup of $L/K$, it follows that $(2^f-1)\mid |M/K|$ (see\cite[p.180]{K-L} and \cite[p.46]{wilson} for example) and hence $M$ is not a $2$-group. By Proposition \ref{odd-index}, we have $\Delta(G)\supseteq \{[P_1], [P_2],[P_3],[P_4],[M]\}$. Since $|\Delta(G)|\le 6$, using Proposition \ref{odd-index} again, we obtain that $a_2+a_3+a_4\le 4$, and so $|\pi(K)|\le 2$, i.e., $K$ is solvable.
By \cite[p.213, Hauptsatz 8.27]{hu1}, $L/K\cong\PSL(2,2^f)$ contains a subgroup isomorphic to $\ZZ_{2^f+1}$ say $M_1/K$, where $M_1\leqslant L$. We assert that there exists $p_i\in \pi(G)$ such that $p_i \mid (2^f+1)$ and $p_i\ne 3$, where $i\in\{2, 3, 4\}$. If on the contrary $2^f+1=3^t$ for some positive integer $t$, then since $f>3$ is a prime, it follows that $f\ge 11$ and hence $t>5$, which contradicts the assumption that $a_2+a_3+a_4\le 5$.
Thus there exists $p_i\in \pi(G)$ such that $p_i \mid (2^f+1)$ and $p_i\ne 3$, where $i\in\{2, 3, 4\}$. The solvability of $K$ and $M_1/K$ leads to the solvability of $M_1$, combining with  Proposition \ref{Hall}, one yields that $M_1$ contains a $\{3,p_i\}$-Hall subgroup say $H$. By Proposition \ref{odd-index} and the assumption that $|\Delta(G)|\le 6$, we have that $\Delta(G)=\{[P_1],[P_2],[P_3],[P_4], [M], [H]\}$. Quoting the result of Lemma \ref{rm}, one yields that $P_1$ is either a cyclic or a generalized quaternion $2$-group, and consequently $P_1K/K\cong P_1/P_1\cap K$ is cyclic, dihedral or generalized quaternion. Note that $P_1K/K\in\Syl_2(G/K)$ and $|G/K:L/K|$ is odd. However, every Sylow $2$-subgroup of $\PSL(2,2^f)$ is isomorphic to the  elementary abelian 2-subgroup $\ZZ_2^f$, which is a contradiction. Therefore, the case $L/K\cong \PSL(2, 2^f)$ for prime $f>3$ cannot occur and Claim 1 holds.

\textbf{Claim 2.} $L/K\notin \{\PSL(2, 8)$, $\PSL(2, 16)$, $\PSL(3, 4)$, $\PSL(3, 8)$, $\allowbreak\PSL(4, 2)$, $\allowbreak\PSp(4,4)$, $\PSU(3,4)$, ${^2\!B_2}(32)\}$.

Based on a method analogous to that of the previous paragraph, we conclude that Claim 2 holds.

\textbf{Claim 3.} If $L/K\cong {^2\!B_2}(8)$, then $G/K\cong {^2\!B_2}(8)$ and $K=\O_2(G)$.

Now assume that $L/K\cong {^2\!B_2}(8)$, where $(M/K) \cap (L/K)$ is a parabolic subgroup of $L/K$. Then $G/K\cong{^2\!B_2}(8)$ or ${^2\!B_2}(8){:}\ZZ_3$ as $\Out({^2\!B_2}(8))\cong \ZZ_3$.
Moreover, since $|{^2\!B_2}(8)| = 2^{6}\cdot 5\cdot 7\cdot 13$ and $|\pi(G)| \le 4$, we conclude  that $G/K$ must be isomorphic to ${^2\!B_2}(8)$, as desired.

Since $(M/K) \cap (L/K)$ is a parabolic subgroup of $L/K$, it follows that $8^2(8-1)\mid |M/K|$ (see\cite[p. 180]{K-L} and \cite[p.46]{wilson} for example). Consequently both $M$ and $M/K$ are not 2-groups. We assert that $K$ is a 2-group.
Suppose on the contrary  that $K$ is not a 2-group. Then $P_1<P_1 K< M$. Note that $P_1 K$ has odd index in $G$. By Proposition \ref{odd-index} and the assumption that $|\Delta(G)|\le 6$, we have $\Delta(G)=\{[M],[P_1 K],[P_1],[P_2],[P_3],[P_4]\}$.
Derived from Lemma \ref{rm}, one yields that $P_1\cong \ZZ_{2^{a_1}}$ or $\Q_{2^{a_1}}$. Since $G$ is non-solvable, Proposition \ref{pb} gives $P_1\cong \Q_{2^{a_1}}$. It follows that $P_1 K/K$, as the Sylow $2$-subgroup of $G/K$, is dihedral or generalized quaternion. However, every Sylow $2$-subgroup of ${^2\!B_2(8)}$ is isomorphic to $\ZZ_2^3.\ZZ_2^3$, a contradiction. Therefore $K$ is a 2-group and the assertion holds.

Since $K\lhd G$, we see that $K\le \O_2(G)$.
On the other hand, since $K=\bigcap_{g\in G} M^g$ and $P_1\le M$, it follows that $\O_2(G)=\bigcap_{g\in G} P_1^g\le K$ and hence $K=\O_2(G)$ and Claim 3 holds.

Based on the results of the previous three Claims, we have finally completed the proof of Lemma \ref{case (5)}.
\qed

With the preparations above, we now prove Theorem \ref{main3}.

\textbf{The proof of Theorem \ref{main3}.} Suppose that $G$ is a finite non-solvable group and $|\Delta(G)|\le 6$. Let $M$ be a maximal subgroup which contains a Sylow $2$-subgroup of $G$, and let $K=\bigcap_{g\in G} M^g$. According to the discussion in the paragraph following Proposition  \ref{forK}, one case of the list in (\ref{star}) holds. Applying Lemmas \ref{case (1)} and \ref{case (2)}, we can exclude the cases (1) and (2) in (\ref{star}). Noting that $\A_5\cong \PSL(2,5)$ and $\A_6\cong \PSL(2,9)$, for the case (3) or (4) in (\ref{star}), Lemmas \ref{case (3)} and \ref{case (4)} lead to the conclusion that $\soc(G/\O_2(G))\cong \PSL(2,q)$ for $q\in \{5, 7, 9, 17, p\}$, where $p$ is an odd prime such that $p\notin \{5, 7, 17\}$ and $(p^2-1)_{2'}$ is a product of two distinct primes, just as the case (2) of Theorem \ref{main3}. Now assume that the case (5) in (\ref{star}) holds. Then Lemma \ref{case (5)} implies that $G/\O_2(G)\cong {^2\!B_2}(8)$, which completes the proof of Theorem \ref{main3}.
\qed

\section{Proof of Theorem \ref{sol}} \label{Theorem 1.7}

In this section, we will prove Theorem \ref{sol}, and throughout we let $G$ be a finite solvable group.

\noindent{\bf Proof of Theorem \ref{sol}.}
Since $G$ is a finite solvable group, derived from Lemma \ref{os}, one yields that $|\Delta(G)| \ge 2^{|\pi(G)|} - 2$, as desired. In the following paragraphs, we will prove the equivalence condition for the equal sign to hold.

Firstly, we will demonstrate the necessity. Assume first that $G$ is solvable and $|\Delta(G)| = 2^{|\pi(G)|}-2$. If $|\pi(G)|=1$, i.e., $G$ is a $p$-group, then $|\Delta(G)| = 0$ and the conclusion can be directly drawn from Proposition \ref{chen2}. So in what follows, we assume that $|\pi(G)| \geqslant 2$. However, since $G$ is solvable, applying Propositions \ref{odd-index} and \ref{Hall}, there are $2^{|\pi(G)|}-2$ non-conjugate Hall subgroups of $G$, and further, we can deduce from $|\Delta(G)| = 2^{|\pi(G)|}-2$ that every subgroup perfect code of $G$ is its Hall subgroup. Applying Proposition \ref{odd-index}, we conclude that $|G|_{2'}$ is square-free.
Further, if $2\mid |G|$, then Lemma \ref{rm} shows that every Sylow 2-subgroup of $G$ is isomorphic to a cyclic $2$-group or generalized quaternion $2$-group, as desired.

Next, we will demonstrate the sufficiency. Assume that $|G|_{2'}$ is square-free and that every Sylow 2-subgroup of $G$ is isomorphic either to a cyclic $2$-group or to a generalized quaternion $2$-group. By Lemma \ref{cg}, we yield that every subgroup perfect code of $G$ has odd order or odd index. Since $|G|_{2'}$ is square-free, every subgroup of $G$ with odd order or odd index is a Hall subgroup. It follows that $\Delta(G)$ is exactly the set of all conjugacy classes of Hall subgroups of $G$. Since $G$ is solvable, we conclude that $|\Delta(G)| = 2^{|\pi(G)|} - 2$, as desired.
\qed

In Theorem \ref{main2}.(2), when the group $G$ is solvable, its structure relies on the classification of $2$-groups having very few conjugacy classes of involutions. The classification of $2$-groups with few involutions is itself a challenging problem; see, for example, \cite{janko,konv} and the detailed account in \cite[p.368-395]{berk}.
This leads naturally to the following questions.
\begin{question}
Determine all $2$-groups whose involutions lie in at most two conjugacy classes.
\end{question}

\begin{question}
Determine all groups whose involutions are all conjugate.
\end{question}

\begin{appendices}
\section{Examination}\label{Appendix}
In this appendix, we present the examination of Propositions \ref{pi+1}, \ref{d=5} and \ref{forK}.

\begin{proposition}\cite[Theorem 1]{ls}\label{odd-degree}
Let $G$ be a primitive permutation group of odd degree $n$ on a set $\Sigma$ and let $H = G_\alpha$, where $\alpha \in \Sigma$.
 \begin{enumerate}[label=(\alph*),font=\normalfont]
  \item Either $(\ZZ_p)^d \unlhd G \leq \mathrm{AGL}(d,p)$ for some odd prime $p$, or $L^m \lhd G \leq G_0 \wr S_m$, where $G_0$ is a primitive group of odd degree $n_0$ with simple socle $L$, and the wreath product has the product action of degree $n = n_0^m$.

  \item If $G$ has simple socle $L$ then $G$ and $H$ are known, and one of (I), (II) and (III) below holds:

\begin{enumerate}[label=(\Roman*),font=\normalfont]
      \item $L$ is $A_c$, an alternating group; $H$ is $(\S_k \times \S_{c-k}) \cap G$ ($1 \leq k < \frac{1}{2} c$), or $H$ is $(\S_a \wr \S_b) \cap G$ ($ab = c, a > 1, b > 1$), or $G$ is $\A_7$, of degree 15,

      \item $L$ is sporadic,

      \item $L = L(q)$, a simple group of Lie type over $\F(q)$; in particular, if $q$ is even, then $H \cap L$ is a parabolic subgroup of $L$.
    \end{enumerate}
    \end{enumerate}
\end{proposition}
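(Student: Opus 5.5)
This statement is a quotation of Liebeck–Saxl \cite[Theorem 1]{ls}, so there is nothing to derive from the earlier results of this paper. The plan is to recall the outline of their argument, which rests on the O'Nan–Scott theorem together with the classification of finite simple groups.

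\textbf{Step 1: reduce to part (a) via O'Nan--Scott.} Let $N=\soc(G)$ and run through the O'Nan--Scott types of the primitive group $G$, excluding those that force even degree.
\begin{itemize}
\item \emph{Affine type.} Then $N=\ZZ_p^d$ acts regularly, so $n=p^d$. Odd degree forces $p$ odd, which gives $\ZZ_p^d\unlhd G\le \mathrm{AGL}(d,p)$.
\item \emph{Diagonal type.} The degree is $|T|^{k-1}$ for a nonabelian simple $T$, which is even. This type is excluded.
\item \emph{Twisted wreath type.} The degree is $|T|^k$, again even. This type is excluded.
\item \emph{Product action type.} Here $G\le G_0\wr \S_m$ with $n=n_0^m$, so $n_0$ is odd. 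The component $G_0$ is primitive of almost simple type with socle $L$; it cannot be of diagonal type, since that also has even degree. This yields the second alternative in (a).
\end{itemize}

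\textbf{Step 2: the almost simple case, part (b).} Now $G$ has simple socle $L$ and $|G:H|$ is odd, so $H$ contains a Sylow $2$-subgroup of $G$. We go through the families of simple groups.
\begin{itemize}
\item \emph{Alternating socle $L=\A_c$.} Use the standard division of subgroups of $\S_c$ into intransitive, imprimitive and primitive ones. A subgroup of odd index must contain a Sylow $2$-subgroup of $\S_c$ (or of $\A_c$). The $2$-adic expansion of $c$ then shows that $H$ is intransitive, of the form $\S_k\times\S_{c-k}$, or imprimitive, of the form $\S_a\wr\S_b$. A primitive $H$ of odd index is ruled out by Bochert/Praeger–Saxl order bounds, except for the small exception $\A_7$ of degree $15$.
\item \emph{Sporadic socle.} Item (II) is simply recorded; the relevant subgroups are checked in the Atlas.
\item \emph{Lie type in characteristic $2$.} When $q$ is even, a maximal subgroup containing a Sylow $2$-subgroup $U$ contains $\N_L(U)=B$ up to the normalizer. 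Hence by Tits' theorem (Borel–Tits) it is parabolic, giving (III).
\item \emph{Lie type in odd characteristic.} This is the hard part. One uses Aschbacher's theorem on the classes $\mathcal{C}_1,\dots,\mathcal{C}_8,\mathcal{S}$ and compares the $2$-parts of $|H|$ and $|L|$, as Liebeck–Saxl do.
\end{itemize}

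\textbf{Main obstacle.} The Lie type groups in odd characteristic and the $\mathcal{S}$-class subgroups are where the real work lies. For the purposes of this paper we do not reprove them; we cite \cite{ls} directly. What the appendix then has to check is only how Propositions \ref{pi+1} and \ref{forK} follow by imposing the order restriction $a_2+\dots+a_n\le n$ on these lists.
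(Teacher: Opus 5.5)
Your proposal agrees with the paper, which likewise gives no proof of this statement and simply quotes Theorem~1 of \cite{ls} as a black box for the appendix. Your outline is only heuristic, and some of the steps it indicates would not work as stated. For instance, the Praeger--Saxl bound $|H|<4^c$ does not by itself exclude primitive subgroups of odd index in $\A_c$, and containing a Sylow $2$-subgroup $U$ of $L$ does not by itself force $B=\N_L(U)\le H\cap L$. Since the actual justification is the citation, this does not affect the result.
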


\begin{proof}[{\bf The proofs of Proposition \ref{pi+1}}]
Remind the assumption that $G$ is a primitive permutation group of odd degree on a finite set $\Omega$, $\alpha\in \Omega$ and that $G_\alpha$ is a 2-group. We also assume that $|G|=p_1^{a_1} p_2^{a_2}\cdots p_n^{a_n}$ and $a_2+\cdots+a_n\le n$, where $p_i\in \pi(G)$ and $a_i\ge 1$ for all $i\in \{1, \dots, n\}$.
By Proposition \ref{odd-degree}, we can conclude that $G$ is one of the following:

\begin{enumerate} [font=\normalfont]
    \item $G\cong \ZZ_p^d{:}G_\alpha$, where $p$ is an odd prime, $d\le 2$ and $G_\alpha$ is an irreducible subgroup of $\GL(d,p)$;

    \item $L\cong \A_5, \A_6, \A_7, \A_8$;

    \item $L$ is a classical simple group over $\F_q$, where $q$ is odd;

    \item $L$ is a classical simple group over $\F_q$, where $q$ is even and $G_\alpha\cap L$ is a parabolic subgroup of $L$.
\end{enumerate}

\textbf{Claim 1.} Suppose that $L\in\{ \A_5, \A_6, \A_7, \A_8\}$. Then $G\cong \PGL(2,9)$, $\M_{10}$ or $\mathrm{P\Ga L}(2,9)$.

By the N/C theorem, we get that $\A_i\le G \le \Aut(\A_i)$ for $i\in \{5,6,7,8\}$.
Since $G$ is primitive on $\Omega$, it follows that $G_{\alpha}$ is a maximal subgroup of $G$.
By the Atlas\cite[p.2,p.4,p.10 and p.22]{atlas}, we conclude that $G$ has a maximal subgroup that is a 2-group only when $L \cong \A_6$. Therefore $L\cong \A_6\cong \PSL(2,9)$,
Since $L=\soc(G)$, we obtain that $\PSL(2,9)\lesssim G\lesssim \mathrm{P\Ga L}(2,9)$. By the Atlas\cite[p.4]{atlas} and since $G$ has a maximal subgroup is a 2-group, we conclude that $G\cong \PGL(2,9)$, $\M_{10}$ or $\mathrm{P\Ga L}(2,9)$,  and hence Claim 1 holds.

\textbf{Claim 2.} Suppose that $L$ is a classical simple group over $\F_q$, where $q=p^f$ is odd and $p$ is a prime.
Then $G\in \{\PGL(2,9),\M_{10}, \mathrm{P\Ga L}(2,9),\PSL(2,p), \PGL(2,p) \}$ where one of $p-1$ or $p+1$ is a power of 2.

Suppose first that $L\cong \PSL(m,p^f)$, where $f\ge 1$ and $m\ge 2$.
Since $|\PSL(m,q)|=\frac{1}{(m,q-1)}q^{\frac{m(m-1)}{2}}\prod_{i=2}^{m}(q^i-1)$, and
$a_{2}+\dots+a_{n} \le n$, we can conclude that $m= 2$ and $q\mid p^2$.
Assume that $q=p^2$.
By \cite[Theorem 1.1]{gi2007}, either $G_\alpha \cap L$ is a maximal subgroup of $L$ or $G\in \{\PGL(2,9),\M_{10}, \mathrm{P\Ga L}(2,9)\}$.
Suppose that $G_\alpha \cap L$ is a maximal subgroup of $L$. Since $G_{\alpha}$ is a 2-group, it follows $G_\alpha \cap L$ also is a 2-group. By Proposition \ref{bray}, we conclude that $p^2-1$ or $p^2+1$ is a power of $2$. Moreover, Proposition \ref{num2} implies that $p=3$.
It follows that $L\cong \PSL(2,9)$. Since $L=\soc(G)$, we obtain that $\PSL(2,9)\lesssim G\lesssim \mathrm{P\Ga L}(2,9)$. By the Atlas\cite[p.4]{atlas} and since $G$ has a maximal subgroup is a 2-group, we conclude that $G\cong \PGL(2,9)$, $\M_{10}$ or $\mathrm{P\Ga L}(2,9)$.
Now assume that $q=p$. Since $L=\soc(G)$, we have that $G\cong \PSL(2,p)$ or $\PGL(2,p)$.
Since $G_{\alpha}$ is a 2-group, Proposition \ref{bray} implies that one of $p-1$ or $p+1$ is a power of 2. Therefore, $G\in \{\PGL(2,9),\M_{10}, \mathrm{P\Ga L}(2,9),\PSL(2,p), \PGL(2,p) \}$ where one of $p-1$ or $p+1$ is a power of 2, as desired.

Suppose that $L\cong \PSp(2m,p^f)$, where $f\ge 1$ and $m\ge 1$.
Since $ |\PSp(2m, p^f)| = \frac{1}{2} p^{f m^2} \prod_{i=1}^{m} (p^{2fi} - 1)$ and $a_{2}+\dots+a_{n} \le n$,  we can conclude that $m=1$ and $f\in\{1, 2\}$.
Thus $L\cong \PSp(2,p^f)$. Note that $\PSp(2,p^f)\cong \PSL(2,p^f)$. As previously discussed, it follows that $G\in \{\PGL(2,9),\M_{10}, \mathrm{P\Ga L}(2,9),\PSL(2,p), \PGL(2,p) \}$ where one of $p-1$ or $p+1$ is a power of 2.

Suppose that $L\cong \PSU(m,p^f)$, where $q=p^f$ and $m\ge 1$.
Since $|\PSU(m,q)| = \frac{1}{(m,q+1)} q^{\frac{m(m-1)}{2}} \prod_{i=2}^{m} (q^{i} - (-1)^{i})$ and $a_{2}+\dots+a_{n} \le n$,  we can conclude that $m=2$ and $f\in \{1,2\}$.
Thus $L\cong \PSU(2,p^f)\cong \PSL(2,p^f)$, where $f\in \{1,2\}$. As previously discussed, it follows that $G\in \{\PGL(2,9),\M_{10}, \mathrm{P\Ga L}(2,9),\PSL(2,p), \PGL(2,p) \}$ where one of $p-1$ or $p+1$ is a power of 2.

Suppose that $L\cong \Omega^+(2m+1,q)$,  $P\Omega^+(2m,q)$ or $P\Omega^-(2m,q)$, where $q=p^f$.
Since $|\mathrm{\Omega}(2m+1,q)| = \frac{1}{(2,q-1)} q^{m^2} \prod_{i=1}^{m} (q^{2i} - 1)$ ,
$|\mathrm{P\Omega}^+(2m,q)| = \frac{1}{(4,q^m-1)} q^{m(m-1)} (q^m - 1) \prod_{i=1}^{m-1} (q^{2i} - 1)$, and $|\mathrm{P\Omega}^-(2m,q)| = \frac{1}{(4,q^m+1)} q^{m(m-1)} (q^m + 1) \prod_{i=1}^{m-1} (q^{2i} - 1)$, and $a_{2}+\dots+a_{n} \le n$,
we conclude that $m\le 2$ and hence $2m\le 4$ and $2m+1\le 5$.
Since all orthogonal simple groups of dimension less than 8 are isomorphic to either $\PSL(m_1,q)$, $\PSp(m_1,q)$ or $\PSU(m_1,q)$ for some suitable integer $m_1\le 6$, this case has already been dealt with in the preceding discussion.
In conclusion, $G\in \{\PGL(2,9),\M_{10}, \mathrm{P\Ga L}(2,9),\PSL(2,p), \PGL(2,p) \}$ where one of $p-1$ or $p+1$ is a power of 2. The Claim 2 holds.

\textbf{Claim 3.} Suppose that $L$ is a classical simple group over $\F_q$, where $q$ is even and $G_\alpha\cap L$ is a parabolic subgroup of $L$. Then $L\cong \PSL(3,2)\cong \PSL(2,7)$.

Suppose that $L\cong \PSL(m,q)$, where $q=2^f$ and $m\ge 2$.
If $m\ge 6$, then $(q^2-1)(q^3-1)(q^4-1)(q^5-1)(q^6-1)/(m,q-1)=(q^2-1)^2 (q^3-1)^2 (q^2+1)(q^5-1)(q^3+1)/(m,q-1)\mid |G|$, contradicting the assumption that $a_{2}+\dots+a_{n} \le n$.
Thus $m\le 5$.
Assume that $m=4,5$. Then $(q^2-1)(q^3-1)(q^4-1)=(q-1)^3 (q+1)^2 (q^2+q+1)(q^2+1)\mid |G|$. Since $a_{2}+\dots+a_{n} \le n$, we can see that $q-1=1$ and so $q=2$.  However, by the Atlas\cite[p.22 and p.70]{atlas}, we conclude that $G$ has no maximal subgroup that is a 2-group when $L\cong \PSL(4,2)$ or $\PSL(5,2)$. Thus $m\ne 4,5$.
Assume that $m=3$.
Since every parabolic subgroup contains a Borel subgroup, the order of the Borel subgroup of $\PSL(3,2^f)$, which is divisible by $q-1$, must divide the order of the parabolic subgroup $L \cap G_\alpha$. However, $L \cap G_\alpha$ is a 2-group. Hence $q-1$ is a power of 2. As $q = 2^f$, this forces $f = 1$ and thus $2^f = 2$ and $L \cong \PSL(3,2)\cong \PSL(2,7)$.
Now assume that $m = 2$. Observe that $L \cap G_\alpha$ is a parabolic subgroup of $L$ and is also a $2$-group. As previously discussed for the case of $m = 3$, we obtain that $f = 1$.
However, when $f = 1$, we have $q = 2$ and $\PSL(2, 2) \cong \S_3$, which is not simple. This contradicts the assumption that $L$ is simple.

Suppose that $L\cong \PSp(2m,q)$, where $q=2^f$ and $m\ge 1$.
If $m\ge 3$, then $(q^2-1)(q^4-1)(q^6-1)=(q^2-1)^3(q^2+1)(q^4+q^2+1)\mid |G|$, contradicting the assumption that $a_{2}+\dots+a_{n} \le n$.
Thus $m\le 2$.
If $m=2$, then since $(q^2-1)(q^4-1)=(q-1)^2(q+1)^2(q^2+1)\mid |G|$ and $a_{2}+\dots+a_{n} \le n$, we can conclude that $q-1=1$ and so $q=2$. However, since $\PSp(4,2) \cong \S_6$ is not simple and so $L\ncong \PSp(4,2)$. Consequently $m=1$ and  $L \cong \PSp(2,2^f)\cong \PSL(2,2^f)$ with $f \ge 2$.
As previously discussed for the case of $\PSL(2,2^f)$, we obtain that $f = 1$, which contradicts the assumption that $L$ is simple. Therefore $L\ncong \PSp(2m,q)$.

Suppose that $L\cong \PSU(m,q)$, where $q=2^f$ and $m\ge 1$.
If $m\ge 5$, then $(q^2-1)(q^3+1)(q^4-1)(q^5+1)/(m,q+1)=(q-1)^2 (q+1)^4 (q^2-q+1) (q^2+1)(q^4 -q^3+q^2-q+1)/(m,q+1)\mid |G|$,
contradicting the assumption that $a_{2}+\dots+a_{n} \le n$.
Thus $m\le 4$. If $m=4$, then $(m,q+1)=1$ and hence $(q+1)^3 \mid |G|$, which contradicts the assumption that $a_{2}+\dots+a_{n} \le n$.
Suppose that $m=3$.
Since every parabolic subgroup contains a Borel subgroup, the order of the Borel subgroup of $\PSU(3,2^f)$, which is divisible by $(2^{2f}-1)/(3, 2^f+1)$, must divide the order of the parabolic subgroup $L \cap G_\alpha$. Since $L \cap G_\alpha$ is a 2-group, it follows that  $2^{2f}-1=3$, and hence $f=1$.
But $\PSU(3,2)$ is not simple, so $L\ncong \PSU(3,2)$.
Therefore $m=2$ and $L\cong \PSU(2,2^f)\cong \PSL(2,2^f)$. As previously discussed for the case of $\PSL(2,2^f)$, we obtain that $f = 1$, which contradicts the assumption that $L$ is simple.  Therefore $L\ncong \PSU(m,q)$.

Suppose that $L\cong \Omega^+(2m+1,q)$, $P\Omega^+(2m,q)$ or $P\Omega^-(2m,q)$.
Recall that $|\mathrm{P\Omega}(2m+1,q)| = \frac{1}{(2,q-1)} q^{m^2} \prod_{i=1}^{m} (q^{2i} - 1)$,
$|\mathrm{P\Omega}^+(2m,q)| = \frac{1}{(4,q^m-1)} q^{m(m-1)} (q^m - 1) \prod_{i=1}^{m-1} (q^{2i} - 1)$, and $|\mathrm{P\Omega}^-(2m,q)| = \frac{1}{(4,q^m+1)} q^{m(m-1)} (q^m + 1) \prod_{i=1}^{m-1} (q^{2i} - 1)$.
If $m\ge 4$, then $(q^2-1)(q^4-1)(q^6-1)= (q^2-1)^3 (q^2+1) (q^4+q^2+1)\mid |G|$, which contradicts the assumption that $a_{2}+\dots+a_{n} \le n$.
Thus $m\le 3$ and hence $2m\le 6$ and $2m+1\le 7$. Since all orthogonal simple groups of dimension less than 8 are isomorphic to either $\PSL(m_1,q)$, $\PSp(m_1,q)$ or $\PSU(m_1,q)$ for some suitable integer $m_1\le 6$, this case has already been dealt with in the preceding discussion.

In conclusion, $L\cong \PSL(2,7)$ and Claim 3 holds.

According to Claims 1--3 and $7+1=8$, one of the following holds:
 \begin{enumerate} [font=\normalfont]

            \item $G\cong \ZZ_p {:} \ZZ_{2^d}$, where prime $p \geqslant 3$ and $2^d\mid(p-1)$;

            \item $G\cong \PGL(2,9), \M_{10}, \PGammaL(2,9)$;

            \item $G\cong \PSL(2, p)$ or $\PGL(2,p)$, where prime $p\ge 5$, one of $p-1$ and $p+1$ is a power of $2$.
\end{enumerate}
\end{proof}

\begin{proposition}\label{d=5}
Let $G$ be a primitive group on a finite set $\Omega$ with $|\Omega|$ is odd. Suppose that  $|G|=2^{a_1} p_2^{a_2} p_3^{a_3} p_4^{a_4} $ for $a_2+a_3+a_4\leq 5$ and $a_i\ge 0$. Let $L=\soc(G)$ and let $\alpha\in \Omega$. Then one of the following holds.
\begin{enumerate} [font=\normalfont]
    \item $G\cong \ZZ_p^d{:} G_{\alpha}$, where $p$ is an odd prime, $d\le 4$ and $G_{\alpha}$ is an irreducible subgroup of $\GL(d,p)$;

    \item $T^2\lhd G \leq G_0 \wr \S_2$, where $G_0$ is a primitive group of odd degree $n_0$ with non-abelian simple socle $T$, and the wreath product has the product action of degree $n=n_0^2$;

    \item $L\cong \A_5, \A_6, \A_7$, $\A_8$, $\M_{11}$ or $\M_{12}$;

    \item $L\cong \PSL(2, p), \PSL(2, 25), \PSL(2, 49), \PSL(2,27)$, $\PSL(3,3)$, $\PSL(3,5)$, $\PSp(4,3)$, $\PSU(3,3)$ or $\PSU(3,7)$, where $p\ge 5$ is an odd prime;

    \item $L\cong \PSL(2, 2^f)$, $\PSL(2,16)$, $\PSL(3, 4)$, $\PSL(3, 8)$, $\PSp(4,4)$, $\PSU(3,4)$, ${^2\!B_2}(2^{3})$ or ${^2\!B_2}(2^{5})$, where $f\ge 3$ is a prime, and $L\cap G_{\alpha}$ is a parabolic subgroup of $L$.
\end{enumerate}
\end{proposition}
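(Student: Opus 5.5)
We follow the template of the proof of Proposition \ref{pi+1}: first apply Proposition \ref{odd-degree}(a) to $G$, then use the order bound $a_2+a_3+a_4\le 5$ on the odd part of $|G|$ to restrict each possibility.

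\emph{Affine case.} If $\ZZ_p^d\unlhd G\le \mathrm{AGL}(d,p)$ with $p$ odd, then $p^d$ divides $|G|_{2'}$. The bound gives $d\le 5$. To reach $d\le 4$ we will show $d=5$ is impossible. If $d=5$, then $|G|_{2'}=p^5$ and $a_i=0$ for every other odd prime. Since $G_\alpha$ is irreducible on $\ZZ_p^5$, it is nontrivial, and its order is coprime to $p$ (otherwise $p^6\mid |G|$). So $G_\alpha$ is a $2$-group acting irreducibly in dimension $5$. This is impossible, because irreducible representations of $2$-groups in odd characteristic have $2$-power degree. This gives case (1). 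The same remark shows the dimension of any $2$-group stabilizer is a power of $2$.

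\emph{Product action.} If $L^m\lhd G\le G_0\wr \S_m$ with $m\ge 2$, each copy of $L$ contributes at least two odd primes. Hence $|L^m|_{2'}$ has exponent sum at least $2m$, which forces $m=2$ and gives case (2).

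\emph{Almost simple case.} This is the main work. We run through part (b) of Proposition \ref{odd-degree}.
\begin{enumerate}[font=\normalfont]
\item For alternating socle $\A_c$, the odd part of $c!/2$ has exponent sum at most $5$. This fails at $c=10$, since $3^4\cdot 5^2\cdot 7$ already has exponent sum $7$; it holds for $\A_9$, whose odd part $3^4\cdot5\cdot7$ has sum $6$ and so also fails. Checking $c\le 8$ leaves $\A_5,\dots,\A_8$.
\item For sporadic socle, inspecting the orders leaves only $\M_{11}$ (odd part $3^2\cdot5\cdot11$) and $\M_{12}$ (odd part $3^3\cdot5\cdot11$). $\J_1$ has odd part $3\cdot5\cdot7\cdot11\cdot19$ with five primes; this would exceed four primes, so $\J_1$ is excluded. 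Similarly $\M_{22}$ has odd part $3^2\cdot5\cdot7\cdot11$, with four odd primes, and is excluded.
\item For Lie type in odd characteristic, we bound the rank and field exactly as in Claim 2 of the proof of Proposition \ref{pi+1}, now with sum at most $5$. The factors $\prod (q^i-1)$ quickly exceed the allowance. This leaves $\PSL(2,q)$ with $q\in\{p,p^2,p^3\}$ and small sporadic exceptions. We then check the cases $\PSL(2,25)$, $\PSL(2,49)$, $\PSL(2,27)$, $\PSL(2,9)\cong\A_6$, $\PSL(3,3)$, $\PSL(3,5)$, $\PSp(4,3)$, $\PSU(3,3)$ and $\PSU(3,7)$ by computing orders. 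For instance $|\PSU(3,7)|_{2'}=3\cdot7^3\cdot43$.
\item For even $q$, the parabolic condition is supplied by Proposition \ref{odd-degree}(III). The order bound then leaves $\PSL(2,2^f)$, $\PSL(3,4)$, $\PSL(3,8)$, $\PSp(4,4)$, $\PSU(3,4)$ and ${^2\!B_2}(8)$, ${^2\!B_2}(32)$.
\end{enumerate}
The four-prime limit, which comes from $3\le|\pi(G)|\le 4$ under the stated factorization, is what kills $\J_1$ and many Lie-type candidates.

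The main obstacle is making the Lie-type elimination exhaustive. We will organize it by the cyclotomic factors $\Phi_i(q)$ of $|L|$. Each such factor with $i\ge 3$ is prime to $q-1$ and $q+1$ apart from small primes, and Zsigmondy's theorem yields a new primitive prime divisor for each $i$. Combined with the four-prime limit, this bounds the rank by $3$ and the field by $q\le 49$ when the rank is at least $2$. The residual list is then checked by direct order computation.
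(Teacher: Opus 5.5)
This is essentially the paper's argument: apply the classification of odd-degree primitive groups (Proposition~\ref{odd-degree}) and prune each case using $|\pi(G)|\le 4$ and $a_2+a_3+a_4\le 5$. Your steps are sound, and for the affine case $d=5$ your observation that an irreducible $\mathbb{F}_p$-module of a $2$-group has $2$-power dimension is a tidy substitute for the paper's appeal to Sylow $2$-subgroups of $\GL(5,p)$ lying in a proper parabolic. The Lie-type elimination is only sketched, and it jumps from families to specific groups without the reductions that restrict $\PSL(2,p^2)$ to $p\in\{3,5,7\}$, $\PSL(2,p^3)$ to $p=3$, and $\PSL(2,2^f)$ to $f$ prime or $f=4$; these follow from your cyclotomic/Zsigmondy bookkeeping together with Lemmas~\ref{num2}--\ref{num3}, but they must be written out.
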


\begin{proof}
Remind the assumption that $G$ is a primitive group of odd degree on a finite set $\Omega$, and $|G|=2^{a_1} p_2^{a_2} p_3^{a_3} p_4^{a_4}$, where $a_2+a_3+a_4\leq 5$ with $a_i\ge 0$ and $L=\soc(G)$. Let $\alpha\in \Omega$.
By Proposition \ref{odd-degree}, one of the following holds:
\begin{enumerate} [font=\normalfont]
    \item $G\cong \ZZ_p^d{:} H$, where $p$ is an odd prime, $d\le 5$ and $H$ is an irreducible subgroup of $\GL(d,p)$;

    \item $T^2\lhd G \leq G_0 \wr \S_2$, where $G_0$ is a primitive group of odd degree $n_0$ with non-abelian simple socle $T$, and the wreath product has the product action of degree $n=n_0^2$;

    \item $L\cong \A_5, \A_6, \A_7,$ or $\A_8$;

    \item $L\cong \M_{11}$ or $\M_{12}$;

    \item $L \cong {^2\!B_2}(2^{2f+1})$, where $f\ge 1$ and $G_\alpha\cap L$ is a parabolic subgroup of $L$;

    \item $L$ is a classical simple group over $\F_q$, where $q$ is odd;

    \item $L$ is a classical simple group over $\F_q$, where $q$ is even and $G_\alpha\cap L$ is a parabolic subgroup of $L$.
\end{enumerate}
We only need to dealt with cases (1), (5), (6) and (7).

\textbf{Claim 1.} Suppose that $G\cong \ZZ_p^d{:} G_{\alpha}$, where $p$ is an odd prime, $d\le 5$ and $G_{\alpha}$ is an irreducible subgroup of $\GL(d,p)$. Then $d\le 4$.

Assume that $d=5$.
Since $a_2+a_3+a_4 \le 5$ and $d=5$, it follows that $G_{\alpha}$ is a 2-group.
However, according to \cite[Theorem 1.1]{Carter}, a Sylow $2$-subgroup of $\GL(5,p)$ is contained in a proper parabolic subgroup (i.e., it stabilizes a nontrivial subspace).
By Sylow's theorems, all Sylow $2$-subgroups of $\GL(5,p)$ are conjugate, and hence every such subgroup is reducible. This shows that $G_{\alpha}$ cannot be a $2$-group, a contradiction.
Thus $d\le 4$, and Claim 1 holds.

\textbf{Claim 2.} Suppose that $L \cong {^2\!B_2}(q)$, where $q=2^{2f+1}$ and $f\ge 1$.
Then $f=1$ or $2$.

Note that $|{^2\!B_2}(q)| = q^2 (q^2 + 1)(q - 1)$.
Let $A=2^{2f+1}+2^{f+1}+1$ and $B=2^{2f+1}-2^{f+1}+1$.
Then $A\cdot B=q^2 + 1$. Since $A-B=2^{f+2}$, it follows that $(A,B)=1$. Since $(q^2 + 1)+(q - 1)=q(q+1)$ and $(q-1,q+1)=1$, it follows that $(q^2 + 1,q - 1)=1$.
Since $A>1$, $B>1$, $q-1>1$, $q+1>1$, $(A,B)=1$ and $(AB, q-1)=1$, we can conclude that $|\pi(G)|=4$. Thus $A$ and $B$ are prime powers.
Observe that $q^2+1=4^{2f+1}+1$. Thus $5\mid (q^2+1)$ and consequently, $5\mid A$ or $B$.
First, we assume that $5\mid A$, that is, $2^{2f+1}+2^{f+1}+1=5^b$ for some integer $b\ge 1$.
If $f\ge 5$, then $5^b\ge 5^4$, contradicting the fact that $a_2+a_3+a_4 \le 5$. Thus $f\le 4$. However, by a simple computation, this case cannot be occur.
Finally, we assume that $5\mid B$.
Since $|\pi(G)|=4$,  we conclude that $B=5^b$ for some integer $b\ge 1$.
Since $5^b= 2^{2f+1}-2^{f+1}+1$ and by assumption that $a_2+a_3+a_4 \le 5$, we conclude that $f\le 2$ and consequently $L\cong {^2\!B_2}(2^3)$ or ${^2\!B_2}(2^5)$. Therefore Claim 2 holds.

\textbf{Claim 3.} Suppose that $L$ is a classical simple group over $\F_q$, where $q=p^f$ is odd, $p$ is a prime and $f\ge 1$. Then
$L\cong \PSL(2, p), \PSL(2, 9), \PSL(2, 25), \PSL(2, 49), \PSL(2,27)$, $\PSL(3,3)$, $\PSL(3,5)$, $\PSp(4,3)$, \allowbreak $\PSU(3,3)$ or $\PSU(3,7)$, where $p\ge 5$ is an odd prime.

Suppose that $L\cong \PSL(n,q)$, where $q=p^f$ and $n\ge 2$.
If $n\ge 4$, since $|\PSL(n,q)|=\frac{1}{(n,q-1)}q^{\frac{n(n-1)}{2}}\prod_{i=2}^{n}(q^i-1)$, it follows that $q^6\mid |G|$, which contradicts the assumption that $a_2+a_3+a_4 \le 5$.
Thus $n\le 3$.

Assume that $n=3$. Since $a_2+a_3+a_4 \le 5$, we have that $f=1$ and hence
$|L|=p^3(p^2-1)(p^3-1)/(3,p-1)=p^3 (p-1)^2 (p+1) (p^2+p+1)/(3,p-1)$.
Suppose that $3\mid (p-1)$.
Since $a_2+a_3+a_4\le 5$, it follows that $p+1$ is a power of $2$ and $(p-1)_{2'}=3$. By Lemma \ref{num3} (1), we get that $p=7$ and hence $L\cong \PSL(3,7)$. However, since $|L|=|\PSL(3,7)|=2^{5} \cdot 3^{2} \cdot 7^{3} \cdot 19$, this contradicts the assumption that $a_2+a_3+a_4\le 5$.
Suppose that $3\nmid (p-1)$. Then $|L|=p^3 (p-1)^2 (p+1) (p^2+p+1)$.
Since $a_2+a_3+a_4\le 5$, it follows that $p-1$ is a power of 2 and $(p+1)_{2'}$ is either 1 or an odd prime.
By Lemma \ref{num3} (2), we have that $p=3$ or $p=5$.  Thus $L\cong \PSL(3,3)$ or $\PSL(3,5)$.

Now assume that $n=2$. Since $a_2+a_3+a_4 \le 5$, it follows that $f\le 4$.
Assume that $f = 4$. Since $p$ is an odd prime, there exist an integer $k\ge 1$ such that $p=2k+1$. It follows that $(p^4+1)/2=8k^{4}+16k^{3}+12k^{2}+4k+1$ is odd. Since $|L|=p^4(p^8-1)/2=p^4(p^2-1)(p^2+1)(p^4+1)/2$ and $a_2+a_3+a_4 \le 5$, it follows that both $p^2-1$ and $p^2+1$ are powers of 2. Lemma \ref{num2} implies that $p=3$ and hence $L\cong \PSL(2,3^4)$.
By a direct calculation, we get that $|L|=|\PSL(2,3^4)| = 2^4 \cdot 3^4 \cdot 5 \cdot 41$, which contradicts the assumption that $a_2+a_3+a_4 \le 5$. Therefore $f\le 3$.
Now we assume that $f=3$.
Observe that $|L|=p^3(p^6-1)/2=p^3(p-1)(p+1)(p^2-p+1)(p^2+p+1)/2$, both $p^2-p+1>1$ and $p^2+p+1>1$ are odd numbers.
Since $a_2+a_3+a_4 \le 5$, we see that $(p-1)(p+1)$ is a power of $2$. Lemma \ref{num2} implies that $p=3$. Thus $L\cong \PSL(2,27)$.
Finally we assume that $f=2$. Then $|L|=p^2(p^4-1)/2=p^2(p-1)(p+1)(p^2+1)/2$.
Since $p$ is an odd prime, it follows that $p=2k+1$ for some integer $k\ge 1$.
Thus $(p^2+1)/2=2k^2+2k+1$ is an odd number. Note that $(p^2-1, p^2+1)=2$.
Let $p_1$ be a prime factor of $(p^2+1)_{2'}$. Now $\{2,p,p_1\}\subseteq \pi(G)$. Since $ |\pi(G)|\le 4$, it follows that $((p-1)(p+1))_{2'}=1$ or $((p-1)(p+1))_{2'}=r^a$ for some odd prime $r$ and integer $a\ge 1$. Since $(p-1, p+1)=2$, this implies that one of $(p-1)$ or $(p+1)$ is a power of 2.
By Lemma \ref{num3}, we get that $p=3,5,7,17$.
By a simple calculation, we have that
\[
\begin{aligned}
|\PSL(2,9)| &= 2^3 \cdot 3^2 \cdot 5,           &\quad |\PSL(2,25)| &= 2^3 \cdot 3 \cdot 5^2 \cdot 13, \\
|\PSL(2,49)| &= 2^4 \cdot 3 \cdot 5^2 \cdot 7^2, &\quad |\PSL(2,17^2)| &= 2^6 \cdot 3^2 \cdot 5 \cdot 17 \cdot 19 \cdot 29.
\end{aligned}
\]
Since $|\pi(G)|\le 4$, this implies that $p=3,5,7$. Consequently $L\cong \PSL(2,9), \PSL(2,25)$ or $\PSL(2,27)$. Moreover, since $\PSL(2,9)\cong \A_6$, this case falls into Proposition \ref{d=5} (3).

Suppose that $L\cong \PSp(2n,q)$, where $q=p^f$ and $n\ge 1$.
Since $ |\PSp(2n, p^f)| = \frac{1}{2} p^{f n^2} \prod_{i=1}^{n} (p^{2fi} - 1)$ and $a_2+a_3+a_4 \le 5$,  we can conclude that $n\le 2$.
Assume that $n=2$. Then $f=1$ as $a_2+a_3+a_4 \le 5$. Since $p^4(p^2-1)^2 (p^2+1)\mid |G|$ and $a_2+a_3+a_4 \le 5$, it follows that $((p^2-1)^2 (p^2+1))_{2'}$ is an odd prime.
Since $(p^2-1,p^2+1)=2$, it follows that one of $p^2-1$ or $p^2+1$ is a power of 2, and Lemma \ref{num2} implies that $p=3$. Consequently $L\cong \PSp(4,3)$.
Now assume that $n=1$. Since $\PSp(2,p^f)\cong \PSL(2,p^f)$, by an analogous argument to the case where $\PSL(2,p^f)$, we obtain that $L\cong \PSL(2, p), \PSL(2, 9), \PSL(2, 25), \PSL(2, 49)$, or $\PSL(2,27)$.

Suppose that $L\cong \PSU(n,q)$, where $q=p^f$ and $n\ge 1$.
Since $$|\PSU(n,q)| = \frac{1}{(n,q+1)} q^{\frac{n(n-1)}{2}} \prod_{i=2}^{n} (q^{i} - (-1)^{i})$$ and $a_2+a_3+a_4 \le 5$,  we can conclude that $n\le 3$.
Assume that $n=3$. The assumption that $a_2+a_3+a_4 \le 5$ implies that $f=1$.
Clearly $p^2-p+1>3$ is an odd number.
Since $a_2+a_3+a_4 \le 5$ and $|L|=\frac{p^3(p^2-1)(p^3+1)}{(3,p+1)}=\frac{p^3(p-1) (p+1)^2 (p^2-p+1)}{(3,p+1)}$, we can conclude that one of $(p-1)/2$ or $(p+1)/2$ is a power of 2 and the other is an odd prime.
By Lemma \ref{num3}, we get that $p=3,5,7,17$.
Since $|\PSU(3,5)|=2^4 \cdot 3^2 \cdot 5^3 \cdot 7$,  $|\PSU(3,17)|=2^6 \cdot 3^5 \cdot 7 \cdot 13 \cdot 17^3$ and $a_2+a_3+a_4 \le 5$, it follows that the cases where $L\cong \PSU(3,5)$ and $L\cong \PSU(3,17)$ cannot occur. Therefore, $L\cong \PSU(3,3)$ or $\PSU(3,7)$.
Now assume that $n=2$. Since $\PSU(2,p^f)\cong \PSL(2,p^f)$, by an analogous argument to the case where $\PSL(2,p^f)$, we obtain that $L\cong \PSL(2, p), \PSL(2,9), \PSL(2, 25), \PSL(2, 49)$, or $\PSL(2,27)$.

Suppose that $L\cong \Omega(2m+1,q)$,  $P\Omega^+(2m,q)$ or $P\Omega^-(2m,q)$.
Since $|\mathrm{\Omega}(2m+1,q)| = \frac{1}{(2,q-1)} q^{m^2} \prod_{i=1}^{m} (q^{2i} - 1)$,
$|\mathrm{P\Omega}^+(2m,q)| = \frac{1}{(4,q^m-1)} q^{m(m-1)} (q^m - 1) \prod_{i=1}^{m-1} (q^{2i} - 1)$, and $|\mathrm{P\Omega}^-(2m,q)| = \frac{1}{(4,q^m+1)} q^{m(m-1)} (q^m + 1) \prod_{i=1}^{m-1} (q^{2i} - 1)$ and $a_2+a_3+a_4 \le 5$,
we can conclude that $m\le 2$ and hence $2m\le 4$ and $2m+1\le 5$. Since all orthogonal simple groups of dimension less than 8 are isomorphic to either $\PSL(m_1,q)$, $\PSp(m_1,q)$ or $\PSU(m_1,q)$ for some suitable integer $m_1\le 6$, this case has already been dealt with in the preceding discussion.

In conclusion, we have that  $L\cong \PSL(2,p)$, $\PSL(3,3)$, $\PSL(3,5)$, $\PSL(2,27)$, $\PSL(2,9)$, $\PSL(2,25)$, $\PSL(2,49)$, $\PSp(4,3)$, $\PSU(3,3)$ or $\PSU(3,7)$,  where $p\ge 5$ is an odd prime. Therefore Claim 3 holds.

\textbf{Claim 4.} Suppose that $L$ is a classical simple group over $\F_q$, where $q$ is even and $G_\alpha\cap L$ is a parabolic subgroup of $L$. Then $L\cong \PSL(2, 2^f)$, $\A_5$, $\A_8$, $\PSL(2,7)$, $\PSL(2,16)$, $\PSL(3, 4)$, $\PSL(3, 8)$, $\PSL(4, 2)$, $\PSp(4,3)$, $\PSp(4,4)$, $\PSU(3,4)$, where $f\ge 3$ is a prime.

Suppose that $L\cong \PSL(n,q)$, where $q=2^f$ and $n\ge 2$.
If $n\ge 6$, then $(q^2-1)(q^3-1)(q^4-1)(q^5-1)(q^6-1)/(n,q-1)=(q^2-1)^2 (q^3-1)^2 (q^2+1)(q^5-1)(q^3+1)/(n,q-1)\mid |G|$, which contradicts the assumption that $a_2+a_3+a_4 \le 5$. Thus $n\le 5$.
Assume that $n=5$. Then $(q^2-1)(q^3-1)(q^4-1)(q^5-1)/(5,q-1)\mid |G|$. If $q\ge 4$, then  Zsigmondy's theorem\cite[p.508]{hu2} implies that $|\pi(G)|\ge 5$, contradicting the assumption that $|\pi(G)|\le 4$.
If $q=2$, then $L\cong \PSL(5,2)$ and $|L|=|\PSL(5,2)|=2^{10}\cdot 3 \cdot 5 \cdot 7 \cdot 31$, and consequently $|\pi(G)|\ge 5$, which contradicts the assumption that $|\pi(G)|\le 4$.
Thus $n\le 4$.
Assume that $n=4$. Then $(q^2-1)(q^3-1)(q^4-1)=(q-1)^3(q+1)(q^2+1)(q^2+q+1)\mid |G|$,
Since $a_2+a_3+a_4 \le 5$, we see that $q-1=1$ and so $q=2$. Thus $L\cong \PSL(4,2)\cong \A_8$, which falls into Proposition \ref{d=5} (3).

Next, we assume that $n=3$. Then $(q^2-1)(q^3-1)/(3,q-1)=(2^f-1)^2 (2^f+1) (2^{2f}+2^f+1)/(3,2^f-1)\mid |G|$.
Suppose that $f=2k$ for $k\ge 1$. Then $(2^f-1)=(2^{k}-1)(2^{k}+1)$ and hence $(2^{k}-1)^2 (2^{k}+1)^2 (2^f+1)(2^{2f}+2^f+1)/(3,q-1)\mid |G|$.
Assume that $3\nmid (q-1)$. Then $(2^{k}-1)^2(2^{k}+1)^2 (2^f+1)(2^{2f}+2^f+1)\mid |G|$. Since $a_2+a_3+a_4 \le 5$, it follows that $2^k-1=1$ and $k=1$, and consequently $3\mid (2^k+1)=3$, which contradicts the assumption that $3\nmid (q-1)=2^f-1$.
Thus $3\mid (q-1)$, and so $3\mid (2^{k}-1)$ or $3\mid (2^{k}+1)$.
Since $a_2+a_3+a_4 \le 5$, it follows that $2^k-1=3$ or $2^k+1=3$ and hence $k=1$ or $2$.
If $k=2$, then $q=2^4$ and $L\cong \PSL(3,2^4)$, and since $|L|=|\PSL(3,2^4)|=2^{12}\cdot 3^{2}\cdot 5^{2}\cdot 7\cdot 13\cdot 17$, which contradicts the assumption that $|\pi(G)|\le 4$. Consequently $k=1$ and $L\cong \PSL(3,4)$.
Now assume that $f$ is odd. If $f=1$, then $L\cong \PSL(3,2)\cong \PSL(2,7)$, as desired. In the following, we assume that $f\ge 3$ and write $f = ab$ for some prime $a$ and integer $b \ge 1$. Since $2^f\equiv (-1)^f \pmod{3}$ and $f$ is odd, we get that $(3,2^f-1)=1$.  Observe that $(2^f-1)^2 (2^f+1) (2^{2f}+2^f+1)\mid |G|$ and  $2^f+1=2^{ab}+1=(2^b+1)(2^{b(a-1)}+\cdots+1)=3(2^{b-1}+\cdots+1)(2^{b(a-1)}+\cdots+1).$
Since $a_2+a_3+a_4 \le 5$ and $3(2^{b-1}+\cdots+1)(2^{b(a-1)}+\cdots+1) (2^f-1)^2 (2^{2f}+2^f+1)\mid |G|,$
we can conclude that $b=1$.
This shows that $f$ is an odd prime, and hence $2^f+1=3(2^{f-1}+\cdots+1)$ is a composite number, and consequently $3 (2^{f-1}+\cdots+1) (2^f-1)^2 (2^{2f}+2^f+1) \mid |G|$.
Since $a_2+a_3+a_4 \le 5$, we conclude that $2^f-1$, $\frac{2^{f}+1}{3}$ and $2^{2f}+2^f+1$ both are odd primes.
Since $(2^{2f}+2^f+1) + (2^f-1)=2^{f+1} (2^{f-1}+1)$ and
$(2^f-1) + (2^{f-1} + 1) = 2^{f-1}(2+1) = 3\cdot 2^{f-1}$, it follows that $(2^{2f}+2^f+1, 2^f-1)\mid 3$.
Since $2^f-1>3$ is a prime, we conclude that $(2^{2f}+2^f+1, 2^f-1)=1$. Moreover, since $2^{2f}+2^f+1> 2^f+1$ is a prime, we get that $\pi(G)=\{2, 3, 2^f-1, \frac{2^{f}+1}{3}, 2^{2f}+2^f+1\}$. If $f>3$, this would contradict the assumption that $|\pi(G)|\le 4$. Thus $f=3$ and hence $L\cong \PSL(3,8)$.

Assume that $n=2$, that is, $L \cong \PSL(2, 2^f)$. Since $\PSL(2,4) \cong \A_5$ falls into Proposition \ref{d=5} (3), we may assume that $f \ge 3$.
Assume first that $f$ is not a power of $2$, that is, $f = ab$ for $a > 1$ is an odd prime and $b \ge 1$. We assert that $f$ is an odd prime.
Suppose that $b=2k$ for some positive integer $k$. Note that  $|L|=|\PSL(2,2^f)|=2^f(2^f-1)(2^f+1)=2^f (2^{f/2}-1) (2^{f/2}+1) (2^b+1) (2^{b(a-1)}-2^{b(a-2)}+\cdots+1)$.
Let $p_1\mid 2^{f/2}-1$, $p_2\mid 2^{f/2}+1$ and $p_3\mid 2^f+1$, where $p_1,p_2,p_3$ are primes. Since $(2^{f/2}-1, 2^{f/2}+1)=1$ and $(2^f-1,2^f+1)=1$, it follows from $|\pi(G)|\le 4$ that
$\pi(G)=\{2,p_1,p_2,p_3\}$. It follows that
$2^{f/2}-1=p_1^{k_1}$, $2^{f/2}+1=p_2^{k_2}$ and $2^{f}+1=p_3^{k_3}$ for some positive integers $k_i$ with $i\in\{1,2,3\}$.
Since $a\ge 3$ is a prime and $2^{b(a-1)}-2^{b(a-2)}+\cdots+1>2^b+1$, we get that $p_3^2\mid (2^{b(a-1)}-2^{b(a-2)}+\cdots+1)$, and consequently $k_1=k_2=1$ as $a_2+a_3+a_4 \le 5$. These mean that both $2^{f/2}-1$ and $2^{f/2}+1$ are primes. The only positive integer $m$ for which both $2^m - 1$ and $2^m + 1$ are primes is $m = 2$. Hence $f/2 = 2$ and $f = 4$, which contradicts the assumption that $f$ has an odd prime factor $a$.
Consequently, $b$ must be odd. Note that $|L|=2^f(2^f-1)(2^f+1)=2^f (2^a - 1)(2^{a(b-1)} + 2^{a(b-2)} + \dots + 1) (2^a+1) (2^{a(b-1)}-2^{a(b-2)}+\dots+1)$.
Suppose $b>1$.
Applying Zsigmondy's theorem\cite[p.508]{hu2} to $2^f-1=2^{ab}-1$, there exists a primitive prime $r\mid 2^f-1$ but not divisible by $2^{j}-1$ for all $1\le j\le f-1$; in particular, $r\nmid 2^a-1$.
Let $r_1\mid 2^a-1$ and $r_2\mid 2^f+1$, where $r_1, r_2$ are primes.
Since $(2^f-1,2^f+1)=1$ and $|\pi(G)|\le 4$, it follows that $\pi(G)=\{2,r,r_1,r_2\}$. Thus $2^f+1=r_2^t$ for some integer $t\ge 1$. Noting that $3\mid 2^f+1$, we have $r_2=3$ and $2^f+1=3^t$. Since $f=ab\ge 9$, it follows that $t> 5$, which contradicts the assumption that $a_2+a_3+a_4 \le 5$.
Thus $b=1$ and $f=a$ is an odd prime, the assertion holds.

Now consider the case where $f$ is a power of $2$, say $f = 2^k$ for some integer $k \ge 2$. If $k \ge 3$, we obtain that
$2^f-1=(2^{2^{k-2}}-1)(2^{2^{k-2}}+1)(2^{2^{k-1}}+1).$
The factors $2^{2^{k-2}}-1$, $2^{2^{k-2}}+1$, and $2^{2^{k-1}}+1$ are pairwise coprime. Together with the coprime factors $2^f$ and $2^f+1$, we conclude that $|\pi(G)|\ge 5$, which contradicts the assumption that $|\pi(G)|\le 4$.
Therefore, $k \le 2$. Recalling that $f \ge 3$, the only possibility is $k=2$, i.e., $f=4$. Hence $L \cong \PSL(2, 16)$.
In conclusion, we have that $L\cong \A_5, \A_8$, $\PSL(2,7)$, $\PSL(3,4)$, $\PSL(3,8)$, $\PSL(2,16)$, or $\PSL(2,2^f)$ with $f\ge 3$ is an odd prime.

Suppose that $L\cong \PSp(2n,2^f)$, where $f\ge 1$ and $n\ge 1$.
If $n \ge 4$, then $|G|$ is divisible by
$$(q^2-1)(q^4-1)(q^6-1)(q^8-1)=(q^2-1)^4 (q^2+1)^2 (q^4+1) (q^4+q^2+1),$$
which contradicts the assumption that $a_2+a_3+a_4 \le 5$. Consequently $n\le 3$.
Assume that $n=3$. Then $|G|$ is divisible by
$$(q^2-1)(q^4-1)(q^6-1)=(q^2-1)^3 (q^2+1) (q^4+q^2+1).$$
Since $a_2+a_3+a_4 \le 5$, it follows that $q^2-1$ is an odd prime, which shows that $q=2$. However, since $|\PSp(6,2)|=2^9 \cdot 3^4 \cdot 5 \cdot 7$ and $a_2+a_3+a_4 \le 5$, it follows that $L\ncong \PSp(6,2)$, and hence the case $n=3$ cannot occur.
Assume that $n=2$, i.e., $L\cong \PSp(4,2^f)$.
Note that
 $$(q^2-1)(q^4-1)= (q-1)^2 (q+1)^2 (q^2+1)=(2^f-1)^2 (2^f+1)^2 (2^{2f}+1).$$
If $f\ge 4$ is even, then $2^{f/2}-1>1$, and since $|G|$ is divisible by $$(2^f-1)^2 (2^f+1)^2 (2^{2f}+1)=(2^{f/2}-1)^2 (2^{f/2}+1)^2 (2^f+1)^2 (2^{2f}+1),$$
this contradicts the assumption that $a_2+a_3+a_4 \le 5$.
Thus $f=2$ or $f$ is odd. If $f\ge 3$, then
$$(2^f-1)^2 (2^f+1)^2 (2^{2f}+1)=(2^f-1)^2 \cdot (3\cdot(2^{f-1}+\cdots+1))^2\cdot 5\cdot(2^{2f-1}+\cdots+1) \mid |G|,$$
which contradicts the assumption that $a_2+a_3+a_4 \le 5$. Consequently $f\le 2$. Since $\PSp(4,2)\cong \S_6$ is not simple, it follows that $f=2$ and $L\cong \PSp(4,4)$.
Finally, when $n=1$, we have $\PSp(2,2^f)\cong \PSL(2,2^f)$, and as we discussed earlier, we obtain that $L\cong \A_5, \PSL(2,16)$ or $\PSL(2,2^f)$ with $f\ge 3$ is an odd prime.
In conclusion, $L\cong  \PSp(4,4)$, $\A_5, \PSL(2,16)$ or $\PSL(2,2^f)$ with $f\ge 3$ is an odd prime.

Suppose that $L\cong \PSU(n,2^f)$, where $f\ge 1$ and $n\ge 2$.
If $n\ge 5$, then $|G|$ can be divisible by
$$\frac{(q^2-1)(q^3+1)(q^4-1)(q^5+1)}{(n,q+1)}=\frac{(q-1)^2 (q+1)^4 (q^2-q+1) (q^2+1)(q^4-q^3+q^2-q+1)}{(n,q+1)},$$
which contradicts the assumption that $a_2+a_3+a_4 \le 5$.
Consequently $n\le 4$.
Assume that $n=4$. Then $(n,q+1)=1$.  Since $|G|$ is divisible by $(q^2-1)(q^3+1)(q^4-1)=(q-1)^2 (q+1)^3 (q^2-q+1) (q^2+1)$ and $a_2+a_3+a_4 \le 5$, it follows that $q-1=1$ and hence $q=2$.
In this case, since $\PSU(4,2)\cong \PSp(4,3)$, we have that $L\cong \PSp(4,3)$, which falls into Proposition \ref{d=5} (4).

Now assume that $n=3$.
Then $|G|$ is divisible by
$$\frac{(q^2-1)(q^3+1)}{(3,q+1)}=\frac{(2^f-1)(2^f+1)^2 (2^{2f}-2^f+1)}{(3,2^f+1)}.$$
Suppose that $3\nmid 2^f+1$.
If $f=ab$ with $a>1$ an odd prime, then $2^f+1$ is an odd composite number and hence $a_2+a_3+a_4\ge 6$,
which contradicts the assumption that $a_2+a_3+a_4 \le 5$.
Thus $f$ is a power of 2, that is, $f=2^k$ for some positive integer $k$.
If $k\ge 2$, then $|G|$ is divisible by
$(2^{2^{k-1}}-1)(2^{2^{k-1}}+1)(2^f+1)^2 (2^{2f}-2^f+1).$
Since $a_2+a_3+a_4 \le 5$, it follows that both $(2^{2^{k-1}}-1)$ and $(2^{2^{k-1}}+1)$ are primes, which shows that $k=2$ and so $f=4$. But then $|L|=|\PSU(3,16)|=2^{12} \cdot 3 \cdot 5 \cdot 17^2 \cdot 241$, which contradicts the assumption that $|\pi(G)|\le 4$.  Therefore $k=1$ and so $L\cong \PSU(3,4)$.
Now suppose that $3\mid 2^f+1$. Since $2^f \equiv (-1)^f \pmod{3}$, it follows that $f$ is odd.
Since $\PSU(3,2)$ is not simple and $L$ is simple, we may assume that $f\ge 3$ in what follows. Thus $2^f+1$ is composite.
If $f=ab$ with $a,b>1$, then $2^f+1=3(2^{a-1}-2^{a-2}+\dots+1)(2^{a(b-1)}-2^{a(b-2)}+\dots+1)$, which gives $a_2+a_3+a_4\ge 6$, contradicting the assumption that $a_2+a_3+a_4 \le 5$. Hence $f$ is an odd prime.
Assume that there exists a prime $r\ne 3$ such that $r\mid 2^f+1$.
Let $t$ be a prime such that $t\mid 2^f-1$. Since $(2^f+1,2^f-1)=1$ and $|\pi(G)|\le 4$,
we have $\pi(G)=\{2,3,r,t\}$. Noting that $(2^{2f}-2^f+1)\mid |G|$ and $2^{2f}-2^f+1>2^f+1$, it follows that $2^{2f}-2^f+1$ is composite, which contradicts the assumption that $a_2+a_3+a_4 \le 5$.
Therefore $2^f+1$ is a power of 3, that is, $2^f+1=3^m$ for some integer $m\ge 2$.
Since $\frac{(2^f-1)(2^f+1)^2 (2^{2f}-2^f+1)}{(3,2^f+1)}\mid |G|$ and $a_2+a_3+a_4 \le 5$, it follows that $m=2$ and so $f=3$. But then $|L|=|\PSU(3,8)|=2^9 \cdot 3^4 \cdot 7 \cdot 19$, which gives $a_2+a_3+a_4\ge 6$, contradicting the assumption that $a_2+a_3+a_4 \le 5$. Thus the case where $n=3$ cannot occur.
Now assume that $n=2$. Since $\PSU(2,2^f)\cong \PSL(2,2^f)$, by an analogous argument to the case where $\PSL(2,2^f)$, we obtain that $L\cong \A_5, \PSL(2,16)$ or $\PSL(2,2^f)$ with $f\ge 3$ is an odd prime. In conclusion, $L\cong \PSp(4,3), \PSU(3,4), \A_5, \PSL(2,16)$ or $\PSL(2,2^f)$ with $f\ge 3$ is an odd prime.

Suppose that $L\cong \Omega^+(2m+1,q)$, $P\Omega^+(2m,q)$ or $P\Omega^-(2m,q)$, where $q=2^f$.
Recall that $|\mathrm{P\Omega}(2m+1,q)| = \frac{1}{(2,q-1)} q^{m^2} \prod_{i=1}^{m} (q^{2i} - 1)$,
$|\mathrm{P\Omega}^+(2m,q)| = \frac{1}{(4,q^m-1)} q^{m(m-1)} (q^m - 1) \prod_{i=1}^{m-1} (q^{2i} - 1)$, and $|\mathrm{P\Omega}^-(2m,q)| = \frac{1}{(4,q^m+1)} q^{m(m-1)} (q^m + 1) \prod_{i=1}^{m-1} (q^{2i} - 1)$.
If $m\ge 4$, then $(q^2-1)(q^4-1)(q^6-1)= (q^2-1)^3 (q^2+1) (q^4+q^2+1)\mid |G|$, and since $a_2+a_3+a_4 \le 5$, it follows that $q^2-1$ and $q^2+1$ are primes, which implies that $q=2$.
However, since $q^4+q^2+1=21$, we see that $a_2+a_3+a_4 \ge 6$, contradicting the assumption that $a_2+a_3+a_4\le 5$.
Thus $m\le 3$ and hence $2m\le 6$ and $2m+1\le 7$. Since all orthogonal simple groups of dimension less than 8 are isomorphic to either $\PSL(m_1,q)$, $\PSp(m_1,q)$ or $\PSU(m_1,q)$ for some suitable integer $m_1\le 7$, this case has already been dealt with in the preceding discussion.

In conclusion, we have that $L\cong \PSL(2, 2^f)$, $\A_5$, $\A_8$, $\PSL(2,7)$, $\PSL(2,16)$, $\PSL(3, 4)$, $\PSL(3, 8)$, $\PSp(4,3)$, $\PSp(4,4)$ or $\PSU(3,4)$, where $f\ge 3$ is a prime. Therefore Claim 4 holds.

According to Claims 1--4, one of the following holds:
\begin{enumerate} [font=\normalfont]
    \item $G\cong \ZZ_p^d{:} G_{\alpha}$, where $p$ is an odd prime, $d\le 4$ and $G_{\alpha}$ is an irreducible subgroup of $\GL(d,p)$;

    \item $T^2\lhd G \leq G_0 \wr \S_2$, where $G_0$ is a primitive group of odd degree $n_0$ with non-abelian simple socle $T$, and the wreath product has the product action of degree $n=n_0^2$;

    \item $L\cong \A_5, \A_6, \A_7$, $\A_8$, $\M_{11}$ or $\M_{12}$;

    \item $L\cong \PSL(2, p), \PSL(2, 25), \PSL(2, 49), \PSL(2,27)$, $\PSL(3,3)$, $\PSL(3,5)$, $\PSp(4,3)$, $\PSU(3,3)$ or $\PSU(3,7)$, where $p\ge 5$ is an odd prime;

    \item $L\cong \PSL(2, 2^f)$, $\PSL(2,16)$, $\PSL(3, 4)$, $\PSL(3, 8)$, $\PSp(4,4)$, $\PSU(3,4)$, ${^2\!B_2}(2^{3})$ or ${^2\!B_2}(2^{5})$, where $f\ge 3$ is a prime, and $L\cap G_{\alpha}$ is a parabolic subgroup of $L$.
\end{enumerate}
\end{proof}

\begin{proof}[{\bf The proofs of Proposition \ref{forK}}]
Recall the assumption that $G$ is a finite non-abelian simple group, $3\le|\pi(G)|\le 4$ and  $|G|=2^{a_1} p_2^{a_2} p_3^{a_3} p_4^{a_4}$, where $2<p_2<p_3<p_4$ are primes, $a_2,a_3\ge 1$ and $a_4\ge 0$.  We also assume that $a_2+a_3+a_4\le 3$.
According to the classification theorem of finite simple groups(CFSG), and since $3\le|\pi(G)|\le 4$ and $a_2+a_3+a_4\le 3$, we can conclude that $G$ is one of the following:
\begin{enumerate} [font=\normalfont]

    \item $G\cong \A_5, \A_6$;

    \item $G \cong {^2\!B_2}(2^{2f+1})$, where $f\ge 1$;

    \item $G$ is a non-abelian classical simple group over $\F_q$, where $q=p^f$, $p$ is a prime and $f\ge 1$.

\end{enumerate}

\textbf{Claim 1.} Assume that $G \cong {^2\!B_2}(q)$, where $q=2^{2f+1}$ and $f\ge 1$. Then $G \cong {^2\!B_2}(8)$ and $|G|_{2'}$ is square-free.

Recall that $|{^2\!B_2}(q)| = q^2 (q^2 + 1)(q - 1)$.
Let $A=2^{2f+1}+2^{f+1}+1$ and $B=2^{2f+1}-2^{f+1}+1$.
Then $A\cdot B= q^2 + 1$. Since $A-B=2^{f+2}$, it follows that $(A,B)=1$. Since $(q^2 + 1)+(q - 1)=q(q+1)$ and $(q-1,q+1)=1$, it follows that $(q^2 + 1,q - 1)=1$.
Since $|G|=q^2 (q^2 + 1)(q - 1)$, we can conclude that $|\pi(G)|=4$.
Since $q^2+1=4^{2f+1}+1$, it follows that $5\mid q^2+1$. Moreover, since $a_2+a_3+a_4\le 3$, we conclude that $A=5$ or $B=5$.
Clearly $A\ne 5$. Thus $B=5$, so that $f=1$ and $G\cong {^2\!B_2}(2^3)$. In particular, $|G|_{2'}$ is square-free. Therefore Claim 1 holds.

\textbf{Claim 2.} Suppose that $G$ is a non-abelian classical simple group over $\F_q$ except for the cases $G\cong \A_5$ and $\A_6$. Here $q=p^f$, $p$ is a prime and $f\ge 1$. Then
$G\cong \PSL(2,7)$, $\PSL(2,16)$, $\PSL(2,31)$, $\PSL(2,8)$, $\PSL(2,17)$, $\PSL(2,2^f)$ or   $\PSL(2,p)$ with $((p-1)_{2'}, (p+1)_{2'})\in \{(r_1,r_2), (1,3r)\}$, where $r_1, r_2$ and $r>3$ are odd primes,  $f>3$, $2^f-1$ and $\frac{2^f+1}{3}>3$ are odd primes.

Since $a_2+a_3+a_4\le 3$,  applying Proposition \ref{d=5}, we get that $G$ is one of the following:
\begin{enumerate}
    \item $G\cong \PSL(2, p), \PSL(2, 25), \PSL(2, 49), \PSL(2,27)$, $\PSL(3,3)$, $\PSL(3,5)$, $\PSp(4,3)$, $\PSU(3,3)$ or $\PSU(3,7)$, where $p\ge 5$ is an odd prime.

    \item $G\cong \PSL(2, 2^f)$, $\PSL(2, 8)$, $\PSL(2, 16)$, $\PSL(3, 4)$, $\PSL(3, 8)$, $\PSL(4, 2)$,  $\PSp(4,4)$ or $\PSU(3,4)$,  where $f> 3$ is a prime.
\end{enumerate}

\begin{longtable}{clll}
\caption{Orders of finite simple groups} \label{T2}\\
\toprule
No. & Group & Order & Prime factorization \\
\midrule
\endfirsthead

\multicolumn{4}{c}{{\tablename\ \thetable{} -- continued from previous page}} \\
\toprule
No. & Group & Order & Prime factorization \\
\midrule
\endhead

\bottomrule
\endlastfoot

1 & $\PSL(2,8)$ & $8 \times (64-1) = 504$ & $2^3 \cdot 3^2 \cdot 7$ \\
2 & $\PSL(2,16)$ & $16 \times (256-1) = 4080$ & $2^4 \cdot 3 \cdot 5 \cdot 17$ \\
3 & $\PSL(2, 25)$ & $25 \times (625-1) / 2 = 7800$ & $2^3 \cdot 3 \cdot 5^2 \cdot 13$ \\
4 & $\PSL(2, 27)$ & $27 \times (729-1) / 2 = 9828$ & $2^2 \cdot 3^3 \cdot 7 \cdot 13$ \\
5 & $\PSL(2, 49)$ & $49 \times (2401-1) / 2 = 58800$ & $2^4 \cdot 3 \cdot 5^2 \cdot 7^2$ \\
6 & $\PSL(3, 3)$ & $27 \times 8 \times 26 = 5616$ & $2^4 \cdot 3^3 \cdot 13$ \\
7 & $\PSL(3, 4)$ & $64 \times 15 \times 63 / 3 = 20160$ & $2^6 \cdot 3^2 \cdot 5 \cdot 7$ \\
8 & $\PSL(3, 5)$ & $125 \times 24 \times 124 = 372000$ & $2^5 \cdot 3 \cdot 5^3 \cdot 31$ \\
9 & $\PSL(3, 8)$ & $512 \times 63 \times 511 = 16482816$ & $2^9 \cdot 3^2 \cdot 7^2 \cdot 73$ \\
10 & $\PSL(4, 2)$ & $64 \times 3 \times 7 \times 15 = 20160$ & $2^6 \cdot 3^2 \cdot 5 \cdot 7$ \\
11 & $\PSp(4, 3)$ & $81 \times 8 \times 80 / 2 = 25920$ & $2^6 \cdot 3^4 \cdot 5$ \\
12 & $\PSp(4, 4)$ & $256 \times 15 \times 255 = 979200$ & $2^8 \cdot 3^2 \cdot 5^2 \cdot 17$ \\
13 & $\PSU(3, 3)$ & $27 \times 28 \times 8 = 6048$ & $2^5 \cdot 3^3 \cdot 7$ \\
14 & $\PSU(3, 4)$ & $64 \times 65 \times 15 = 62400$ & $2^6 \cdot 3 \cdot 5^2 \cdot 13$ \\
15 & $\PSU(3, 7)$ & $343 \times 344 \times 48 = 5663616$ & $2^7 \cdot 3 \cdot 7^3 \cdot 43$ \\
\bottomrule
\end{longtable}
By the assumption that $a_2+a_3+a_4\le 3$, if $G$ is isomorphic to a group in Table \ref{T2}, then $G$ can only be $\PSL(2,8)$ or $\PSL(2,16)$.
Now we only need to consider the remaining cases $\PSL(2,p)$ and $\PSL(2,2^f)$, where $p$ and $f$ are primes greater than 3.

Assume that $G\cong \PSL(2,p)$ where $p\ge 5$ is an odd prime.
Then $|G|=p(p-1)(p+1)/2$.
If $|\pi(G)|=3$, then $p-1$ or $p+1$ is a power of $2$. Lemma \ref{num3} implies that $p=5,7,17$.
Since $\PSL(2,5)\cong \A_5$, it follows that $L\ncong \PSL(2,5)$ and consequently  $G\cong\PSL(2,7)$ or $\PSL(2,17)$.
Now assume that $|\pi(G)|=4$. Then the condition $a_2+a_3+a_4\le 3$ forces $a_2=a_3=a_4=1$. It follows that either both $p-1$ and $p+1$ are not powers of 2, or
 one of them is a power of 2 and the odd part of the other is a product of two distinct odd primes.
If both $p-1$ and $p+1$ are not powers of 2, then $((p-1)_{2'}, (p+1)_{2'})=(r_1,r_2)$, where both $r_1$ and $r_2$ are primes, as desired.
Now assume that one of $p-1$ and $p+1$ is a power of 2 and the odd part of the other is a product of two distinct odd primes.
First, we assume that $p-1$ is a power of 2, and prime $r\mid (p+1)_{2'}$ with $r>3$.
Let $p-1=2^k$ for $k\ge 2$. Then $p=2^k+1$ is a Fermat prime, and consequently $k$ is a power of $2$. Since $\frac{p+1}{2}=2^{k-1}+1$ and $k-1$ is odd, we get that $3\mid \frac{p+1}{2}$.
Moreover, since $\frac{p+1}{2}$ is a product of two distinct odd primes, it follows that $\frac{p+1}{2}=3r$ and $((p-1)_{2'},(p+1)_{2'})=(1,3r)$, as desired.
Finally, we assume that $p+1$ is a power of 2.
Let $p+1=2^k$ for some integer $k\ge 3$.
Then $p=2^k-1$ is a Mersenne prime, and consequently $k$ is a prime.
Given that $\frac{p-1}{2}$ is a product of two distinct odd primes, we get that $k\ge 5$.
Since $\frac{p-1}{2}=2^{k-1}-1$ and $k-1$ is even,  it follows that $\frac{p-1}{2}=(2^{(k-1)/2}-1)(2^{(k-1)/2}+1)$.
Since $|\pi(G)|\le 4$ and $a_2+a_3+a_4\le 3$, it follows that both $2^{(k-1)/2}-1$ and $2^{(k-1)/2}+1$ are odd primes.
If $(k-1)/2$ is odd, then $3\mid (2^{(k-1)/2}+1)$, so that $2^{(k-1)/2}+1$ is a composite number. This contradicts the earlier conclusion that $2^{(k-1)/2}+1$ is a prime.
Therefore $(k-1)/2$ is even. Since $k\ge 5$ and $(2^{(k-1)/2}-1)=(2^{(k-1)/4}-1)(2^{(k-1)/4}+1)$ is a prime, it follows that $k-1=4$ and hence $k=5$. Consequently $p=31$ and $G\cong \PSL(2,31)$.
Therefore, $G\cong \PSL(2,7), \PSL(2,17)$, $\PSL(2,31)$ or $\PSL(2,p)$ with $((p-1)_{2'}, (p+1)_{2'})\in \{(r_1,r_2), (1,3r)\}$, where $r_1, r_2$ and $r>3$ are odd primes, as desired.

Now we assume that $G\cong \PSL(2,2^f)$, where $f>3$ is a prime.
Since $f>3$ is a prime, it follows that $2^f+1=3(2^{f-1}-2^{f-2}+\cdots+1)$ and $2^{f-1}-2^{f-2}+\cdots+1>3$.
If $2^{f-1}-2^{f-2}+\cdots+1$ is not a prime, then one yields that $a_2+a_3+a_4\ge 4$, which contradicts the assumption that $a_2+a_3+a_4\le 3$.
Thus $(2^f+1)/3$ is a prime, as desired.

In summery, $G\cong \PSL(2,7)$, $\PSL(2,16)$, $\PSL(2,31)$, $\PSL(2,8)$, $\PSL(2,17)$, $\PSL(2,2^f)$ or   $\PSL(2,p)$ with $((p-1)_{2'}, (p+1)_{2'})\in \{(r_1,r_2), (1,3r)\}$, where $r_1, r_2$ and $r>3$ are odd primes,  $f>3$, $2^f-1$ and $\frac{2^f+1}{3}>3$ are odd primes.  Therefore Claim 2 holds.

According to Claims 1 and 2, the following holds:
\begin{enumerate}[font=\normalfont, label=(\arabic*)]
\item If $p_2^{a_2} p_3^{a_3} p_4^{a_4}$ is square-free, then $G$ is isomorphic to $\A_5$, ${^2\!B_2(8)}$, $\PSL(2,7)$, $\PSL(2,16)$, $\PSL(2,31)$, $\PSL(2, p_4)$ or $\PSL(2,2^f)$, where $r_1, r_2$ and $r>3$ are odd primes, $((p_4-1)_{2'}, (p_4+1)_{2'})\in \{(r_1,r_2), (1,3r)\}$, $f>3$, $2^f-1$ and $\frac{2^f+1}{3}>3$ are odd primes.

\item If $|G|_{2'}=p_i^2 p_j$, then $G$ is isomorphic to $\A_6$, $\PSL(2,8)$ or $\PSL(2,17)$, where $i\ne j$ and $i,j\in \{2,3\}$.
\end{enumerate}
\end{proof}

\end{appendices}

\makeatletter
\let\@setaddresses\relax
\makeatother
\end{document}